\documentclass[psamsfonts]{amsart}

\usepackage{hyperref}
\hypersetup{
    colorlinks,
    citecolor=black,
    filecolor=black,
    linkcolor=black,
    urlcolor=black
}
\usepackage{geometry}
\usepackage{pxfonts}
\usepackage[fontsize=11pt]{scrextend}
\usepackage{amssymb,amsfonts}
\usepackage[all,arc]{xy}
\usepackage{enumerate}
\usepackage{mathrsfs}
\usepackage[utf8]{inputenc}
\usepackage{url}
\usepackage{mathtools}
\usepackage{enumerate}
\usepackage[shortlabels]{enumitem}
\usepackage{tikz}
\usetikzlibrary{trees, snakes}
\usepackage{color}
\usepackage{bbm}
\usepackage{graphicx}
\usepackage{enumitem}

\setlength{\textwidth}{\paperwidth}
\addtolength{\textwidth}{-2.5in}
\calclayout

\DeclareMathOperator\Ft{Ft}
\DeclareMathOperator\arctanh{arctanh}
\DeclareMathOperator\Fr{Fr}
\DeclareMathOperator\Gr{Gr}
\DeclareMathOperator\PSL{PSL}

\DeclareMathOperator\Id{Id}

\DeclareMathOperator\vol{vol}

\DeclareMathOperator\diag{diag}

\DeclareMathOperator\SL{SL}
\DeclareMathOperator\supp{supp}

\DeclareMathOperator\SO{SO}

\DeclareMathOperator\Aut{Aut}

\DeclareMathOperator\dist{dist}

\DeclareMathOperator\area{area}

\DeclareMathOperator\Prob{\bP\br\bo\bb}
\DeclareMathOperator\ft{\mathbf{f}\bt}
\DeclareMathOperator\bary{\mathbf{b}\ba\br}
\DeclareMathOperator\ab{\ba\mathbf{b}\ba\br}
\DeclareMathOperator\core{core}
\DeclareMathOperator\T{T}
\DeclareMathOperator\N{N}

\DeclareMathOperator\proj{proj}


\newcommand{\lef}{\left}
\newcommand{\ri}{\right}

\newcommand{\wk}{\rightharpoonup}
\newcommand{\wkstar}{\stackrel{\star}\wk}

\newcommand{\iinfty}{\stackrel{i\to\infty}\arr}
\newcommand{\epszero}{\stackrel{\epsilon\to 0}\arr}

\newcommand{\aand}{\quad\text{and}\quad}
\newcommand{\eps}{\epsilon}

\newcommand{\la}{\langle}
\newcommand{\ra}{\rangle}
\newcommand{\til}{\tilde}
\newcommand{\se}{\subset}

\newcommand{\wt}{\widetilde}

\newcommand{\cH}{\mathcal{H}}

\newcommand{\fa}{\mathfrak{a}}
\newcommand{\fb}{\mathfrak{b}}

\newcommand{\fm}{\mathfrak{m}}

\newcommand{\fs}{\mathfrak{s}}
\newcommand{\frt}{\mathfrak{t}}

\newcommand{\ov}{\overline}

\newcommand{\pants}{\mathbf{\Pi}_{\epsilon, R}}
\newcommand{\tpants}{\mathbf{\wt{\Pi}}_{\epsilon, R}}

\newcommand{\curves}{\mathbf{\Gamma}_{\epsilon, R}}

\newcommand{\bC}{\mathbf{C}}

\newcommand{\bH}{\mathbf{H}}

\newcommand{\bL}{\mathbf{L}}

\newcommand{\bP}{\mathbf{P}}
\newcommand{\bPi}{\mathbf{\Pi}}

\newcommand{\bR}{\mathbf{R}}

\newcommand{\bU}{\mathbf{U}}

\newcommand{\bZ}{\mathbf{Z}}

\newcommand{\ba}{\mathbf{a}}
\newcommand{\bb}{\mathbf{b}}

\newcommand{\bd}{\mathbf{d}}

\newcommand{\bh}{\mathbf{h}}

\newcommand{\bl}{\mathbf{l}}

\newcommand{\bo}{\mathbf{o}}

\newcommand{\br}{\mathbf{r}}

\newcommand{\bt}{\mathbf{t}}



\newcommand{\sF}{\mathscr{F}}
\newcommand{\sG}{\mathscr{G}}

\newcommand{\sM}{\mathscr{M}}

\newcommand{\sS}{\mathscr{S}}
\newcommand{\sT}{\mathscr{T}}

\providecommand{\arr}{\longrightarrow}
\providecommand{\seq}{\subseteq}

\newtheorem{thm}{Theorem}[section]
\newtheorem{cor}[thm]{Corollary}
\newtheorem{prop}[thm]{Proposition}
\newtheorem{lem}[thm]{Lemma}

\newtheorem{claim}[thm]{Claim}
\newtheorem*{claim*}{Claim}
\newtheorem*{thm*}{Theorem}
\newtheorem*{appl*}{Application}
\newtheorem*{prop*}{Proposition}
\newtheorem*{lem*}{Lemma}
\newtheorem*{cor*}{Corollary}
\newtheorem*{conj*}{Conjecture}

\theoremstyle{definition}

\newtheorem*{ques*}{Question}
\newtheorem*{exmp*}{Example}
\newtheorem*{defn*}{Definition}

\def\XXint#1#2#3{{\setbox0=\hbox{$#1{#2#3}{\int}$ }
\vcenter{\hbox{$#2#3$ }}\kern-.6\wd0}}

\theoremstyle{remark}

\newtheorem*{rem*}{Remark}

\makeatletter
\let\c@equation\c@thm
\makeatother
\numberwithin{thm}{section}
\numberwithin{equation}{section}

\title{limits of asymptotically Fuchsian surfaces in a closed hyperbolic 3-manifold}
\author{Fernando Al Assal}
\address{University of Wisconsin-Madison, 480 Lincoln Drive Madison, WI 53703}
\email{alassal@wisc.edu}

\begin{document}

\begin{abstract}
Let $M$ be a closed hyperbolic 3-manifold. Let $\nu_{\Gr M}$ denote the probability volume (Haar) measure of the 2-plane Grassmann bundle $\Gr M$ of $M$ and let $\nu_T$ denote the area measure on $\Gr M$ of an immersed closed totally geodesic surface $T\se M$. We say a sequence of $\pi_1$-injective maps $f_i:S_i\to M$ of surfaces $S_i$ is \emph{asymptotically Fuchsian} if $f_i$ is $K_i$-quasifuchsian with $K_i\to 1$ as $i\to \infty$. We show that the set of weak-* limits of the probability area measures induced on $\Gr M$ by asymptotically Fuchsian minimal or pleated maps $f_i:S_i\to M$ of closed connected surfaces $S_i$ consists of all convex combinations of $\nu_{\Gr M}$ and the $\nu_T$.
\end{abstract}

\maketitle

\section{Introduction}
Let $M = \Gamma\backslash \bH^3$ be a closed hyperbolic 3-manifold, where $\Gamma\leq \PSL_2\bC$ is a cocompact lattice. We say a sequence of $\pi_1$-injective (essential) maps $f_i:S_i\to M$ of surfaces $S_i$ is \emph{asymptotically Fuchsian} if $f_i$ is $K_i$-quasifuchsian with $K_i\to 1$ as $i\to\infty$. For an almost-everywhere differentiable map $f:S\to M$ of a surface into $M$, we let $\nu(f)$ denote the probability area measure induced by $f$ on the oriented 2-plane Grassmann bundle $\Gr M$ of $M$. (Precisely, if we let $\ov{f}:S\to \Gr M$ be given by $\ov{f}(p) = (f(p),T_{f(p)} f(S))$, then $\nu(f)$ is the pushforward via $\ov{f}$ of the pullback via $f$ of the volume measure of $M$, normalized to have mass 1.) We let $\sG$ denote the set of commensurability classes of immersed closed totally geodesic surfaces in $M$. For $T\in \sG$, we let $\nu_T$ denote the area measure of $T$ on $\Gr M$. We let $\nu_{\Gr M}$ denote the probability volume (Haar) measure of $\Gr M$. The main theorem of the article is

\begin{thm}\label{main}
The set of weak-* limits of $\nu(f_i)$, where $f_i:S_i\to  M$ are asymptotically Fuchsian minimal or pleated maps of closed connected surfaces, consists of all measures of the form
\[\tag{$\bullet$}
\nu= \alpha_M\nu_{\Gr M} + \sum_{T\in\sG} \alpha_T \nu_T
\]
where $\alpha_M + \sum_{T\in\sG} \alpha_T = 1$.
\end{thm}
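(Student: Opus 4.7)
The plan is to establish both directions of the claim. I identify $\Gr M\cong\Gamma\backslash G/K$ where $G=\PSL_2\bC$ and $K=\SO(2)$ is the stabiliser in $G$ of an oriented 2-plane $V_0$ at a basepoint of $\bH^3$, and I let $H\cong\PSL_2\bR$ be the subgroup of $G$ preserving the totally geodesic $\bH^2\subset\bH^3$ extending $V_0$ (so $K\le H$). Under this identification the lifts $\wt T\subset\Gr M$ of the closed totally geodesic surfaces $T\in\sG$ are precisely the closed $H$-orbits in $\Gamma\backslash G/K$, and $\nu_T$ is the $H$-invariant probability measure on $\wt T$; the Haar measure $\nu_{\Gr M}$ is of course also $H$-invariant. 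Both $\nu_{\Gr M}$ and the $\nu_T$ therefore lie in the simplex of $H$-invariant probability measures on $\Gr M$, and the theorem amounts to identifying the weak-* closure of the $\nu(f_i)$ with this simplex.

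\emph{Necessity.} By compactness of probability measures on the compact space $\Gr M$, pass to a subsequence with $\nu(f_i)\to\nu$ weak-*. The crucial step is $H$-invariance of $\nu$. For pleated $f_i$, the map is totally geodesic off a bending lamination, and asymptotic Fuchsianity forces the bending per unit area to vanish as $K_i\to 1$; on each pleat the Grassmannian lift sits inside a single $H$-orbit, so the only obstruction to exact $H$-invariance of $\nu(f_i)$ comes from crossing the lamination, and this contribution vanishes in the limit. For minimal $f_i$, quasiconformality close to $1$ forces the second fundamental form to be uniformly small, so $\ov{f_i}(S_i)$ is $C^1$-close to a union of $H$-orbits, with the same conclusion. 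In either case $h_*\nu(f_i)-\nu(f_i)\to 0$ weakly for every $h\in H$, hence $h_*\nu=\nu$. Now I apply the Mozes-Shah classification (a consequence of Ratner's theorem, since $H$ is generated by unipotents): the $H$-invariant ergodic probability measures on $\Gamma\backslash G$ are either Haar or measures supported on closed $H$-orbits, the latter in bijection with $\sG$. Pushing forward to $\Gr M$ via the quotient by $K$ and applying ergodic decomposition to $\nu$, one obtains the required convex combination of $\nu_{\Gr M}$ and the $\nu_T$.

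\emph{Sufficiency and main obstacle.} The Haar component $\nu_{\Gr M}$ is realised by Kahn--Markovic's equidistributing asymptotically Fuchsian closed surfaces, and each $\nu_T$ alone is realised by $T$ itself (totally geodesic, hence $1$-quasifuchsian). A general convex combination $\alpha_M\nu_{\Gr M}+\sum_T\alpha_T\nu_T$ should be realised by a modification of the Kahn--Markovic construction in which, for each $T$ with $\alpha_T>0$, a fraction $\alpha_T$ of the "good pants" is drawn from a small neighbourhood of a pants decomposition of $T$, while the remaining fraction $\alpha_M$ is drawn from the full distribution of good pants in $M$; a feet-matching argument in the Kahn--Markovic style then glues these into connected closed surfaces. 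The main obstacle I expect is this sufficiency construction: achieving the prescribed area proportions while maintaining $K_i\to 1$ requires a perturbation and matching argument that respects both the combinatorial pairing of feet and the delicate exponential geometric estimates underlying Kahn--Markovic's control of quasiconformal distortion, and must accommodate countably many totally geodesic targets simultaneously through a diagonal approximation.
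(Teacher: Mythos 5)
Your skeleton --- necessity via $\PSL_2\bR$-invariance and Ratner, sufficiency via an explicit asymptotically Fuchsian construction --- is the right one, and your treatment of the minimal case (Seppi's uniform curvature bound giving $C^1$-closeness to $H$-orbits, then Lowe) is exactly what the paper does. But the pleated half of necessity has a real gap. You assert that asymptotic Fuchsianity "forces the bending per unit area to vanish" and that the obstruction to $H$-invariance "comes from crossing the lamination, and this contribution vanishes in the limit." This is precisely what the paper flags as unavailable: the definition allows \emph{any} pleated representative in the homotopy class, so the pleating lamination $\lambda_i$ is arbitrary, and the universal covers of asymptotically Fuchsian pleated surfaces "may develop wrinkles" and "are never $C^1$-close to a totally geodesic plane" (Figure~\ref{introwrinkle}). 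No quantitative bound on bending per unit area is offered or, apparently, available in a form that would make your argument go through. The paper instead proves Theorem~\ref{samelimit}: it flows $\widetilde{f_i^p}(\widetilde{S_i})$ normally to the convex-core boundary component $H_i^+$, shows via a non-collision lemma (Lemma~\ref{box}) and a $C^1$ estimate on hitting times (Lemma~\ref{c1}, a convexity argument) that this flow has Jacobian uniformly close to $1$ away from an $\eta$-neighbourhood of the pleating, and sends $\eta \to 0$; the $\PSL_2\bR$-invariance is then inherited from the minimal case, never proved directly for the pleated one. You would need to either supply an analogue of this argument or justify the bending claim.

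For sufficiency your sketch --- drawing a fraction $\alpha_T$ of good pants from near a pants decomposition of $T$ --- differs from the paper and, as you note yourself, leaves the matching problem open. The paper's construction is more concrete: it (i) builds a single \emph{connected} surface $\hat S_{\eps,R}$ from $N$ copies of every good pants (Section~4, via Liu--Markovi\'c irreducibility and Hall matching); (ii) proves $\hat S_{\eps,R}$ equidistributes in $\Gr M$ (Sections 5--6, via equidistribution of triangle barycenters, using a functional version of the Kahn--Wright feet equidistribution together with Lalley's theorem --- this is \emph{not} a consequence of Mozes--Shah, since the pants are only nearly geodesic); and (iii) for each $T\in\sG_k$ takes a finite cover $T^d$ that is $(\eps,R)$-well-built from good pants (Kahn--Markovi\'c's Ehrenpreis theorem) and reglues it to $\hat S_{\eps,R}$ along a single nonseparating good curve, with the degrees $d(T)$ tuned to produce the prescribed coefficients $\alpha_T$. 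Steps (ii) and (iii), which your proposal elides, are where most of the work lives.
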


In particular, for any measure $\nu$ of the form $(\bullet)$,
we construct asymptotically Fuchsian minimal or pleated connected surfaces $f_i:S_i\to M$ so $\nu(f_i)$ weak-* converges to $\nu$.

An important part of the proof of Theorem 1.1 is showing that the weak-* limits of convergent subsequences of $\nu(f_i)$ do not depend on whether $f_i$ is minimal or pleated, or in particular on the choice of pleated map. This is despite the fact that, in the pleated case, the universal covers of $f_i(S_i)$ typically do not converge to a geodesic plane in the $C^1$ sense.

\begin{thm}\label{samelimit_intro}
Suppose $f_i : S_i \to M$ are essential asymptotically Fuchsian maps of a closed connected surface. Let $f_i^p$ and $f_i^m$ be, respectively, pleated and minimal maps homotopic to $f_i$. Then, the probability area measures $\nu(f_i^p)$ and $\nu(f_i^m)$ have the same weak-* limit along any convergent subsequence.
\end{thm}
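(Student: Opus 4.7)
The plan is to show that for every $\phi \in C(\Gr M)$, $\int\phi\,d\nu(f_i^p) - \int\phi\,d\nu(f_i^m)\to 0$, by proving that on every bounded scale, the lifts of $f_i^p$ and $f_i^m$ to $\bH^3$ push forward essentially the same area measure to $\Gr\bH^3$, concentrated on a common totally geodesic disk.

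Let $\rho_i : \pi_1(S_i) \to \PSL_2\bC$ be the $K_i$-quasifuchsian holonomy of $f_i$ and let $\tilde f_i^p, \tilde f_i^m : \tilde S_i \to \bH^3$ be $\rho_i$-equivariant lifts. Both lie in a bounded neighborhood of the convex hull $\mathrm{CH}(\Lambda_{\rho_i})$ of the $K_i$-quasicircle $\Lambda_{\rho_i}$. The key step is an asymptotic-flatness statement: for every $R,\eta > 0$ there exists $K_0>1$ such that whenever $K_i\le K_0$ and $p \in \tilde S_i$, both $\tilde f_i^p(B_R(p))$ and $\tilde f_i^m(B_R(p))$ lie in the $\eta$-neighborhood of a common totally geodesic disk $D$, and the pushforward area measures on $\Gr \bH^3$ are weak-$*$ $\eta$-close to Haar measure on $D$. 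This rests on two inputs:
(a) the convex hull $\mathrm{CH}(\Lambda_{\rho_i})$ has width tending to $0$ on bounded scales, since $K_i$-quasicircles become round on every bounded scale; a coarea/area-formula argument then forces the pleated surface, confined to this thin slab, to have tangent planes nearly perpendicular to the thin direction at almost every point;
(b) the $L^\infty$ norm of the second fundamental form of the minimal representative tends to $0$ as $K_i\to 1$ (Uhlenbeck, Seppi), so the minimal surface is $C^1$-close to a geodesic plane.

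The normalizations match asymptotically: $f_i^p$ has area exactly $-2\pi\chi(S_i)$, while the Gauss equation $K_{f_i^m} = -1 - \tfrac12|\mathrm{II}|^2$ combined with (b) and Gauss--Bonnet yields $\mathrm{area}(f_i^m)/(-2\pi\chi(S_i)) \to 1$. A partition-of-unity argument on patches of $\tilde S_i$ of intrinsic radius $R \ll \mathrm{inj}(M)$, summed over the $\rho_i(\pi_1(S_i))$-action and projected to $\Gamma\backslash\bH^3$, then yields $\nu(f_i^p) - \nu(f_i^m) \to 0$ weak-$*$ in $\Gr M$.

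The main obstacle I anticipate is in (a): controlling the tangent-plane distribution of an \emph{arbitrary} pleated representative (not just the convex core boundary), whose bending lamination could a priori be large. The coarea argument should handle this, because any area-$(-2\pi\chi(S_i))$ surface trapped in a slab of thickness $\to 0$ is forced to have its tangent planes close to horizontal away from a set of small area; but making this work uniformly as $S_i$ varies in topology, and matching both estimates at a common scale $R$ with the asymptotic estimate for $|\mathrm{II}|$ on the minimal side, is where the real care lies. Once asymptotic flatness is in hand, the local-to-global measure-theoretic step is a routine partition-of-unity argument using compactness of $\Gr M$.
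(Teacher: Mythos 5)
Your proposal takes a genuinely different route from the paper. The paper does not attempt to compare $\nu(f_i^p)$ and $\nu(f_i^m)$ directly with the measure of a geodesic disk; instead it introduces the intermediate surface $H_i^+$, the top boundary component of the convex core of $\rho_i(\pi_1 S_i)$, and proves separately (Theorems \ref{sl1} and \ref{sl2}) that the pleated measure $p_i$ and the minimal measure $m_i$ each have the same subsequential limits as $h_i=\nu(f_i^h)$. In each case the comparison map is the normal flow into $H_i^+$, and the work is to show that the hitting time $\tau_i$ goes to $0$ in $C^1$ (Lemmas \ref{c1} and \ref{c12}), which controls the Jacobian $\det dF_i$; the decisive geometric input is the \emph{convexity} of $H_i^+$, used in the supporting-plane argument of Figure \ref{angle}. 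This sidesteps having to say anything at all about whether the tangent planes of the pleated surface are close to a fixed geodesic plane.

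The gap in your plan, which you partially flag yourself, is step (a) on the pleated side. The assertion that ``any area-$(-2\pi\chi(S_i))$ surface trapped in a slab of thickness $\to 0$ is forced to have its tangent planes close to horizontal away from a set of small area'' is false for general surfaces, and the coarea formula alone does not save it: on a wrinkled surface the level sets of the slab coordinate $z$ can be arbitrarily long relative to the slab thickness, so $\int_\Sigma|\nabla_\Sigma z|\,dA$ is not controlled by $\delta$. This is exactly the phenomenon depicted in Figure \ref{introwrinkle}. What \emph{is} true is a statement that uses the special structure of pleated surfaces: the totally geodesic pieces at intrinsic distance $\ge\eta$ from the bending lamination contain geodesic disks of radius comparable to $\eta$, such a disk cannot sit in a slab of thickness $\delta$ unless its plane makes an angle $O(\delta/\eta)$ with the central plane, and the $\eta$-neighborhood of the lamination has area $O(\eta\,|\chi(S_i)|)$. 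If you want a direct coarea-style argument you must make this observation explicit, truncate near the lamination, and then send $\eta\to0$ \emph{after} $K_i\to1$; this double limit is built into the paper's structure via $P_i^\eta$ and Corollary \ref{complementsmall}. A second issue your sketch does not address: near the bending lamination the normal flow from $P_i$ need not be injective (the wrinkles can cross one another), so even defining a comparison map requires something like Lemma \ref{box}, which again relies on the $\eta$-truncation. The minimal side of your sketch (Seppi's $|\mathrm{II}|\to 0$), the Gauss--Bonnet normalization, and the partition-of-unity patching are essentially correct and match the paper's treatment.
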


\begin{figure}
\includegraphics[scale=0.13]{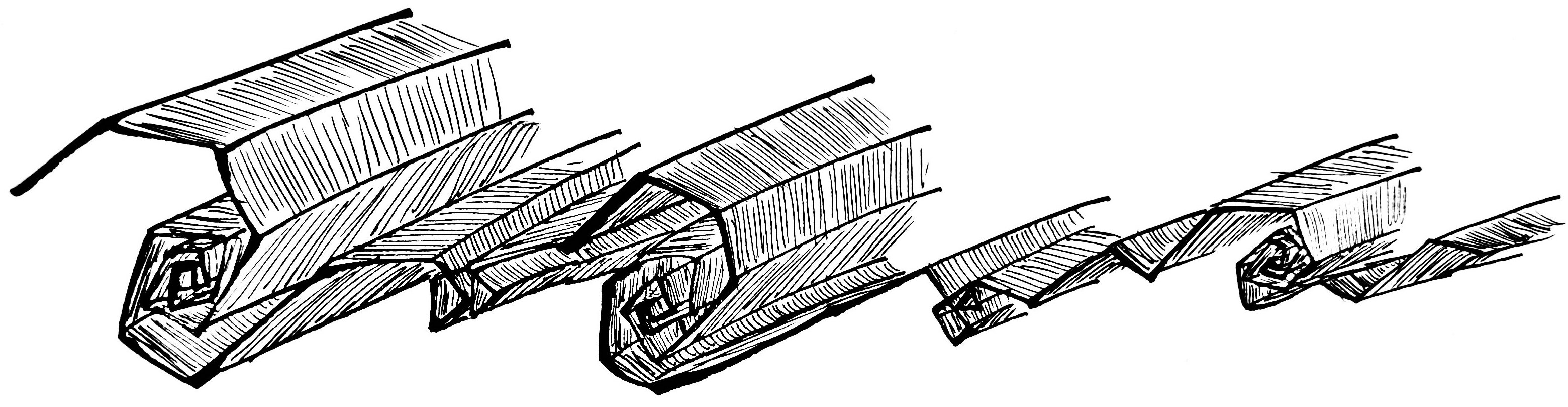}
\caption{The universal covers of asymptotically Fuchsian pleated surfaces are not necessarily embedded in $\bH^3$ and may develop wrinkles as above, so they are never $C^1$-close to a totally geodesic plane}
\label{introwrinkle}
\end{figure}

Theorem \ref{main} is in contrast with the case in which the maps $f_i:S_i\to M$ are all Fuchsian and the $S_i$ are all distinct. Then, the surfaces $f_i(S_i)$ equidistribute in $\Gr M$, namely

\begin{thm*}[Mozes-Shah]
$\nu(f_i) \wkstar \nu_{\Gr M}$ as $i\to \infty$.
\end{thm*}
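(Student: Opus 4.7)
The plan is to lift the statement to the frame bundle of $M$ and invoke the Mozes--Shah equidistribution theorem for closed orbits of $\PSL_2\bbR$ in $\Gamma\backslash\PSL_2\bbC$. Identify the oriented orthonormal frame bundle of $M$ with $FM:=\Gamma\backslash\PSL_2\bbC$ and let $H\leq\PSL_2\bbC$ be the stabilizer of a chosen oriented totally geodesic plane $\bH^2\se\bH^3$, so $H\cong\PSL_2\bbR$. The Grassmann bundle $\Gr M$ is the quotient of $FM$ by a compact subgroup $K\leq H$ acting on the right, and the Haar probability measure of $FM$ pushes forward continuously to $\nu_{\Gr M}$.

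Each Fuchsian map $f_i:S_i\to M$ has image an immersed closed totally geodesic surface, whose lift to $FM$ is a closed $H$-orbit $\sO_i$; let $\mu_i$ denote the $H$-invariant probability measure supported on $\sO_i$. A short direct computation identifies $\nu(f_i)$ with the pushforward of $\mu_i$ under the projection $FM\to\Gr M$. Since continuous pushforward preserves weak-$*$ convergence, it suffices to show $\mu_i\wkstar \mu_{FM}$, the Haar probability measure on $FM$.

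A standard finiteness result in homogeneous dynamics (Dani--Margulis, or equivalently Wang-type finiteness for conjugacy classes of cocompact Fuchsian subgroups of $\Gamma$ of bounded covolume) guarantees that, for every $V>0$, only finitely many closed $H$-orbits in $FM$ have volume at most $V$. Since the $S_i$ give rise to distinct orbits $\sO_i$, one has $\vol(\sO_i)\to\infty$ after passing to a subsequence. The Mozes--Shah theorem then applies: $H$ is generated by unipotents, so by Ratner's measure classification every weak-$*$ subsequential limit of $\mu_i$ is an algebraic probability measure invariant under some closed subgroup $L$ with $H\leq L\leq \PSL_2\bbC$; cocompactness of $\Gamma$ precludes escape of mass, the volume divergence rules out $L=H$, and $\PSL_2\bbR$ is a maximal connected subgroup of $\PSL_2\bbC$, forcing $L=\PSL_2\bbC$ and the limit to be $\mu_{FM}$. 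Pushing forward yields $\nu(f_i)\wkstar \nu_{\Gr M}$.

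The identification of $\nu(f_i)$ with the pushforward of $\mu_i$ and the descent of weak-$*$ convergence through the fiber bundle projection are formal. The main obstacle is the Mozes--Shah step: controlling subsequential limits requires both the cocompactness of $\Gamma$ (to preclude escape of mass) and the absence of intermediate closed subgroups between $H$ and $\PSL_2\bbC$, while the crucial input of volume divergence relies on the nontrivial finiteness theorem for closed $H$-orbits of bounded volume.
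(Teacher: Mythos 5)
Your argument follows the same route the paper indicates: derive the result from Mozes--Shah's theorem on weak-$*$ limits of ergodic unipotent-invariant probability measures on $\Gamma\backslash\PSL_2\bbC$, combined with Ratner's classification. The paper gives no proof beyond citing \cite{SM} and \cite{R}, so your elaboration (closed $\PSL_2\bbR$-orbit measures, finiteness of bounded-volume orbits, cocompactness precluding escape of mass, maximality of $\PSL_2\bbR$ in $\PSL_2\bbC$, pushforward from $\Fr M$ to $\Gr M$) is a correct unpacking of the same approach; the one step to be careful about, namely why volume divergence rules out the subsequential limit being concentrated on a single closed $H$-orbit, is handled by the finer structure in the Mozes--Shah theorem, which forces the supports of the $\mu_i$ to eventually coincide with a bounded-volume closed orbit if the limit were so concentrated.
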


This follows from a more general theorem of Mozes and Shah \cite{SM} on unipotent dynamics, which builds on work of Dani, Margulis and Ratner. A special case of the main theorem in \cite{SM} is that a sequence of infinitely many distinct orbit closures of the unipotent flow in $\Gr M$ equidistributes. Due to Ratner \cite{R}, these orbit closures are either totally geodesic surfaces or all of $\Gr M$.


More recently, Margulis-Mohammadi \cite{MM} and Bader-Fisher-Miller-Stover \cite{BFMS} showed that if $M$ contains infinitely many distinct (noncommensurable) totally geodesic surfaces, then $M$ is arithmetic. (On the other hand, it was already known, due to Reid \cite{Re} and Maclachlan-Reid \cite{MR} that if $M$ is arithmetic, then it contains either zero or infinitely many distinct totally geodesic surfaces.) This rigid behavior of totally geodesic surfaces, however, is not shared by the nearly Fuchsian surfaces of $M$. Due to the surface subgroup theorem of Kahn and Marković \cite{KM}, any closed hyperbolic 3-manifold $M$ has infinitely many commensurability classes of $K$-quasifuchsian surfaces, for any $K>1$.

The Kahn-Marković construction of surface subgroups has a probabilistic flavor. The building blocks from which the nearly Fuchsian surfaces are assembled are the \emph{$(\eps,R)$}-good pants, which are the maps $f:P\to M$ from a pair of pants $P$ taking the cuffs of $P$ to $(\eps,R)$-good curves in $M$ -- the closed geodesics with complex translation length $2\eps$-close to $2R$.  We say two $(\eps,R)$-good pants $f$ and $g$ are equivalent if $f$ is homotopic to $g\circ \phi$, for some orientation-preserving homeomorphism $\phi:P\to P$. For more detailed and precise definitions, see Section 2.

A crucial reason why this construction works is that the good pants incident to a given good curve $\gamma$ come from a well-distributed set of directions. Precisely, the \emph{feet} of the good pants are well-distributed in the unit normal bundle $\N^1(\gamma)$ of $\gamma$. The feet of a good pants $\pi = f:P\to M$ are the derivatives of the unit speed geodesic segments connecting a cuff of $f(P)$ to another, meeting both cuffs orthogonally. Each cuff has two feet, and it turns out that they define the same point, the \emph{foot}, denoted $\ft(\pi)$, in the quotient $\N^1(\sqrt{\gamma})$ of $\N^1(\gamma)$ by $n\mapsto n + \bh\bl(\gamma)$, where $\bh\bl(\gamma)$ is half of the translation length of $\gamma$.

The precise statement of the equidistribution of the feet follows below, from the article of Kahn and Wright \cite{KW} with proof in the supplement \cite{KW2}. In \cite{KW} Kahn and Wright extend the surface subgroup theorem to the case where $M$ has finite volume, while simplifying some elements of the original proof of Kahn-Marković. The proof of the well-distribution of feet in \cite{KW2} follows a different approach than the original Kahn-Marković argument in \cite{KM}. In the latter, the pants are constructed by flowing tripods via the frame flow. In the former, pants with a given cuff are constructed from geodesic segments meeting the cuff orthogonally (the \emph{orthogeodesic connections}). Denote the space of $(\eps,R)$-good curves in $M$ as $\curves$ and the space of $(\eps,R)$-good pants having $\gamma$ as a cuff as $\pants(\gamma)$.

\begin{thm}[Kahn-Wright: Equidistribution of feet]\label{kw}
There is $q=q(M)>0$ so that if $\eps>0$ is small enough and $R > R_0(\eps)$, the following holds. Let $\gamma\in \curves$. If $B\se \N^1 (\sqrt{\gamma})$, then
\[
(1-\delta) \lambda (N_{-\delta} B) \leq
\frac{\#\{ \pi \in \pants(\gamma) : \ft_{\gamma} \pi \in B\}}
{C_{\eps,R,\gamma}}
\leq 
(1+\delta) \lambda(N_{\delta} B),
\]
where $\lambda=\lambda_{\gamma}$ is the probability Lebesgue measure on $\N^1(\sqrt{\gamma})$, $\delta = e^{-qR}$, $N_{\delta}(B)$ is the $\delta$-neighborhood of $B$, $N_{-\delta}(B)$ is the complement of $N_{\delta} (\N^1(\sqrt{\gamma}) - B)$ and $C_{\eps,R,\gamma}$ is a constant depending only on $\eps$, $R$ and $\bl(\gamma)$.
\end{thm}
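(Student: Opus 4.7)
My plan is to prove equidistribution of feet via effective exponential mixing of the frame flow on $\T^1 M$, combined with a correspondence between $(\eps,R)$-good pants having $\gamma$ as a cuff and orthogeodesic arcs in $M$ emanating perpendicularly from $\gamma$ and returning perpendicularly to $\gamma$.

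The first step is to set up the correspondence. Given $\pi \in \pants(\gamma)$ with other cuffs $\gamma_1,\gamma_2$, the seams (common perpendicular arcs from $\gamma$ to $\gamma_i$) lift to orthogeodesic arcs in $M$ starting and ending perpendicular to $\gamma$; the length of these arcs is determined by $\bl(\gamma)$ via the right-angled hexagon formulas, and its initial unit normal is $\ft_\gamma(\pi)$ once we pass to the $\N^1(\sqrt{\gamma})$ quotient. Conversely, an orthogeodesic arc $\alpha$ of the correct length from $\gamma$ to $\gamma$, together with a compatible choice of outgoing/incoming direction, closes up to produce a $(\eps,R)$-good pants provided the third cuff (obtained by concatenating subarcs of $\gamma$ with $\alpha$ via the group-theoretic relation) is itself $(\eps,R)$-good. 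So counting good pants with foot in $B \subseteq \N^1(\sqrt{\gamma})$ reduces to counting orthogeodesic arcs whose initial vector projects to $B$ and that satisfy an endpoint-determined goodness condition.

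The second step is to count these arcs using effective mixing. The count of geodesic arcs starting in $A \subseteq \N^1(\gamma)$, ending in $B' \subseteq \N^1(\gamma)$, with length in a prescribed interval, equals (up to smooth weights) an intersection number of the perpendicular disk bundle of $\gamma$ with its image under the geodesic flow at the appropriate time, which by exponential mixing of the frame flow (with rate $q$ coming from the spectral gap of the Laplacian on $M$) equals $C \lambda(A)\lambda(B')(1 + O(e^{-qR}))$, where $C$ depends on $\bl(\gamma)$ through the normalization of the perpendicular disk bundles. Marginalizing over the terminal vector $B'$, and incorporating the (mixing-based) expected fraction of configurations for which the third cuff is good, yields a count of good pants proportional to $\lambda(B)$ up to multiplicative error $1 + O(e^{-qR})$.

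The third step is a smoothing argument at scale $\delta = e^{-qR}$: one replaces $\chi_B$ by a smooth approximation supported in $N_\delta B$ and bounded below by one supported on $N_{-\delta} B$, which gives the $\lambda(N_{\pm \delta} B)$ bounds stated in the theorem since the $C^k$-norms of these approximations are absorbed into the mixing error. The main obstacle is the effective mixing estimate of step two: one must verify that the transversality of the propagated perpendicular bundle and $\N^1(\gamma)$ is uniform enough in the relevant range of lengths, and that the third-cuff goodness condition depends on endpoint data in a way compatible with iterated application of mixing (so that errors do not compound). Both points are delicate but follow from a careful analysis of the hyperbolic geometry of right-angled hexagons together with standard refinements of Margulis-type effective mixing for geometrically natural test functions.
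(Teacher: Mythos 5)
The paper does not prove Theorem~\ref{kw} itself: it cites the Kahn--Wright supplement \cite{KW2} with the one-line remark that ``the main engine is the mixing of the frame flow in $\Fr M$.'' In the introduction the paper also contrasts the original Kahn--Markovic argument (flowing tripods via the frame flow) with the Kahn--Wright argument (building pants with a given cuff $\gamma$ out of orthogeodesic connections returning to $\gamma$), and it is the latter that \cite{KW2}, and hence this paper, rely on.

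Your proposal is an outline of exactly that Kahn--Wright route: parametrize pants with cuff $\gamma$ by orthogeodesic arcs perpendicular to $\gamma$ at both endpoints; count such arcs by effective mixing; smooth at scale $\delta=e^{-qR}$ to extract the $N_{\pm\delta}$ bounds. So you are following the same approach as the paper's cited source, not a different one.

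Two remarks. A minor slip: the frame flow acts on $\Fr M \cong \Gamma\backslash\PSL_2\bC$, not on $\T^1 M$; here you genuinely need $\Fr M$, since a foot is a normal vector to $\gamma$ and the argument must track twisting around $\gamma$ as one flows, which the geodesic flow on $\T^1 M$ forgets. The substantive gap is in your step two, which you yourself flag as ``delicate'' without resolving: the mixing estimate controls the density of orthogeodesic arcs with prescribed initial and terminal normal data and length, but converting this into a count of \emph{good pants} also requires that the third cuff be $(\eps,R)$-good, and that constraint is not an endpoint condition amenable to a single application of mixing. Showing this constraint is compatible with the mixing estimate, with the error staying multiplicative of size $e^{-qR}$ uniformly over $\gamma\in\curves$, is precisely the technical content of \cite{KW2}. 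As written, your step two is a plan rather than a proof; the strategy is correct and is the one the paper endorses, but the proposal does not yet prove the theorem.
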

This theorem will be used in many ways in the article. It implies that a nearly geodesic surface $S(\eps,R)$ may be built using one representative of each equivalence class of $(\eps,R)$-good pants. The equidistribution of feet (in a slight generalization explained in Section 5) will also be used to show that these surfaces equidistribute in $\Gr M$ as $\eps\to 0$. It will also be important in the construction of \emph{non-equidistributing} asymptotically Fuchsian surfaces. 

The surface $S(\eps,R)$ built out of a representative of each equivalence class may not be connected, however. And we do need, for our main theorem, a \emph{connected} surface that goes through every good pants, meeting every cuff in a well-distributed set of directions. This can be achieved using the work of Liu and Marković \cite{LM}. Using their ideas, we can reglue the pants used to build $N = N(\eps,R)$ copies of $S(\eps,R)$ and obtain a connected surface that goes through every cuff in many directions.



\subsection*{Further directions} 

One can ask the same questions for finite-volume hyperbolic 3-manifolds $M$. Crucially, Kahn and Wright \cite{KW} extended the surface subgroup theorem of Kahn and Marković to this context by building a nearly Fuchsian surface in $M$ from the good pants that do not go too far into the cusps as well as new building blocks called umbrellas. To execute our construction, we would need to reglue these pieces in order to get a sequence of \emph{connected} closed asymptotically Fuchsian surfaces. This would likely use the work of Sun \cite{Sun} that generalizes ideas of Liu and Marković from \cite{LM} to finite-volume 3-manifolds. Another difference in this setting is that it is not clear whether the mass of a sequence of asymptotically Fuchsian minimal or pleated surfaces may escape to the cusps or not.

Another direction is to extend these results to other homogeneous spaces $\Gamma\backslash G$, where $G$ is a semisimple Lie group and $\Gamma < G$ a cocompact lattice. It has been shown that $\Gamma$ has many surface subgroups, in the style of the Kahn-Marković theorem, when $\Gamma$ is a uniform lattice in a rank one simple Lie group of noncompact type distinct from $\SO_{2m,1}$ by Hämenstadt \cite{H} and when $\Gamma$ is a uniform lattice in a center-free complex semisimple Lie group by Kahn, Labourie and Mozes \cite{KLM}. In the latter article, the authors show that their surface groups are \emph{$\zeta$-Sullivan} for any $\zeta>0$, which is a generalization of $K$-quasifuchsian for the higher rank setting. Again, it would be necessary to extend ideas of Liu and Marković from \cite{LM} to those contexts. Moreover, when $G$ is a higher rank Lie group, there are more $\SL_2\bR$-invariant measures (depending on a choice of representation $\SL_2\bR\to G$) in the homogeneous space $\Gamma\backslash G$ in which a sequence of asymptotically Fuchsian (or $\zeta$-Sullivan with $\zeta\to 0$) could perhaps limit to.

\subsection*{Outline} 
The large-scale structure of the article is the following. In Section 2, we show that that the weak-* limits of the probability area measures of asymptotically Fuchsian surfaces in $M$ is a convex combination of the volume measure of $\Gr M$ and the area measures supported on closed geodesic surfaces. This is one of the directions of Theorem \ref{main}. The other direction of this equality will be proved in Sections 3, 4, 5, 6 and 7. Details follow below.

In Section 2, we prove Theorem \ref{samelimit_intro}. Namely, we describe how nearly Fuchsian surfaces may be realized geometrically inside $M$ as pleated or as minimal surfaces. We argue that, as these surfaces $f_i: S_i\to M$ become closer to Fuchsian, the weak-* limits of their area measures in $\Gr M$ do not depend on the choice of geometric structure. We do this by mapping the universal covers of our surfaces to a component of the convex core of $(f_i)_*(\pi_1(S_i))$ via the normal flow, and arguing that this map has small derivatives in most of its domain. This is despite the fact that the universal covers of the pleated surfaces do not converge to a geodesic disc in the $C^1$ sense. For the case of minimal surfaces, we use the fact that their principal curvatures are uniformly small, as shown by Seppi \cite{Se}. 

Using a theorem of Lowe for minimal surfaces \cite{Lo}, we conclude that the limiting measures are $\PSL_2 \bR$-invariant. Thus, due to the Ratner measure classification, they are a convex combination of the volume measure of $\Gr M$ and area measures of the totally geodesic surfaces of $M$. This shows one direction of Theorem \ref{main}.

In Section 3, we explain how to construct nearly geodesic closed essential surfaces in $M$, following Kahn, Marković and Wright (\cite{KM} and \cite{KW}). We define their building blocks, the $(\eps,R)$-good pants, and the correct ($(\eps,R)$-\emph{good}) way to glue them together so the result is nearly Fuchsian. Finally, we explain how to use the equidistribution of feet (Theorem \ref{kw}), together with the Hall marriage theorem from combinatorics, to show that a copy of each good pants may be glued via good gluings to form a closed surface $S(\eps,R)$.

In Section 4, we follow ideas of Liu and Marković \cite{LM} to explain how to reassemble a \emph{connected} nearly Fuchsian closed essential surface $\hat{S}(\eps,R)$ from $N = N(\eps,R)$ copies of the surface built in Section 2. In particular, this connected surface defines the same area measure $\nu(\eps,R)$ in $\Gr (M)$ as $S(\eps,R)$.

In Section 5, we endow the connected surface built out of the same number of copies of each good pants $\hat{S}(\eps,R)$ with the pleated structure in which every good pants is glued from two ideal triangles. We show that the barycenters of these triangles equidistribute in the frame bundle $\Fr M$ of $M$ as the surfaces become more Fuchsian (namely, as $\eps\to 0$ and $R(\eps)\to\infty$). To do so, we use a generalization of the equidistribution of feet (Theorem \ref{kw}), in which a continuous function $g\in C(\N^1 (\sqrt{\gamma}))$ plays the role of the set $B$ in the statement above. We also use the fact, from a formulation due to Lalley \cite{L} (building on the work of Bowen \cite{B}), that asymptotically almost surely, the cuffs of the pants equidistribute in the unit tangent bundle $\T^1 M$.

In Section 6, from the equidistribution of the barycenters of the triangles, we conclude that the surfaces $\hat{S}(\eps,R)$ built from the triangles equidistribute as $\eps\to 0$ and $R(\eps)\to \infty$. This is because each triangle can be obtained from the right action of a subset $\Delta\se \PSL_2\bR$ on the barycenter. The approach we take in Sections 5 and 6 is similar to the one used by Labourie \cite{La} to show that certain perhaps disconnected asymptotically Fuchsian surfaces equidistribute in $M$. A difference is that the surfaces in \cite{La} are built from a different multiset of good pants that comes from the original Kahn-Marković construction. It is not clear, for example, how many copies of each pants are used to build those asymptotically Fuchsian surfaces.

In Section 7, we build a family of nearly Fuchsian surfaces by gluing the equidistributing surfaces $\hat{S}(\eps,R)$ of Sections 5 and 6 to high degree covers of totally geodesic surfaces in $M$. To do so, we need the fact that a high degree cover of the totally geodesic surfaces of $M$ may be built from good gluings of good pants. This was shown by Kahn and Marković \cite{KME} in order to prove the Ehrenpreis conjecture. We show that as these hybrid surfaces become asymptotically Fuchsian, they may accumulate on any of the totally geodesic surfaces.

\medskip 
 
\subsection*{Acknowledgements}
I would like to thank Danny Calegari, James Farre, Ben Lowe and Franco Vargas-Pallete for useful discussions and correspondence. I thank Rich Schwartz for suggesting the adverb ``asymptotically'' instead of ``increasingly'' for the title. I especially thank Jeremy Kahn for suggesting the proof of Theorem \ref{equidftfm} and my advisor Yair Minsky for all the help. Finally, I thank the referee for a careful reading and thoughtful comments.

\bigskip

\tableofcontents

\section{Geometric realizations of nearly Fuchsian surfaces}

Suppose $f:S\to M$ is an essential nearly Fuchsian immersion of a closed connected orientable surface. Then, $f$ is homotopic to maps with interesting geometric properties, namely a unique minimal map and many pleated maps. In this section, we will describe these geometric realizations and show that their area measures in $\Gr M$ have the same limit as they become asymptotically Fuchsian.

Precisely, suppose $f_i: S_i \to M$ are asymptotically Fuchsian maps of closed connected surfaces $S_i$. Let $f_i^p$ and $f_i^m$ be, respectively, pleated and minimal maps homotopic to $f_i$. Let $\nu(f^p_i) = p_i$ and $\nu(f^m_i) = m_i$ be the probability area measures induced by these maps on the 2-plane Grassmann bundle $\Gr M$. The main theorem of this section is the following, which was labeled as Theorem 1.2 in the introduction.

\begin{thm}\label{samelimit}
A subsequence $m_{i_j}$ satisfies $m_{i_j}\wkstar \nu$ as $j\to\infty$ if and only if $p_{i_j}\wkstar \nu$.
\end{thm}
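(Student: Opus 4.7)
I plan to show directly that for every $\phi\in C(\Gr M)$ we have $\int\phi\,dp_i - \int\phi\,dm_i \to 0$; the theorem then follows, since $p_i$ and $m_i$ are probability measures on the compact space $\Gr M$.

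Let $\rho_i=\pi_1(f_i):\pi_1(S_i)\to \PSL_2\bC$ be the holonomy, and let $C_i$ be its convex core. Since $f_i$ is $K_i$-quasifuchsian with $K_i\to 1$, the limit set of $\rho_i$ is a quasicircle converging (after a Möbius normalization) to a round circle, so the width $w_i$ of $C_i$ tends to zero. Up to homotopy, both $\tilde f_i^p$ and $\tilde f_i^m$ send $\tilde S_i$ into $\tilde C_i\se\bH^3$, and hence their images are at Hausdorff distance at most $w_i$. Fix an equivariant boundary component $\Sigma_i$ of $\tilde C_i$ (a pleated plane), and define the equivariant nearest-point projections $\Psi_i^\sharp:\tilde f_i^\sharp(\tilde S_i)\to \Sigma_i$ for $\sharp\in\{m,p\}$.

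The key step is to show that $\Psi_i^\sharp$ is close to an isometry off an exceptional subset of $\tilde S_i$ of relative area $o(1)$ in a fundamental domain, in the sense that its differential is $o(1)$-close to that of an isometry of oriented planes (so that both the Jacobian and the angle between source and target tangent planes go to zero). For the minimal case I invoke Seppi's theorem \cite{Se}, which gives principal curvatures $O((K_i-1)^{1/2})$ and hence exhibits $\tilde f_i^m(\tilde S_i)$ as a $C^1$-small graph over $\Sigma_i$ outside any preassigned neighborhood of the bending lamination of $\Sigma_i$. The pleated case is more delicate: let $L_i$ be the union of the bending laminations of $\tilde f_i^p(\tilde S_i)$ and of $\Sigma_i$; away from $L_i$ both surfaces are locally totally geodesic, and being confined to a common convex tube of width $w_i$ forces their tangent planes to be $o(1)$-close, so $d\Psi_i^p$ is close to an isometry there. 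Granted this, both $\nu(f_i^p)$ and $\nu(f_i^m)$ are $o(1)$-close to the common area measure on $\Gr M$ induced by $\Sigma_i\to M$, and therefore $p_i-m_i\to 0$ weak-*.

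To show that the exceptional sets have negligible area, one uses that any pleated plane confined to a tube of width $w_i$ in $\bH^3$ carries a bending measure of small total mass on bounded sets: a bend of angle $\theta$ across a geodesic of length $\ell$ forces the surface to leave the tube unless $\theta\ell=O(w_i)$, a Sullivan--Epstein-type estimate. Consequently, taking an appropriate $\eta_i$-neighborhood of $L_i$ with $\eta_i\to 0$ produces an exceptional set of vanishing relative area, on which $\phi$ contributes at most $2\|\phi\|_\infty\cdot o(1)$ to either integral; off it, the modulus of continuity of $\phi$ contributes an additional error of order $o(1)$. The main obstacle is the pleated case: since the tangent planes of $\tilde f_i^p(\tilde S_i)$ genuinely jump across its bending lamination, there is no pointwise $C^1$-convergence to any geodesic plane, and the wrinkles must be integrated out using the quantitative confinement in the thin convex core. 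Making this estimate precise, uniformly in the combinatorics of the bending laminations, is the heart of the argument.
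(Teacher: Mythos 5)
Your high-level strategy is essentially the paper's: compare each of the pleated and minimal maps to a boundary component of the convex core via a normal-type projection, show the area distortion is small off a thin exceptional set, and use Seppi's curvature bound for the minimal case. The paper literally flows normally from $\wt{f_i^p}(\wt{S_i})$ to $H_i^+$ (rather than using nearest-point projection to $\Sigma_i$), but that is a minor variant. Where your sketch breaks down is precisely the spot you flag at the end: you assert that away from $L_i$, ``being confined to a common convex tube of width $w_i$ forces their tangent planes to be $o(1)$-close,'' but confinement to a thin tube does \emph{not} control tangent planes of a totally geodesic piece unless you also have a definite lower bound $\eta$ on the distance to the lamination, with $w_i/\eta\to 0$. (A tiny geodesic disc near a pleating leaf can sit in the tube at a large angle.) The paper's fix is a clean convexity argument, not a tube argument per se: it fixes $\eta>0$, works on the $\eta$-truncated triangles $T^\eta$, and observes that if the angle at the image point between the normal geodesic and the chord to a boundary point of the $\eta$-disc were too small, a supporting plane of $H_i^+$ would cut into the convex core -- giving $\cos\theta_i \le \tanh\tau_i(p)/\tanh\eta \to 0$. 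One then sends $\eta\to 0$ \emph{after} $i\to\infty$, avoiding any diagonalization with $\eta_i$.

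Your ``Sullivan--Epstein-type estimate'' $\theta\ell = O(w_i)$ is both in the wrong form and used to justify the wrong thing. The bound you want for the exceptional set -- that the $\eta$-neighborhood of the pleating locus $\lambda_i$ has relative area $o(1)$ as $\eta\to 0$, uniformly in $i$ -- is a statement about geodesic laminations on hyperbolic surfaces and has nothing to do with the size of the bending measure; the paper uses it implicitly in Corollary \ref{complementsmall}. The smallness of the bending measure plays no role in the argument, and for the arbitrary pleated map $f_i^p$ its bending lamination $\lambda_i$ is arbitrary anyway -- what is controlled is only the confinement of $\wt{f_i^p}(\wt{S_i})$ to the convex core (true, since pleated maps with the correct limit set land in the convex hull of the limit set). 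So: keep the two-surface comparison via a normal map; replace ``tube forces tangent planes close'' with the supporting-plane convexity estimate on a fixed $\eta$-truncation; and replace the bending-measure argument for the exceptional set with the elementary area bound for $\eta$-neighborhoods of laminations, applied after the $i\to\infty$ limit.
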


Let $\hat{m}_i = \hat{\nu}(f_i^m)$ be the probability measure induced by $f_i^m$ on the frame bundle $\Fr M$. By the weak-* compactness of the probability measures on $\Fr M$, the $\hat{m}_i$ converge to a measure $\hat{\nu}$ along a subsequence. As shown by Lowe in Proposition 5.2 of \cite{Lo} and, following ideas of Labourie \cite{La}, by Lowe-Neves in Lemma 3.2 of \cite{LN}, the measure $\hat{\nu}$ is invariant under the right action of $\PSL_2 \bR$. Thus, from the Ratner measure classification theorem \cite{R}, it follows that the weak-* subsequential limits of $m_i$ are of the form
\[\tag{$\star$}
\nu = \alpha_M \nu_{\Gr M} + \sum_{T\in\sG} \alpha_T \nu_T.
\]
As before, $\sG$ is a set containing a representative of each commensurability class of closed immersed totally geodesic surfaces in $M$, $\nu_{\Gr M}$ is the probability Haar measure on $\Gr M$, and $\nu_T$ is the probability area measure of an immersed closed totally geodesic surface $T\se M$. The coefficients $\alpha_M$ and $\alpha_T$ sum to 1.

This, combined with Theorem \ref{samelimit}, shows one of the directions of the main theorem of the article, Theorem \ref{main}. In Sections 5, 6 and 7, we will show that given any $\nu$ of the form ($\star$), we may find asymptotically Fuchsian connected closed surfaces in $M$ with limiting measure $\nu$.


Let $H_i^+$ be a (the top) component of the boundary of the convex core of the quasifuchsian group $Q_i = (f_i)_*(\pi_1 S_i)$.
Let $f_i^h$ be the pleated map homotopic to $f_i$ whose lift to the universal cover maps $\hat{S}_i$ into $H_i^+$ and say $h_i = \nu(f_i^h)$. To prove Theorem 4.1, we show that each of $p_i$ and $m_i$ has the same weak-* subsequential limits as $h_i$.

\begin{thm}\label{sl1}
A subsequence $p_{i_j}$ satisfies $p_{i_j}\wkstar \nu$ as $j\to\infty$ if and only if $h_{i_j}\wkstar \nu$.
\end{thm}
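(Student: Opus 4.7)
The strategy is to compare both pleated realizations to $H_i^+$ itself via the nearest-point projection of the convex core. Since $Q_i = \pi_1(f_i)(\pi_1 S_i)$ is $K_i$-quasifuchsian with $K_i \to 1$, the limit set of $Q_i$ approaches a round circle and $\mathrm{CC}(Q_i)$ collapses onto a totally geodesic plane, its Hausdorff width $w_i$ tending to $0$. Fixing equivariant lifts $\tilde f_i^p, \tilde f_i^h : \tilde S_i \to \bH^3$, both images lie in $\mathrm{CC}(Q_i)$, and the $Q_i$-equivariant nearest-point projection $\Pi_i : \mathrm{CC}(Q_i) \to H_i^+$ descends to a map $S_i \to Q_i \backslash H_i^+$ homotopic to $f_i^h$. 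Thus $f_i^p$ is carried to $f_i^h$ (up to homotopy) by $\Pi_i$, and the task reduces to showing that this projection is asymptotically area- and tangent-plane-preserving on most of $S_i$.

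The key geometric input is a slab-constraint observation: any totally geodesic plane in $\bH^3$ whose intersection with a slab of width $w$ between two nearly parallel planes has diameter $\geq d$ must make angle $O(w/d)$ with the slab's bounding planes. Applied to the pleating triangles of both $\tilde f_i^p$ and $H_i^+$, this shows that for any sequence $d_i \to 0$ with $w_i / d_i \to 0$, every triangle of diameter $\geq d_i$ in either realization has tangent plane within angle $o(1)$ of the tangent plane of $H_i^+$ at the projected point. On such triangles the normal-flow projection $\Pi_i$ is $(1+o(1))$-bilipschitz, hence area-preserving up to $o(1)$ error, and the tangent planes agree up to $o(1)$ error. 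A change of variables then gives $\int \phi\, dp_i = \int \phi\, dh_i + o(1)$ for any $\phi \in C(\Gr M)$, which is the claim.

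The main obstacle, I expect, is showing that triangles of diameter $< d_i$ contribute a vanishing fraction of the area in a fundamental domain. Pleated surfaces can wrinkle and are not $C^1$-close to a plane, so one cannot rule this out pointwise; instead I would argue measure-theoretically, using that each fundamental domain for $Q_i$ on $\tilde f_i^p$ has fixed intrinsic hyperbolic area $-2\pi\chi(S_i)$ and that, by a Gauss-Bonnet / area comparison in the thick part of the pleated surface, the fraction of that area occupied by small-diameter triangles tends to $0$ once $d_i$ is chosen large compared to the intrinsic injectivity-radius scale dictated by $w_i$. This is consistent with the paper's description of the method as showing that the normal-flow map has small derivatives on most of its domain.
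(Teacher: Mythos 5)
Your proposal follows the same overall strategy as the paper: compare the pleated realization $P_i = \wt{f_i^p}(\wt{S_i})$ to the convex-core boundary $H_i^+$ via normal projection, show this projection is nearly area- and tangent-plane-preserving on a good set, and show the exceptional set carries vanishing area. The convexity mechanism behind your ``slab constraint'' is the same as in the paper's Lemma~\ref{c1}: if the supporting plane of $H_i^+$ at $F_i(p)$ were too tilted, it would cut through a disc of radius $\eta$ around $p$ inside the pleated surface, contradicting convexity of $\core Q_i$, which gives the quantitative bound $\cos\theta_i \leq \tanh\tau_i(p)/\tanh\eta$.

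Your last paragraph, however, has a flaw that needs repair. The pieces of the pleated surface off the bending lamination are \emph{ideal} triangles, which have infinite intrinsic and extrinsic diameter, so ``triangles of diameter $< d_i$'' is not a meaningful category; and the slab lemma as literally stated --- bounding the angle by the diameter of the intersection of a plane with the slab --- is vacuous here, since that intersection is unbounded for every triangle of the pleating. The correct invariant is the distance of a point $p$ to the bending lamination $\lambda_i$: if $d(p,\lambda_i) \geq \eta$ then the disc of radius $\eta$ around $p$ lies in $\core Q_i$, which is exactly what makes the disc-containment (half-space) argument run. Accordingly, the exceptional set should be the $\eta$-neighbourhood of $\lambda_i$ (the paper's $P_i \setminus P_i^{\eta}$), and its area fraction is $O(\eta)$ \emph{uniformly in $i$} by a direct calculation on ideal triangles: the $\eta$-collar of the boundary of an ideal triangle has area $O(\eta)$ while the triangle has area $\pi$. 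No Gauss--Bonnet, thick/thin decomposition, or ``injectivity-radius scale dictated by $w_i$'' is needed, and I do not see how that version would close --- the intrinsic hyperbolic metric on $\Sigma_i$ is not controlled by $w_i$, and the thin part of a hyperbolic surface can carry a fraction of area bounded away from zero. Once the good set is redefined as $P_i^{\eta}$, choosing a diagonal $\eta_i \to 0$ with $\tau_i/\eta_i \to 0$ is a harmless stylistic variant of the paper's iterated limit ($i\to\infty$ for fixed $\eta$, then $\eta\to 0$ via Corollary~\ref{complementsmall}).
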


\begin{thm}\label{sl2}
A subsequence $m_{i_j}$ satisfies $m_{i_j}\wkstar \nu$ as $j\to\infty$ if and only if $h_{i_j}\wkstar \nu$.
\end{thm}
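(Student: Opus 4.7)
The strategy is to work in the universal cover $\bH^3$ with covering group $Q_i := \pi_1(f_i)(\pi_1 S_i)$ and construct a $Q_i$-equivariant ``normal projection'' $\Phi_i : \wt{f_i^m}(\wt{S_i}) \to \wt{H_i^+}$ whose Jacobian and tangent-plane distortion both tend to $1$ on a set of full area measure. This will yield $\bigl|\int g\,dm_i - \int g\,dh_i\bigr| \to 0$ for every $g \in C(\Gr M)$, which is equivalent to the statement of the theorem. As $K_i \to 1$, the convex core of $Q_i\backslash\bH^3$ collapses onto a closed totally geodesic surface of the Fuchsian limit, so both $\wt{f_i^m}(\wt{S_i})$ and $\wt{H_i^+}$ lie in an $\epsilon_i$-neighborhood of a common totally geodesic plane $P_i \subset \bH^3$, with $\epsilon_i \to 0$.

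On the minimal side I would invoke Seppi's theorem from \cite{Se}, which bounds the sup norm of the principal curvatures of $f_i^m$ by some $\eta_i \to 0$. This gives uniform $C^1$-closeness of $\wt{f_i^m}(\wt{S_i})$ to $P_i$: at every point, the tangent plane makes angle $O(\eta_i)$ with that of $P_i$. Consequently the outward unit normal geodesic at each $p$ meets $\wt{H_i^+}$ transversally after time $O(\epsilon_i)$; letting $\Phi_i(p)$ be that intersection, a standard Jacobi-field computation for the normal flow (with second fundamental form of size $\eta_i$ and flow time $O(\epsilon_i)$) yields $|\det d\Phi_i - 1| = O(\eta_i + \epsilon_i)$. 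At points where $\wt{H_i^+}$ is smooth, its tangent plane also makes angle $O(\epsilon_i)$ with $P_i$, so the oriented tangent planes at $p$ and $\Phi_i(p)$ differ by $o(1)$; hence $\ov{f_i^m}(p)$ and $\ov{f_i^h}(\Phi_i(p))$ are $o(1)$-close in $\Gr M$.

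The main obstacle is handling the bending locus of $\wt{H_i^+}$, where the tangent plane is undefined and $\Phi_i$ may behave badly. Here I would use the standard fact that the total bending measure of the convex core boundary of a $K_i$-quasifuchsian manifold tends to $0$ as $K_i \to 1$, so for a suitable $\delta_i \to 0$ the $\delta_i$-neighborhood $W_i$ of the bending lamination has area tending to $0$ in $H_i^+$. Its $\Phi_i$-preimage has small area by the Jacobian estimate, and outside it both $\ov{f_i^m}$ and $\ov{f_i^h}\circ \Phi_i$ are defined and $o(1)$-close in $\Gr M$. Combining these bounds with the uniform continuity of $g$, passing to the quotient by $Q_i$, and pairing against $g$, we obtain $\bigl|\int g\,dm_i - \int g\,dh_i\bigr| \to 0$, which proves the theorem.
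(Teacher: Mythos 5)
Your overall strategy coincides with the paper's: construct the $Q_i$-equivariant normal projection from $\wt{f_i^m}(\wt{S_i})$ onto $H_i^+$, show its Jacobian tends to $1$, and push the integral identity down to $\Gr M$. Seppi's curvature bound also enters exactly as you use it. However, there are two substantive gaps in the proposal as written.

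First, the Jacobian estimate is not ``a standard Jacobi-field computation with small second fundamental form and small flow time.'' The map is $\Phi_i(p) = n_{\tau_i(p)}(p)$, so $d\Phi_i(p)v = dn_{\tau_i(p)}(p)v + d\tau_i(p)(v)\,\partial_t$, and the dangerous term is $d\tau_i$: the hitting time can a priori vary sharply (if $H_i^+$ had a wrinkle approaching $D_i$ obliquely, $\nabla\tau_i$ would be large even though $\|\tau_i\|_\infty$ is small, and the Jacobian would not tend to $1$). The paper's proof devotes most of its effort precisely to ruling this out: it proves $\|\tau_i\|_{C^1(D_i)}\to 0$ by a convexity argument, comparing the angle $\theta_i$ that the flowed curve $s\mapsto F_i(\exp_p sv)$ makes with the normal geodesic against the angle subtended by a chord from $F_i(p)$ to $\exp_p^{D_i}(\eta v)$, and using that a supporting plane of $H_i^+$ cannot cross $D_i$. (This is where convexity of $\core Q_i$, not just closeness to a plane, is essential.) Your plan omits this step entirely, and it is the heart of the proof.

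Second, the handling of the bending locus is off. You cite that the total bending measure of $\partial\core Q_i$ tends to $0$ as $K_i\to 1$ and deduce that a $\delta_i$-neighborhood of the bending lamination has small area. That implication is false: the bending measure is a \emph{transverse} measure and says nothing about the two-dimensional area of a neighborhood of the support; a filling lamination with tiny transverse measure can have a neighborhood of large area. The paper sidesteps this by not excising a neighborhood at all in the minimal case: it observes that the geodesic lamination $\beta_i$ has zero area in $H_i^+$, constructs a smooth map $G_i$ from the slab between Seppi's barrier surfaces $D_i^\pm$ back to $D_i$ whose restriction to $H_i^+$ is Lipschitz and agrees with $F_i^{-1}$, and concludes $\tilde m_i(F_i^{-1}(\beta_i))=0$. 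No rate is needed. (A minor further inaccuracy: $\wt{f_i^m}(\wt{S_i})$ and $H_i^+$ do not lie in an $\epsilon_i$-neighborhood of a single totally geodesic plane $P_i$, since $\Lambda_{Q_i}$ is a quasicircle, not a round circle; they are close to a geodesic plane only locally, after conjugation. The paper's explicit coordinate computation is phrased pointwise for this reason.)
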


Theorems \ref{sl1} and \ref{sl2} are in turn proven by flowing the universal covers $\wt{f}^m_i(\wt{S}_i)$ and $\wt{f}^p_i(\wt{S}_i)$ normally into $H_i^+$. We argue that this process has uniformly small area distortion. In the pleated case of Theorem \ref{sl1}, we need to argue a definite distance $\eta>0$ away from the bending lamination of $\wt{f}^p_i(\wt{S}_i)$ to avoid complicated wrinkles as in  Figure \ref{introwrinkle}. Then, we take $\eta\to 0$. In the minimal case of Theorem \ref{sl2}, we use the result of Seppi \cite{Se} that says that the principal curvatures of $\wt{f}^m_i(\wt{S}_i)$ go uniformly to zero as the quasiconformal constant $K_i$ tends to 1.

\begin{figure}
\includegraphics[scale=0.13]{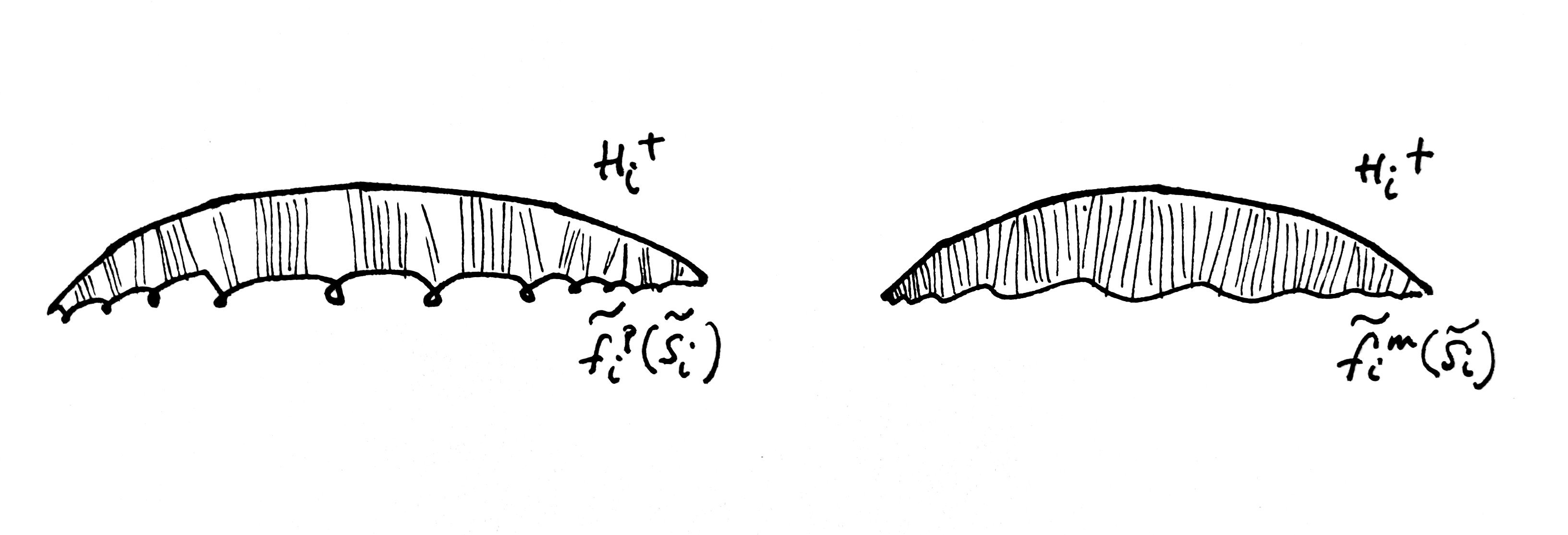}
\caption{\small{Visual outline of the proof of Theorem \ref{samelimit}. We will flow the universal covers $\wt{f}^m_i(\wt{S}_i)$ and $\wt{f}^p_i(\wt{S}_i)$ of the asymptotically Fuchsian minimal and pleated surfaces normally till they hit a component $H_i^+$ of the boundary of the convex core. We will argue this process has a uniformly small area distortion (away from the pleating lamination, in the pleated case).}}
\end{figure}

\subsection{Quasiconformal maps and quasifuchsian groups}
Let $\Omega\se \hat{\bC}$ be a domain. A continuous map $h:\Omega\to \hat{\bC}$ is quasiconformal if its weak derivatives are locally in $L^2(\Omega)$ and it satisfies the Beltrami equation
\[
\partial_z h(z) = \mu(z) \partial_{\bar{z}} h(z)
\]
for almost every $z\in \Omega$ for some $\mu \in L^{\infty} (\Omega)$ with $\|\mu\|_{L^{\infty}(\Omega)} < 1$. The derivatives $\partial_z = (\partial_x - i\partial_y)/2$ and $\partial_{\bar{z}} = (\partial_x + i\partial_y)/2$ are understood in the distributional sense.

We say that $h:\Omega\to\hat{\bC}$ is $K$-quasiconformal if $\mu$, which is called the Beltrami differential of $h$, satisfies
\[
K(h):= \frac{1+\|\mu\|_{\infty}}{1-\|\mu\|_{\infty}} \leq K.
\]

In general, $\mu$ is a Beltrami differential in a domain $\Omega\se\hat{\bC}$ if it is an element of the open unit ball around the origin $B_1(0)$ of $L^{\infty} (\Omega)$.
The measurable Riemann mapping theorem says that given a Beltrami differential in $\hat{\bC}$, we may find a unique quasiconformal mapping $h:\hat{\bC}\to\hat{\bC}$ fixing $0$, $1$ and $\infty$ with $\partial_z h = \mu \partial_{\bar{z}}h$.

Quasiconformal maps enjoy the following compactness property that will be useful to us. (It is Lemma 6 on page 21 of \cite{G}.)

\begin{lem*}[Compactness]\label{qc}
Let $h_i: \hat{\bC} \to \hat{\bC}$ be a sequence of $K$-quasiconformal maps fixing $0$, $1$ and $\infty$. Then the $h_i$ converge uniformly to $h$ as $i\to\infty$, where $h$ is a $K$-quasiconformal map.
\end{lem*}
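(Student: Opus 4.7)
The plan is the classical Arzel\`a--Ascoli strategy: establish equicontinuity of the family $\{h_i\}$, extract a uniformly convergent subsequence, and then show that the limit is a $K$-quasiconformal homeomorphism.

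First I would establish equicontinuity using a quantitative modulus-of-continuity bound for $K$-quasiconformal self-maps of $\hat{\bC}$ normalized to fix three points. Mori's distortion theorem, or equivalently the module estimate for Gr\"otzsch/Teichm\"uller rings via extremal length, gives a H\"older bound of the form $d_{\hat{\bC}}(h_i(z), h_i(w)) \le C(K)\, d_{\hat{\bC}}(z,w)^{1/K}$ on any compact subset of $\hat{\bC}$ that stays a definite spherical distance from two of the three fixed points. Covering $\hat{\bC}$ by three such pieces, one avoiding each pair among $\{0,1,\infty\}$, upgrades this to uniform equicontinuity on all of $\hat{\bC}$. The three-point normalization is used precisely here: without it the $h_i$ could collapse an arbitrarily large region to a single point and no such bound could hold.

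Since $\hat{\bC}$ is compact, Arzel\`a--Ascoli then extracts a subsequence $h_{i_j}$ converging uniformly to a continuous map $h:\hat{\bC}\to\hat{\bC}$, which still fixes $0$, $1$, and $\infty$ and is in particular nonconstant. Applying the same reasoning to the inverses $h_i^{-1}$, which are $K$-quasiconformal with the same normalization, and passing to a further subsequence yields a continuous $g:\hat{\bC}\to\hat{\bC}$ with $g\circ h = h\circ g = \id$, so $h$ is a homeomorphism.

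The one nontrivial step is to show that $h$ is itself $K$-quasiconformal. I would use the geometric characterization of quasiconformality: a homeomorphism $\hat{\bC}\to\hat{\bC}$ is $K$-quasiconformal if and only if it distorts the modulus of every topological quadrilateral by at most the factor $K$. Moduli of quadrilaterals are continuous under uniform convergence of the defining data to a homeomorphism, so this property passes from the $h_{i_j}$ to $h$. Alternatively, one can pass to a weak-$*$ limit $\mu$ of the Beltrami coefficients $\mu_i\in L^\infty(\hat{\bC})$, which satisfies $\|\mu\|_\infty\le (K-1)/(K+1)$, and invoke continuous dependence of the normalized solution of the Beltrami equation on its coefficient in the weak-$*$ topology to identify $h$ with that solution. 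This last verification, that quasiconformality is preserved in the limit, is the main obstacle; equicontinuity and extraction of a subsequence are direct once the H\"older estimate is in hand.
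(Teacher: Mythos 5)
Your proposal is correct, but a comparison with ``the paper's proof'' is not quite possible here: the paper does not prove this lemma at all, it simply cites it as Lemma~6 on page~21 of Gardiner's book. Your Arzel\`a--Ascoli argument via Mori/modulus distortion estimates, inversion to get homeomorphy of the limit, and quadrilateral-modulus (or Beltrami weak-$*$) stability is exactly the standard proof one would find in a textbook treatment, so there is no methodological daylight to report.

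One small but genuine point worth flagging: the lemma as stated in the paper asserts that the \emph{whole} sequence $h_i$ converges uniformly, which is false as written (interlace two distinct normalized $K$-quasiconformal maps). What is true, and what Gardiner proves, is subsequential compactness: every sequence of $K$-quasiconformal self-maps of $\hat{\bbC}$ fixing $0,1,\infty$ has a subsequence converging uniformly to a $K$-quasiconformal map. Your write-up correctly extracts a subsequence and proves the subsequential statement; the discrepancy is in the paper's phrasing, not your argument. (In the one place the paper actually uses the lemma, namely to conclude that $K_i$-quasiconformal maps with $K_i\to 1$ converge uniformly to the identity, the subsequential version suffices: every subsequential limit is $1$-quasiconformal fixing three points, hence the identity, so the full sequence converges.) Otherwise your proof is complete and correct; the only step requiring any care, as you rightly identify, is preservation of the dilatation bound in the limit, and both of your proposed mechanisms for that are standard and valid.
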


It turns out that $1$-quasiconformal maps are  conformal, which is a  regularity theorem for the solutions of the Beltrami equation. Thus, it follows that if the $h_i$ are $K_i$-quasiconformal fixing $0$, $1$ and $\infty$ with $K_i\to 1$ as $i\to \infty$, then they converge uniformly to the identity.

Let $\bU \se \hat{\bC}$ denote the upper half plane, and let $\bL = \hat{\bC} - \bar{\bU}$. We define the universal Teichmüller space of $\bU$ as
\[
\sT (\bU) = \{ h:\hat{\bC}\to\hat{\bC}\text{ quasiconformal fixing 0, 1 and }\infty\,:\, h|_{\bL}\text{ is conformal}\}.
\]
To obtain elements of $\sT(\bU)$, let $\mu$ be a Beltrami differential in $\bU$. We may extend it to a Beltrami differential also denoted $\mu$ in $\hat{\bC}$ by setting $\mu|_{\bL} = 0$. By the measurable Riemann mapping theorem, there is a unique quasiconformal mapping $h$ of $\hat{\bC}$ that fixes 0, 1 and $\infty$ and satisfies $\partial_z h = \mu \partial_{\bar{z}} h$. Moreover, $\partial_{\bar{z}} h= 0$ in $\bL$, so $h|_{\bL}$ is conformal.

A Jordan curve $\Lambda\se \hat{\bC}$ is a $K$-quasicircle if
\[
K = \inf \{ K(h) \,:\,h\in \sT(\bU)\text{ and }\Lambda=h(\partial\bU) \}.
\]
Note that this infimum is achieved: if $h_i$ are elements of $\sT(\bU)$ with $K(h_i) \to K$, then by the compactness lemma, the $h_i$ converge uniformly to a $K$-quasiconformal mapping of $\hat{\bC}$ fixing 0, 1 and $\infty$ with $\Lambda = h(\partial\bU)$.

A group $Q\leq \PSL_2 \bC$ is $K$-\emph{quasifuchsian} if $F=hQ h^{-1}$ is a Fuchsian group for some $K$-quasiconformal map $h:\hat{\bC}\to\hat{\bC}$. Up to conjugating $Q$ by a $g\in \PSL_2 \bC$, we can say that its limit set $\Lambda_Q$ contains $0$, $1$ and $\infty$. Thus, there is a $K$-quasiconformal mapping $h\in \sT(\bU)$ so that $\Lambda_Q = f(\partial\bU)$. In particular, we see that $\Lambda_Q$ is a $K$-\emph{quasicircle} -- the image of a circle under a $K$-quasiconformal map. These are nowhere differentiable Hölder curves.

A continuous, $\pi_1$-injective map $f:S\to M$ of a hyperbolic surface $S$ into a hyperbolic 3-manifold $M$ is $K$-\emph{quasifuchsian} if $f_*(\pi_1 S) \leq \Gamma \cong \pi_1 M \leq \PSL_2\bC$ is a $K$-quasifuchsian group. Given a $K$-quasifuchsian subgroup $Q$ of the Kleinian group $\Gamma \cong \pi_1 M$, we may recover a $K$-quasifuchsian map $f:S\to M$ in the following way. As described above, $Q$ gives rise to a $K$-quasiconformal map $h\in \sT(\bU)$, whose restriction to $\partial \bU \cong \partial_{\infty} \bH^2$ may be extended to a $Q$-equivariant map $\tilde{f} : \bH^2 \to \bH^3$. The map $\tilde{f}$ in turn descends to $f:S\to M$. (We will describe examples of this extensions as minimal or pleated maps in detail below.)

A sequence of maps $f:S_i\to M$ of hyperbolic surfaces $S_i$ into a hyperbolic 3-manifold is \emph{asymptotically Fuchsian} if the $f_i$ are $K_i$-quasifuchsian for $K_i\to 1$ as $i\to\infty$. Given such a sequence, we may find a sequence of $K_i$-quasiconformal maps $h_i\in \sT(\bU)$ that conjugate $Q_i = (f_*)(\pi_1 S_i)$ into $\PSL_2\bR$. From the compactness theorem of quasiconformal maps, it follows that the $h_i$ converge uniformly to the identity. In particular, the limit sets $\Lambda_{Q_i}$ are sandwiched between two circles at an Euclidean distance going to zero as $i\to\infty$.

\subsection{The Schwarzian derivative and the Bers norm} The \emph{Schwarzian derivative} of a holomorphic function $f$ with nonvanishing derivative is given by
\[
S_f = \lef( \frac{f''}{f'}\ri)' - \frac{1}{2}\lef( \frac{f''}{f'} \ri)^2.
\]
This vanishes precisely at the Möbius transformations and it can be shown that if $f_i$ converges uniformly to a Möbius transformation as $i\to\infty$, then $S_{f_i}\to 0$ as $i\to \infty$.

The \emph{Bers norm} of $f\in\sT(\bU)$ is given by
\[
\|f\|_B := \sup_{z\in\bL} |S_f(z)| \rho^2(z),
\]
where $\rho$ is the Poincaré metric of curvature -1 on $\bL$. As the quasiconformal constant of $f$ goes to 1, $f$ converges uniformly to the identity on $\hat{\bC}$, and so $\|f\|_B \to 0$.

\subsection{Pleated surfaces and the convex core}

A $\pi_1$-injective isometric map $f:S\to M$ of a surface $S$ is \emph{pleated} or \emph{uncrumpled} if every $p\in S$ is inside a geodesic arc of $S$ that is mapped to a geodesic arc of $M$. It turns out (see Proposition 8.8.2 of \cite{Th}) that the set $\lambda \se S$ of points that lie in a single geodesic segment that gets mapped to a geodesic is a lamination on $S$, and that $f$ is totally geodesic outside $\lambda$. The lamination $\lambda$ is called the \emph{pleating} or \emph{bending} lamination.

A $K$-quasifuchsian map $f:S\to M$ is homotopic to many pleated surfaces -- given any geodesic lamination $\lambda \se S$, it is possible to find a pleated map homotopic to $f$ whose pleating locus is $\lambda$. One such pleated map of note comes from the boundary of the convex core of the quasifuchsian group $Q = f_* (\pi_1 S)$. Let $\Lambda$ be the limit set of $Q$. The convex core of $Q$ is the smallest set $\core Q\se \bH^3$ containing the geodesics with endpoints in $\Lambda$. Thurston showed that its boundary $\partial \core Q$ has two components $H^-$ and $H^+$ that are the image of $\bH^2$ under a $Q$-equivariant pleated map \cite{EM}. In particular, $f:S\to M$ is homotopic to a pleated map $f^h:S\to M$ so that $\wt{f^h}(\tilde{S}) = H^+$.

The pleated discs $H^-$ and $H^+$ inherit an orientation from $f$, and in particular normal vector fields $n^-$ and $n^+$ away from their bending loci. We will follow the convention that $H^-$ is the component so that the trajectory from flowing a vector $n^-$ via the geodesic flow will meet $H^+$ at some positive time.

Another pleated map homotopic to $f:S\to M$ of importance in this article is the one where the bending lamination consists of a pants decomposition of $S$ as well as three spiraling geodesics per pants that divide the pants into two ideal triangles. We will keep track of these triangles to show that the surface built out of one copy of each $(\eps,R)$-good pants equidistributes as $\eps\to 0$ in Section \ref{equid}.

In what follows we will repeatedly use the fact, proved by Birman and Series \cite{BS}, that geodesic laminations on a surface have Lebesgue measure zero. Without loss of generality we will also assume that all our geodesic laminations are \emph{maximal}, i.e., their complement is an union of ideal triangles. (See Theorem I.4.2.8 in \cite{EM} -- it is always possible to make a geodesic lamination maximal by adding a finite number of leaves.)

\subsection{Proving Theorem \ref{sl1}}

We are now ready to restate and prove Theorem \ref{sl1}. Let $f_i:S_i\to M$ be asymptotically Fuchsian maps, with $Q_i = (f_i)_* (\pi_1 S_i)$. Let $H_i^-$ and $H_i^+$ be the components of $\partial\core Q_i$ (again, chosen so flowing normally from $H_i^-$ gets you to $H_i^+$). Let $f_i^p$ and $f_i^h$ be pleated maps homotopic to $f_i$, where $f_i^h$ has a lift to the universal cover $\wt{f_i^h} : \wt{S_i} \to \bH^3$ satisfying $\wt{f_i^h} (\wt{S_i}) = H_i^+$. Let $p_i = \nu(f_i^p)$ and $h_i = \nu(f_i^h)$ be the area measures induced on $\Gr M$ by $f_i^p$ and $f_i^h$, respectively.

\begin{thm*}
A subsequence $p_{i_j}$ satisfies $p_{i_j}\wkstar \nu$ as $j\to\infty$ if and only if $h_{i_j}\wkstar \nu$.
\end{thm*}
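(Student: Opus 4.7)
The plan is to build, for each $i$, a $Q_i$-equivariant ``normal-flow'' map $\Phi_i$ from $\wt f_i^p(\wt S_i)$ to $H_i^+$ and prove that, as $i\to\infty$, its Jacobian tends to $1$ on the bulk of the surface while the complement of the bulk contributes arbitrarily small probability mass to both $p_i$ and $h_i$. The first ingredient I would collect is geometric control on the convex core: since the conjugating maps $h_i\in\sT(\bU)$ converge uniformly to the identity by the Compactness Lemma and have $\|h_i\|_B\to 0$, classical Epstein--Marden-type estimates force the Hausdorff thickness of $\core Q_i$ to some $\epsilon_i\to 0$ and the bending measure of $H_i^+$ to tend to zero. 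In particular there is a totally geodesic plane $P_i\subset\bH^3$ such that both $\wt f_i^p(\wt S_i)$ and $H_i^+$ lie in the $\epsilon_i$-neighborhood of $P_i$.

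Next I would fix $\eta>0$ and, for each $x$ in the pleated disc at intrinsic distance $\ge\eta$ from the bending lamination $\wt\lambda_i^p$, define $\Phi_i(x)$ by flowing along the positive unit normal to $\wt f_i^p(\wt S_i)$ at $x$ until first hitting $H_i^+$. Call the resulting bulk region $A_i^\eta$, where I additionally require the image to sit at distance $\ge\eta$ from $\wt\lambda_i^h$. On $A_i^\eta$ the pleated discs are locally totally geodesic on a disc of radius $\eta$; since each such flat piece lies within $\epsilon_i$ of $P_i$, elementary hyperbolic trigonometry gives that its tangent plane makes angle $O(\epsilon_i/\eta)$ with $TP_i$, and similarly for $H_i^+$. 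Combined with the flow-time bound $t_i(x)\le 2\epsilon_i$, this yields $|\mathrm{Jac}\,\Phi_i|=1+O(\epsilon_i/\eta)$ on $A_i^\eta$ and that the Grassmannian lifts $\bar f_i^p$ and $\bar f_i^h\circ\Phi_i$ are $O(\epsilon_i/\eta)$-close. For any $\varphi\in C(\Gr M)$, the bulk parts of $\int\varphi\,dp_i$ and $\int\varphi\,dh_i$ therefore differ by $o(1)$ as $i\to\infty$ with $\eta$ held fixed.

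The hard part will be to bound the thin contributions $p_i(N_\eta(\wt\lambda_i^p))$ and $h_i(N_\eta(\wt\lambda_i^h))$ by a quantity going to zero with $\eta$, uniformly in $i$. For $h_i$ this is routine since the bending measure of $H_i^+$ already tends to zero with $i$, so $h_i(N_\eta(\wt\lambda_i^h))\to 0$ independently of any fixed $\eta>0$. For $p_i$ the pleating lamination of $f_i^p$ is not directly controlled by $K_i$: it can be intricate even when $Q_i$ is almost Fuchsian, as Figure~\ref{introwrinkle} illustrates. The idea is to exploit that $\wt f_i^p(\wt S_i)$ nonetheless lies inside the $\epsilon_i$-thick convex hull, so that the total transverse bending across any disc of bounded radius in the pleated disc must be controlled by $\epsilon_i$; a covering argument carried out in a fundamental domain of $Q_i$ then produces an estimate of the form $p_i(N_\eta(\wt\lambda_i^p))\le C\eta$ with $C$ independent of $i$.

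Combining these three ingredients, for every $\varphi\in C(\Gr M)$ one gets $|\int\varphi\,dp_i-\int\varphi\,dh_i|\le o_i(1)+O(\eta)\|\varphi\|_\infty$. Letting $i\to\infty$ first and then $\eta\to 0$ yields the weak-$*$ equivalence. The main obstacle I anticipate is the uniform-in-$i$ bound on $p_i(N_\eta(\wt\lambda_i^p))$ just described: making it rigorous requires a careful analysis of how much wrinkling can accumulate inside a thin convex hull while keeping the pleated disc embedded and $Q_i$-invariant.
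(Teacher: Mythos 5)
Your overall strategy coincides with the paper's: flow the pleated disc normally into $H_i^+$, show the Jacobian of this map tends to $1$ uniformly on the part of the surface at distance $\ge\eta$ from the pleating lamination, and show the complementary thin contributions are negligible as $\eta\to 0$. The paper carries out the Jacobian estimate by proving $\|\tau_i\|_{C^1(\Delta^\eta)}\to 0$ directly (Lemma \ref{c1}), using convexity of $H_i^+$ to bound the angle between the normal geodesic and the curve $s\mapsto F_i(\exp_p sv)$ below by $\arccos(\tanh\tau_i/\tanh\eta)$. Your ``tangent planes make angle $O(\eps_i/\eta)$ with a fixed plane'' heuristic produces the same estimate in spirit.

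However, your handling of the two thin contributions has genuine problems, and this is exactly where you flagged the difficulty. First, the claim that $h_i(N_\eta(\wt\lambda_i^h))\to 0$ ``independently of any fixed $\eta>0$'' because the bending measure of $H_i^+$ tends to zero is not correct: the bending measure controls the total transverse angle, not the Hausdorff measure of the lamination's support or the area of its neighborhood, and a lamination with tiny bending measure can still be dense. The paper handles $h_i(M\setminus R_i^\eta)$ differently and much more cleanly: since the normal flow $F_i^\eta$ has Jacobian $\ge 1$ (from the formula $(\cosh^4\tau_i+|\nabla\tau_i|^2\cosh^2\tau_i)^{1/2}\ge 1$) and $R_i$, $\Sigma_i$ have equal total area (both pleated, hence both have the intrinsic hyperbolic metric of $S_i$), the $h_i$-mass of the complement is bounded by the $p_i$-mass of the complement, i.e.\ $h_i(M\setminus R_i^\eta)\le p_i(M\setminus\Sigma_i^\eta)$. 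This piggybacking is the key move you are missing. Second, for $p_i(M\setminus\Sigma_i^\eta)$ you anticipate needing a delicate covering argument controlling wrinkling inside a thin convex hull; this is unnecessary. The complementary regions of a (maximal) geodesic lamination are ideal triangles, and the $\eta$-neighborhood of the boundary of a single ideal triangle has area $O(\eta\log(1/\eta))$ uniformly (all ideal triangles are isometric), so $p_i(M\setminus\Sigma_i^\eta)$ is bounded by a universal function of $\eta$ tending to $0$, with no dependence on $i$ or on how ``wrinkled'' $\lambda_i$ is. (If the pleating lamination is not maximal one first extends it to a maximal lamination; the paper implicitly treats $\lambda_i$ as an ideal triangulation throughout, as in Lemma \ref{box}.) With that observation the estimate you need on the thin part is elementary, not the ``main obstacle'' you expected.

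A smaller issue: you require the image of $\Phi_i$ to lie at distance $\ge\eta$ from $\wt\lambda_i^h$. The paper does not impose this; it only needs to know that $(F_i^\eta)^{-1}(\beta_i)$ has $\tilde p_i$-measure zero (Proposition \ref{map}ii, via the Lipschitz inverse $G_i$), which suffices for the Radon--Nikodym derivative to make sense and doesn't shrink the bulk further.
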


Let $\Lambda_i \se \hat{\bC}$ be the limit set of $Q_i$ 
Let $\wt{f_i^p}:\wt{S_i}\to \bH^3$ be the lift of $f_i^p$ to the universal cover so $\partial_{\infty} \wt{f_i^p} (\wt{S_i}) = \Lambda_i$. We define $P_i:= \wt{f_i^p} (\wt{S_i})$. We let $\tilde{p}_i$ and $\til{h}_i$ be, respectively, the area measures induced by $\wt{f_i^p}$ and $\wt{f_i^h}$ on $\Gr \bH^3$. We denote the pleating laminations of $P_i$ and $H_i^+$ by $\lambda_i$ and $\beta_i$, respectively. Finally, we define $\Sigma_i := \Gamma\backslash P_i$ and $R_i := \Gamma\backslash H_i^+$.

We let $n_t: \Gr \bH^3\to \bH^3$ be the map taking $(p,\Pi)\in \Gr \bH^3$ to the point $q\in \bH^3$ obtained by flowing $p$ in the direction normal to $\Pi$ (from the orientation of $P$) for time $t$ via the geodesic flow. We will usually apply $n_t$ to points in $p\in P_i$, so to loosen the notation $n_t(p)$ will be shorthand for $n_t(p,T_p P_i)$.
 
For $\eta>0$, let $P_i^{\eta}$ denote the set of points in $P_i$ that are at a distance greater than $\eta$ from the pleating lamination $\lambda_i$. We define a map
\[
F_i^{\eta} : P_i^{\eta} \arr H_i^+
\]
by flowing $p\in P_i^{\eta}$ normally for the time $\tau_i(p)$ it takes to hit $H_i^+$. In other words, $F_i^{\eta}(p) = n_{\tau_i(p)}(p)$.

We also let $\det(dF_i^{\eta})$ be the Radon-Nikodym derivative
\[
\det (dF_i^{\eta}) := \frac{d (F_i^{\eta})^* \til{h}_i}{d \til{p}_i},
\]
which is defined due to parts i and ii of the Proposition \ref{map} below.

\begin{figure}
\includegraphics[scale=0.14]{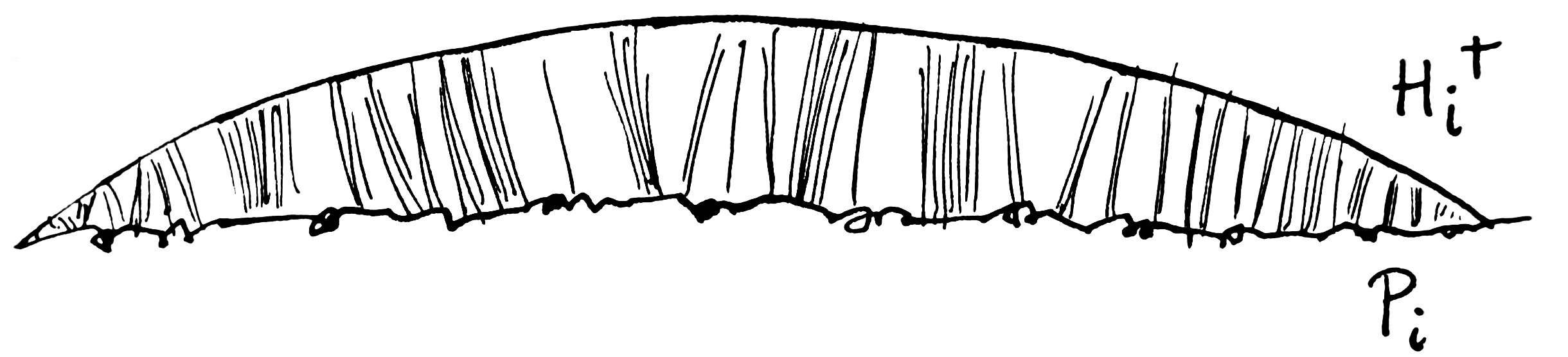}
\caption{A visualization of the map $F^{\eta}_i$, flowing normally from $P_i^{\eta}$ till $H_i^+$. Lemma \ref{box} below shows that these lines indeed do not meet for $i$ large enough.}
\end{figure}

\begin{prop}\label{map}
For $i\geq I_0(\eta)$, these maps $F_i^{\eta}$ satisfy
\begin{enumerate}[i.]
\item $F_i^{\eta}$ is differentiable outside of $(F^{\eta}_i)^{-1}(\beta_i) \cup \lambda_i,$
\item $\til{p}_i\lef( (F^{\eta}_i)^{-1} (\beta_i) \ri) = 0,$
\item $\|\det(dF^{\eta}_i) - 1\|_{L^{\infty} (P^{\eta}_i)} \to 0 \text{ as }i\to\infty$.
\end{enumerate}
\end{prop}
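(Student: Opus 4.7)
The strategy is to exploit that asymptotic Fuchsianness forces the limit sets $\Lambda_i$ to be Hausdorff-close to a round circle (via compactness of $K$-quasiconformal maps as $K\to 1$), so that $P_i$ and $H_i^+$, both inside $\core Q_i$, are trapped in a common slab whose thickness tends to $0$. The preparatory step is a ``box'' lemma (alluded to in the caption of Figure~3): for $i\geq I_0(\eta)$, every outward normal geodesic ray from a point $p\in P_i^{\eta}$ meets $H_i^+$ transversally at a unique point $q$ after time $\tau_i(p)=o_i(1)$, with no intervening tangency to bending loci within the $\eta$-scale flat piece. This is what makes $F_i^{\eta}$ well-defined and lets the subsequent analysis proceed locally on each flat piece.

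For \emph{(i)}, away from $\lambda_i$ the surface $P_i$ is locally totally geodesic, so its outward normal field is smooth; if moreover $F_i^{\eta}(p)$ lies in the interior of a flat piece of $H_i^+$, the implicit function theorem applied to the transverse intersection of a smooth geodesic with a totally geodesic plane yields smoothness of $\tau_i$, and hence of $F_i^{\eta}$, near $p$. For \emph{(ii)}, the bending locus $\beta_i\subset H_i^+$ is a geodesic lamination, hence $2$-dimensionally null on $H_i^+$; its preimage under $F_i^{\eta}$ is a countable union of smooth arcs, one per leaf of $\beta_i$ meeting the image, and hence $\tilde p_i$-null.

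For \emph{(iii)}, fix $p\in P_i^{\eta}$ with $q=F_i^{\eta}(p)$ in a flat piece of $H_i^+$, set $\tau=\tau_i(p)$, and let $\theta_i(p)$ be the angle between $T_q H_i^+$ and the parallel transport of $T_p P_i$ to $q$ along the normal geodesic. A Jacobi-field calculation factors $dF_i^{\eta}|_p$ as the equidistant flow $T_p P_i\to$ parallel plane at $q$ (with Jacobian $\cosh^2\tau$), composed with orthogonal projection onto $T_q H_i^+$ (Jacobian $\cos\theta_i(p)$), plus a rank-one perturbation of norm $O(|d\tau_i|)$ coming from the variable flow time. It thus suffices to show uniformly in $p\in P_i^{\eta}$ that $\tau_i\to 0$, $\theta_i\to 0$, and $|d\tau_i|\to 0$ as $i\to\infty$. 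The first is precisely the slab thickness supplied by the box lemma.

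The main obstacle is the angle estimate. Since $\|h_i\|_B\to 0$, Epstein-Marden-type control on the convex hull boundary gives that the components $H_i^\pm$ are $C^1$-close to the limiting Fuchsian plane on compact subsets of $\bH^3$. A totally geodesic $\eta$-disc of $P_i$ trapped between two $C^1$-nearly-parallel sheets in an $o_i(1)$-thin slab has its tangent plane $o_i(1)$-close to both of theirs, and similarly for $T_q H_i^+$; these two estimates combine to give $\theta_i=o_i(1)$. The subtlety is that the pleated surface $P_i$ itself can wrinkle arbitrarily (cf.\ Figure~\ref{introwrinkle}), so there is no $C^1$-convergence of $P_i$ as a whole to exploit, and the argument must convert slab confinement of a definite-size flat disc into a tangent-plane bound without any global regularity of $P_i$ -- this is exactly where the $\eta$-thickening is indispensable. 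The estimate $|d\tau_i|=o_i(1)$ then follows by differentiating the implicit equation defining $\tau_i$ and reusing the slab and angle bounds.
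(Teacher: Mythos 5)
The decomposition of $dF_i^{\eta}$ in item (iii) and the overall reduction to showing $\tau_i \to 0$, $|d\tau_i| \to 0$, and a tangent-plane angle going to zero uniformly is the same strategy as the paper's. The slab confinement from $\|h_i\|_B \to 0$ is also the paper's argument for $\|\tau_i\|_{L^\infty} \to 0$. But your treatment of the angle estimate has a genuine gap.

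You claim that "Epstein--Marden-type control gives $H_i^\pm$ are $C^1$-close to the limiting Fuchsian plane" and that the tangent-plane bound for $T_q H_i^+$ follows "similarly" to the bound for the $\eta$-truncated flat piece of $P_i$. The slab argument for $P_i$ works because the flat pieces of $P_i^{\eta}$ have definite intrinsic size $\gtrsim\eta$: a totally geodesic disc of fixed radius in a slab of thickness $o_i(1)$ must be nearly parallel to the slab walls. That argument does \emph{not} transfer to $H_i^+$: the flat pieces of the convex-core boundary have no guaranteed size, and a tiny flat piece can sit in the slab at a large angle. Slab confinement alone cannot bound the tilt of $T_q H_i^+$. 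The paper's Lemma~\ref{c1} handles this with an argument you would need to replace or rediscover: it uses \emph{convexity}, namely that the supporting plane of $H_i^+$ at $F_i(p)$ cannot cross the disc $B_\eta(p) \subset P_i$ (both lie in $\core Q_i$ with $H_i^+$ in the boundary), which yields the hyperbolic-trigonometric inequality $\cos\theta_i < \tanh \tau_i(p)/\tanh\eta$ regardless of how small the flat pieces of $H_i^+$ are. Without this convexity input your angle estimate is unproven.

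Two smaller issues. In item (ii) you write that $(F_i^{\eta})^{-1}(\beta_i)$ is "a countable union of smooth arcs, one per leaf"; bending laminations of convex-core boundaries can have uncountably many leaves, so this is not a valid null-set argument as stated. The cleaner route (and the paper's) is to show the normal retraction $G_i$ from a neighborhood of $H_i^+$ back to $P_i^{\eta}$ is well defined (this is exactly what the box lemma buys) and Lipschitz, so $G_i(\beta_i)$ is null. In item (iii) your factorization is slightly off: the passage from the parallel plane at $q$ to $T_q H_i^+$ along the normal direction is a shear, not an orthogonal projection, and contributes a factor $1/\cos\theta_i$ rather than $\cos\theta_i$; the correct exact formula (computed in the paper in normal coordinates) is $\det dF_i = \bigl(\cosh^4\tau_i + |\nabla\tau_i|^2 \cosh^2\tau_i\bigr)^{1/2}$. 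This doesn't affect the conclusion, since both expressions tend to $1$ under the same hypotheses, but as stated the decomposition is incorrect.
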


\begin{proof}
(We will drop the superscript $\eta$ when convenient and unambiguous.)

{\bf i.} Let $p\in P^{\eta}_i - (F_i^{-1}(\beta_i)\cup\lambda_i)$. Then, $F_i$ maps a small disc around $p$ to a piece of a totally geodesic plane in $H^+_i$ via the normal flow. This is a differentiable map.

\medskip

{\bf ii.} Let $H_i^{\eta}:= F_i^{\eta}(P_i^{\eta})$. We will prove the statement by showing that the inverse map
\[
(F_i^{\eta})^{-1} : H_i^{\eta} \to P_i^{\eta}
\]
is well defined and sends sets of measure zero (such as the bending lamination $\beta_i$) to sets of measure zero.

For a parameter $t(\eta)>0$ to be picked later, we define $E_i^{\eta}$ to be the three-dimensional submanifold of $\bH^3$ obtained by flowing $P_i^{\eta}$ normally for times $s\in [0,t(\eta)]$. Precisely, $E_i^{\eta} := \bigcup_{s\in [0,t(\eta)]} n_s (P^{\eta}_i)$.

As the pleating lamination $\lambda_i$ is maximal, $P_i^{\eta}$ is a disjoint union of triangles. Thus, the set $E_i^{\eta}$ is a union of thickened triangles, possibly not disjoint. Lemma \ref{box} below shows that for $i$ sufficiently large depending on $\eta$, we can choose $t(\eta)$ so $E_i^{\eta}$ is in fact a disjoint union of thickened triangles. (See Figure 4.)

For an ideal triangle $T\se \bH^3$, we will denote by $T^{\eta}$ the set of $p\in T$ at a distance greater than $\eta$ from the edges of $T$.

\begin{lem}\label{box}
There exists $I_0(\eta)$ satisfying the following. Supose $i\geq I_0(\eta)$. Then, there is $t(\eta)>0$ so that for any two ideal triangles $S$ and $T$ in $P_i$, we have
\[
n_{s_1} (S^{\eta}) \cap n_{s_2} (T^{\eta}) = \emptyset
\]
for all $0\leq s_1, s_2 \leq t(\eta)$.
\end{lem}

\begin{figure}\label{boxfig}
\includegraphics[scale=0.12]{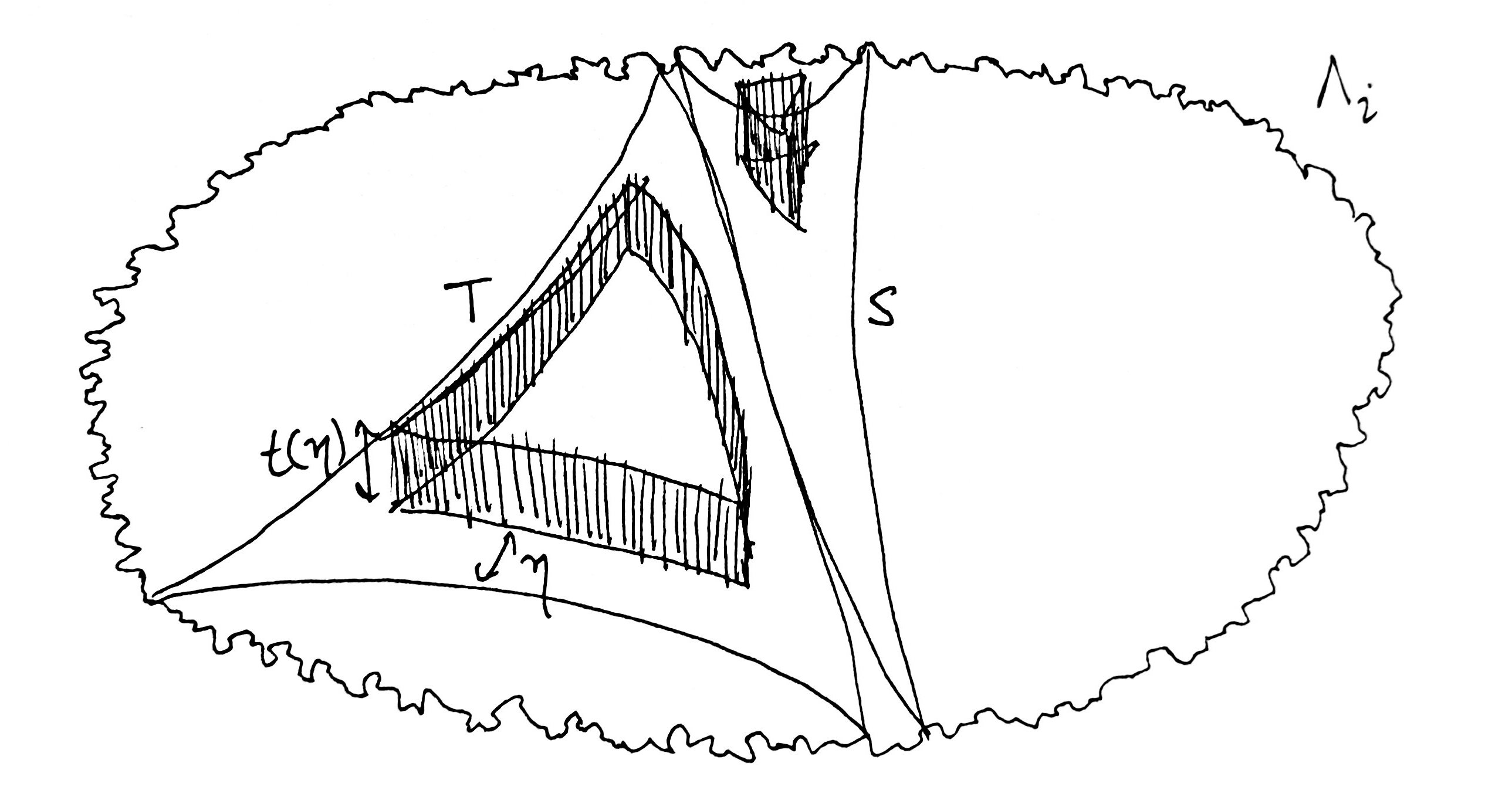}
\caption{Lemma \ref{box} says that the boxes made out of flowing $S^{\eta}$ and $T^{\eta}$ for time $t(\eta)$ never meet.}
\end{figure}

\begin{proof}
Without loss of generality, up to conjugating everything by Möbius transformations, we may take $S= \Delta$.

Recall that $\wt{f^p_i}: \bH^2 \arr \bH^3$ is the pleated map so that $\wt{f^p_i} (\bH^2) = P_i$. We know that $\partial_\infty p_i$ is the $K_i$-quasiconformal homeomorphism $h_i:\hat{\bC} \to \hat{\bC}$ fixing $0$, $1$ and $\infty$ so that $h_i(\hat{\bR}) = \Lambda_i$ (where $K_i\to 1$ as $i\to\infty$). In particular, $\wt{f^p_i}$ is the identity on $\Delta$. Moreover, as discussed previously, $h_i$ converges uniformly to the identity map as $i\to\infty.$ Denote this modulus of uniform convergence as $\omega_i$.

Define the following closed intervals in $\hat{\bR}\se \hat{\bC}$:
\[
I^1 = [0,1],\quad I^2 = [1,\infty]\aand I^3 = [\infty,0].
\]

Let $T$ be a triangle arising as a component of $P_i - \lambda_i$, distinct from $\Delta$. Then, $\wt{f_i^p}^{-1} (T)$ and $\wt{f_i^p}^{-1} (\Delta) = \Delta$ are triangles in the ideal triangulation $\wt{f_i^p}^{-1}(\lambda_i)$ of $\bH^2$. In particular, they do not intersect, so the vertices of $\wt{f_i^p}^{-1}(T)$ all lie in $I^{\ell}$ for some $\ell\in \{1,2,3\}$. Thus, as $f_i$ is uniformly $\omega_i$-close to the identity, the vertices of $T$ are contained in $N_{\omega_i}(I^{\ell})$, the $\omega_i$-neighborhood of $I^{\ell}$ in $\hat{\bC}$.

As the vertices of $T$ are trapped in a shrinking neighborhood of $I^{\ell}$, we have that
\[
T^{\eta} \cap N_{\eta}(\Delta^{\eta}) = \emptyset,
\]
which in turn implies that
\[
\sup_{p\in T^{\eta}} \dist_{\bH^3} (p,\Delta^{\eta}) \geq \eta.
\]
It follows that $n_{s_1} (\Delta^{\eta})\cap n_{s_2} (T^{\eta}) = \emptyset$ for $0\leq s_1,s_2 \leq \eta/2$. We conclude the theorem holds for $t(\eta) = \eta/2$.
\end{proof}

For $i$ sufficiently large depending on $\eta$, the lemma allows us to define a map
\[
G_i : E^{\eta}_i \arr P^{\eta}_i
\]
which takes $q\in E^{\eta}_i$ back to the unique $p\in P^{\eta}_i$ so that $n_t(p) = q$. This is well defined as the components of $E_i^{\eta}$ given by normal flow starting at some triangle of $P^{\eta}_i$ never intersect. We are also using the fact that the normal flow is injective when restricted to a geodesic plane. Moreover, $G_i$ is smooth.

As we will explain in the beginning of the proof of Lemma \ref{c1} below, it is possible to take $i$ large enough so $H^{\eta}_i$ lies inside $E^{\eta}_i$. We can thus restrict $G_i$ to $H^{\eta}_i$ and obtain the inverse of $F_i^{\eta}$. This restriction is Lipschitz and hence sends sets of measure zero to sets of measure zero.

\medskip

{\bf iii.} It suffices to show that $\det (dF^{\eta}_i)$ converges uniformly to 1 in the fixed triangle $\Delta^{\eta}$, namely

\begin{prop}\label{derivdelta}
$\|\det(dF_i) - 1 \|_{L^{\infty}(\Delta^{\eta})} \to 0$ as $i\to\infty$.
\end{prop}

Indeed, if $T_{i_j} \se P_i$ is a sequence of triangles, there are Möbius transformations $f_{i_j}$ so that $f_{i_j} T_{i_j} (f_{i_j})^{-1} = \Delta$ while $f_{i_j} \Lambda_i f_{i_j}^{-1}$ is still trapped in a $\delta(i)$ neighborhood of $\bR$, where $\delta(i) = o_i(1)$ and does not depend on $j$. Therefore, conjugating by $f_{i_j}$ does not affect the following analysis and in particular $\det dF_i$ being uniformly close to 1 in $\Delta^{\eta}$ implies $\det dF_i$ is uniformly close to 1 in all of $P_i^{\eta}$.

Recall that $\tau_i$ is the time it takes for a point $p\in \Delta^{\eta}$ to hit $H_i^+$ via the normal flow, i.e.,
\[
\tau_i(x) = \inf \{t>0 : n_t(x) \in H_i^+\}.
\]
In order to prove Proposition \ref{derivdelta}, we will need the following

\begin{lem}\label{c1}
$\|\tau_i\|_{C^1(\Delta^{\eta})} \to 0$ as $i\to\infty$.
\end{lem}

\begin{proof}
We begin by showing that 

\begin{equation}\label{cpt} \|\tau_i\|_{L^{\infty}(\Delta^{\eta})} \to 0 \text{ as }i\to\infty.\end{equation}

Recall that $\Lambda_i= f_i (\bR)$, where the $f_i$ are $K_i$-quasiconformal maps with $K_i\to 1$ that converge uniformly to the identity on $\hat{\bC}$. In particular, we can find a function $\delta(i)\to 0$ with $i\to\infty$ so that $\Lambda_i \se N_{\delta(i)}(\bR)$. Let $\Pi_i^+$ and $\Pi_i^-$ be the totally geodesic planes satisfying
\[
\partial_{\infty} \Pi_i^+ \cup \partial_{\infty} \Pi_i^- = \partial N_{\delta(i)} (\bR),
\]
with $\Pi_i^+$ in the same side of the plane containing $\Delta$ as $H_i^+$. Let $T_i(x)$ be the time it takes for a point $x\in \Delta^{\eta}$ to hit $\Pi_i^+$ via the normal flow, i.e.,
\[
T_i(x) = \inf \{t>0 : n_t(x) \in \Pi_i^+\}.
\]
By construction, $\tau_i(x) \leq T_i(x)$ for $x\in \Delta^{\eta}$. In addition, as $\delta (i) \to 0$, we also have that $T_i(x) \to 0$ uniformly in $x$, with $i\to\infty$. This shows \ref{cpt}.

Let $p\in \Delta^{\eta}$ be a point outside of $F_i^{-1}(\beta_i)$ and let $v\in T_p \Delta^{\eta}$ be a unit vector. Now, we show that
\begin{equation}\label{cptder} d\tau_i(p)(v) \iinfty 0 \end{equation}
uniformly in $(p,v)\in \T^1 \Delta^{\eta}.$

Let $\theta_i(p,v)$
be the angle in $(0,\pi/2]$ that the geodesic normal to $\Delta$ through $p$ makes with the curve $s\mapsto F_i(\exp_p sv)$, for $s\geq 0$. It suffices to show that this angle is uniformly close to $\pi/2$ as $(p,v)$ varies in $\T^1 \Delta^{\eta}$.

\begin{figure}
\includegraphics[scale=0.08]{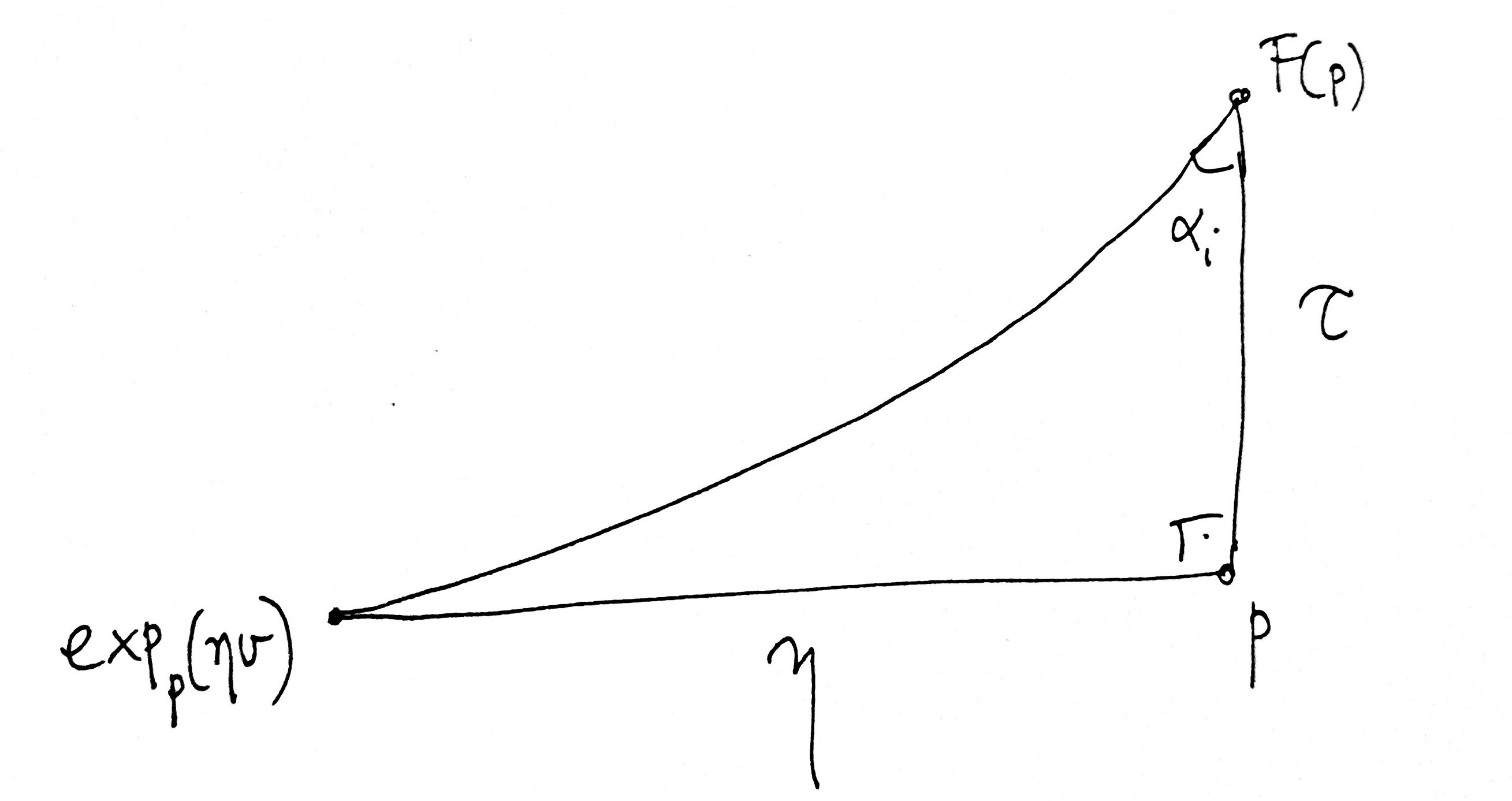}
\caption{If $\theta_i$ was smaller or equal to $\til{\theta}_i = \cos^{-1}(\tanh \tau_i(p)/\tanh \eta)$ as in the figure above, then the supporting plane to $H_i^+$ containing $F_i(p)$ would intersect $\Delta$, in a violation of convexity.}
\label{angle}
\end{figure}

Let $\alpha_i$ be the angle based at $F_i(p)$ between the normal geodesic $t\mapsto n_t(p)$ to $\Delta$ at $p$ and the geodesic segment from $F_i(p)$ to $\exp_p (\eta v)$. (See Figure \ref{angle}.) 

Note that $\alpha_i < \theta_i$. If that was not the case and $\theta_i$ was any smaller, the supporting plane of $H_i^+$ containing $F_i(p)$ would intersect the intrinsic disc $B_{\eta}(p)\se \Delta$ of radius $\eta$, in a contradiction of convexity.

On the other hand, from trigonometry,
\[
\cos \theta_i < \cos \alpha_i = \frac{\tanh \tau_i(p)}{\tanh \eta}.
\]{}
Thus, as $\|\tau_i\|_{L^{\infty}(\Delta^{\eta})}\to 0$ as $i\to \infty$, it follows that $\cos \theta_i\to 0$ uniformly in $(p,v)$. This finishes the argument.
\end{proof}

\begin{proof}[Proof of proposition]

For $p\in\bH^3$, let $v,w \in \T_p\bH^3$. We let $\area_p(v,w)$ denote the area spanned by $v$ and $w$ in $\T_p \bH^3$, with respect to the hyperbolic metric $G = \la \cdot,\cdot\ra$ of $\bH^3$. In formulas,
\[
\area_p (v,w) = \lef| \det \begin{bmatrix} \la v,v\ra & \la v, w\ra \\ \la v,w\ra & \la w,w \ra \end{bmatrix} \ri|^{1/2}.
\]

The Radon-Nikodym derivative $\det dF_i$ measures the area distortion caused by $F_i$. If $p\in \Delta_i^{\eta}$, we have
\[
\det dF_i (p) = \frac{\area(dF_i(p)v,dF_i(p)w)}{\area(v,w)},
\]
where $v$ and $w$ are distinct vectors in $\T^1_p\bH^3$.

Consider the coordinates in $\bH^3$ given by $(x,y,t) = n_t(x,y)$, where we choose $(x,y)\in\bH^2$ so that $\partial_x$ and $\partial_y$ form an orthonormal basis for $\T_p \bH^2$, where $\bH^2$ denotes the geodesic plane containing $\Delta$. In these coordinates, the metric on $n_t(\bH^2)$ is given by \[G_t = \cosh^2 t \, g_{\bH^2} + dt^2,\] where $g_{\bH^2}$ is the hyperbolic metric of $\bH^2$. We also have
\[
F_i(x,y,0) = (x,y,\tau_i(x,y)),
\]
and for $v\in T_p\bH^2$,  
\begin{align*}
dF_i(p)(v) &= v + d\tau_i(p) v \,\partial_t.
\end{align*}

With these explicit formulae for $dF_i$ and $G_t$, we can calculate the area distortion $\det dF_i(p)$, which turns out to be
\begin{align*}
\det dF_i(p) &= \frac{\area(dF_i(p)\, \partial_x,dF_i(p)\,\partial_y)}{\area(\partial_x,\partial_y)} \\
&= \lef| \det \begin{bmatrix} \cosh^2 \tau_i(p) + (\partial_x \tau_i(p))^2 & \partial_x \tau_i(p)\,\partial_y \tau_i(p) \\ \partial_x \tau_i(p)\,\partial_y \tau_i(p) & \cosh^2 \tau_i(p) + (\partial_y \tau_i(p))^2  \end{bmatrix} \ri|^{1/2} \\
&= \lef( \cosh^4 \tau_i(p) + |\nabla \tau_i (p)|^2 \cosh^2 \tau_i(p) \ri)^{1/2}.
\end{align*}

Using Lemma \ref{c1} above, we see that
\[
\|\det dF_i - 1\|_{L^{\infty}(\Delta^{\eta})} \to 0 \text{ as }i\to\infty.
\]
\end{proof}

As argued above, it follows that\[
\|\det dF_i - 1\|_{L^{\infty}(P_i^{\eta})} \to 0 \text{ as }i\to\infty.
\]
This concludes the proof of item {\bf iii} of Proposition \ref{map}.
\end{proof}

Let $H^{\eta}_i := F_i(P^{\eta}_i)$ and let $R^{\eta}_i$ be its projection to $M$. Let $\tilde{h}^{\eta}_i$ be the area measure of $H^{\eta}_i$ and let $h^{\eta}_i$ be the restriction of $h_i$ (the probability area measure of $R_i = \Gamma\backslash H_i^+$) to $R^{\eta}_i$.

\begin{cor}\label{complementsmall}
$h^{\eta}_i (M - R^{\eta}_i) \to 0$ as $\eta\to 0$.
\end{cor}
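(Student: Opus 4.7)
The plan is to estimate $h_i(R_i^\eta)$ via the area-distortion bound of Proposition \ref{map}(iii) and compare with $\area(R_i)$ using Gauss--Bonnet; interpreting the conclusion as $h_i(M \setminus R_i^\eta) = 1 - h_i(R_i^\eta) \to 0$ as $\eta \to 0$, uniformly for $i \geq I_0(\eta)$.

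Since both $R_i = \Gamma\backslash H_i^+$ and $\Sigma_i := \Gamma\backslash P_i$ are closed pleated hyperbolic surfaces homotopic to $f_i$, Gauss--Bonnet gives $\area(R_i) = \area(\Sigma_i) = -2\pi\chi(S_i) =: A_i$. By construction $\lambda_i$ cuts $\Sigma_i$ into $N_i = A_i/\pi$ ideal triangles, each isometric to a fixed ideal triangle $T_0 \se \bH^2$. Writing $\omega(\eta) := \area(T_0) - \area(T_0^\eta)$, monotone convergence yields $\omega(\eta) \to 0$ as $\eta\to 0$, and
\[
\frac{\area(\Sigma_i \setminus \Sigma_i^\eta)}{A_i} = \frac{N_i\omega(\eta)}{N_i\pi} = \frac{\omega(\eta)}{\pi}
\]
independently of $i$.

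Next I would verify that $F_i^\eta$ descends to a map $\bar F_i^\eta : \Sigma_i^\eta \to R_i^\eta$ that is a diffeomorphism onto its image for $i \geq I_0(\eta)$. Injectivity at the quotient level follows from the injectivity of $F_i^\eta$ on $P_i^\eta$ (Lemma \ref{box}) combined with $Q_i$-equivariance: if $\bar F_i^\eta([p]) = \bar F_i^\eta([q])$, their lifts satisfy $F_i^\eta(p) = g\,F_i^\eta(q) = F_i^\eta(gq)$ for some $g \in Q_i$, and injectivity on $P_i^\eta$ forces $p = gq$. Proposition \ref{map}(iii) together with the change of variables formula then gives $\area(R_i^\eta) = (1+o_i(1))\,\area(\Sigma_i^\eta)$, whence
\[
\bigl| h_i(M\setminus R_i^\eta) - \omega(\eta)/\pi \bigr| = |o_i(1)|\bigl(1 - \omega(\eta)/\pi\bigr) \leq |o_i(1)|.
\]
Sending $i\to\infty$ and then $\eta\to 0$ completes the argument.

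The main obstacle is the injectivity of $\bar F_i^\eta$ at the quotient level, which is handled as above; the remaining steps amount to a clean bookkeeping computation using the area-distortion estimate already established in Proposition \ref{map} and Gauss--Bonnet applied to the two pleated surfaces in the same homotopy class.
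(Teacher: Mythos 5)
Your argument is correct, but it takes a slightly heavier route than the paper. The paper's proof is one line: flowing normally \emph{outward} to the convex-core boundary is area-nondecreasing, i.e.\ $\det dF_i^\eta\geq 1$ (this is immediate from the Jacobian formula
$\det dF_i = (\cosh^4\tau_i + |\nabla\tau_i|^2\cosh^2\tau_i)^{1/2}$
computed in the proof of Proposition \ref{derivdelta}), so
$\area(R_i^\eta)\geq\area(\Sigma_i^\eta)$, and combined with Gauss--Bonnet ($\area(R_i)=\area(\Sigma_i)$) this gives the one-sided bound
$h_i^\eta(M\setminus R_i^\eta)\leq p_i^\eta(M\setminus\Sigma_i^\eta)=\omega(\eta)/\pi$
\emph{uniformly} in $i\geq I_0(\eta)$, with no error term. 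You instead invoke the full two-sided asymptotic $\det dF_i^\eta = 1+o_i(1)$ from Proposition \ref{map}(iii), which is more than you need and introduces an $i$-dependent error; this forces you into an iterated limit ($i\to\infty$, then $\eta\to 0$) where the paper's inequality holds for every $i\geq I_0(\eta)$. For the subsequent application in the proof of Theorem \ref{sl1} the iterated limit is fine, so there is no gap, but the monotone observation is cleaner. Where your write-up genuinely adds value is in making explicit the descent of $F_i^\eta$ to an injective map $\bar F_i^\eta:\Sigma_i^\eta\to R_i^\eta$ via $Q_i$-equivariance plus Lemma \ref{box}; the paper relies on this silently when it sums the area inequality over the triangles, and your verification fills in that bookkeeping.
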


\begin{proof}
Given an ideal triangle $T\se P_i - \lambda_i$, the area of $F_i^{\eta} (T^{\eta})$ is larger than that of $T^{\eta}$. Since $R_i$ and $\Sigma_i = \Gamma \backslash P_i$ have the same area (as they are pleated and homotopic to each other), the corollary follows from the fact that $p_i(M - \Sigma^{\eta}_i)\to 0$ as $\eta\to 0$.
\end{proof}

\begin{claim}\label{pleatedupstairs}
Let $(g_{\alpha}) \se C(\Gr \bH^3)$ be a bounded and equicontinuous family of functions, namely
\begin{enumerate}[i.]
\item $\sup_{\alpha} \|g_{\alpha}\|_{\infty} < \infty$ and the support of the $(g_{\alpha})$ has uniformly bounded diameter.
\item There is a function $w:(0,\infty)\to\bR$ satisfying $w(\delta)\to 0$ as $\delta\to 0$ such that
\[
|g_{\alpha} (x) - g_{\alpha} (y)| \leq w\lef(\dist_{\Gr \bH^3} (x,y)\ri).
\]
\end{enumerate}
Then,
\[
\sup_{\alpha} \lef| 
\int_{\Gr \bH^3} g_{\alpha} \,d\tilde{p}^{\eta}_i - \int_{\Gr \bH^3} g_{\alpha} \,d \tilde{h}^{\eta}_i \ri| \to 0 \text{ as }i\to\infty.
\]
\end{claim}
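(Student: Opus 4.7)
The strategy is a change of variables via the Grassmannian lift $\overline{F_i^\eta}:P_i^\eta\to H_i^\eta$ of $F_i^\eta$, sending $(p,T_pP_i)$ to $(F_i^\eta(p),T_{F_i^\eta(p)}H_i^+)$. Proposition \ref{map} guarantees this map is a.e.\ differentiable, that the bending locus $\beta_i$ pulls back to a $\tilde p_i^\eta$-null set, and that the definition $\det dF_i^\eta=d(F_i^\eta)^*\tilde h_i/d\tilde p_i$ yields
\[
\int g_\alpha\,d\tilde h_i^\eta \;=\; \int_{P_i^\eta}(g_\alpha\circ\overline{F_i^\eta})\,\det(dF_i^\eta)\,d\tilde p_i^\eta.
\]
Writing
\[
\int g_\alpha\,d\tilde p_i^\eta-\int g_\alpha\,d\tilde h_i^\eta \;=\; \underbrace{\int(g_\alpha-g_\alpha\circ\overline{F_i^\eta})\,d\tilde p_i^\eta}_{(A)} \;+\; \underbrace{\int(g_\alpha\circ\overline{F_i^\eta})(1-\det dF_i^\eta)\,d\tilde p_i^\eta}_{(B)},
\]
it then suffices to show $|(A)|,|(B)|=o_i(1)$ uniformly in $\alpha$. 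Here $\tilde p_i^\eta$ is tacitly restricted to a fundamental domain for the $Q_i$-action so that the total mass $\tilde p_i^\eta(P_i^\eta)$, comparable to $\area(\Sigma_i^\eta)$, is finite and bounded independently of $i$.

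\textbf{The Jacobian term.} Term $(B)$ is bounded in absolute value by
\[
\sup_\alpha\|g_\alpha\|_\infty\cdot\|1-\det dF_i^\eta\|_{L^\infty(P_i^\eta)}\cdot\tilde p_i^\eta(P_i^\eta).
\]
The outer two factors are bounded by hypothesis and by the remark above, while Proposition \ref{map}.iii makes the middle factor $o_i(1)$, so $|(B)|=o_i(1)$ uniformly.

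\textbf{The displacement term.} For $(A)$, equicontinuity reduces the task to a uniform estimate on the Grassmannian displacement $d_{\Gr\bH^3}\bigl((p,T_pP_i),\overline{F_i^\eta}(p,T_pP_i)\bigr)$. The base distance is $d_{\bH^3}(p,F_i^\eta(p))=\tau_i(p)$, which tends to $0$ uniformly on $P_i^\eta$ by estimate \eqref{cpt} in the proof of Lemma \ref{c1}. For the fiber distance, the coordinates $(x,y,t)=n_t(x,y)$ from the proof of Proposition \ref{derivdelta} present $T_{F_i^\eta(p)}H_i^+$ as the $\Span$ of $\partial_x+(\partial_x\tau_i)\,\partial_t$ and $\partial_y+(\partial_y\tau_i)\,\partial_t$; after normal parallel transport by time $\tau_i(p)$ back to $p$, this plane differs from $T_pP_i=\Span(\partial_x,\partial_y)$ by an angle jointly controlled by $\tau_i(p)$ and $|\nabla\tau_i(p)|$. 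Lemma \ref{c1} gives $\|\tau_i\|_{C^1(\Delta^\eta)}\to 0$, and the conjugation argument from the proof of Proposition \ref{derivdelta} (applying $\PSL_2\bC$-Möbius transformations that carry an arbitrary triangle to $\Delta$ while keeping $\Lambda_i$ inside a shrinking neighborhood of $\hat\bR$) upgrades this to $\|\tau_i\|_{C^1(P_i^\eta)}\to 0$. Consequently the Grassmannian displacement is $o_i(1)$ uniformly in $p$, and equicontinuity then gives $\sup_\alpha\|g_\alpha-g_\alpha\circ\overline{F_i^\eta}\|_{L^\infty(P_i^\eta)}=o_i(1)$, whence $|(A)|=o_i(1)$.

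\textbf{Main obstacle.} The Jacobian estimate is handed to us by Proposition \ref{map}; the real work is the fiber-distance bound in $\Gr\bH^3$. Basepoint closeness alone is insufficient — one needs the tangent planes to $H_i^+$ to be almost parallel to those of $P_i$, and this is precisely where the $C^1$ (not merely $C^0$) decay of $\tau_i$ from Lemma \ref{c1} is essential.
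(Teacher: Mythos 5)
Your proof follows essentially the same route as the paper: change variables via the Radon--Nikodym derivative $\det dF_i^\eta$ from Proposition \ref{map}, decompose the difference into a Jacobian term and a displacement term, and bound each separately. The paper's written proof is terser and does not spell out the point you rightly emphasize in the displacement term --- that equicontinuity alone does nothing until one knows the Grassmannian lift $\overline{F_i^\eta}$ moves points a uniformly small distance in $\Gr\bH^3$, and that the \emph{fiber} part of that distance (tangent plane tilt) is controlled by the $C^1$, not merely $C^0$, decay of $\tau_i$ from Lemma \ref{c1} together with the Möbius-conjugation argument upgrading $\Delta^\eta$ to all of $P_i^\eta$. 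Making that explicit is a genuine improvement in rigor.

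There is, however, a gap in your treatment of the Jacobian term. You bound $|(B)|$ by $\sup_\alpha\|g_\alpha\|_\infty\cdot\|1-\det dF_i^\eta\|_{L^\infty}\cdot\tilde p_i^\eta(P_i^\eta)$ and assert that $\tilde p_i^\eta(P_i^\eta)$, ``comparable to $\area(\Sigma_i^\eta)$,'' is bounded independently of $i$ after restricting to a fundamental domain. That is false in general: the surfaces $S_i$ are allowed to have unbounded genus (indeed they must in the applications of this theorem to the Kahn--Markovic surfaces), so $\area(\Sigma_i^\eta)\to\infty$ and this factor blows up. The correct way to salvage the bound --- and what the paper tacitly relies on when it later applies this claim in Claim \ref{upseta} --- is that the $g_\alpha$ in play are the translates $\tilde g\circ\gamma$ of a \emph{compactly supported} $\tilde g$ with support in a lifted ball $\tilde B$ of fixed radius, so the effective domain of integration $\supp(g_\alpha\circ\overline{F_i^\eta})\subseteq(\overline{F_i^\eta})^{-1}(\gamma^{-1}\tilde B)$ always meets $P_i^\eta$ in a set of $\tilde p_i^\eta$-measure bounded uniformly in $i$ and $\alpha$ (nearly geodesic surfaces meet a ball of fixed radius in uniformly bounded area). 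With that replacement your argument is correct; without it the Jacobian estimate as written does not close.
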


\begin{proof}
Since $\det dF_i = d(F_i^* \tilde{h}^{\eta}_i)/d\til{p}^{\eta}_i$, we have
\[
\int_{\Gr \bH^3} g_{\alpha} \, d\tilde{h}^{\eta}_i
= \int_{P^{\eta}_i} g_{\alpha} (F_i(x)) \, \det dF_i (x) \,d\tilde{p}^{\eta}_i(x).
\]
Thus,
\[
\lef| \int_{\Gr \bH^3} g_{\alpha} \,d\tilde{p}^{\eta}_i -  \int_{\Gr \bH^3} g_{\alpha} \,d\tilde{h}^{\eta}_i \ri| \leq
\int_{P^{\eta}_i} |g_{\alpha} \circ F_i | \, |\det(dF_i) - 1| \, d\tilde{p}^{\eta}_i + \int_{P^{\eta}_i} | g_{\alpha}\circ F_i - g_{\alpha}|\, d\tilde{p}^{\eta}_i
\]
From the boundedness and equicontinuity of $g_{\alpha}$ (including the fact that the supports of $g_{\alpha}$ and hence $g_{\alpha}\circ F_i$ have uniformly bounded diameter) and the fact that $\det dF_i$ converges uniformly to 1 (Claim \ref{map}),  we see that the right hand side of this inequality goes to zero.
\end{proof}

\begin{claim}\label{upseta}
Let $g\in C(\Gr M)$. Then,
\[
\lim_{i\to\infty} \int_{\Gr M} g \, dp^{\eta}_i = 
\lim_{i\to\infty} \int_{\Gr M} g \, dh^{\eta}_i.
\]
\end{claim}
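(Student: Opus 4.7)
My plan is to repeat the change-of-variables argument of Claim \ref{pleatedupstairs} directly in $M$, using that $F_i^\eta$ descends to a map between the immersed surfaces there. Since the normal flow on $\bH^3$ commutes with isometries and both $P_i^\eta$ and $H_i^+$ are $Q_i$-invariant (where $Q_i=(f_i)_\ast\pi_1 S_i$), the map $F_i^\eta$ is $Q_i$-equivariant and therefore descends to a map
\[
\bar F_i^\eta : \Sigma_i^\eta \arr R_i^\eta
\]
between the two pleated (immersed) surfaces in $M$, with image $R_i^\eta$.

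\textbf{Descent of the regularity estimates.} The conclusions of Proposition \ref{map} and Lemma \ref{c1} pass to the quotient: $\bar F_i^\eta$ is differentiable off a $p_i^\eta$-null set (the projection of the bad set upstairs), $\|\det d\bar F_i^\eta-1\|_{L^\infty(\Sigma_i^\eta)}=o_i(1)$, and the displacement satisfies $d_M(x,\bar F_i^\eta(x))=\tau_i(x)\to 0$ uniformly on $\Sigma_i^\eta$. Moreover, $\|d\tau_i\|_\infty\to 0$ forces the angle at which the normal flow meets $H_i^+$ to approach $\pi/2$, so the tangent plane $T_{\bar F_i^\eta(x)}R_i$ differs from the normal parallel transport of $T_x\Sigma_i$ by a vanishing rotation, and hence
\[
\epsilon_i \;:=\; \sup_{x\in\Sigma_i^\eta}\, d_{\Gr M}\!\bigl(\bar x,\, \overline{\bar F_i^\eta(x)}\bigr) \longrightarrow 0.
\]

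\textbf{Main computation.} By definition of the probability area measures, and since $\Sigma_i$ and $R_i$ have the same area $A_i$ (both are pleated homotopic surfaces, Gauss–Bonnet),
\[
\int_{\Gr M} g\,dp_i^\eta = \frac{1}{A_i}\int_{\Sigma_i^\eta} g(\bar x)\,dA_{\Sigma_i}(x),
\]
and applying the change of variables $y=\bar F_i^\eta(x)$ in the other integral,
\[
\int_{\Gr M} g\,dh_i^\eta = \frac{1}{A_i}\int_{R_i^\eta} g(\bar y)\,dA_{R_i}(y) = \frac{1}{A_i}\int_{\Sigma_i^\eta} g\bigl(\overline{\bar F_i^\eta(x)}\bigr)\,\det d\bar F_i^\eta(x)\,dA_{\Sigma_i}(x).
\]
Subtracting,
\[
\left|\int g\,dp_i^\eta - \int g\,dh_i^\eta\right| \leq \frac{1}{A_i}\int_{\Sigma_i^\eta}\Bigl(\omega_g(\epsilon_i)+\|g\|_\infty\,\|\det d\bar F_i^\eta-1\|_\infty\Bigr) dA_{\Sigma_i},
\]
where $\omega_g$ is the modulus of continuity of $g$ on the compact space $\Gr M$. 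Since $A_{\Sigma_i^\eta}/A_i\leq 1$, the right-hand side tends to $0$ as $i\to\infty$, yielding the claim.

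\textbf{Main obstacle.} The main point to verify carefully is that the Grassmannian displacement $\epsilon_i$ — not merely the point displacement $\tau_i$ — tends to zero. Proximity in $M$ follows directly from $\|\tau_i\|_\infty\to 0$; the tangent-plane convergence requires the stronger $C^1$ estimate of Lemma \ref{c1}, which controls the angle between the normal geodesic and $R_i$ and hence the rotation of the tangent plane during the flow. Once this is in hand, the proof is a direct downstairs analogue of Claim \ref{pleatedupstairs}, bypassing any issues with infinite total mass of $\tilde p_i^\eta$ on $\Gr\bH^3$.
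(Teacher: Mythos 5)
Your proof is correct, and it takes a genuinely different route from the paper's. The paper lifts $g$ to $\Gr\bH^3$, writes the integral against $p_i^{\eta}$ as a finite sum $\frac{1}{\area\Sigma_i}\sum_{\gamma\in K_i}\int_{\Gr\bH^3}\tilde g\circ\gamma\,d\tilde p_i^{\eta}$ over a bounded portion of the deck group, bounds $\#K_i/\area\Sigma_i$ by a packing argument, and then invokes Claim \ref{pleatedupstairs} uniformly over the bounded equicontinuous family $\{\tilde g\circ\gamma\}_{\gamma\in\Gamma}$. You instead use the $Q_i$-equivariance of the normal flow to descend $F_i^{\eta}$ directly to a map $\bar F_i^{\eta}:\Sigma_i^{\eta}\to R_i^{\eta}$ of the abstract quotient surfaces, and carry out a single change of variables downstairs. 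Both arguments consume the same analytic inputs: the uniform $\|\det dF_i^{\eta}-1\|_\infty=o_i(1)$ estimate of Proposition \ref{map}, and — explicitly in your write-up, only implicitly in the paper's appeal to ``equicontinuity of $g_\alpha$'' — the uniform $C^1$-smallness of $\tau_i$ from Lemma \ref{c1}, which is what controls the tangent-plane rotation and hence the Grassmannian displacement $\epsilon_i$. Your version is cleaner in that it eliminates the lift-to-universal-cover bookkeeping, surfaces the Grassmannian-displacement requirement instead of burying it, and, as you note, avoids the cosmetic awkwardness that $\tilde p_i^{\eta}$ has infinite mass on $\Gr\bH^3$. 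One point you should make explicit: Lemma \ref{c1} as stated is for $\Delta^{\eta}$ only, and the promotion to a uniform bound over all of $P_i^{\eta}$ (hence to the uniformity of your $\epsilon_i$ over $\Sigma_i^{\eta}$) requires the conjugation-by-M\"obius-transformations observation from the proof of Proposition \ref{derivdelta}; you are using it implicitly.
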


\begin{proof}
It suffices to take a $g$ supported in a small geodesic ball $B\se \Gr M$. Let $\tilde{B}$ be a lift of this ball to $\Gr \bH^3$. Then, there is $\tilde{g}\in C(\Gr \bH^3)$ and a finite set $K_i \se \Gamma$ so that
\[
\int_{\Gr M} g \,dp^{\eta}_i = \frac{1}{\area (\Sigma_i)} \sum_{\gamma\in K_i} \int_{\Gr \bH^3} \tilde{g}\circ \gamma \, d\tilde{p}^{\eta}_i
\]
and similarly,
\[
\int_{\Gr M} g \,dh^{\eta}_i = \frac{1}{\area (R_i)} \sum_{\gamma\in K_i} \int_{\Gr \bH^3} \tilde{g}\circ \gamma \, d\tilde{h}^{\eta}_i.
\]

Note that $(\tilde{g}\circ\gamma)_{\gamma\in\Gamma}$ is a bounded and equicontinuous family of functions in $C(\Gr \bH^3)$.

We claim moreover that the $K_i$ can be chosen so that 
\[\frac{\sup_i \#K_i}{\area(\Sigma_i)} < \infty.\] 
Indeed, let $2B\se \Gr M$ be a ball of twice the radius as $B$, centered at the same point. Then, $\#K_i$ is no larger than the number of connected components of $\Sigma_i\cap 2B$ that meet $B$. Each such component $C$ satisfies $\area(C) \geq c(B)$, where $c(B)$ is a constant depending only on $B$. Thus, we have
\[
\#K_i \cdot c(B) \leq \area(\Sigma_i),
\]
which shows that $\#K_i/\area(\Sigma_i)\leq c(B)^{-1}$.

Using the fact that $\area(\Sigma_i) = \area(R_i)$, we estimate
\begin{align*}
&\lef|\int_{\Gr M} g\,dp^{\eta}_i - \int_{\Gr M} g\,dh^{\eta}_i \ri| \leq
 \frac{1}{\area(\Sigma_i)} \sum_{\gamma\in K_i} \lef| \int_{\Gr \bH^3} \tilde{g}\circ \gamma \, d\tilde{p}^{\eta}_i -  \int_{\Gr \bH^3} \tilde{g}\circ \gamma \, d\tilde{h}^{\eta}_i \ri|\\
 &\leq \frac{\# K_i}{\area(\Sigma_i)} \sup_{\gamma\in\Gamma} 
 \lef| \int_{\Gr \bH^3} \tilde{g}\circ\gamma\,d\tilde{p}^{\eta}_i - 
 \int_{\Gr \bH^3} \tilde{g}\circ \gamma\, d\tilde{h}^{\eta}_i \ri|.
\end{align*}

The term above goes to zero as $i\to \infty$ due to the boundedness of $\#K_i/\area(\Sigma_i)$ and the boundedness and equicontinuity of $(\tilde{g}\circ\gamma)_{\gamma\in\Gamma}$.
\end{proof}

Finally, let $g\in C(\Gr M)$. Then,

\begin{align*}
&\lef| \int_{\Gr M} g\,dp_i - \int_{\Gr M} g\,dh_i \ri|\\
&\leq \lef| \int_{\Gr M} g\,dp^{\eta}_i - \int_{\Gr M} g\,dh^{\eta}_i\ri|
+ \int_{\Gr M} |g|\cdot 1_{P_i - P^{\eta}_i}\,dp_i + 
\int_{\Gr M} |g|\cdot 1_{H^+_i - H^{\eta}_i}\,dh_i\\
&\leq \lef| \int_{\Gr M} g\,dp^{\eta}_i - \int_{\Gr M} g\,dh^{\eta}_i\ri| + \|g\|_{L^{\infty}(\Gr M)} \lef( p_i^{\eta} (M - \Sigma^{\eta}_i) + h^{\eta}_i (M - H_i^{\eta})  \ri).
\end{align*}

Claim \ref{upseta} implies that the first summand in the expression above goes to zero as $i\to \infty$. The second summand, in turn, goes to zero as $\eta\to 0$, from Corollary \ref{complementsmall}. Since $\eta$ was arbitrary, we have shown
\[
\lef| \int_{\Gr M} g\,dp_i - \int_{\Gr M} g\,dh_i \ri| \to 0 \text{ as }i\to\infty.
\]
In particular, if a subsequence $p_{i_j}$ converges to $\nu$, then so does $h_{i_j}$ and vice-versa. This completes the proof of Theorem \ref{sl1}.

\subsection{Minimal surfaces}

A map $f:S\to M$ of a surface $S$ into $M$ is \emph{minimal} if the principal curvatures of $f(S)$ (a \emph{minimal surface}) sum to zero at every point. These surfaces turn out to be locally area-minimizing.

Let $f:S\to M$ be a $\pi_1$-injective map of a hyperbolic surface $S$ into $M$. Schoen-Yau \cite{SY} and Sacks-Uhlenbeck \cite{SU} show that $f$ is homotopic to a minimal map $f^m$. In addition, Uhlenbeck shows that if the principal curvatures $\pm \lambda(p)$ of $f^m(S)$ satisfy $\lambda(p)\in (-1,1)$ for every $p\in f^m(S)$, then $f^m$ is quasifuchsian and it is the unique minimal map in its homotopy class. Finally, Seppi \cite{Se} shows that for a minimal $K$-quasifuchsian map $f^m:S\to M$ with $K$ small enough, 

\begin{thm}[Seppi]\label{seppimain}The principal curvatures $\pm \lambda$ of $f^m(S)$ satisfy
\[
\|\lambda\|_{L^{\infty}(f^m(S))} \leq C\log K,
\] for an universal constant $C$.
\end{thm}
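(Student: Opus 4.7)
The plan is to identify the principal curvature $\lambda$ of the minimal disk with the intrinsic hyperbolic norm of a holomorphic quadratic differential on its universal cover, bound that norm by the Bers norms of the two Schwarzian derivatives associated to the asymptotic boundary, and invoke the universal Teichm\"uller estimate of Section~4.2 to conclude.

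First I would lift $f^m$ to an equivariant map $\wt{f^m}: \bH^2\to\bH^3$ whose image is a minimal disk $D$ asymptotic to the $K$-quasicircle $\Lambda_Q$. In a global conformal coordinate $z$ on $D$, write the induced metric as $g = e^{2u}|dz|^2$. Minimality forces the $(2,0)$-part of the complexified second fundamental form to be the real part of a holomorphic quadratic differential $Q(z)\,dz^2$, and the two principal curvatures of $D$ are $\pm\lambda$ with $\lambda = |Q|\,e^{-2u}$; in particular $\|\lambda\|_{L^\infty(f^m(S))}$ equals the intrinsic sup-norm $\||Q|\|_g$. The Gauss equation in these coordinates reads
\begin{equation*}
\Delta_0 u \;=\; e^{2u} + |Q|^2 e^{-2u},
\end{equation*}
where $\Delta_0$ is the flat Laplacian.

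Next I would bound $\||Q|\|_g$ by the Bers norms of the Schwarzian derivatives $S_\pm$ of the Riemann maps uniformizing the two components of $\hat{\bC}\setminus \Lambda_Q$. There is a classical identification (going back to Epstein's work on surfaces in $\bH^3$) that recovers $Q$ on $D$ from its asymptotic Schwarzian data via a fixed linear combination; a quantitative version near the totally geodesic locus gives $\||Q|\|_g \leq C(\|S_+\|_B + \|S_-\|_B)$. Since $\Lambda_Q$ is a $K$-quasicircle, the estimate $\|S_\pm\|_B \leq C\log K$ follows from the universal Teichm\"uller discussion of Section~4.2, namely the bound $\|h\|_B = O(\log K)$ for $h\in\sT(\bU)$ a $K$-quasiconformal map, combined with the fact that the Schwarzian Bers norm is bi-Lipschitz equivalent to the quasiconformal distortion of the boundary extension. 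Combining, $\|\lambda\|_\infty \leq C\log K$.

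The main obstacle is the quantitative identification in the second step: one must show that the asymptotic Plateau problem (find a minimal disk in $\bH^3$ asymptotic to a prescribed quasicircle) depends Lipschitz-continuously on its Schwarzian boundary data, uniformly in the distance from the Fuchsian locus. This reduces to an implicit-function-theorem argument around the totally geodesic solution, whose linearized operator (the Jacobi operator of the geodesic disk, essentially $-\Delta_{\bH^2}+2$) is elliptic with a spectral gap. Controlling the error terms in the linearization uniformly on the noncompact disk $D$ — rather than on a compact fundamental domain — is the key analytic point, and is also what forces the theorem to require $K$ close to $1$.
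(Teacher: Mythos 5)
This theorem is stated in the paper without proof and cited from Seppi \cite{Se}; the only trace of the argument here is Proposition~\ref{seppiprop}. Your proposal takes a genuinely different route than Seppi's. You seek to bound the Hopf quadratic differential $Q$ of the minimal disc directly by the Bers norms of the Schwarzians at infinity, and then to establish Lipschitz dependence of the asymptotic Plateau solution on Schwarzian boundary data via an implicit-function-theorem argument around the totally geodesic locus. Seppi does not linearize. His argument is a barrier argument: Epstein's parallel-surface construction produces, from the conformal structures on the two sides of the quasicircle, convex surfaces sandwiching the minimal disc at distance controlled by $\arctanh(2\|S\|_B)$ --- this is exactly what Proposition~\ref{seppiprop} records --- and the principal curvature bound is extracted from this uniform $C^0$ proximity to a geodesic plane. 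That barrier step is what makes the estimate uniform on the noncompact disc.

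The gap in your proposal is the step you label a ``classical identification.'' You assert $\||Q|\|_g \leq C(\|S_+\|_B + \|S_-\|_B)$ as a known fact, but this inequality \emph{is} the quantitative heart of Seppi's theorem, not a corollary of Epstein's formulas. Epstein's construction relates the second fundamental form of a parallel surface to a \emph{chosen} conformal metric and Schwarzian, but the minimal disc is not one of these surfaces for data you know in advance: its conformal factor solves the nonlinear Gauss equation $\Delta_0 u = e^{2u} + |Q|^2 e^{-2u}$, and relating $Q$ on $D$ to $S_\pm$ on the two components of $\hat{\bC}\setminus\Lambda_Q$ requires estimating this solution globally on the disc. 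The implicit-function-theorem argument you propose to make this quantitative needs a uniform a priori $C^0$ bound in order to stay in the perturbative regime across the whole noncompact disc --- which is precisely what the convex-hull width bound of Proposition~\ref{seppiprop} supplies, and what your sketch lacks. As written the proposal assumes what it needs to prove at the critical step; replacing the asserted identification by that barrier argument is what closes it, and is what Seppi actually does. A minor secondary point: the quantitative estimate $\|S_\pm\|_B = O(\log K)$ is also not established in Section~2 of the paper, which only records that $\|f\|_B \to 0$ as $K\to 1$; the rate is a genuine universal Teichm\"uller fact but needs its own justification or citation.
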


Combining these theorems, we see that if $f_i:S_i\to M$ are asymptotically Fuchsian maps, then for $i$ large enough $f_i$ is homotopic to a unique minimal map $f_i^m$. In addition, the principal curvatures of $f_i^m(S_i)$ go to zero uniformly as $i\to\infty$.

\subsection{Proving Theorem \ref{sl2}}

We will now restate and prove Theorem \ref{sl2}. As before, $f_i:S_i\to M$ are asymptotically Fuchsian maps and $f_i^h$ is the pleated map homotopic to $f_i$ coming from the top component $H_i^+$ of $Q_i = (f_i)_* (\pi_1 S_i)$. We let $f_i^m$ be the minimal maps homotopic to $f_i$. We denote the probability area measure induced by $f_i^m$ and $f_i^h$ as $m_i = \nu(f_i^m)$ and $h_i = \nu(f_i^h)$.

\begin{thm*}\label{hm} A subsequence $m_{i_j}$ satisfies $m_{i_j}\wkstar \nu$ if and only if $h_{i_j}\wkstar \nu$.
\end{thm*}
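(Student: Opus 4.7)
The strategy mirrors the proof of Theorem~\ref{sl1}: I will define a normal flow map from the minimal surface to $H_i^+$, show that its Jacobian tends uniformly to $1$, and then push the comparison of $\tilde m_i$ and $\tilde h_i$ down to $\Gr M$ exactly as in Claims~\ref{pleatedupstairs} and~\ref{upseta}. Let $M_i := \widetilde{f_i^m}(\widetilde{S_i}) \subset \bH^3$ be the lift of the minimal surface whose ideal boundary is $\Lambda_i$. By the convex-hull property of minimal discs in $\bH^3$, $M_i \subset \core Q_i$, so one may define $F_i^m : M_i \to H_i^+$ by flowing each $p \in M_i$ along its unit normal (chosen consistently with the sign convention for $H_i^+$) until first hitting $H_i^+$. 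Let $\tau_i^m(p)$ denote the flow time.

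\textbf{Smallness of $\tau_i^m$ in $C^1$.} Since the $K_i$-quasiconformal conjugators of $Q_i$ converge uniformly to the identity, $\Lambda_i$ lies in a shrinking neighborhood of a round circle, and both $M_i$ and $H_i^+$ lie in a shrinking neighborhood of the geodesic disc bounded by this circle. This gives $\|\tau_i^m\|_{L^\infty(M_i)} \to 0$. For the tangential derivative, the argument of Lemma~\ref{c1} applies verbatim, with the totally geodesic disc $\Delta$ replaced by $M_i$ itself: the supporting plane to the convex set $\core Q_i$ at $F_i^m(p)$ cannot cross the intrinsic $\eta$-ball in $M_i$, and combined with Seppi's bound $\|\lambda_i\|_\infty = O(\log K_i)$ (Theorem~\ref{seppimain}), which says $M_i$ is quantitatively $C^2$-close to a geodesic plane, the same trigonometric estimate forces the angle between the normal geodesic and the curve $s \mapsto F_i^m(\exp_p sv)$ to be uniformly close to $\pi/2$. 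Hence $\|d\tau_i^m\|_{L^\infty} \to 0$ as well.

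\textbf{Uniform convergence of the Jacobian.} In local principal coordinates around $p \in M_i$, the Jacobi fields along the normal geodesic from a surface with principal curvature $\lambda$ take the form $J(t) = (\cosh t - \lambda \sinh t)\, v$, so the push-forward of the area element by $N_t$ is $(\cosh^2 t - \lambda^2 \sinh^2 t)\, dA$. Writing $F_i^m(q) = N_{\tau_i^m(q)}(q)$ and expanding as
\[
dF_i^m(p)(v) = J(\tau_i^m(p)) + d\tau_i^m(p)(v)\,\partial_t\big|_{F_i^m(p)},
\]
one obtains
\[
\det dF_i^m(p) = \Phi\!\left(\tau_i^m(p),\, \nabla \tau_i^m(p),\, \lambda_i(p)\right),
\]
where $\Phi$ is a smooth function with $\Phi(0,0,0)=1$, analogous to the formula in the pleated case but with additional $\lambda\sinh\tau$ corrections from the shape operator. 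By the previous step and Seppi's theorem, all three arguments tend uniformly to zero, so $\|\det dF_i^m - 1\|_{L^\infty(M_i)} \to 0$.

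\textbf{Conclusion.} Once $\det dF_i^m \to 1$ uniformly, the proof of Theorem~\ref{sl1} applies with minor changes: there is no pleating locus to delete, so one may take $\eta = 0$ and skip the analogue of Corollary~\ref{complementsmall} entirely. The upstairs equicontinuity Claim~\ref{pleatedupstairs} and the downstairs sheet-counting Claim~\ref{upseta} go through verbatim, yielding $\bigl| \int g\, dm_i - \int g\, dh_i \bigr| \to 0$ for all $g \in C(\Gr M)$, and hence the subsequential weak-$*$ limits coincide. \textbf{The main obstacle} is the Jacobian computation in Step~3, since unlike the pleated case the source surface is not locally geodesic; however Seppi's bound makes the curvature contribution manageable, and in fact the smoothness of $M_i$ makes the minimal case cleaner than the pleated one (no wrinkles, no $\eta$-truncation).
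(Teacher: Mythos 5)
Your proposal follows the same route as the paper: flow the minimal disc $D_i$ normally into $H_i^+$, show the flow time $\tau_i$ is $C^1$-small via Seppi's bound and a convexity/supporting-plane argument, compute the Jacobian explicitly (your Jacobi-field formula $\cosh^2 t - \lambda^2\sinh^2 t$ agrees with the paper's $g_t = (\cosh t\,\Id + \sinh t\, A_i)^2$), and then push the comparison downstairs. You also correctly identify that the $\eta$-truncation and its analogue of Corollary~\ref{complementsmall} can be dropped because the source surface has no pleating locus.

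There is one concrete gap in the ``Conclusion'' step: the downstairs Claim~\ref{upseta} does \emph{not} go through verbatim. In the pleated-to-pleated comparison the two sums downstairs are normalized by $\area(\Sigma_i)$ and $\area(R_i)$, and these are equal because both surfaces carry the hyperbolic metric (Gauss--Bonnet gives $2\pi(2g_i-2)$ for each). Here the normalizations are $\area(N_i)$ and $\area(\Sigma_i)$, and $N_i$ is minimal with Gaussian curvature $-1-\lambda_i^2$, so $\area(N_i) < \area(\Sigma_i)$ in general. You therefore pick up an error term proportional to $\lvert 1 - \area(\Sigma_i)/\area(N_i)\rvert$ when converting $\tilde m_i$-integrals to the probability measure $m_i$, and this must be shown to vanish. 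The paper handles it by one more application of Gauss--Bonnet together with Seppi's curvature bound:
\[
\left\lvert 1 - \frac{\area(\Sigma_i)}{\area(N_i)}\right\rvert = \frac{1}{\area(N_i)}\left\lvert \area(N_i) - 2\pi(2g_i-2)\right\rvert = \frac{1}{\area(N_i)}\int_{N_i}\lambda_i^2\,d\area \le \|\lambda_i^2\|_{L^\infty(N_i)} \to 0.
\]
Adding this estimate fixes the argument; without it, the final inequality bounding $\lvert \int g\,dm_i - \int g\,dh_i\rvert$ is missing a term. Everything else in your proposal is sound and matches the paper's proof.
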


We let $\wt{f_i^m}$ be the lift of $f_i^m$ to $\bH^2$ so that $\partial_{\infty} \wt{f_i^m}$ is the limit set $\Lambda_i$ of $Q_i$. We let $\til{m}_i$ and $\til{h}_i$ be the area measures induced by $\wt{f^m_i}$ and $\wt{f^h_i}$ on $\Gr\bH^3$. As before, $\beta_i$ is the bending lamination of $H_i^+$, $R_i = \Gamma\backslash H_i^+$ and we put $N_i:= \Gamma\backslash D_i = f_i^m(S_i)$.

As in the proof of Theorem \ref{sl1}, we define a map
\[
F_i : D_i \arr H_i^+
\]
where $F_i(p)$ is given by flowing $p$ in the direction normal to $D_i$ for the time $\tau_i(p)$ it takes to hit $H_i^+$. Concisely, $F_i(p) = n_{\tau_i(p)}(p)$.

\begin{claim}\label{Fnice}
The map $F_i$ satisfies the following properties:
\begin{enumerate}[i.]
\item $F_i$ is differentiable outside of $F_i^{-1}(\beta_i)$
\item $\tilde{m}_i (F_i^{-1}(\beta_i)) = 0$
\item $\| \det(dF_i) - 1 \|_{L^{\infty} (D_i)} \to 0$ as $i\to\infty$.
\end{enumerate}
\end{claim}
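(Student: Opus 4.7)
The plan is to follow the structure of Proposition \ref{map}, with Seppi's uniform curvature estimate (Theorem \ref{seppimain}) taking the role that the flatness of ideal triangles played in the pleated case. Because $D_i$ is smooth everywhere, there is no pleating locus on the domain to excise, so we can work globally on $D_i$ without the auxiliary parameter $\eta$.

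For item (i), $F_i$ is the composition of the smooth normal-vector assignment $p\mapsto(p,N(p))$ with the geodesic flow, restricted to where the trajectory hits the smooth part of $H_i^+$, which is precisely the complement of $F_i^{-1}(\beta_i)$; it is smooth there. For item (ii), Seppi's bound $\|\lambda_i\|_{L^\infty}\leq C\log K_i$ forces the focal distance along normal geodesics from $D_i$ to tend to infinity, so for $i$ large the normal flow map $D_i\times[0,T]\to\bH^3$ is a local diffeomorphism for any fixed $T$. Combined with the convexity of $H_i^+$, this means $F_i$ admits a locally Lipschitz inverse $G_i$ on the smooth part of $H_i^+$; since $\beta_i$ has Hausdorff dimension $1$, its image $G_i(\beta_i) = F_i^{-1}(\beta_i)$ does too, so it has $\til{m}_i$-measure zero.

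The main content is item (iii). Paralleling Lemma \ref{c1}, I would first show $\|\tau_i\|_{L^\infty}\to 0$ by sandwiching $\Lambda_i$ between two totally geodesic planes $\Pi_i^\pm$ with $\partial_\infty\Pi_i^\pm$ converging to a common round circle; both $D_i$ (since a complete minimal surface lies in the convex hull of its asymptotic boundary, by the maximum principle) and $H_i^+$ (by convexity) are trapped between these planes, so the normal distance between them is $o_i(1)$. Next, I would prove $\|d\tau_i\|_{L^\infty}\to 0$ by a supporting-plane argument like that of Figure \ref{angle}: Seppi's bound makes $D_i$ almost totally geodesic, so its tangent plane at $p$ is an approximate supporting hyperplane, and the convexity of $H_i^+$ forces the angle between the normal geodesic $t\mapsto n_t(p)$ and the image curve $s\mapsto F_i(\exp_p sv)$ to be close to $\pi/2$, with error controlled quantitatively by $\tau_i$ and $\|\lambda_i\|_\infty$. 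Finally, the Jacobi field computation along a normal geodesic of length $\tau_i(p)$ from a minimal surface with shape operator $A$ (eigenvalues $\pm\lambda_i(p)$) yields, in an orthonormal frame,
\[
\det dF_i(p) = \bigl(\cosh^2\tau_i - \lambda_i^2\sinh^2\tau_i\bigr)\sqrt{1 + u^\top(\cosh\tau_i\,I - \sinh\tau_i\,A)^{-2}\,u},
\]
where $u=\nabla\tau_i(p)$. With $\tau_i$, $|\nabla\tau_i|$, $\|\lambda_i\|_\infty = o_i(1)$ uniformly, the right-hand side tends to $1$ uniformly on $D_i$.

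The main obstacle I expect is the gradient bound $\|d\tau_i\|_{L^\infty}\to 0$: the pleated proof relied on the exact totally geodesic structure of the ideal triangles, whereas here the tangent-plane-as-supporting-plane argument must tolerate a nonzero (if small) second fundamental form on $D_i$. Seppi's estimate is precisely the ingredient that lets this argument close, and once $\tau_i$ and $\nabla\tau_i$ are controlled, the $\lambda_i^2\sinh^2\tau_i$ correction in the Jacobian is automatically negligible.
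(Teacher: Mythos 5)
Your plan follows the paper's proof of this claim closely, and the overall structure is right: item (i) is handled identically; item (ii) is the same idea (the paper phrases the local-diffeomorphism fact in terms of the nondegeneracy of the pullback metrics $(\cosh t\,\Id+\sinh t\,A_i)^2$ for $|\lambda|<1$, rather than focal distances, and uses ``Lipschitz maps preserve measure zero'' rather than Hausdorff dimension, but these are the same content); and item (iii) is the same three-step scheme, namely $\|\tau_i\|_{L^\infty}\to 0$, then $\|d\tau_i\|_{L^\infty}\to 0$ via a convexity/supporting-plane argument corrected by Seppi's curvature bound, then a direct Jacobian computation in normal coordinates. Your determinant formula agrees with the paper's (via $\det(g_\tau+uu^\top)=\det g_\tau\,(1+u^\top g_\tau^{-1}u)$), up to a sign: Uhlenbeck's formula for the equidistant metric is $g_\tau=(\cosh\tau\,\Id+\sinh\tau\,A)^2$, so the factor under the square root should be $u^\top(\cosh\tau\,\Id+\sinh\tau\,A)^{-2}u$, not with a minus sign; this is a harmless convention issue since $A$ has eigenvalues $\pm\lambda$ and everything still tends to $1$.

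The one place to be a bit careful is your argument for $\|\tau_i\|_{L^\infty}\to 0$. Sandwiching $\Lambda_i$ between two round circles and taking the bounding planes $\Pi_i^\pm$ does trap both $D_i$ and $H_i^+$ in a thin slab, but to conclude $\tau_i=o_i(1)$ you are integrating along the normal geodesic from $D_i$, not along the slab normal; if the normal to $D_i$ made a large angle with the slab, a thin slab would not control $\tau_i$. This can be repaired (Seppi's curvature bound shows the tangent planes to $D_i$ are uniformly close to the slab planes), but the paper sidesteps it entirely by invoking Seppi's Proposition 4.1 (reproduced here as Proposition~\ref{seppiprop}), which supplies surfaces $D_i^\pm$ \emph{equidistant from $D_i$} at normal distance $\arctanh(2\|f_i\|_B)$ and containing $\core Q_i$, so that $\tau_i(p)\le\arctanh(2\|f_i\|_B)\to 0$ directly. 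You may want to use that statement for the $L^\infty$ step as well, rather than only for the implicit convexity fact.
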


To prove this claim, we will use the following rephrasing of Proposition 4.1 of Seppi in \cite{Se}:

\begin{prop}\label{seppiprop}
Suppose $i$ is large enough that the uniformizing map $f_i$ has Bers norm $\|f_i\|_B<1/2$. Then, we may find surfaces $D^-_i$ and $D^+_i$ that are equidistant from $D_i$ so that the region between $D^-_i$ and $D^+_i$ is convex and thus contains $\core Q_i$.

Moreover, given $x\in D_i$, there is a geodesic segment $\alpha$ from $D_i^-$ to $D_i^+$, meeting $D_i$ and $D_i^{\pm}$ orthogonally, whose length satisfies
\begin{equation}\label{seppi}
\ell(\alpha) \leq \arctanh (2\|f_i\|_B).
\end{equation}
\end{prop}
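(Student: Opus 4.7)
The plan is to follow the argument of Seppi's Proposition 4.1 in \cite{Se}; the statement above is essentially its translation from the principal-curvature language Seppi uses into the language of parallel (equidistant) surfaces. The proof decomposes into three ingredients.

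First, I would recall the standard formula for how principal curvatures transform under the equidistant deformation. If $D \subset \bH^3$ is a smooth embedded surface with principal curvatures $k_1, k_2$ at a point $p$, and $D^t$ denotes the equidistant surface obtained by flowing $p$ for signed time $t$ along the unit normal, then at the image of $p$ the principal curvatures of $D^t$ are
\[
k_j^{(t)} = \frac{k_j + \tanh t}{1 + k_j \tanh t}.
\]
Since $D_i$ is minimal, $k_1 = -k_2 = \lambda_i$ with $\lambda_i \geq 0$, so both $k_j^{(t)}$ are monotone in $t$ and depend only on $\lambda_i$ and $\tanh t$.

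Second, the quantitative input from Seppi bounds the principal curvatures directly by the Bers norm in the small-norm regime, namely $\|\lambda_i\|_{L^\infty(D_i)} \lesssim \|f_i\|_B$ whenever $\|f_i\|_B < 1/2$; this is sharper than Theorem \ref{seppimain} and is what Seppi actually proves. I would then set $D_i^\pm := D_i^{\pm t_i}$, choosing $t_i$ so that $\tanh t_i$ slightly exceeds $\|\lambda_i\|_\infty$. The parallel-surface formula then shows that both principal curvatures of $D_i^+$ are strictly positive and both of $D_i^-$ strictly negative, so each is locally strictly convex with convex side facing $D_i$. Calibrating $t_i$ so that the total length $2t_i$ of the normal segment matches the stated bound $\arctanh(2\|f_i\|_B)$ is a half-angle manipulation of $\tanh$, which I would perform as in \cite{Se}.

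Third, to promote local convexity of the two leaves to global convexity of the region $U_i$ between them, I would verify that the equidistant family $\{D_i^s\}_{s \in [-t_i, t_i]}$ foliates $U_i$ smoothly and without self-intersection: non-existence of focal points follows from $|\lambda_i \tanh s| < 1$ throughout the interval, and global injectivity uses that $D_i$ is embedded, being the minimal disc spanning the quasicircle $\Lambda_i$ as guaranteed by Uhlenbeck's theorem. The foliation of $U_i$ by convex leaves with a common convex orientation forces $U_i$ itself to be convex; once $U_i$ is closed, $Q_i$-invariant, and has ideal boundary $\Lambda_i$, it contains every bi-infinite geodesic with endpoints in $\Lambda_i$, hence $\core Q_i$. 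The segment $\alpha$ is then the restriction to $U_i$ of the normal geodesic to $D_i$ at $x$; orthogonality to $D_i$ and to each $D_i^\pm$ is automatic, and its length is exactly $2t_i$.

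The main obstacle I anticipate is the third step, in particular the passage from local to global convexity. The parallel-surface formula and Seppi's curvature bound are mechanical, but verifying global injectivity of the normal exponential map out to distance $t_i$ requires invoking the embeddedness of the quasifuchsian minimal disc and ruling out focal points, which is where the argument uses more than just the pointwise algebra. The other genuinely delicate checkpoint is matching the exact constant $\arctanh(2\|f_i\|_B)$ against Seppi's curvature estimate, since the $\tanh$ half-angle arithmetic is tight.
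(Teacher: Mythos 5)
The paper gives no internal proof of this proposition: it is stated verbatim as a rephrasing of Seppi's Proposition 4.1 and is used as a black box, so there is nothing in the paper to compare against and your argument is necessarily a reconstruction of an external result. Your outline — parallel surfaces $D_i^{\pm t}$, the transformation formula for principal curvatures, absence of focal points, convexity of the two outer leaves, containment of the convex hull — is the natural mechanism, and the $\arctanh$ arithmetic does close provided the curvature estimate has the right constant: if $\|\lambda_i\|_\infty \leq \|f_i\|_B$ one takes $\tanh t_i = \|\lambda_i\|_\infty$ and gets width $2\arctanh\|\lambda_i\|_\infty \leq 2\arctanh\|f_i\|_B = \arctanh\bigl(\tfrac{2\|f_i\|_B}{1+\|f_i\|_B^2}\bigr) \leq \arctanh(2\|f_i\|_B)$.

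Two things, however, need repair. First, you only assert $\|\lambda_i\|_\infty \lesssim \|f_i\|_B$ with an unspecified implicit constant, and that constant must in fact be $1$ (or close enough to survive the half-angle manipulation) for the exact bound $\arctanh(2\|f_i\|_B)$ to emerge. You flag this as a ``delicate checkpoint'' but then defer it back to \cite{Se}, which is circular: the statement you are proving \emph{is} a restatement of a proposition of \cite{Se}, so you cannot borrow the calibration from the proof of that same proposition. A genuine independent proof needs to locate and state precisely which Schwarzian-to-curvature inequality you are invoking, with its constant. Second, the local-to-global convexity step is mis-stated. The interior leaves $D_i^s$ with $|s| < t_i$ are \emph{not} convex: wherever $\lambda_i(p) > \tanh|s|$ the leaf has one strictly negative principal curvature, so ``the foliation of $U_i$ by convex leaves forces $U_i$ to be convex'' is false as written. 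The equidistant foliation and the non-existence of focal points give that the normal exponential map is an embedding on $[-t_i,t_i]$, which you do need; but global convexity of $U_i$ should instead be obtained by noting that each boundary leaf $D_i^{\pm t_i}$ is a complete, properly embedded, connected, weakly convex surface with ideal boundary $\Lambda_i$, hence bounds a convex region of $\bH^3$, and $U_i$ is the intersection of those two convex regions; this requires checking that the convex side of $D_i^{+t_i}$ contains $D_i^{-t_i}$ and vice versa, which follows from the focal-point-free foliation.
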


\begin{figure}
\includegraphics[scale=0.09]{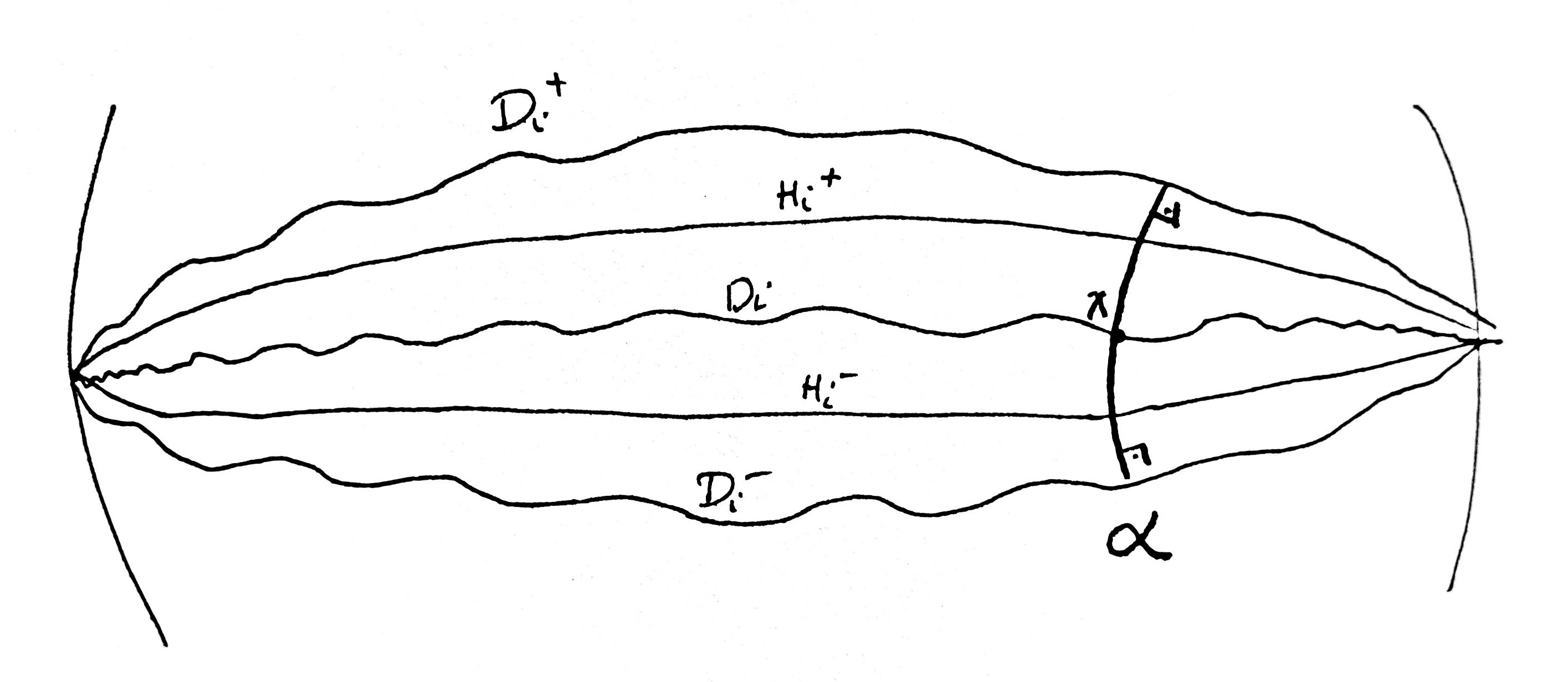}
\caption{Illustrating Proposition \ref{seppiprop}}
\end{figure}

In particular, given $x_i\in D_i$, let $P^+_i$ and $P^-_i$ be the geodesic planes tangent to $D^+_i$ and $D^-_i$ at the endpoints of the segment $\alpha_i$. From \ref{seppi}, we see that the distance between $P^+_i$ and $P^-_i$ goes to zero as $i\to\infty$ and does not depend on the chosen point $x_i \in D_i$.

\begin{proof}[Proof of Claim \ref{Fnice}]

{\bf i.} Let $x\in D_i - F_i^{-1} (\beta_i)$. Then, $F_i$ maps a disc around $x$ to a piece of a totally geodesic plane in $H_i^+$ by the geodesic flow in the normal direction. This is a smooth map.

\medskip

{\bf ii.} Let $E$ be the set containing all the points above $D_i^-$ and below $D_i^+$. This set is foliated by surfaces $D_i^t$ equidistant to $D_i$, for $t\in [-\dist(D_i^-,D_i),\dist(D_i,D_i^+)]$. The set $E$ is also foliated by the orbits of the geodesic flow going through points in $D_i$ and their normal vector. These flow lines never meet. If they did, the pullback metrics of the $D_i^t$ on $D_i$ would be degenerate, which is the not the case, as their principal curvatures are given by
\[
\frac{\lambda-\tanh t}{1-\lambda\tanh t} \aand \frac{-\lambda-\tanh t}{1+\lambda\tanh t}
\]
and $\lambda\in (-1,1).$

In particular, we can define a map
\[
G_i : E \arr D_i,
\]
which takes $y \in D_i^t \se E$ back to the point $x\in D_i$ so that $g_t(x,n) = y$. This map is smooth and in particular, its restriction to $H_i^+$ is Lipschitz and hence takes sets of measure zero to sets of measure zero. Thus,
\[
\tilde{m}_i (G_i(\beta_i)) = \tilde{m}_i (F_i^{-1}(\beta_i)) = 0.
\]

\medskip

{\bf iii.} As before, first we show that the hitting times $\tau_i$ converge to zero uniformly on $D_i$ in the $C^1$ sense.

\begin{lem}\label{c12} $\|\tau_i\|_{C^1(D_i)}\to 0$ as $i\to\infty$.
\end{lem}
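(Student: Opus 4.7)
The plan is to imitate the proof of Lemma \ref{c1} essentially verbatim, with $D_i$ playing the role of $\Delta$ and Seppi's Proposition \ref{seppiprop} replacing the quasiconformal control on the limit set $\Lambda_i$. The substantive new ingredient is that $D_i$ is not totally geodesic, so the trigonometric step will acquire correction terms controlled by the principal curvatures of $D_i$, which Seppi's Theorem \ref{seppimain} tells us tend to zero uniformly.

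First I would establish the $L^{\infty}$ bound $\|\tau_i\|_{L^{\infty}(D_i)}\to 0$ directly from Proposition \ref{seppiprop}. It places $\core Q_i$ inside the slab between the equidistant surfaces $D_i^-$ and $D_i^+$, whose thickness is at most $2\arctanh(2\|f_i\|_B)$. Since $H_i^+\subset \core Q_i$, and the minimal disc $D_i$ lies in the convex hull of $\Lambda_i$ and hence in $\core Q_i$, the normal flow from any $p\in D_i$ reaches $H_i^+$ in time at most $\arctanh(2\|f_i\|_B)$. Because $f_i$ converges uniformly to the identity on $\hat{\bC}$, $\|f_i\|_B\to 0$, and the $L^{\infty}$ bound follows.

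For the derivative estimate, I would repeat the convexity-and-trigonometry argument of Lemma \ref{c1}. Fix $p\in D_i\setminus F_i^{-1}(\beta_i)$, a unit $v\in T_pD_i$, and a small scale $\eta>0$, and let $\gamma(s)=\exp_p^{D_i}(sv)$ be the intrinsic geodesic in $D_i$. Because $\gamma([0,\eta])\subset D_i\subset \core Q_i$, the supporting plane $\Pi$ to $H_i^+$ at $F_i(p)$ has $\gamma([0,\eta])$ on the concave side. Letting $\theta_i(p,v)$ denote the angle at $F_i(p)$ between the normal direction to $D_i$ and the tangent vector $dF_i(p)v=(F_i\circ\gamma)'(0)$, and $\alpha_i$ the angle between the normal direction and the $\bH^3$-geodesic from $F_i(p)$ to $\gamma(\eta)$, the convexity argument of Lemma \ref{c1} (with its conclusion adjusted by the distance from $\gamma(\eta)$ to the tangent geodesic plane $\Pi_p$ of $D_i$ at $p$) gives $\cos\theta_i \leq \tanh\tau_i(p)/\tanh\eta + C\,\|\lambda_i\|_{L^{\infty}(D_i)}\,\eta$, where $\pm\lambda_i$ are the principal curvatures of $D_i$. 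Fixing $\eta$ and letting $i\to\infty$ makes the right-hand side go to $0$ uniformly in $(p,v)$, since both $\tau_i$ and $\|\lambda_i\|_{L^{\infty}(D_i)}$ tend to zero uniformly by Step 1 and Theorem \ref{seppimain}; hence $|d\tau_i(p)v|\to 0$ uniformly.

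The main obstacle is this curvature bookkeeping: in Lemma \ref{c1} the entire argument sits inside a single totally geodesic plane $\Delta$, so intrinsic and extrinsic exponentials coincide and the trigonometry is exact. Here one must compare the intrinsic geodesic $\gamma$ of $D_i$ with the $\bH^3$-geodesic from $F_i(p)$ to $\gamma(\eta)$, and carefully track both the $O(\|\lambda_i\|_\infty\eta^2)$ deviation of $\gamma(\eta)$ from $\Pi_p$ and the variation of the normal direction along $\gamma$. The quantitative smallness of the second fundamental form from Seppi makes all such corrections $o_i(1)$, rather than delicate. With Lemma \ref{c12} in hand, part (iii) of Claim \ref{Fnice} then follows from the identical area-distortion computation in Fermi coordinates as at the end of the proof of Proposition \ref{derivdelta}.
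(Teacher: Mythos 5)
Your proposal takes essentially the same route as the paper: the $L^\infty$ bound comes straight from Seppi's Proposition \ref{seppiprop}, and the derivative bound repeats the convexity-plus-trigonometry argument of Lemma \ref{c1}, introducing an auxiliary angle to the intrinsic geodesic of $D_i$ and absorbing the discrepancy with the ambient-geodesic angle into a correction term that vanishes uniformly with the principal curvatures via Theorem \ref{seppimain}. The paper denotes this correction $\omega_i$ rather than writing $C\|\lambda_i\|_\infty\eta$ explicitly, but the structure and the inputs are identical.
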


\begin{proof}
The fact that $\|\tau_i\|_{L^{\infty}(D_i)}\to 0$ as $i\to\infty$ follows readily from Seppi's Proposition \ref{seppiprop}. It remains to show that $d\tau_i(p)v\to 0$ as $i\to\infty$ uniformly in $\T^1 (D_i - F_i^{-1}(\beta_i))$. This in turn will follow from Seppi's Theorem \ref{seppimain} that says that the principal curvatures of $D_i$ converge uniformly from zero.

For $(p,v)\in \T^1 D_i$, we let $\theta_i(p,v)$ be the angle in $(0,\pi/2]$ that the geodesic normal to $D_i$ at $p$ makes with the curve $s\mapsto F_i(\exp_p sv)$. As in the pleated case (Lemma \ref{c1}), it suffices to show that $\theta_i(p,v)\to \pi/2$ uniformly in $\T^1D_i$.

Fix $\eta > 0$.
Again, we let $\alpha_i$ be the angle based at $F_i(p)$ between the normal geodesic $t\mapsto n_t(p)$ to $\Delta$ at $p$ and the geodesic segment from $F_i(p)$ to $\exp_p (\eta v)$. Let $\exp^{D_i}_p : \T^1_p D_i \to D_i$ denote the exponential map intrinsic to $D_i$. We let  $\alpha'_i$ be the angle based at $F_i(p)$ between the normal geodesic $t\mapsto n_t(p)$ to $\Delta$ at $p$ and the \emph{intrinsic} geodesic segment of $D_i$ from $F_i(p)$ to $\exp^{D_i}_p (\eta v)$.

\begin{figure}
\includegraphics[scale=0.06]{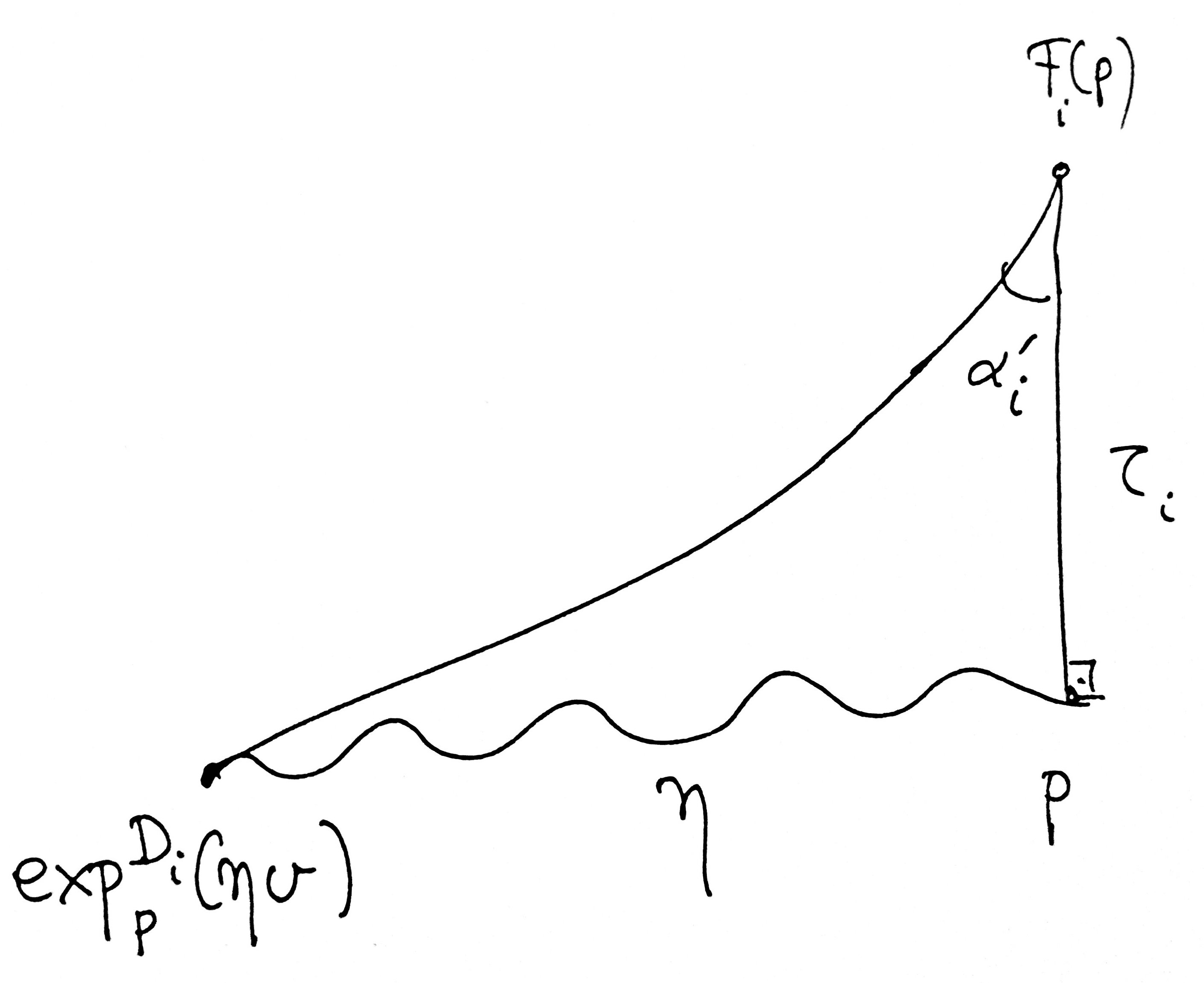}
\caption{The angle $\alpha'_i$ is defined in a similar way to $\alpha_i$, except that it is opposite to an intrinsic geodesic of $D_i$ of length $\eta$, rather than a geodesic of $\bH^3$.}
\end{figure}

Note that $\alpha'_i < \theta_i$. If that was not the case, a supporting plane to $H_i^+$ based at $F_i(p)$ would meet $D_i$, in a violation of convexity.

Due to the principal curvatures of $D_i$ going to zero as $i\to \infty$, it follows that the difference between $\alpha_i$ and $\alpha'_i$ also goes to zero unformly as $i\to \infty$. In other words, there is a quantity $\omega_i \to 0$ as $i\to \infty$ (which depends on the choice of $\eta>0$, but not of $(p,v)\in \T^1 D_i$), so that
\[
|\cos \alpha_i - \cos \alpha'_i| \leq \omega_i.
\]

But as before, $\cos \alpha_i = \tanh(\tau_i(p))/\tanh\eta$. Thus,
\[
\cos \theta_i \leq \frac{\tanh\|\tau_i\|_{L^{\infty} (D_i)}}{\tanh\eta} + \omega_i \iinfty 0.
\]
\end{proof}

For each $i$, we choose coordinates on $\bH^3$ given by $(x,y,t) = n_t(x,y)$, where $(x,y)$ are coordinates for $D_i$ chosen so that $\partial_x$ and $\partial_y$ form an orthonormal basis for $T_{p_i} D_i$. (The points $p_i$ are chosen in the full measure set $D_i - F_i^{-1}(\beta_i)$.) In these coordinates, the metric $G_t$ on $n_t(D_i)$ is given by
\[
G_t = g_t + dt^2,
\]
where at $n_t(p_i)$, the matrix entries of $g_t$ corresponding to the basis $\partial_x$, $\partial_y$ are given by
\[\tag{$\star$}
g_t = (\cosh t \Id + \sinh t A_i)^2
\]
and $A_i$ is the second fundamental form of $D_i$. (See Section 5 of Uhlenbeck \cite{U} for details.)

As before, in these coordinates we have $F_i(x,y,0) = (x,y,\tau_i(x,y))$ and so $dF_i(p)v = v + d\tau_i(p) v \, \partial/\partial t$. Thus,
\begin{align*}
\det dF_i(p_i) &= {\area(dF_i(p_i)\,\partial_x,dF_i(p_i)\,\partial_y)} \\
&= \lef| \det \begin{bmatrix} g_{\tau_i(p_i)} (\partial_x,\partial_x) + (\partial_x \tau_i(p_i))^2 & \partial_x \tau_i(p)\,\partial_y \tau_i(p_i) \\ \partial_x \tau_i(p_i)\,\partial_y \tau_i(p) & g_{\tau_i(p_i)} (\partial_y,\partial_y) + (\partial_y \tau_i(p_i))^2  \end{bmatrix} \ri|^{1/2} \\
&= \lef( |\partial_x|^2_{\tau_i(p_i)}|\partial_y|^2_{\tau_i(p_i)} + (\partial_x\tau_i(p_i))^2|\partial_y|^2_{\tau_i(p_i)} + (\partial_y\tau_i(p_i))^2 |\partial_x|^2_{\tau_i(p_i)} \ri)^{1/2},
\end{align*}

Above, $|\partial_x|^2_{\tau_i(p_i)}$ and $|\partial_y|^2_{\tau_i(p_i)}$ denote, respectively, the first and second diagonal entries of $g_{\tau_i(p_i)}$. From Seppi's result (Theorem \ref{seppimain}), we know that the second fundamental forms $A_i$ converge uniformly to the zero matrix as $i\to\infty$. In view of the formula ($\star$), it follows that $|\partial_x|^2_{\tau_i(p_i)}$ and $|\partial_y|^2_{\tau_i(p_i)}$ both converge uniformly to 1 as $i\to\infty$.

In addition, from Lemma \ref{c12}, we know that the derivatives of $\tau_i(p_i)$ converge to zero as $i\to\infty$. We can thus conclude that

\[
\|\det dF_i(p) - 1 \|_{L^{\infty} (D_i)} \to 0 \text{ as }i\to\infty.
\]
\end{proof}

Let $\tilde{m}_i$ and $\tilde{h}_i$ be the area measures of $D_i$ and $H^+_i$ in $\Gr \bH^3$.

\begin{claim}\label{upstairs}
Let $(g_{\alpha}) \se C(\Gr \bH^3)$ a bounded and equicontinuous family of functions.
Then,
\[
\sup_{\alpha} \lef| 
\int_{\Gr \bH^3} g_{\alpha} \,d\tilde{m}_i - \int_{\Gr \bH^3} g_{\alpha} \,d \tilde{h}_i \ri| \to 0 \text{ as }i\to\infty.
\]
\end{claim}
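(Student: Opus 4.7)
The plan is to mirror the pleated-case argument of Claim \ref{pleatedupstairs}, using Claim \ref{Fnice} in place of Proposition \ref{map}. A welcome simplification in the minimal setting is that no thickening $D_i^{\eta}$ is needed, since $D_i$ is smooth and Claim \ref{Fnice} controls $F_i$ on all of $D_i$ at once, so the outer limit $\eta \to 0$ can be dropped.

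First, I would invoke parts (i) and (ii) of Claim \ref{Fnice} to perform a change of variables via $\bar F_i : D_i \to \Gr \bH^3$, $x \mapsto (F_i(x), T_{F_i(x)} H_i^+)$, writing
\[
\int_{\Gr \bH^3} g_{\alpha}\, d\tilde h_i = \int_{D_i} (g_{\alpha}\circ \bar F_i)\, \det dF_i\, d\tilde m_i.
\]
Subtracting and applying the triangle inequality yields
\[
\left| \int g_{\alpha}\, d\tilde m_i - \int g_{\alpha}\, d\tilde h_i \right| \le \int_{D_i} |g_{\alpha}\circ\bar F_i|\,|\det dF_i - 1|\, d\tilde m_i + \int_{D_i} |g_{\alpha} - g_{\alpha}\circ \bar F_i|\, d\tilde m_i.
\]
The first summand is bounded, uniformly in $\alpha$, by $\sup_{\alpha}\|g_{\alpha}\|_{\infty}\cdot \|\det dF_i - 1\|_{L^{\infty}(D_i)}\cdot \tilde m_i(\supp g_{\alpha}\circ \bar F_i)$; the middle factor is $o_i(1)$ by Claim \ref{Fnice}(iii), and (as in the proof of Claim \ref{upseta}) this is applied to a family $(g_{\alpha})$ whose supports are uniformly controlled, so the area factor stays bounded.

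For the second summand I would use the equicontinuity modulus $w$ and show that $\dist_{\Gr \bH^3}(\bar\iota_D(x),\bar F_i(x)) \to 0$ uniformly in $x\in D_i \setminus F_i^{-1}(\beta_i)$, where $\bar\iota_D(x) = (x,T_x D_i)$. The point distance is $\tau_i(x)$, which tends to $0$ uniformly by Seppi's Proposition \ref{seppiprop}. For the $2$-plane part, the normal geodesic segment $[x,F_i(x)]$ meets $D_i$ orthogonally by construction, and meets $H_i^+$ at an angle $\theta_i(x,v)$ uniformly approaching $\pi/2$: this is precisely the supporting-plane convexity estimate proved in Lemma \ref{c12}, which uses both $\|\tau_i\|_{\infty}\to 0$ and the uniform smallness of the principal curvatures of $D_i$ from Seppi's Theorem \ref{seppimain}. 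Parallel transport along this short segment is $o_i(1)$-close to the identity on the Grassmannian, so $T_x D_i$ and $T_{F_i(x)} H_i^+$, once transported to a common basepoint, both lie within $o_i(1)$ of the $2$-plane perpendicular to the normal direction. Thus their Grassmannian distance is $o_i(1)$ uniformly in $x$, and applying $w$ drives the second summand to $0$ uniformly in $\alpha$.

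The main obstacle is this final plane-closeness step, because $H_i^+$ is only pleated and its tangent planes are defined merely almost everywhere. However, on the full-measure set where they are defined, the tangent plane is that of a totally geodesic piece, and the same orthogonality-and-convexity bound from Lemma \ref{c12} applies verbatim; crucially, since $F_i^{-1}(\beta_i)$ is $\tilde m_i$-null by Claim \ref{Fnice}(ii), the a.e.-defined tangent-plane field of $H_i^+$ is accessed only on a full-measure portion of $D_i$, so no extra work beyond what is already in Claim \ref{Fnice} is required.
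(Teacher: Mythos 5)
Your proposal is correct and takes exactly the route the paper does: the paper's proof of Claim \ref{upstairs} literally reads ``The proof is the same as the proof of Claim \ref{pleatedupstairs}, substituting $\tilde{h}_i$ for $\tilde{h}^{\eta}_i$ and $\tilde{m}_i$ for $\tilde{p}^{\eta}_i$,'' and that earlier proof is precisely the change-of-variables via $\det dF_i$, triangle inequality, and then boundedness plus equicontinuity. Your extra paragraph unpacking the second summand -- separating the Grassmannian distance into the base-point part $\tau_i(x)\to 0$ and the $2$-plane angle controlled by the orthogonality-and-convexity estimate inside Lemma \ref{c12}, and using the $\tilde m_i$-nullity of $F_i^{-1}(\beta_i)$ to ignore the pleating locus -- is a useful elaboration of what the paper leaves implicit in the phrase ``from the boundedness and equicontinuity of $g_{\alpha}$.''
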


\begin{proof}
The proof is the same as the proof of Claim \ref{pleatedupstairs}, substituting $\tilde{h}_i$ for $\tilde{h}^{\eta}_i$ and $\tilde{m}_i$ for $\tilde{p}^{\eta}_i$.\end{proof}

Now, let $g\in C(\Gr M)$. In a similar fashion to the proof of Claim \ref{upseta}, we proceed to show that
\[
\lim_{i\to\infty} \int_{\Gr M} g\,dm_i = \lim_{i\to\infty}
\int_{\Gr M} g\,dh_i.
\]
As before, we may choose $g$ to be supported in a small geodesic ball $B\se \Gr M$. For a lift $\tilde{B}$ of $B$ to $\Gr \bH^3$, there is $\tilde{g}\in C(\Gr \bH^3)$ and a finite set $K_i \se \Gamma$ so that
\[
\int_{\Gr M} g \,dm_i = \frac{1}{\area (N_i)} \sum_{\gamma\in K_i} \int_{\Gr \bH^3} \tilde{g}\circ \gamma \, d\tilde{m}_i
\]
and similarly,
\[
\int_{\Gr M} g \,dh_i = \frac{1}{\area (\Sigma_i)} \sum_{\gamma\in K_i} \int_{\Gr \bH^3} \tilde{g}\circ \gamma \, d\tilde{h}_i.
\]

As before, $(\tilde{g}\circ\gamma)_{\gamma\in\Gamma}$ is a bounded and equicontinuous family of functions in $C(\Gr M)$, and the $K_i$ can be chosen so that 
$\sup_i \#K_i/\area(\Sigma_i) < \infty.$

Now we estimate
\begin{align*}
&\lef|\int_{\Gr M} g\,dm_i - \int_{\Gr M} g\,dh_i \ri| \leq
 \frac{1}{\area(\Sigma_i)} \sum_{\gamma\in K_i} \lef| \frac{\area(\Sigma_i)}{\area(N_i)}\int_{\Gr \bH^3} \tilde{g}\circ \gamma \, d\tilde{m}_i -  \int_{\Gr \bH^3} \tilde{g}\circ \gamma \, d\tilde{h}_i \ri|\\
 &\leq \frac{\# K_i}{\area(\Sigma_i)} \sup_{\gamma\in\Gamma} \lef[ 
 \lef| \int_{\Gr \bH^3} \tilde{g}\circ\gamma\,d\tilde{m}_i - 
 \int_{\Gr \bH^3} \tilde{g}\circ \gamma\, d\tilde{h}_i \ri| + 
 \lef| 1 - \frac{\area(\Sigma_i)}{\area(N_i)} \ri|\, \| \tilde{g}\circ \gamma\|_{L^1(\tilde{h}_i)} \ri].
\end{align*}

The upper bound above goes to zero as $i\to \infty$ due to the boundedness of $\#K_i/\area(\Sigma_i)$; the equicontinuity and boundedness of $(\tilde{g}\circ\gamma)_{\gamma\in\Gamma}$ and the fact that $\area(\Sigma_i)/\area(N_i)$ goes to 1. 

To see the latter, say $\pm \lambda_i$ are the principal curvatures of $N_i$ and $g_i$ is the genus of $S_i$. Using the Gauss-Bonnet formula, we have
\begin{align*}
\lef| 1 - \frac{\area(\Sigma_i)}{\area (N_i)} \ri| &= \frac{1}{\area (N_i)} \lef| \int_{N_i} 1 \,d\area - 2\pi(2g_i - 2) \ri| \\
&= \frac{1}{\area(N_i)} \lef| \int_{N_i} 1 \,d\area - \int_{N_i} \lambda_i^2 \,d\area \ri| \\
&\leq \frac{1}{\area(N_i)} \| 1 - \lambda_i^2 \|_{L^{\infty} (N_i)} \area (N_i).
\end{align*}
We know that $\|1 - \lambda_i^2\|_{L^{\infty}(N_i)}\to 0$ as $i\to\infty$ due to Seppi's Theorem \ref{seppimain}.

\section{Building surfaces out of good pants}

In this section, we will outline how to construct a $\pi_1$-injective closed oriented nearly Fuchsian surface in $M$ out of good pants. This is the Kahn-Marković surface subgroup theorem \cite{KM}, though our exposition will line up with that of Kahn and Wright \cite{KW} as well as use some notation from Liu and Marković \cite{LM}.

\subsection{Building blocks}

The following paragraphs define the many terms related to the building blocks of this construction.

An orthogeodesic $\gamma$ between two closed geodesics $\alpha_0, \alpha_1\se M$ is a geodesic segment parametrized with unit speed going from $\alpha_0$ to $\alpha_1$ and meeting both curves orthogonally.

We denote by $\curves$ the space of $(\epsilon,R)$-\emph{good curves}. Those are the closed oriented geodesics whose complex translation length $\bl(\gamma)$ is $2\eps$-close to $2R$.

Let $P_R$ be the planar oriented hyperbolic pair of pants whose cuffs $C_i$ have length exactly $2R$ for $i\in \bZ/3$. We define the space $\pants$ of $(\eps,R)$-good pants to be the space of equivalence classes of maps $f:P_R\to M$ so that $f(C_i)$ is homotopic to an element of $\curves$, for all $i\in\bZ/3$. Two representatives $f$ and $g$ of elements of $\pants$ are equivalent if $f$ is homotopic to $g\circ \psi$ for some orientation-preserving homeomorphism $\psi:P_R\to P_R$. 

We let $\tpants$ be the space of \emph{ends of $(\eps,R)$ good pants}, which can be thought as good pants with a marked cuff. Precisely $\tpants$ is the space of equivalence classes of pairs $[(f,C_i)]$, where $f\in\pants$ and $C_i\se \partial P_R$ is a cuff. We say two representatives $(f,C_i)$ and $(g,C_j)$ of elements of $\tpants$ are equivalent if $f$ is homotopic to $g\circ \psi$, where $\psi:P_R\to P_R$ is an orientation-preserving homeomorphism $\psi: P_R\to P_R$ so that $\psi(C_i) = C_j$. Note that forgetting the cuff of $[(f,C_i)]\in \tpants$ defines a three-to-one surjection from $e:\tpants\to\pants$. For $\pi\in\pants$, we call $e^{-1}(\pi)$ the \emph{ends} of $\pi$.

For $\gamma\in\curves$, we let $\tpants(\gamma)$ be the $[(f,C_i)]\in \tpants$ so that $f(C_i)$ is homotopic to $\gamma$ or its orientation reversal $\gamma^{-1}$. We can decompose $\tpants(\gamma)$ into $\pants^+(\gamma)\sqcup \pants^-(\gamma)$, where $\pants^+(\gamma)$ consists of the $[(f,C_i)]$ with $f(C_i) \sim \gamma$ and $\pants^-(\gamma)$ consists of the $(f,C_i)$ with $f(C_i)\sim \gamma^{-1}$. There is a bijection
\[
r: \pants^-(\gamma) \arr \pants^+ (\gamma)
\]
given by $r([(f,C_i)]) = ([(f\circ \rho,C_i)])$, where $\rho: P_R\to P_R$ is the reflection along the short orthogeodesics of $P_R$. We let $\pants(\gamma)$ denote the quotient of $\tpants(\gamma)$ by $r$.

\begin{figure}
\includegraphics[scale=0.055]{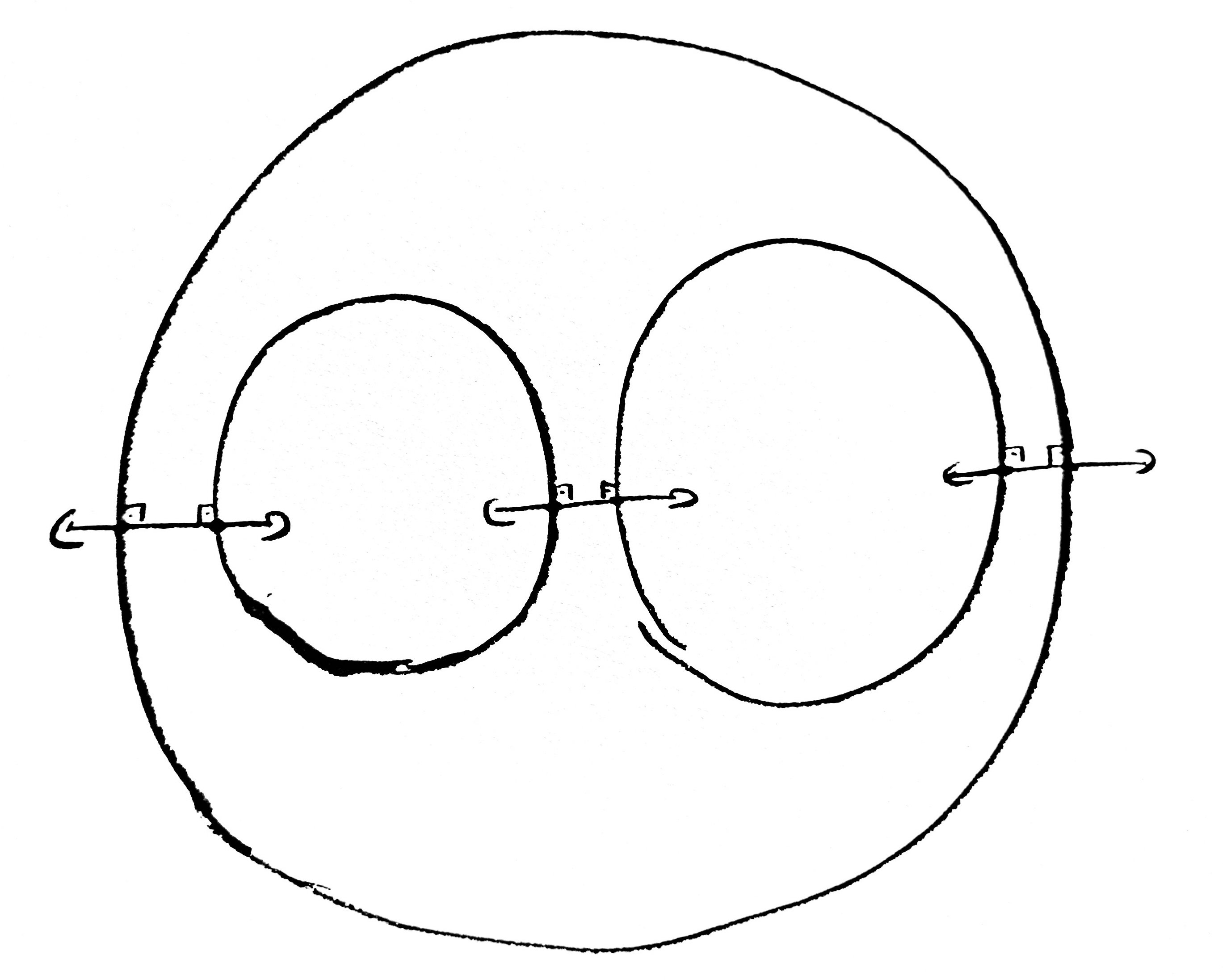}
\caption{Short orthogeodesics and feet of a good pants.}
\end{figure}

The planar pair of pants $P_R$ is equipped with three \emph{short orthogeodesics}, which are the orthogonal geodesic segments  from one cuff to another. The short orthogeodesic from $C_i$ to $C_j$ is denoted $a_{ij}$. A marked pair of pants $\pi\in \tpants$ comes with left and right short orthogeodesics, respectively denoted $\eta^{\ell}(\pi)$ and $\eta^r(\pi)$, which are defined as follows. Choose a representative $(f,C_i)\in\pi$ that sends cuffs $C_j\se \partial P_R$ to geodesics $\gamma_j\se M$. We let $\eta^{\ell}(\pi)$ be the geodesic segment homotopic to $f(a_{i,i+1})$ (through segments from $\gamma_i$ to $\gamma_{i-1}$) meeting $\gamma_i$ and $\gamma_{i-1}$ orthogonally. Similarly, $\eta^{r}(\pi)$ is the geodesic segment homotopic to $f(a_{i,i-1})$ meeting $\gamma_i$ and $\gamma_{i+1}$ orthogonally. Note that these definitions do not depend on the choice of representative in $\pi$.

We endow the short orthogeodesics of $\pi$ with unit speed parametrizations, and from their construction, they are oriented to go from $\gamma_i$ to the other cuffs. The \emph{feet} of a short orthogeodesic $\gamma$ of $\pi$ are the unit vectors $-\eta'(0)$ and $\eta'(\ell(\eta))$. We call $\ft^{\ell} (\pi) = -(\eta^{\ell}(\pi))'(0)$ and $\ft^{r} (\pi) = -(\eta^{r}(\pi))'(0)$ respectively the left and right foot of $\pi$.

We define the \emph{half length} $\bh\bl(\gamma_i)$ of $\gamma_i$ to be the complex distance between lifts of $\eta_{i,i-1}$ and $\eta_{i,i+1}$ to $\bH^3$ that differ by a positively oriented segment of $\gamma$ joining $\eta_{i,i-1}$ to $\eta_{i,i+1}$. It turns out that $\bl(\gamma) = 2\bh\bl(\gamma)$ (see \cite{KW}, Section 2.8).

The unit normal bundle  $\N^1 (\gamma)$ to a oriented closed geodesic $\gamma$ in $M$ is acted upon simply and transitively by the group $\bC / (\bl (\gamma) + 2\pi i \bZ).$ We define $\N^1(\sqrt{\gamma})$ to be the quotient of $\N^1 (\gamma)$ by the involution $n\mapsto n + \bh\bl(\gamma)$. This is acted upon simply and transitively by $\bC/(\bh\bl(\gamma) + 2\pi i \bZ)$.

As $\bh\bl(\gamma) = \bl(\gamma)/2$, the left and right feet of $\pi \in\tpants (\gamma)$ turn out to define the same point in $\N^1(\sqrt{\gamma})$. We thus have a map
\[
\ft : \tpants(\gamma) \arr \N^1\lef(\sqrt{\gamma}\ri)
\]
which assigns the pants in $\pi$ to its foot in $\N^1 (\sqrt{\gamma})$. This map is also well defined on the unoriented version $\pants(\gamma)$.

\begin{figure}
\includegraphics[scale=0.07]{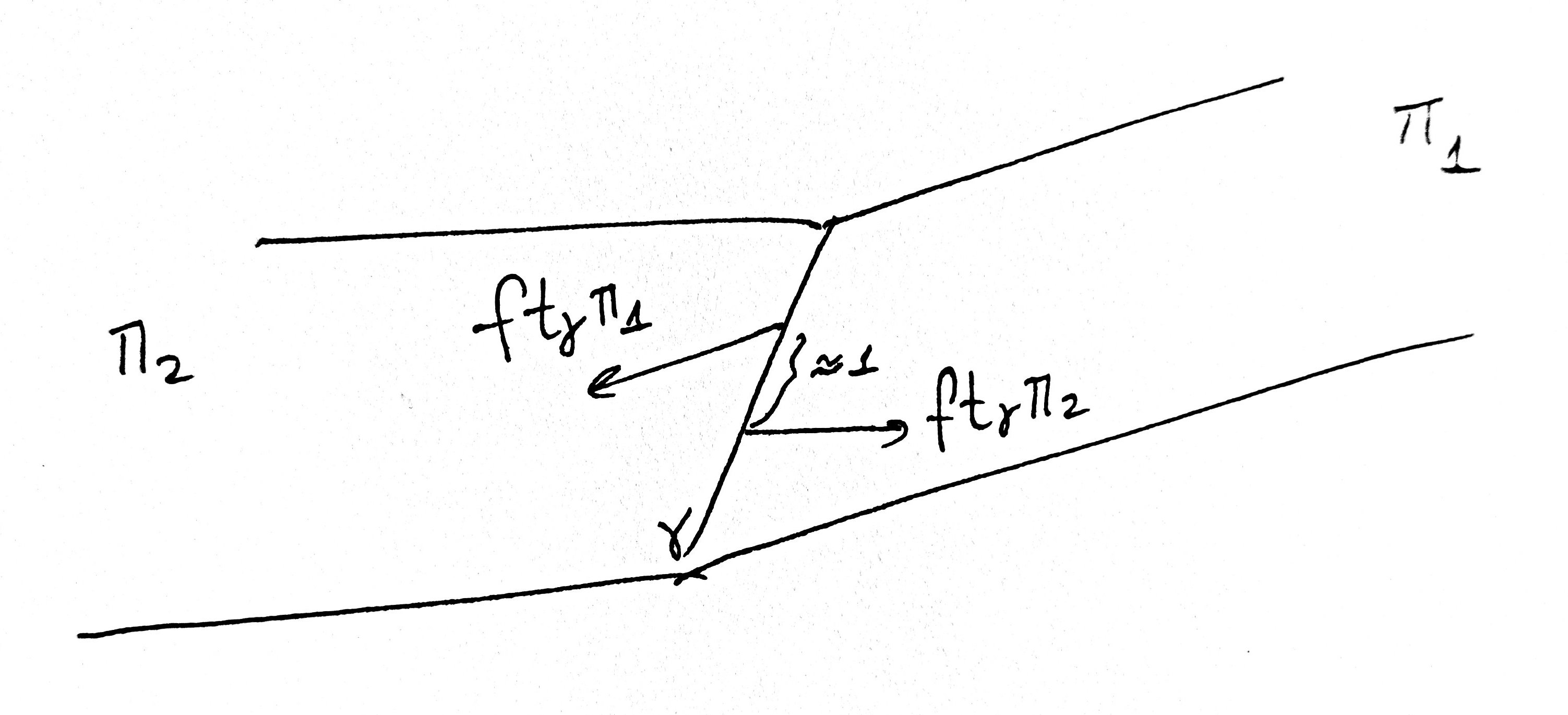}
\caption{A good gluing between $\pi_1$ and $\pi_2$.}
\end{figure}

Two pants $\pi_1, \pi_2 \in \tpants(\gamma)$ that induce opposite orientations on $\gamma$ are $(\epsilon,R)$-\emph{well matched} or \emph{well glued} along $\gamma\in \curves$ if
\[
\dist_{\N^1(\sqrt{\gamma})} (\ft\pi_1, \tau(\ft\pi_2)) < \frac{\eps}{R},
\]
where $\tau$ is the translation of $\N^1(\sqrt{\gamma})$ given by $\tau(x) =x +  1 + i\pi$.
In other words, the shearing between the feet is always approximately one (and the $i\pi$ takes into account that they point toward nearly opposite directions). Heuristically, the nearly constant shearing ensures you are never gluing the thin part of a pants (near the short orthogeodesics) to the thin part of another pants repeatedly.

For a finite set $X$, we let $\sM(X)$ be the space of measures on $X$ that are valued on the nonnegative integers. For $\mu\in\sM( X)$, we let $\sS(\mu)$ be the multiset consisting of $\mu(x)$ copies of each $x\in X$. For $\mu\in \sM(\pants)$, we let $\til{\mu}\in \sM(\tpants)$ denote the measure so that $\til{\mu}(\til{\pi}) = \mu(\pi)$ for any end $\til{\pi}$ of $\pi$. For $\gamma\in \curves$ and $\sF$ a multiset of elements of $\tpants$, we let $\sF_{\gamma}$ consists of the elements of $\sF$ that also lie in $\tpants(\gamma)$. The multiset $\sF_{\gamma}$ decomposes into a disjoint union $\sF^-_{\gamma}\sqcup \sF^+_{\gamma}$ of the ends reversing and preserving the orientation of $\gamma$. There is a map
\[
\partial: \sM(\pants) \arr \sM(\curves)
\]
defined via $\partial \mu (\gamma) = \sum_{\pi\in \tpants(\gamma)} \mu(\pi)$.

We say a surface is \emph{built out of} $\mu$ if it is obtained from gluing the elements of a submultiset of ends $\sF \se \sS (\til{\mu})$ via bijections $\sigma_{\gamma}:\sF^-_{\gamma}\to\sF^+_{\gamma}$ for every cuff $\gamma\in\supp\partial\mu$. A surface is $(\eps,R)$-\emph{well built} out of $\mu$ if all the gluings are $(\eps,R)$-good.

\subsection{Assembling the surface} The first step in the construction is to show that a surface made out of good pants glued via good gluings is essential and nearly Fuchsian. Precisely,

\begin{thm}\label{good}
For $R>0$ large enough and $\eps>0$ small enough, the following holds. Let $\mu\in\sM(\pants)$ be so that a closed surface $S$ may be $(\eps,R)$-well built from $\mu$. Then, $S$ is essential and $(1+O(\eps))$-quasifuchsian.
\end{thm}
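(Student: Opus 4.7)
The plan is to realize $S$ geometrically in $M$ as a pleated map $f\colon S\to M$ whose bending lamination has very small bending everywhere, and then convert that smallness into a quasifuchsian bound via complex Fenchel-Nielsen coordinates.

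First, I would construct the pleated realization. Cut each $(\eps,R)$-good pants $\pi$ in $\sS(\mu)$ into two ideal triangles by inserting three spiraling geodesics homotopic to the short orthogeodesics $a_{ij}$. Map each such ideal triangle isometrically into $\bH^3$ and glue the resulting pleated pants together via the bijections $\sigma_\gamma\colon \sF_\gamma^-\to \sF_\gamma^+$ prescribed by the hypothesis that $S$ is $(\eps,R)$-well built from $\mu$. This produces a pleated map $f\colon S\to M$ whose bending lamination $\lambda$ consists of the cuffs $\gamma\in\supp\partial\mu$ together with three spiraling geodesics in each pants.

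Next, I would estimate the bending along $\lambda$. Along each cuff $\gamma$, the bending angle is the imaginary part of $\tau(\ft\pi_2)-\ft\pi_1-(1+i\pi)$, where $\pi_1,\pi_2$ are the two pants glued at $\gamma$; by the $(\eps,R)$-good matching hypothesis this complex quantity has modulus at most $\eps/R$, so the cuff bending is $O(\eps/R)$. Along each spiraling geodesic inside a pants $\pi$, the bending is controlled by the deviation $|\bl(\gamma_j)-2R|<2\eps$ of the three cuff complex lengths from the real number $2R$: a trigonometric computation in the ideal-triangle decomposition shows that, were the three $\bl(\gamma_j)$ exactly real, the two triangles would lie in a common geodesic plane, and a deviation of size $\eps$ contributes pointwise bending of size $O(\eps)$ (with an exponentially decaying profile along the spiral).

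Finally, I would convert the pointwise bending bound into a quasifuchsian estimate. Analyze the holonomy representation $\rho\colon\pi_1(S)\to\PSL_2\bC$ in complex Fenchel-Nielsen coordinates along the pants decomposition: each cuff contributes a complex length within $2\eps$ of the real value $2R$ and a complex twist-bend parameter whose deviation from a real Fuchsian twist is controlled by the feet discrepancy, hence is $O(\eps/R)$. Compared to the nearby Fuchsian representation $\rho_0$ obtained by projecting each complex Fenchel-Nielsen parameter onto its real part, $\rho$ differs from $\rho_0$ by $O(\eps)$ in these coordinates. A perturbation argument of Epstein-Marden / Bridgeman-Canary type then shows that $\rho$ is discrete and faithful and that there is a $(1+O(\eps))$-quasiconformal conjugacy from $\rho_0$ to $\rho$; in particular $f$ is essential and $f_\ast\pi_1(S)$ is $(1+O(\eps))$-quasifuchsian. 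The main obstacle is this last step: converting pointwise-small bending along a lamination of uniformly bounded transverse mass (per unit area in the universal cover) into a uniform bound on the quasiconformal constant, which requires care because the cuffs accumulate in $M$ even though the bending along each is individually tiny.
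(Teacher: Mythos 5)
The paper does not actually prove this theorem; it cites Section 2 of Kahn--Markovi\'c \cite{KM} (the original, long proof) and the appendix of Kahn--Wright \cite{KW} (a shorter proof of the qualitative version $K(\eps)\to 1$). Your opening moves --- a pleated realization of $S$ and the bending estimates $O(\eps/R)$ at cuffs and $O(\eps)$ along spiral leaves --- are consistent with both, but the step you flag as ``the main obstacle'' is the entire content of those proofs, and there is no off-the-shelf perturbation argument that closes it.

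Here is why the proposed last step fails as written. The genus of $S$ is unbounded (it grows as $\eps\to 0$, $R\to\infty$), so the comparison with a Fuchsian $\rho_0$ must produce a bound on $K$ that is \emph{uniform in genus}. But a geodesic of length $L$ in $\widetilde S$ crosses $\Theta(L)$ leaves of the pleating lamination, so the total bending it encounters grows linearly in $L$ and is unbounded; this is exactly the obstruction you name, and it defeats any naive coordinate-wise perturbation. There is no cancellation; what rescues the argument is that in $\bH^3$ a broken geodesic with legs of length bounded below and bends bounded above by a sufficiently small constant stays within bounded distance of a true geodesic, because each displacement caused by a bend is exponentially damped along the next leg. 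Kahn--Markovi\'c quantify a surface version of this fact by an inductive estimate on the developing map of $\widetilde S$ (the ``nearly geodesic immersion'' lemmas); Kahn--Wright give a shorter direct estimate in their appendix. Neither is a consequence of the Epstein--Marden inequality (which controls a quasiconformal constant by bending along the boundary of a convex core, a quantity you do not yet know $\widetilde{f}(\widetilde S)$ to bound) nor of the Bridgeman--Canary Lipschitz bounds. Two secondary problems: the Fuchsian ``projection'' $\rho_0$ you define by taking real parts of complex Fenchel--Nielsen coordinates is not obviously discrete, and even when it is, the quasiconformal distance between $\rho$ and $\rho_0$ is not a coordinate-wise function of Fenchel--Nielsen differences once the genus is unbounded --- that dependence is precisely the genus-uniformity you would need to establish. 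To complete the proof you would need to replace the perturbation step with the developing-map estimate from one of the two references.
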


\begin{proof}
The proof of this is long and is the content of Section 2 of \cite{KM}. A more concise proof that $\rho$ is $K(\epsilon)$-quasifuchsian, with $K(\epsilon)\to 1$ as $\epsilon\to 0$ (without the quantitative statement that $K(\epsilon) = 1+O(\epsilon)$) can be found in the appendix of \cite{KW}.
\end{proof}

It remains to find such a measure $\mu\in \sM(\pants)$ from which we can build a closed surface with good gluings. The matching theorem below tells us we can take $\mu$ to be the measure $\mu_{\epsilon,R}$ that gives weight 1 to each $\pi\in\pants$.

\begin{thm}\label{matching}
For $\epsilon>0$ sufficiently small, there is $R\geq R_0(\eps)$ so if $\gamma\in\curves$, there is a bijection
\[
\sigma_{\gamma} : \pants^- (\gamma) \arr \pants^+(\gamma)
\]
with the property that $\pi$ is $(\epsilon,R)$-well matched to $\sigma_{\gamma}(\pi)$ for all $\pi\in\pants^-(\gamma)$.
\end{thm}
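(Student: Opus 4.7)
The plan is to apply Hall's marriage theorem to the bipartite graph $G$ whose vertex classes are $\pants^-(\gamma)$ and $\pants^+(\gamma)$, with $\pi^- \in \pants^-(\gamma)$ joined to $\pi^+ \in \pants^+(\gamma)$ by an edge whenever $\pi^-$ is $(\eps,R)$-well matched to $\pi^+$, i.e.\ $\dist_{\N^1(\sqrt{\gamma})}(\ft\pi^-, \tau(\ft\pi^+)) < \eps/R$. The reflection involution $r$ introduced above is a bijection $\pants^-(\gamma)\to\pants^+(\gamma)$, so the two vertex classes have equal cardinality, and a perfect matching of $G$ supplies the sought bijection $\sigma_\gamma$.

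The core task is to verify Hall's condition, namely $|N(A)| \geq |A|$ for every $A \subseteq \pants^-(\gamma)$. Given such an $A$, set $B = \ft(A) \subseteq \N^1(\sqrt{\gamma})$, so that $N(A) = \{\pi^+ \in \pants^+(\gamma) : \ft\pi^+ \in \tau^{-1}(N_{\eps/R}(B))\}$. Applying the equidistribution of feet (Theorem \ref{kw}) with $\delta = e^{-qR}$, and using that $\tau$ is an isometry of $\N^1(\sqrt{\gamma})$, gives
\begin{align*}
|A| &\leq \#\{\pi^- \in \pants^-(\gamma) : \ft\pi^- \in B\} \leq (1+\delta)\,\lambda(N_\delta B)\,C_{\eps,R,\gamma},\\
|N(A)| &\geq (1-\delta)\,\lambda(N_{\eps/R-\delta}(B))\,C_{\eps,R,\gamma}.
\end{align*}
If $R \geq R_0(\eps)$ so that $\delta \ll \eps/R$, and the points of $B$ are sufficiently spread out in $\N^1(\sqrt{\gamma})$ (so that the $\delta$-balls about them are essentially disjoint), then $\lambda(N_{\eps/R-\delta}B)/\lambda(N_\delta B)=\Omega(((\eps/R)/\delta)^2)$, which comfortably dominates the factor $(1+\delta)/(1-\delta)$, and Hall's inequality $|N(A)|\geq|A|$ follows.

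The main obstacle is the opposite regime, in which $B$ is so densely packed in $\N^1(\sqrt{\gamma})$ that $N_\delta B$ and $N_{\eps/R-\delta}B$ both nearly fill the space and the above volume comparison becomes tight. To handle this I would partition $\N^1(\sqrt{\gamma})$ into a grid of cells of diameter $\asymp \eps/(4R)$, and apply Theorem \ref{kw} cell-by-cell: the numbers of $\pants^-(\gamma)$-feet and of $\tau$-shifted $\pants^+(\gamma)$-feet in each cell agree to within a multiplicative factor $1\pm\delta$, so a greedy intra-cell pairing matches all but an $O(\delta)$-fraction of the pants at distance less than the cell diameter, well within $\eps/R$. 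The residual unmatched pants form a sparse subset on which the preceding sparse-regime argument applies, and they can be paired with residuals in neighbouring cells within the remaining $\eps/R$-budget, since the cell diameter is much smaller than $\eps/R$. Assembling these local matchings yields a global perfect matching, hence the bijection $\sigma_\gamma$.
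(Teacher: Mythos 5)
Your overall strategy---apply Hall's marriage theorem to the bipartite graph on $\pants^-(\gamma)\sqcup\pants^+(\gamma)$, note that the reflection involution $r$ equalizes the two sides so that a matching is automatically a bijection---is exactly the paper's. The problem is that the verification of Hall's condition has two real gaps.

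\textbf{The ``sparse'' case.} The volume comparison $\lambda(N_{\eps/R-\delta}B)/\lambda(N_\delta B)=\Omega\bigl(((\eps/R)/\delta)^2\bigr)$ only holds if the points of $B=\ft(A)$ are (roughly) $\eps/R$-separated; nothing forces this for an arbitrary $A\se\pants^-(\gamma)$. Feet of distinct good pants along $\gamma$ can lie far closer together than $\delta=e^{-qR}$, in which case $N_\delta B$ and $N_{\eps/R-\delta}B$ are both governed by the size of a small number of clusters and the quadratic improvement disappears. What the paper uses instead is the isoperimetric inequality on the flat torus $\N^1(\sqrt\gamma)$, whose Cheeger constant is $>1/R$: for any $B$ with $\lambda(N_{\eps/2R}B)\le1/2$, one gets $\lambda(N_{\eps/2R}B)/\lambda(N_\delta B)\ge 1+(\eps/2R-\delta)/R$. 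This is a much weaker gain than your $(\eps/R\delta)^2$, but it is \emph{universal} in $B$, and since $\eps/R^2\gg e^{-qR}$ it still beats the error factor $(1+\delta)/(1-\delta)\approx 1+2\delta$ coming from Theorem~\ref{kw}. That isoperimetric input is the missing ingredient.

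\textbf{The ``dense'' case.} The grid/cell argument is trying to construct a matching directly rather than verify Hall, and the step ``the residuals\ldots can be paired with residuals in neighbouring cells within the remaining $\eps/R$-budget'' is unjustified. The equidistribution theorem bounds each cell count only up to a multiplicative $1\pm O(R\delta/\eps)$, and the signed discrepancies $N^-_i-N^+_i$ need not balance locally: nothing prevents all the $+$-excess from accumulating on one side of the torus and all the $-$-excess on the other, in which case the leftover feet must travel a distance comparable to $R$, far outside the $\eps/R$-budget. The paper avoids this entirely: if $\lambda(N_{\eps/2R}\ft A)>1/2$, it passes to the complement $C=\N^1(\sqrt\gamma)\setminus N_{\eps/R}\tau(\ft A)$, which satisfies $\lambda(N_{\eps/2R}C)\le1/2$, applies the isoperimetric estimate to $C$ with $\tau^{-1}$ in place of $\tau$, and then uses the disjointness of $\tau^{-1}(N_{\eps/R}C)$ and $\ft A$ together with $\Ft C=\Ft\N^1(\sqrt\gamma)-\Ft\tau(N_{\eps/R}\ft A)$ to convert that bound back into Hall's inequality for $A$.

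In short: keep the Hall framing and the reflection-involution observation, but replace the heuristic volume ratio by the torus isoperimetric inequality (Proposition giving $\lambda(N_\eta A)/\lambda(A)\ge 1+\eta/R$), and replace the cell-by-cell regluing in the dense regime by the complementation trick. Both appear in the paper's proof of Lemma~\ref{matchlem}.
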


Gluing the pants in $\tpants(\gamma)$ via $\sigma_{\gamma}$ for every $\gamma$ gives us a closed surface whose components, by Theorem \ref{good}, are essential and $(1+O(\epsilon))$-quasifuchsian.

The crucial ingredient in the proof of the matching theorem is the fact that the feet of pants in $\pants$ are well distributed in $\N^1(\sqrt{\gamma})$ for every $\gamma \in \curves$. This is the content of the equidistribution theorem below.

\begin{figure}
\includegraphics[scale=0.12]{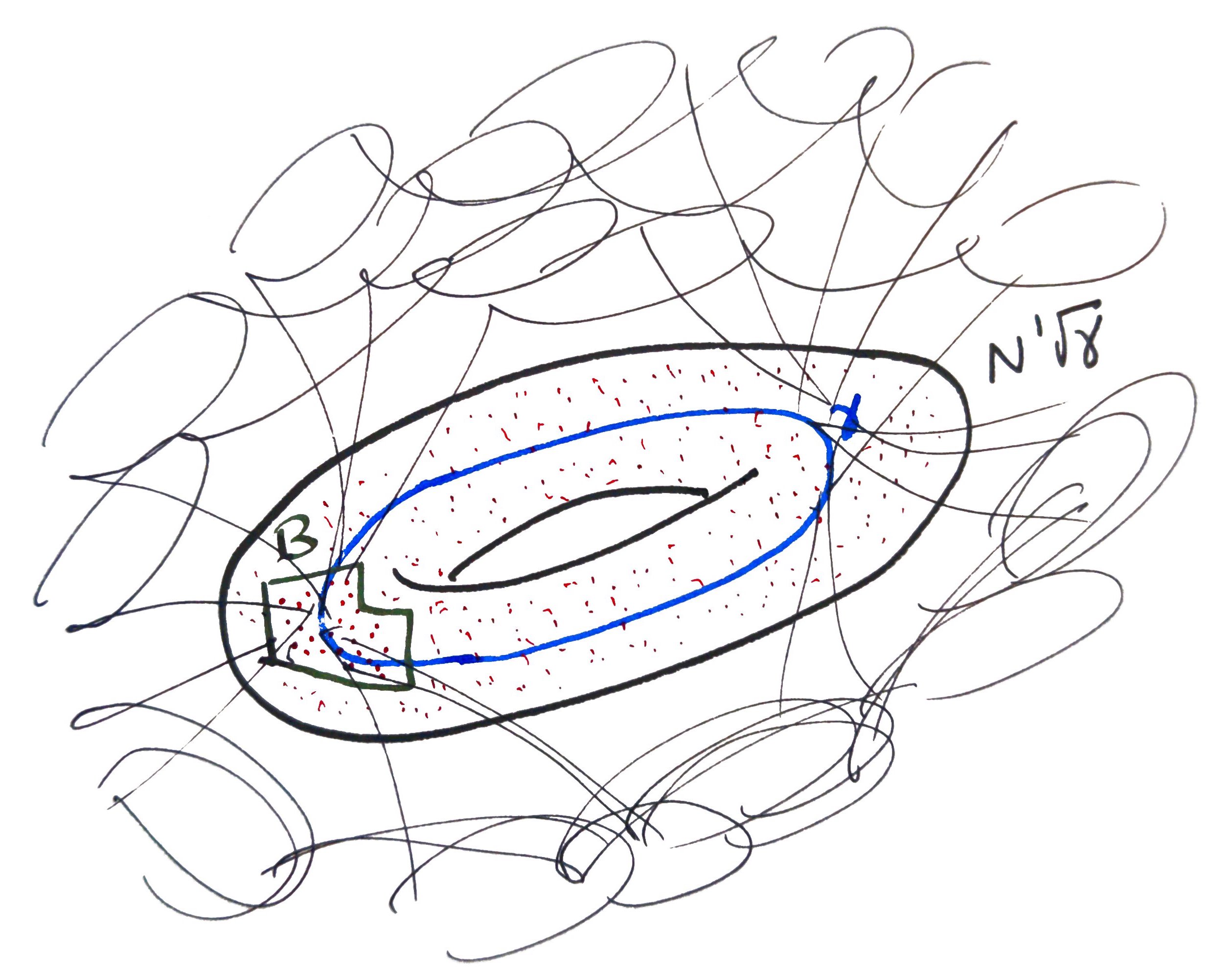}
\caption{The feet of the good pants with cuff $\gamma$ are well distributed in $\N^1(\sqrt{\gamma})$.}
\end{figure}

\begin{thm}[Equidistribution of feet]\label{kw}
There is $q=q(M)>0$ so that if $\eps>0$ is small enough and $R > R_0(\eps)$, the following holds. Let $\gamma\in \curves$. If $B\se \N^1 (\sqrt{\gamma})$ is measurable, then
\[
(1-\delta) \lambda (N_{-\delta} B) \leq
\frac{\#\{ \pi \in \pants(\gamma) : \ft \pi \in B\}}
{C(\eps,R,\gamma)}
\leq 
(1+\delta) \lambda(N_{\delta} B),
\]
where $\lambda=\lambda_{\gamma}$ is the probability Lebesgue measure on $\N^1(\sqrt{\gamma})$, $\delta = e^{-qR}$, $N_{\delta}(B)$ is the $\delta$-neighborhood of $B$, $N_{-\delta}(B)$ is the complement of $N_{\delta} (\N^1(\sqrt{\gamma}) - B)$ and $C(\eps,R,\gamma)$ is a constant depending only on $\eps$, $R$ and $\bl(\gamma)$.
\end{thm}

\begin{proof}
The proof of the equidistribution of feet is the content of \cite{KW2}. The main engine is the mixing of the frame flow in $\Fr M$. We use a slight generalization of this theorem in Sections 5 and 6, which is explained then.
\end{proof}

To complete the exposition, we will include a proof of the matching theorem using the equidistribution of feet along a curve. This is a relatively short argument which uses the Hall marriage theorem of combinatorics. Before stating it, we fix some notation: in a graph $X$, we write $v\sim w$ for two vertices $v$ and $w$ that are connected by an edge. For a set $A$ of vertices, we let $\partial N_1 (A)$ be the vertices $w\notin A$ satisfying $w\sim v$ for some $v\in A.$

\begin{thm}[Hall marriage]
Suppose $X$ is a bipartite graph, i.e., the vertices $V$ of $X$ are the disjoint union of $V_1$ and $V_2$, where no two elements of a given $V_i$ are connected by an edge. Then, there is a \emph{matching} $m:V_1\to V_2$, namely an injection so that $v\sim m(v)$ if and only if
\[
\# A \leq \# \partial N_1 (A)
\]
for any finite $A \se V_1$.
\end{thm}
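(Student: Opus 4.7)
The plan is to prove Hall's marriage theorem by strong induction on $\#V_1$, handling the finite case directly; the extension to infinite $V_1$ with finite vertex neighborhoods then follows by a standard compactness argument (e.g.\ using K\H{o}nig's lemma on the tree of partial matchings), though in the paper's application $V_1 = \pants^-(\gamma)$ is finite. The forward implication is immediate: if $m$ is a matching and $A\seq V_1$, then $m(A)\seq \partial N_1(A)$, since each $m(v)\in V_2$ is adjacent to $v\in V_1$ and (being in $V_2$) lies outside $A\seq V_1$; hence $\#A = \#m(A)\leq \#\partial N_1(A)$.

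For the reverse direction, the base case $\#V_1 = 1$ is immediate, since Hall's condition on $V_1$ itself supplies a neighbor to match to. For the inductive step I would split into two cases according to whether Hall's condition is strict on every proper nonempty subset.

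\textbf{Case 1 (surplus):} every non-empty proper $A\subsetneq V_1$ satisfies $\#\partial N_1(A) \geq \#A + 1$. Pick any $v\in V_1$ and any neighbor $w\in V_2$, and consider the induced subgraph on $V_1\setminus\{v\}$ and $V_2\setminus\{w\}$. For each $A\seq V_1\setminus\{v\}$, deleting $w$ from $V_2$ removes at most one vertex from $\partial N_1(A)$, so the new graph satisfies Hall's condition; induction furnishes a matching of $V_1\setminus\{v\}$, which we extend by $v\mapsto w$.

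\textbf{Case 2 (tight subset):} there is a non-empty proper $A\subsetneq V_1$ with $\#\partial N_1(A) = \#A$. Apply the induction hypothesis to the bipartite subgraph on $A$ and $\partial N_1(A)$ to obtain a matching $m_A : A\arr \partial N_1(A)$. For the residual graph on $V_1\setminus A$ and $V_2\setminus \partial N_1(A)$, let $\partial N_1^*$ denote the neighborhood operator inside this subgraph. For any $B\seq V_1\setminus A$ the key observation is the disjoint decomposition
\[
\partial N_1(A\cup B) \;=\; \partial N_1(A) \,\sqcup\, \partial N_1^*(B),
\]
which combined with Hall's condition on $A\cup B$ gives $\#A + \#B \leq \#A + \#\partial N_1^*(B)$, i.e.\ the residual graph satisfies Hall's condition. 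Induction then yields a matching of $V_1\setminus A$, and together with $m_A$ this produces the desired matching of all of $V_1$. The main subtlety is Case 2: one must correctly isolate the tight subset, verify the disjoint decomposition above, and confirm that the inductive hypothesis applies simultaneously to both pieces.
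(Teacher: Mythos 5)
Your proof is correct and is the classical Halmos--Vaughan inductive argument for Hall's theorem. However, the paper does not actually supply a proof of this statement: it records Hall's marriage theorem as a known black-box result from combinatorics and immediately applies it to $V_1 = \pants^-(\gamma)$, $V_2 = \pants^+(\gamma)$ in the proof of the matching theorem (Theorem \ref{matching}). So there is no paper proof to compare against; you have filled in a standard textbook argument that the author deliberately omitted. Your Case 1/Case 2 split (surplus vs.\ tight subset), the disjoint decomposition $\partial N_1(A\cup B) = \partial N_1(A)\sqcup \partial N_1^*(B)$, and the verification that the residual graph inherits Hall's condition are all correct. One small caution: as stated in the paper, with the hypothesis only for finite $A\seq V_1$, the theorem is false for general infinite $V_1$ (take $V_1 = V_2 = \bbN$ with $0\in V_1$ adjacent to every vertex of $V_2$ and $n+1\in V_1$ adjacent only to $n\in V_2$); one needs locally finite degrees on the $V_1$ side for the K\H{o}nig's-lemma compactness argument to go through. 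You correctly flag this and note that it is irrelevant here since $\pants^-(\gamma)$ is finite, but it is worth observing that the paper's formal statement is slightly too strong as written.
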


We will also use the following fact. Let $M^d$ be a $d$-dimensional Riemannian manifold with Riemannian volume measure $|\cdot|$ and $(d-1)$-dimensional Hausdorff measure $\cH^{d-1}$. We define the \emph{Cheeger constant} $h(M)$ of $M$ to be the infimum of $\cH^{d-1}(\partial A)/|A|$, taken over all measurable subsets of $M$ satisfying $|A|\leq |M|/2$.

\begin{prop}
Suppose a measurable $A\se M$ satisfies $|N_{\eta} (A)| \leq |M|/2$ for some $\eta>0$. Then,
\[
\frac{|N_{\eta}(A)|}{|A|}\geq 1 + \eta h(M).
\]
\end{prop}

\begin{proof}
One incarnation of the coarea formula (\cite{N}, Theorem 5.3 or \cite{N}, page 77) says that if $f:M\to \bR$ is a Lipschitz function and $B\se M$ is measurable, then
\[
\int_B |\nabla f| = \int_{-\infty}^{\infty} \cH^{d-1} (B\cap f^{-1} (t))\, dt.
\]

Let us take $f(\cdot) = \dist(\cdot, A)$, which is a 1-Lipschitz function with $|\nabla f| = 1$ almost everywhere in the complement of $A$ and $B = N_{\eta}(A) - A$. The coarea formula then specializes to

\[
|N_{\eta}A - A| = \int_0^{\eta} \cH^{d-1} (\partial N_t A) \,dt.
\]
Since $\cH^{d-1}(\partial N_t A) \geq h(M) |A|$, the above equation tells us $|N_{\eta} A - A| \geq \eta h(M) |A|$.
\end{proof}

The Cheeger constant of the flat torus $\N^1(\sqrt{\gamma})$ satisfies $h\lef(\N^1(\sqrt{\gamma})\ri) > 1/R$ (see \cite{HHM}). Thus, if $A\se \N^1(\sqrt{\gamma})$ satisfies $\lambda(N_{\eta} (A)) \leq 1/2$, we have
\begin{equation}\label{Cheeger}
\frac{\lambda(N_{\eta}(A))}{\lambda(A)}> 1 + \frac{\eta}{R}.
\end{equation}

We also define
\[\Ft B := \#\{\pi\in \pants(\gamma): \ft \pi\in B \}.\]

The inequality \ref{Cheeger}, together with the equidistribution of feet gives us

\begin{lem}\label{matchlem}
Let $B\se \N^1 (\sqrt{\gamma})$ and let $\rho:\N^1(\sqrt{\gamma})\to \N^1(\sqrt{\gamma})$ be a translation. Then,
\[
\Ft B \leq \Ft \rho \lef(N_{\eps/R} B\ri).
\]
\end{lem}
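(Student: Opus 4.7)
The plan is to use the Kahn--Wright equidistribution of feet (Theorem \ref{kw}) to convert both sides of the desired inequality into Lebesgue volumes on the flat torus $\N^1(\sqrt{\gamma})$, and then to close the resulting multiplicative gap using the Cheeger inequality \eqref{Cheeger}. Concretely, the upper bound in Theorem \ref{kw} gives $\Ft B \leq (1+\delta)\,C(\eps,R,\gamma)\,\lambda(N_\delta B)$, while the lower bound applied to $\rho(N_{\eps/R} B)$ gives
\[
\Ft \rho(N_{\eps/R} B) \;\geq\; (1-\delta)\,C(\eps,R,\gamma)\,\lambda\bigl(N_{-\delta}(\rho N_{\eps/R} B)\bigr).
\]
Since $\rho$ is a translation of the flat torus, it preserves $\lambda$ and commutes with metric neighborhoods, so $\lambda(N_{-\delta}(\rho N_{\eps/R} B)) = \lambda(N_{-\delta}(N_{\eps/R} B))$. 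Furthermore, any $x$ with $d(x,B) < \eps/R - \delta$ satisfies $B(x,\delta) \subseteq N_{\eps/R} B$, giving $N_{\eps/R - \delta} B \subseteq N_{-\delta}(N_{\eps/R} B)$. It therefore suffices to prove the purely geometric inequality
\[
\frac{\lambda(N_{\eps/R - \delta} B)}{\lambda(N_\delta B)} \;\geq\; \frac{1+\delta}{1-\delta}.
\]

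To get this, set $A := N_\delta B$ and observe that metric neighborhoods compose cleanly in the flat torus, so $N_{\eps/R - \delta} B = N_{\eps/R - 2\delta}(A)$. Under the assumption $\lambda(N_{\eps/R - \delta} B) \leq 1/2$, the Cheeger inequality \eqref{Cheeger} (with $h \geq 1/R$) yields
\[
\frac{\lambda(N_{\eps/R - 2\delta}(A))}{\lambda(A)} \;\geq\; 1 + \frac{\eps/R - 2\delta}{R} \;=\; 1 + \frac{\eps}{R^2} - \frac{2\delta}{R}.
\]
Since $\delta = e^{-qR}$ decays exponentially in $R$ while $\eps/R^2$ decays only polynomially, for $R \geq R_0(\eps)$ large enough the right-hand side comfortably exceeds $(1+\delta)/(1-\delta) = 1 + 2\delta/(1-\delta)$, establishing the required ratio.

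In the remaining regime $\lambda(N_{\eps/R - \delta} B) > 1/2$ one cannot apply \eqref{Cheeger} directly; here I would rerun the argument symmetrically on the complement. Using $(N_{\eps/R} B)^c = N_{-\eps/R}(B^c)$ and the fact that complements, $\rho$-translations and metric neighborhoods all commute on the flat torus, the inequality transforms into an analogous statement for $B^c$ with $\rho$ replaced by $\rho^{-1}$; now $\lambda(N_{-\delta} B^c) < 1/2$, so Cheeger applies to grow $B^c$, and the same exponential-vs-polynomial comparison closes the gap. The main (and essentially only) obstacle is precisely this reconciliation of the two measure regimes, since \eqref{Cheeger} is stated asymmetrically; the fix is routine once one checks that the $\Ft$-bounds and neighborhood relations transform compatibly under complementation.
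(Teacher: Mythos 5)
Your proposal follows the paper's strategy exactly: use the two-sided Kahn--Wright bound to reduce the foot-counting inequality to a Lebesgue-volume inequality, and then close that volume gap with the Cheeger estimate \eqref{Cheeger}. The first case is fine: the reductions $\lambda(N_{-\delta}(\rho N_{\eps/R}B)) = \lambda(N_{-\delta}(N_{\eps/R}B)) \geq \lambda(N_{\eps/R-\delta}B)$, the rewriting $N_{\eps/R-\delta}B = N_{\eps/R-2\delta}(N_\delta B)$, and the Cheeger estimate all work, and the exponential-vs-polynomial comparison of $\delta=e^{-qR}$ against $\eps/R^2$ is exactly the paper's.

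The gap is in the complementary regime, and it is not as routine as you assert. First, the claim $\lambda(N_{-\delta}B^c)<1/2$ does not follow from your case hypothesis. You have $N_{-\delta}B^c = (N_\delta B)^c$, so $\lambda(N_{-\delta}B^c) = 1 - \lambda(N_\delta B)$, and you would need $\lambda(N_\delta B)>1/2$; but your first case fails only when $\lambda(N_{\eps/R-\delta}B)>1/2$, which is a weaker statement since $\delta < \eps/R-\delta$. Second, the "analogous statement for $B^c$ with $\rho$ replaced by $\rho^{-1}$," namely $\Ft B^c \leq \Ft\rho^{-1}(N_{\eps/R}B^c)$, is not what the original inequality transforms into under complementation: unwinding gives $\Ft B \geq \Ft\rho^{-1}(N_{-\eps/R}B)$, which is the wrong direction and wrong set. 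The paper avoids both issues by applying the first-case argument not to $B^c$ but to $C := \N^1(\sqrt\gamma)\setminus N_{\eps/R}\rho(B)$ with $\rho^{-1}$, and---crucially---by choosing the case threshold to be $\lambda(N_{\eps/2R}B)\leq 1/2$ rather than $\lambda(N_{\eps/R-\delta}B)\leq 1/2$. With the $\eps/2R$ threshold one can check $N_{\eps/2R}C \subseteq (N_{\eps/2R}\rho(B))^c$, hence $\lambda(N_{\eps/2R}C)\leq 1 - \lambda(N_{\eps/2R}B) < 1/2$ in the complementary regime, so the first case applies to $C$. Then $\Ft C = \Ft\N^1(\sqrt\gamma) - \Ft\rho(N_{\eps/R}B)$ and $\rho^{-1}(N_{\eps/R}C)\subseteq B^c$ give $\Ft\rho^{-1}(N_{\eps/R}C) \leq \Ft\N^1(\sqrt\gamma) - \Ft B$, from which the lemma follows. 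So the idea (run the argument on a complement) is right, but the specific set, the specific translation, and the specific threshold all need to be chosen with some care, and the version you wrote down would not go through as stated.
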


\begin{proof}

From the equidistribution of feet and the fact that $\rho$ is measure preserving, we have that
\[
(1-\delta) \lambda\lef( N_{\epsilon/R - \delta} B \ri) \leq \frac{\Ft \rho (N_{\epsilon/R} B)}{C_{\eps,R,\gamma}}.
\]
Thus, it suffices to show that
\begin{equation}\label{suff}
\frac{\Ft B}{C_{\eps,R,\gamma}} \leq (1-\delta)\, \lambda\lef( N_{\epsilon/R - \delta} B \ri).
\end{equation}

Using the equidistribution of feet again, we have that
\[
\frac{\Ft B}{C_{\eps,R,\gamma}}  \leq (1+\delta) \lambda\lef( N_{\delta} B\ri).
\]
This reduces our goal to showing
\begin{equation}\label{suff2}
\lambda\lef( N_{\delta} B \ri) \leq 
\frac{1-\delta}{1+\delta}\, \lambda\lef(N_{\eps/R - \delta} B \ri).
\end{equation}

Suppose now that $\lambda(N_{\epsilon/2R} B) \leq 1/2$. From equation \ref{Cheeger}, we have that
\[\lambda\lef(N_{\delta} B\ri) <
\frac{1}{1 + \lef(\frac{\epsilon}{2R} - \delta \ri)\frac{1}{R}} \lambda\lef( N_{\epsilon/2R} B\ri).
\]But if $R$ is large enough, as $\delta = e^{-qR}$, we have\footnote{\[\frac{1}{1 + \lef(\frac{\epsilon}{2R} - \delta \ri)\frac{1}{R}} \leq \frac{1}{1 + \frac{\eps}{3R^2}} \leq 1 - \frac{\eps}{6R^2} \leq 1 - 3\delta \leq \frac{1 - \delta}{1+\delta}.\]}

\[
\frac{1}{1 + \lef(\frac{\epsilon}{2R} - \delta \ri)\frac{1}{R}} \leq \frac{1 - \delta}{1+\delta}.
\]

Thus, we conclude that \ref{suff2} holds if $\lambda(N_{\eps/R} B) \leq 1/2.$ In particular,
\[
\Ft B \leq \Ft \rho (N_{\eps/R} B)
\]
holds in this case.

On the other hand, if $\lambda(N_{\epsilon/2R}B)>1/2$, let $C = \N^1(\sqrt{\gamma}) - N_{\epsilon/R} \rho(B)$. Then, $\lambda(N_{\epsilon/2R} C)  \leq 1/2$ and so by the same argument as above, for $C$ instead of $B$ and $\rho^{-1}$ instead of $\rho$, we have
\[
\Ft C \leq \Ft \rho^{-1} (N_{\epsilon/R}  C).
\]
But $\Ft C = \Ft \N^1 (\sqrt{\gamma}) - \Ft \rho(N_{\eps/R} B)$ and $\Ft \rho^{-1} (N_{\epsilon/R} C) = \Ft \N^1(\sqrt{\gamma}) - \Ft B$. Therefore,
\[
\Ft B \leq \Ft \rho (N_{\eps/R} B),
\]
in this case as well. This completes the proof of the lemma.
\end{proof}

\begin{proof}[Proof of the matching theorem]
For $\gamma\in\curves$, we can make $\tpants(\gamma)$ into a graph by saying that $\pi_1\sim\pi_2$ if $\pi_1$ and $\pi_2$ are $(\epsilon,R)$-well matched, namely if they induce opposite orientations on $\gamma$ and $\dist_{\N^1(\sqrt{\gamma})} (\ft\pi_1,\tau(\ft\pi_2)) < \eps/R$, where $\tau:\N^1(\sqrt{\gamma})\to\N^1(\sqrt{\gamma})$ is the translation $\tau(x) = x+1+i\pi$. Since only the pants inducing opposite orientations on $\gamma$ may be matched, $\tpants(\gamma) = \pants^-(\gamma)\sqcup\pants^+(\gamma)$ is a bipartite graph. We wish to show there is a matching
\[
\sigma_{\gamma} : \pants^-(\gamma) \arr \pants^+(\gamma).
\]

By the Hall marriage theorem, it suffices to show that for $A\se \pants^-(\gamma)$,
\begin{align*}
\# A &\leq \partial N_1 (A) \\
&= \# \{\pi^+ \in \pants^+(\gamma) : 
|\ft \pi^+ - \tau(\ft \pi^-) | < \epsilon/R \text{ for some }\pi^-\in A \} \\
&= \Ft \tau(N_{\epsilon/R} \ft A).
\end{align*}
This, in turn, follows from Lemma \ref{matchlem} for $B = \ft A$ and $\rho = \tau$.

Since the sets $\pants^-(\gamma)$ and $\pants^+(\gamma)$ are finite and have the same cardinality, it follows that $\sigma_{\gamma}$ is a bijection, which concludes the proof of the matching Theorem \ref{matching}.
\end{proof}

In summary, we have shown the matching Theorem \ref{matching}, which allows us to build a closed $(1+O(\eps))$-quasifuchsian surface $S_{\eps,R}$ in $M$ by gluing one copy of each pants in $\pants$ via $(\eps,R)$-good gluings.

\section{Connected surfaces going through every good pants}

Recall that $\mu_{\eps,R}\in \sM(\pants)$ is the measure so that $\mu_{\eps,R} (\pi) = 1$ for each $\pi\in\pants$. In the previous section, we have seen that a closed, oriented, essential and $(1+O(\eps))$-quasifuchsian surface $S_{\eps,R}$ may be built from $\mu_{\eps,R}$. We do not know, however, whether $S_{\eps,R}$ is connected, or what its components may look like.  Following ideas of Liu and Marković \cite{LM}, we show that if we take $N=N(\eps,R,M)$ copies of $S_{\eps,R}$, it is possible to perform cut-and-paste surgeries around certain good curves in order to get \emph{connected} closed, oriented, essential and $(1+O(\eps))$-quasifuchsian surfaces $\hat{S}_{\eps,R}$.

\begin{thm}
There is an integer $N = N(\eps,R,M) > 0$ so that a \emph{connected}, closed, oriented, essential and $(1+O(\eps))$-quasifuchsian surface may be built from $N\mu_{\eps,R}$.
\end{thm}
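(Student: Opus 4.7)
The plan is to take the disjoint union $NS_{\eps,R}$ of $N$ copies of $S_{\eps,R}$ and then modify the gluings along good curves to merge its components into one, following the cut-and-paste surgery ideas of Liu and Markovic. The fundamental operation is a \emph{good swap}: given a good curve $\gamma$ and two currently matched pants pairs $(\pi_1^-,\pi_1^+)$ and $(\pi_2^-,\pi_2^+)$ sharing the cuff $\gamma$, replace the gluings $\pi_i^-\leftrightarrow \pi_i^+$ ($i=1,2$) by the swapped gluings $\pi_1^-\leftrightarrow \pi_2^+$ and $\pi_2^-\leftrightarrow \pi_1^+$. Topologically, the swap merges the (at most two) components containing these pairs into a single component. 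A short computation with $\tau(x)=x+1+i\pi$ and the triangle inequality on $\N^1(\sqrt{\gamma})$ shows that the new shearings are $(C\eps,R)$-good for an absolute constant $C$, provided $\dist_{\N^1(\sqrt{\gamma})}(\ft\pi_1^-,\ft\pi_2^-)<\eps/R$.

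First I would perform \emph{trivial swaps}, in which $(\pi_1^-,\pi_1^+)$ and $(\pi_2^-,\pi_2^+)$ are copies of the same matched pair in two distinct copies of $S_{\eps,R}$. These swaps are automatically good because the feet coincide. For each component $X$ of $S_{\eps,R}$, choosing a single matched pair inside $X$ and cyclically permuting its $N$ copies across the $N$ copies of $S_{\eps,R}$ merges all $N$ copies of $X$ into a single component. After this stage, the number of components equals $k=k(\eps,R)$, the number of components of $S_{\eps,R}$ itself.

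Next I would apply \emph{cross-component swaps} to merge the remaining $k$ components. The equidistribution of feet (Theorem~\ref{kw}) implies that for $R$ large, every $(\eps/R)$-ball in $\N^1(\sqrt{\gamma})$ contains roughly the expected proportion of pants from $\pants^-(\gamma)$. A pigeonhole argument on the partition of $\pants^-(\gamma)$ by those components of $S_{\eps,R}$ that use $\gamma$ then produces, for any two such components $X$ and $Y$, pants $\pi\in X$ and $\pi'\in Y$ in $\pants^-(\gamma)$ whose feet lie within $\eps/R$, so the cross-component good swap applies. Chaining at most $k-1$ such swaps yields a connected surface, and choosing $N=N(\eps,R,M)$ sufficiently large ensures that all swaps can be performed simultaneously without degrading any gluing beyond the $(O(\eps),R)$-good threshold.

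The main obstacle is the connectivity claim in the cross-component step: that every pair of components of $S_{\eps,R}$ can actually be joined through a chain of good swaps. If two components do not directly share a good curve with pants of close enough feet, one must bridge them through a third component. This should follow from the fact that $S_{\eps,R}$ uses every pants in $\pants$ (hence every good curve appears in the surface) combined with the quantitative equidistribution of Theorem~\ref{kw}, which prevents any component from being combinatorially isolated in the foot graph along any cuff it uses. Once connectivity is established, Theorem~\ref{good} applied to the new $(O(\eps),R)$-well built configuration yields a connected, closed, oriented, essential and $(1+O(\eps))$-quasifuchsian surface built from $N\mu_{\eps,R}$, as desired.
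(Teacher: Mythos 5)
Your high-level strategy---merge components of $N$ disjoint copies of $S_{\eps,R}$ via ``good swaps'' whose existence is guaranteed by the equidistribution of feet---matches the spirit of the paper, and your two-stage organization (trivial swaps to fuse the $N$ copies of each component, then cross-component swaps) is a sensible variant of the paper's use of an $N$-fold cover with homeomorphic pant lifts. But there is a genuine gap in the cross-component step, and it is exactly the one you flag at the end: you have not shown that every pair of components of $S_{\eps,R}$ can be bridged by good swaps. The problem is that your proposed remedy does not work. The equidistribution of feet along a fixed $\gamma\in\curves$ only prevents one component from monopolizing a $\frac{\eps}{R}$-ball in $\N^1(\sqrt{\gamma})$ \emph{for a curve that several components already use}; it says nothing about whether two given components use any common cuff at all. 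A priori, the pants $\pants$ could split as $\sP_1\sqcup\sP_2$ with the cuff sets $\supp\partial\sP_1$ and $\supp\partial\sP_2$ disjoint (as unoriented curves), in which case every surface built from $\mu_{\eps,R}$ is disconnected and no swap along any curve helps. Feet-equidistribution along a single $\gamma$ cannot rule this out because it never compares pants with different cuffs.

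The missing ingredient is what the paper calls \emph{irreducibility} of $\mu_{\eps,R}$, and it requires a separate input: Proposition~\ref{conn} (Proposition~7.1 of Liu--Markovi\'c \cite{LM}), which says that any two curves in $\curves$ can be joined by a chain of pants in $\pants$. That proposition is proved by an independent geometric ``swapping'' construction using mixing of the frame flow and is not a consequence of Theorem~\ref{kw}. Once irreducibility is in hand, the argument you sketch---apply equidistribution along the shared cuff to find pants in distinct components with feet within $\frac{\eps}{R}$, perform the $(2\eps,R)$-good regluing, and iterate---goes through; this is essentially the paper's Theorem~\ref{irred}. A second, more technical point you should also address: to guarantee that each swap strictly \emph{decreases} the number of components, the paper first passes (via Lemma~\ref{double}) to a cover in which the good cuffs are nonseparating in each component; if a cuff were separating in both components being merged, the cut-and-cross-glue could fail to connect them. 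Your cyclic permutation of $N$ copies does not automatically arrange this, so you would need to incorporate the nonseparating normalization as well.
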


We define a measure $\mu\in\sM(\pants)$ to be \emph{irreducible} if for any nontrivial decomposition $\mu = \mu_1 + \mu_2$, there is a curve $\gamma\in\curves$ so that $\gamma$ lies in $\supp\partial\mu_1$ and its orientation reversal $\gamma^{-1}$ lies in $\supp\partial\mu_2$.

If $\mu$ is \emph{not} irreducible, then no connected surface may be built from $\mu$. In fact, if there is a nontrivial decomposition $\mu=\mu_1+\mu_2$ so that if $\gamma\in \supp\partial\mu_1$, then $\gamma^{-1}\notin\supp\partial\mu_2$, then no pants in $\supp\partial\mu_1$ may be glued to pants in $\supp\partial\mu_2$. Thus, a surface built out of $\mu$ will have at least two components.

On the other hand, if $\mu$ is irreducible, we have the following theorem, which is close to Lemma 3.9 of Liu and Marković \cite{LM}. (They do not assume $\mu$ to be positive on all pants, using a weaker hypothesis instead, but the conclusion is the same.)

\begin{thm}\label{irred}
Suppose $\mu\in \sM(\pants)$ is an irreducible measure so that $\mu(\pi)>0$ for every $\pi\in\pants$ and a closed surface may be $(\eps,R)$-well built from $\mu$. Then, there is an integer $N = N(\mu)$ so that a \emph{connected} closed surface may be $(2\eps,R)$-well built from $N\mu$.
\end{thm}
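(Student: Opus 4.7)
Following the cut-and-paste strategy of Liu-Markovic, I would start with the $(\eps,R)$-well built closed surface $S_\mu$ given by hypothesis and its $N$-fold disjoint union $\Sigma_N = N \cdot S_\mu$, which is $(\eps,R)$-well built from $N\mu$. I will then perform a sequence of \emph{swap surgeries} at individual good curves to merge the components of $\Sigma_N$ while only enlarging the gluing parameter from $\eps/R$ to $2\eps/R$. A swap at a cuff $\gamma$ replaces two current gluings $(\pi_1^-,\pi_1^+)$ and $(\pi_2^-,\pi_2^+)$, with $\pi_i^\pm \in \pants^\pm(\gamma)$, by the cross-gluings $(\pi_1^-,\pi_2^+)$ and $(\pi_2^-,\pi_1^+)$. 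Since both old gluings are $(\eps,R)$-good, the triangle inequality on $\N^1(\sqrt{\gamma})$ yields
\[
\dist(\ft\pi_1^-, \tau(\ft\pi_2^+)) \leq \dist(\ft\pi_1^-,\tau(\ft\pi_1^+)) + \dist(\ft\pi_1^+,\ft\pi_2^+) < \eps/R + \dist(\ft\pi_1^+,\ft\pi_2^+),
\]
and similarly for the other new gluing. Hence both new gluings are $(2\eps,R)$-good whenever $\dist(\ft\pi_1^+,\ft\pi_2^+)<\eps/R$, and whenever the two original gluings lay in distinct connected components, this surgery merges them.

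In \emph{Phase 1} I would merge the $N$ copies of each component $C_j$ of $S_\mu$ into a single ``thick'' component $\hat C_j$. Any two copies of $C_j$ contain identical gluings, so a swap between two such identical gluings has feet distance exactly $0$ and is automatically admissible. After $(N-1)$ such swaps per component, we are left with one thick $\hat C_j$ per component $C_j$ of $S_\mu$, and $\Sigma_N$ has exactly $k$ components, where $k$ is the number of components of $S_\mu$; each $\hat C_j$ contains $N$ copies of each constituent pants.

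In \emph{Phase 2} I would merge the thick components using irreducibility together with the equidistribution of feet (Theorem \ref{kw}). Writing $\mu=\sum_j \mu^{(j)}$ for the decomposition of $\mu$ according to which pants are used in $C_j$, irreducibility of $\mu$ implies that the graph on $\{\hat C_j\}$ with an edge between $\hat C_i$ and $\hat C_j$ whenever some cuff $\gamma \in \curves$ lies in $\supp\partial\mu^{(i)}\cap\supp\partial\mu^{(j)}$ is connected. For an edge at a cuff $\gamma$, I partition $\N^1(\sqrt{\gamma})$ into tiles of diameter at most $\eps/(4R)$. Theorem \ref{kw} guarantees, for $R$ large enough that $\delta=e^{-qR}$ is much smaller than the tile volume, that every tile contains a number of feet of $\pants^+(\gamma)$-ends proportional to its volume, uniformly in $\gamma$. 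Taking $N=N(\mu)$ large and pigeonholing, I can find a tile containing $+$-ends of gluings from both $\hat C_i$ and $\hat C_j$; the resulting swap has $\dist(\ft\pi_i^+,\ft\pi_j^+)<\eps/(4R)<\eps/R$ and thus merges the two thick components.

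The main obstacle is Phase 2, and specifically the fact that taking $N$ disjoint copies only multiplies existing feet rather than producing new ones, so a priori the finite set of feet locations occupied by $\hat C_i$ at $\gamma$ could entirely miss the tiles occupied by $\hat C_j$, blocking a direct swap even at a shared cuff. I would handle this by chaining swaps along a path in the connected component graph: successively merge $\hat C_i$ with an intermediate thick component $\hat C_k$ whose feet tiles at some cuff overlap those of $\hat C_i$, producing an enlarged component whose accumulated feet occupy a strictly larger collection of tiles, and continue through further intermediaries until the enlarged component reaches $\hat C_j$. Each such step reduces the total component count by one, and by connectedness of the component graph this process terminates after at most $k-1$ rounds, producing a single connected closed surface $(2\eps,R)$-well built from $N\mu$ for $N=N(\mu)$ chosen large enough to make every pigeonholing step succeed.
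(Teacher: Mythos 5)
Your overall blueprint---take many copies, perform feet-matching swaps at good curves, and invoke irreducibility plus the equidistribution of feet---is the right one, but your execution deviates from the paper at exactly the two places you need to be careful, and these deviations leave real gaps.

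\textbf{Gap 1: the close-feet step.} You correctly identify that Phase 2 is the crux, but the repair you offer does not work. The paper's argument does not rely on pigeonholing or on chaining through the component graph. Instead, it shows that the open sets $U_k = N_{\eps/2R}(\ft\,\Pi_k^-(\gamma))$ (neighborhoods of the feet of the ends lying in each component) cover \emph{all} of $\N^1(\sqrt{\gamma})$, which follows from the equidistribution of feet (Theorem~\ref{kw}) applied to the union of \emph{all} feet at $\gamma$. Then, since the torus $\N^1(\sqrt{\gamma})$ is \emph{connected} and the $U_k$ are open, two of them must intersect whenever at least two are nonempty, which irreducibility supplies. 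This produces the required pair $\pi_i^-, \pi_j^-$ with close feet. Your chaining argument appeals to connectedness of the component graph (edges given by shared cuffs in $\supp\partial\mu^{(i)}\cap\supp\partial\mu^{(j)}$), but a shared cuff does not give close feet --- the two components might occupy disjoint clusters of the torus at that cuff --- so you cannot guarantee the intermediary $\hat C_k$ ``whose feet tiles at some cuff overlap those of $\hat C_i$'' exists. The connectedness you actually need is that of $\N^1(\sqrt{\gamma})$ itself, combined with the fact that the $U_k$ cover it; that combination is what forces an overlap.

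\textbf{Gap 2: nonseparating cuffs.} A swap at a cuff $\gamma$ only merges two components if $\gamma$ is nonseparating in at least one of them: if $\gamma$ is separating in both pieces, cross-gluing leaves you with two components again. Your Phase~1 merging of the $N$ copies of $C_j$ can be rescued by choosing a nonseparating cuff of $C_j$ (one exists since the dual graph of a pants decomposition of a closed surface always contains a cycle), but you do not address whether the shared cuff $\gamma$ produced by irreducibility is nonseparating in the thick components $\hat C_i$, $\hat C_j$ that Phase~1 produced. This is exactly why the paper replaces $N$ disjoint copies by a degree-$N$ \emph{covering} of $S$ and invokes Lemma~\ref{double}: taking the double cover in which every pants curve lifts to a nonseparating curve guarantees that every swap actually reduces the component count. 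Without something equivalent, a regluing in Phase~2 can silently fail to merge, and the iteration does not terminate as claimed.

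A secondary point: the paper's choice of a cover of degree $N$ (rather than disjoint copies) also serves to ensure that each good curve appears at least $r$ times in each component, so that there are always enough ``fresh'' gluings as the iteration proceeds; your proposal does not explain why the supply of usable gluings cannot be exhausted.
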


In view of that, our goal for this section is to prove that $\mu_{\eps,R}$ is irreducible. Fortunately, we have the following theorem, which is Proposition 7.1 of \cite{LM}.

\begin{prop}\label{conn}
Given two curves $\gamma_0,\gamma_1 \in \curves$, we may find a sequence of pants $\pi_0,\ldots,\pi_n$ in $\pants$ so that $\gamma_0$ is a cuff of $\pi_0$, $\gamma_1$ is a cuff of $\pi_n$ and $\pi_i$ may be glued to $\pi_{i+1}$ for $0\leq i <n$.
\end{prop}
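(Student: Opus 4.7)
Define the \emph{pants graph} $G$ with vertex set $\curves$ and an edge between two good curves whenever they appear as two of the three cuffs of a common pants in $\pants$. A path in $G$ from $\gamma_0$ to $\gamma_1$ is precisely the chain of pants required by the proposition, so it suffices to prove $G$ is connected. The key input will be the equidistribution of feet (Theorem \ref{kw}) together with exponential mixing of the geodesic/frame flow on $\Fr M$, which is the engine of \cite{KW2}.

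My first step is to establish a \emph{reachability lemma}: for any fixed $\gamma \in \curves$ and any nonempty open set $U \subseteq \Fr M$, once $R \geq R_0(U,M)$ there exists a good pants with $\gamma$ as one cuff whose second cuff, viewed as a subset of $\Fr M$ via any frame lift, meets $U$. The proof combines Theorem \ref{kw}, which distributes the \emph{feet} of pants through $\gamma$ evenly over $\N^1(\sqrt{\gamma})$, with the unwinding of the pants construction in \cite{KW}: the second cuff is obtained by flowing an orthogeodesic from $\gamma$ of length approximately $R$ and applying an Anosov closing argument. Exponential mixing of the geodesic flow transports equidistribution in $\N^1(\sqrt{\gamma})$ into equidistribution of second cuffs in $\Fr M$.

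Next I would fix $\gamma_0$, let $V \subseteq \curves$ be the $G$-component of $\gamma_0$, and show $V = \curves$ by contradiction. If $\gamma_1 \in \curves \setminus V$, then by the graph definition every pants in $\pants$ has all three of its cuffs contained in a single one of $V$ or $\curves \setminus V$. Pick a small neighborhood $U$ of a frame on $\gamma_1$: by the reachability lemma applied to $\gamma_0 \in V$, there is a good pants with $\gamma_0$ as a cuff whose second cuff meets $U$. For $R$ large, distinct good curves are separated at scale much greater than the mixing-error $e^{-qR}$, so after sharpening $U$ and refining the closing argument this second cuff must coincide with $\gamma_1$, contradicting $\gamma_1 \notin V$.

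The main obstacle is the reachability lemma, and specifically the upgrade from equidistribution of feet to equidistribution of second cuffs. This requires a careful analysis of the map from $\N^1(\sqrt{\gamma})$ to $\Fr M$ that sends a foot to the frame along the associated second cuff. Morally this map is the geodesic flow for time approximately $R$ composed with a projection, so mixing should yield the desired equidistribution, but making the error bounds quantitative enough to single out a particular good curve $\gamma_1$ (rather than merely a neighborhood in $\Fr M$) is the delicate point. This is the technical core of the argument and parallels the analysis carried out in \cite{KW2}.
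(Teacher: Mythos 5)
There is a genuine gap, and it is the one you yourself flag at the end: the upgrade from ``second cuff lands in a small open set $U$ around a frame on $\gamma_1$'' to ``second cuff \emph{is} $\gamma_1$'' does not go through, and in fact the conclusion it is trying to reach is too strong to be true. If your argument worked, it would show that any two $(\eps,R)$-good curves $\gamma_0,\gamma_1$ are cuffs of a single common good pants (the chain has length $n=0$). But an $(\eps,R)$-good pants has short orthogeodesics of length on the order of $e^{-R/2}$, so two cuffs of a common good pants must approach each other to within roughly $e^{-R/2}$; two arbitrary good curves in $M$ need not come anywhere near that close, so in general no single good pants connects them. The proposition really does need a chain of length $>0$.

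The specific step that fails is the claim that ``for $R$ large, distinct good curves are separated at scale much greater than the mixing-error $e^{-qR}$.'' This has the inequality backwards: the number of $(\eps,R)$-good curves is of order $e^{4R}$, they have length $\approx 2R$, and they all live in the fixed-volume unit tangent bundle, so by pigeonhole some pair must come within roughly $e^{-R}$ of each other --- which is much \emph{smaller} than $e^{-qR}$ for the small $q=q(M)$ that mixing actually gives you. And even setting aside the quantitative point, ``close at one frame'' cannot single out a closed geodesic: distinct closed geodesics of comparable length can shadow each other along almost their entire length, so passing through an arbitrarily small $U$ near $\gamma_1$ does not force equality with $\gamma_1$. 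The reachability lemma, at any fixed scale, produces some good curve near $\gamma_1$, not $\gamma_1$ itself. The paper's route, following Liu--Markovic, sidesteps this entirely: it first replaces $\gamma_0,\gamma_1$ by curves of a much tighter tolerance class using auxiliary good pants, and then uses the ``swapping'' construction (two segments of length $\approx R/2$ joining the two curves, chosen via mixing so that together with $\gamma_0,\gamma_1$ they bound a surface $F$ of bounded genus with four boundary components) and decomposes $F$ into good pants. The point is to build a bounded-length \emph{chain} of pants through a concrete subsurface, never to hit $\gamma_1$ in a single pants.
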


The gluings in the proposition above are not necessarily $(\eps,R)$-good.

\begin{proof}[Proof sketch]
For any curve $\gamma\in \curves$, it is possible to use the mixing of the frame flow to show there is a segment $\alpha$ from $\gamma$ to itself, dividing $\gamma$ into two pieces of approximately equal length and with the property that the curves homotopic to each of the bigons formed by $\gamma$ and $\alpha$ are $(\eps/10000,R)$-good. (Liu and Marković call this construction \emph{splitting} -- \cite{LM}, Construction 4.17) Thus, we can think of $\gamma_0$ and $\gamma_1$ as boundaries of pants $\pi$ and $\pi'$ whose other cuffs are in $\mathbf{\Gamma}_{\eps/10000,R}$ and for the purposes of proving this proposition, we may assume $\gamma_0$ and $\gamma_1$ are $(\eps/10000,R)$-good.

Liu and Marković then use a construciton called \emph{swapping} (\cite{LM}, Construction 4.18) to build a surface $F$ made out of pants in $\pants$ so that $\gamma_0$ and $\gamma_1$ are connected components of $\partial F$.

To explain a bit more about the swapping construction, we will need to introduce a bit of their terminology. A \emph{framed segment} is a geodesic segment in $M$ with orthonormal frames at its endpoints, where the first vectors of both frames are tangent to the segment. An \emph{$(L,\delta)$-tame cycle} is a sequence $\fs_1,\ldots,\fs_m$ of framed segments of length at least $2L$, with the property that co{}nsecutive initial and terminal frames are within $\delta$ of each other in the metric on $\Fr M$ and with $\fs_m$ ending on the initial point of $\fs_1$. The \emph{reduced concatenation} of $[\fs_1 \cdots \fs_m]$ of $\fs_1,\dots,\fs_m$ is the unique geodesic in $M$ homotopic to the concatenation of the segments. An \emph{$(L,\delta)$-tame bigon} is an $(L,\delta)$-tame cycle with two segments and an \emph{$(L,\delta)$-tame swap pair of bigons} is a pair of $(L,\delta)$-tame bigons $[\fa_-\fa_+]$ and $[\fa'_-\fa'_+]$ such that $\fa_s$ and $\fa'_s$ have complex length $\delta$-close to each other for $s = -$ or $+$ and $[\fa_-\fa_+']$ and $[\fa'_-\fa_+]$ are also $(L,\delta)$-tame bigons.

Now let $\eps>0$ be sufficiently small and $L = L(\eps,M)$ be sufficiently large. The swapping construction takes as an input a $(10L,\eps/100)$-tame swap pair of bigons $[\fa_-\fa_+]$ and $[\fa'_-\fa'_+]$ with the property that all of  $[\fa_-\fa_+]$,  $[\fa_-'\fa_+']$,  $[\fa_-'\fa_+]$ and  $[\fa_-\fa_+']$ are in $\curves$. The output is a surface $F$ built out of pants in $\pants$ so that $\partial F$ has exactly four boundary components: $[\fa_-\fa_+]$,  $[\fa_-'\fa_+']$, as well as the orientation reversals of  $[\fa_-'\fa_+]$ and  $[\fa_-\fa_+']$.

A word on the proof of this construction: in the simple case when $\fa_-$ and $\fa_+$ (hence also $\fa'_-$ and $\fa'_+$) have approximately the same length, it is possible to using mixing to draw a segment $\fm$ from the initial to the final point of $\fa_-$ with the property that all of $[\fa_s \fm]$ and $[\fa'_s\fm]$ are in $\curves$, for $s \in \{-,+\}$. These curves, together with $[\fa_-\fa_+]$,  $[\fa_-'\fa_+']$, $[\fa_-'\fa_+]$ and  $[\fa_-\fa_+']$, bound four  pants in $\pants$, which we glue together to get $F$. The general case requires a more intricate construction, and the resulting $F$ is made from 12 pants in $\pants$.

Now let us go back to our curves $\gamma_0$ and $\gamma_1$ in $\mathbf{\Gamma}_{\eps/10000,R}$ and explain how to use swapping to build a bridge of pants in $\pants$ between them. Decompose $\gamma_0$ into segments $\fa_-$ and $\fa_+$, with $\fa_-$ of length $R/2$ and $\gamma_1$ into segments $\fb_-$ and $\fb_+$ with $\fb_+$ of length $R/2$. Using mixing of the frame flow, it is possible to draw segments $\fs$ and $\frt$ of complex length $\eps/10000$-close to $R/2$ so that $\fa_- ,\fs ,\fb_+ ,\frt$ is a $(100,\eps/100)$-tame cycle (see Figure \ref{swapfig}). Moreover, it is possible to use the \emph{length and phase formula} (\cite{LM}, Lemma 4.8) to show that the reduced concatenation $\gamma' = [\fa_-\fs\fb_+\frt]$ is in $\mathbf{\Gamma}_{\eps/100,R}$. Moreover, $[\fa_-\fa_+]$ and $[\fa_-(\fs\fb_+\frt)]$ form a $(100,\eps/100)$-tame swap pair of bigons, so using swapping we can build a surface $F_1$ out of pants in $\pants$ so $\gamma_0$ and $\gamma'$ are components of $\partial F_1$. Similarly, $[\fb_-\fb_+]$ and $[(\frt \fa_-\fs)\fb_+]$ are a $(100,\eps/100)$-tame swap pair of bigons, so swapping gives us a surface $F_2$ built out of pants in $\pants$ so $\gamma'$ and $\gamma_1$ are components of $\partial F_2$. By gluing $F_1$ and $F_2$ along $\gamma'$, we obtain a bridge of $(\eps,R)$-good pants between $\gamma_0$ and $\gamma_1$.
\end{proof}

\begin{figure}
\includegraphics[scale=0.08]{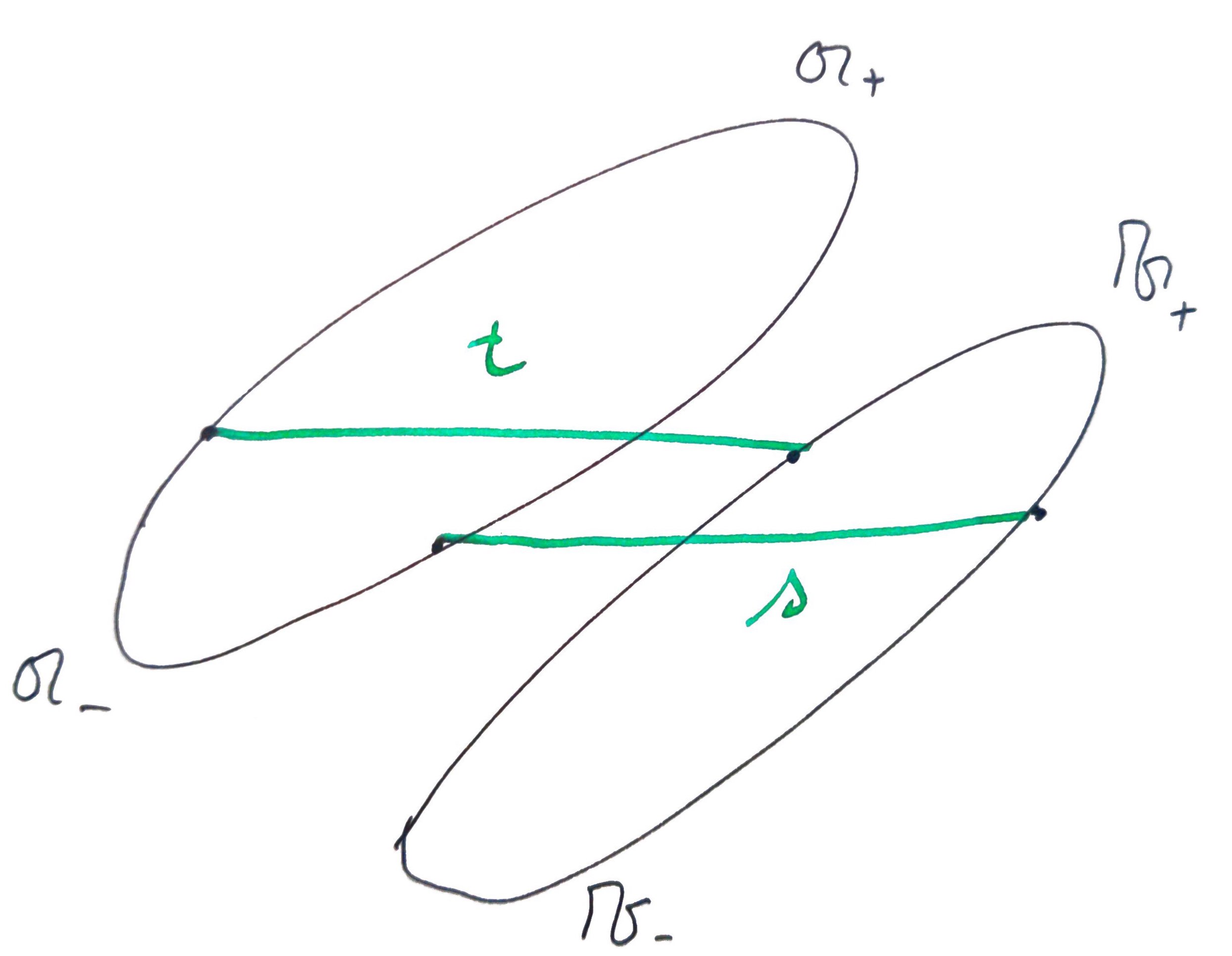}
\caption{Proof sketch of Proposition \ref{conn}: joining $\gamma_0$ and $\gamma_1$ via segments $\fs$ and $\frt$.}
\label{swapfig}
\end{figure}

\begin{thm}
The measure $\mu_{\eps,R}$ is irreducible.
\end{thm}

\begin{proof}
Let $\mu_{\eps,R} = \mu_0 + \mu_1$ be a nontrivial decomposition. Let $\gamma_0\in \supp\partial\mu_0$ and $\gamma_1\in\supp\partial\mu_1$. In view of Proposition \ref{conn}, there are pants $\pi_0,\ldots,\pi_n$ in $\pants = \supp\mu_{\eps,R}$ so that $\gamma_0$ is a cuff of $\pi_0$, $\gamma_1$ is a cuff of $\pi_n$ and $\pi_i$ may be glued to $\pi_{i+1}$. This means there is a curve $\gamma$, which is a cuff of some $\pi_i$, so that $\gamma\in\supp\mu_1$ and $\gamma^{-1}\in\supp\mu_2$. This means $\mu_{\eps,R}$ is irreducible.
\end{proof}

We conclude the section by providing a proof of Theorem \ref{irred}. The regluing of surfaces featured in this proof provides inspiration for the construction of non-equidistributing surfaces in Section 7.

We start with the following lemma about pants decompositions.

\begin{lem}\label{double}
Let $S$ be a surface with a pants decomposition $P$. Then, $S$ has a double cover $\hat{S}$ to which the pants in $P$ lift homeomorphically to pants with nonseparating cuffs.
\end{lem}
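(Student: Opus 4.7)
The plan is to work with the dual graph $G$ of the pants decomposition $P$, whose vertices are the pants and whose edges are the cuffs (with a self-loop at $v_P$ for each cuff self-adjacent to $P$). Because every pants has three cuffs, $G$ is a 3-regular multigraph (self-loops contributing $2$ to the degree of their endpoint). By Bass-Serre theory $\pi_1(S)/\langle\langle \pi_1(P_i)\rangle\rangle\cong \pi_1(G)$, so the double covers $\hat{S}\to S$ in which every pants of $P$ lifts homeomorphically to two disjoint pants biject with homomorphisms $\bar\phi\colon H_1(G;\bbZ/2)\to\bbZ/2$. For any such cover every cuff of $S$ lifts to two cuffs of $\hat{S}$, and a lifted cuff is non-separating in $\hat{S}$ iff the corresponding edge of the dual graph $\hat{G}$ of the lifted pants decomposition is not a bridge.

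Next I would unpack $\hat{G}$ in terms of $G$ and $\bar\phi$: it has two vertices per vertex of $G$ and two edges per edge of $G$. For a non-loop edge of $G$ its two lifts either both preserve or both swap sheets, the choice determined by $\bar\phi$. For a self-loop $\ell$ at $v$ the two lifts in $\hat{G}$ are either two self-loops at the two vertex-lifts (if $\bar\phi(\ell)=0$) or two parallel edges joining them (if $\bar\phi(\ell)=1$); either way, these lifts are automatically non-bridges. For a non-loop non-bridge edge $e$ of $G$, $e$ lies on some cycle $\gamma$ of $G$, whose lift to $\hat{G}$ is either two disjoint cycles or one cycle of doubled length, so both lifts of $e$ lie on cycles and are therefore non-bridges.

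It remains to handle the bridges of $G$. For a bridge $e=uv$ of $G$ with $G\setminus\{e\}=G_A\sqcup G_B$ (say $u\in G_A,v\in G_B$), I would check that the two lifts of $e$ in $\hat{G}$ are non-bridges iff $\bar\phi$ is nontrivial on each of $H_1(G_A;\bbZ/2)$ and $H_1(G_B;\bbZ/2)$; otherwise one lifted side splits into two disjoint copies of itself, and each lift of $e$ becomes the unique connection of one copy to the rest of $\hat{G}$. To arrange this for every bridge at once I would use the decomposition $H_1(G;\bbZ/2)=\bigoplus_C H_1(C;\bbZ/2)$ over 2-edge-connected components $C$ of $G$ (valid since bridges do not contribute to $H_1$) and choose $\bar\phi$ nonzero on each $C$ of positive first Betti number. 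A handshaking count in the 3-regular graph $G$ shows that any side of a bridge satisfies $|E(G_A)|=(3|V(G_A)|-1)/2>|V(G_A)|-1$, hence contains a cycle and therefore a 2-edge-connected component of positive first Betti number, so such a $\bar\phi$ exists.

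The main obstacle is the subtle point that $\hat{G}$ is not literally the topological double cover of $G$ when $G$ has self-loops: each cuff of $S$ yields two cuffs of $\hat{S}$, so each edge of $G$ produces two edges of $\hat{G}$, whereas in the topological cover a loop with $\bar\phi(\ell)=1$ would lift to a single edge. Once this correspondence is set up carefully and the bridge criterion is verified, the actual choice of $\bar\phi$ is a routine exercise in $\bbZ/2$-linear algebra.
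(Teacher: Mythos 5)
Your proof is correct, and its core idea coincides with the paper's: both arguments hinge on the decomposition of the 3-regular dual graph into its 2-edge-connected components joined by bridges, and on arranging the cover to be connected over each nontrivial 2-edge-connected piece so that both lifts of any bridge join the same two components of the lifted graph. The difference is one of packaging. The paper constructs the double cover directly, choosing a connected double cover $\hat{C}_j$ of each 2-edge-connected component $C_j$, showing by the deck-transformation argument that $\hat{C}_j$ is again 2-edge-connected, and then gluing in two copies of each bridge-tree; your version instead parametrizes all pants-preserving double covers by $H_1(G;\bbZ/2)\to\bbZ/2$, characterizes non-bridge lifts via nontriviality of the restriction to $H_1(G_A)$ and $H_1(G_B)$, and constructs the class explicitly using $H_1(G;\bbZ/2)=\bigoplus_C H_1(C;\bbZ/2)$. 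Your route has the merit of making explicit a point the paper leaves implicit, namely that each side of a bridge in a cubic graph contains a cycle (your $|E(G_A)|=(3|V(G_A)|-1)/2>|V(G_A)|-1$ count), so that the required nontrivial classes actually exist; this simultaneously guarantees $\bar\phi\neq 0$, hence connectedness of $\hat{S}$, which the paper's proof also needs but does not spell out. One small correction: the worry you flag at the end is unfounded. In the genuine topological double cover of a graph, the interior of a self-loop $\ell$ is an open interval, hence always lifts to two open intervals; when $\bar\phi(\ell)=1$ these close up to two parallel edges between the two vertex lifts, exactly as you describe. So the $\hat{G}$ you build \emph{is} the topological double cover, self-loops and all, and no special reconciliation is needed.
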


\begin{figure}
\includegraphics[scale=0.1]{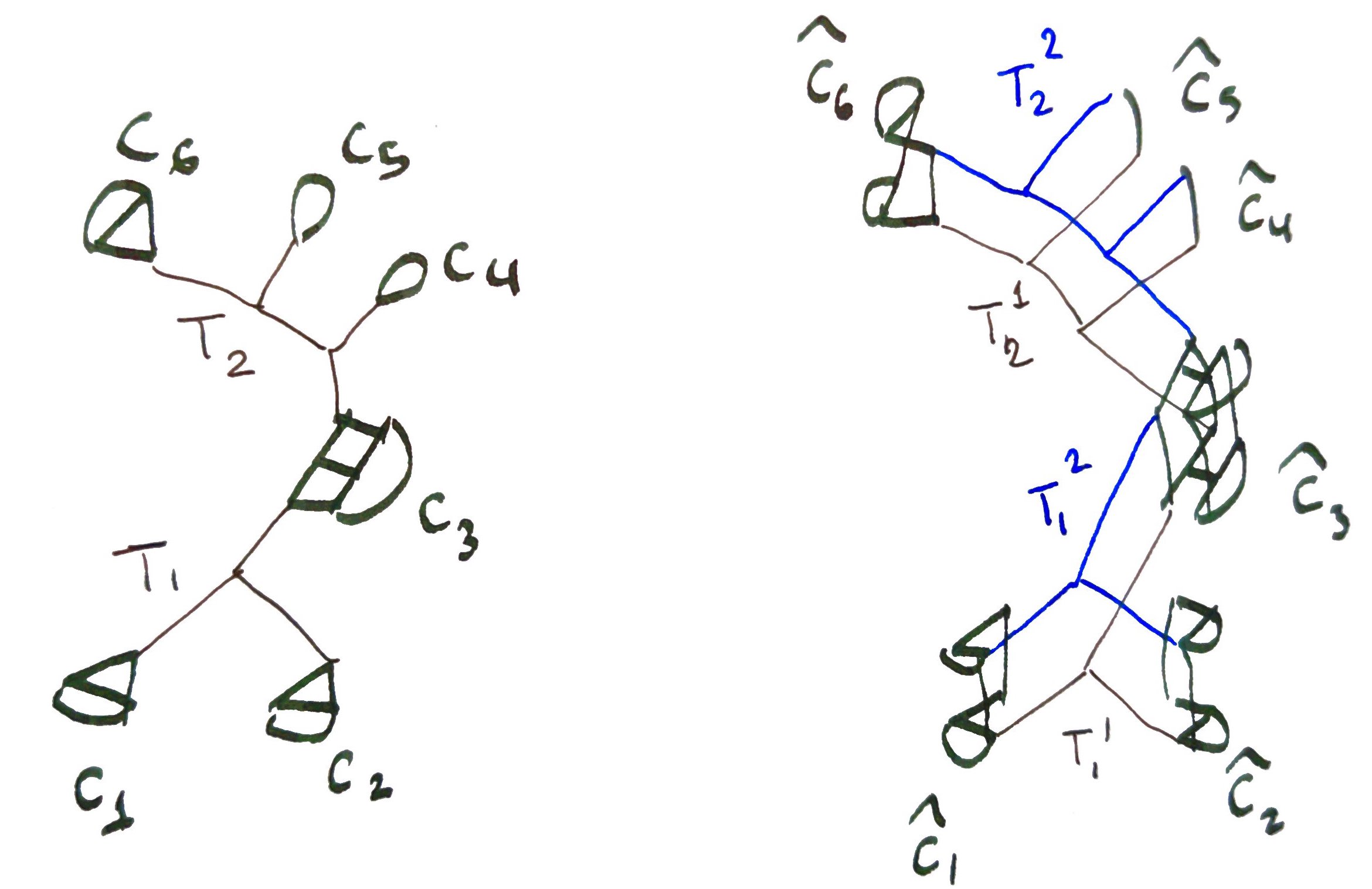}
\caption{Proof of Lemma \ref{double}. On the left, we have the dual graph $X$ to the pants decomposition $P$ of $S$. On the right, we have the double cover $\hat{X}$. $T_i^1$ and $T_i^2$ are the lifts of $T_i$ to $\hat{X}$, which are nonseparating in $\hat{X}$.}
\label{doublecover}
\end{figure}

\begin{proof}
Let $X = X(S,P)$ be the 3-regular graph whose vertices are pants in $P$ and the edges are the cuffs shared by pants in $P$. Conversely, given a 3-regular graph $X$, we can obtain a surface $S$ with pants decompostion $P$ by drawing pants-shaped tubes around a neighborhood of each vertex in $X$ and identifying their cuffs along the edges of $X$. Thus, given a degree $d$ finite cover $\hat{X}$ of $X(S,P)$, we get a surface $\hat{S}$ with pants decomposition $\hat{P}$, which is a degree $d$ cover of $S$
. Moreover, the pants in $P$ lift homeomorphically to pants in $\hat{P}$.

A cuff is separating in $S$ if and only if its corresponding edge in $X(S,P)$ is separating. Thus, our task is to show that $X$ has a double cover $\hat{X}$ that only has nonseparating edges.

To do so, let $F= \sqcup_{i=1}^n T_i \se X$ be the graph-theoretic forest consisting of all separating edges of $X$, where the $T_i$ are disjoint trees. We also write $X - F = \sqcup_{j=1}^m C_j$, where $C_j$ are disjoint connected components. For each $j$, we take a double cover $d_j:\hat{C}_j\to C_j$. This gives us a double cover $d:\sqcup_{j=1}^m \hat{C}_j\to \sqcup_{j=1}^m C_j$.

Note that each $\hat{C_j}$ consists of nonseparating edges. If some $\hat{C_j}$ had a separating edge $e$, it would have another separating edge $e'$, the image of $e$ under the nontrivial deck transformation $\hat{C_j}\to\hat{C_j}$. Thus, $\hat{C_j}-\{e,e'\}$ consists of \emph{three} components, otherwise one of $e$ or $e'$ would not be separating. In particular, the inverse image of the (connected) set $C_j-d_j(e)$ under $d_j$ would consist of three components, contradicting the fact that $d_j$ is a double cover.

For each $i$, we attach a copy of $T_i$ to each of the two lifts that $\partial T_i$ has in $\sqcup_{j=1}^m \hat{C}_j$. As a result, we get a double cover $D:\hat{X}\to X$ which extends $d$. (See Figure \ref{doublecover}.) The trees $T_i\se X$ have nonseparating lifts to $\hat{X}$, as both of their lifts are bounded by the same subset of the $\{\hat{C}_j\}_{j=1
}^m$.
\end{proof}

\begin{figure}
\includegraphics[scale=0.07]{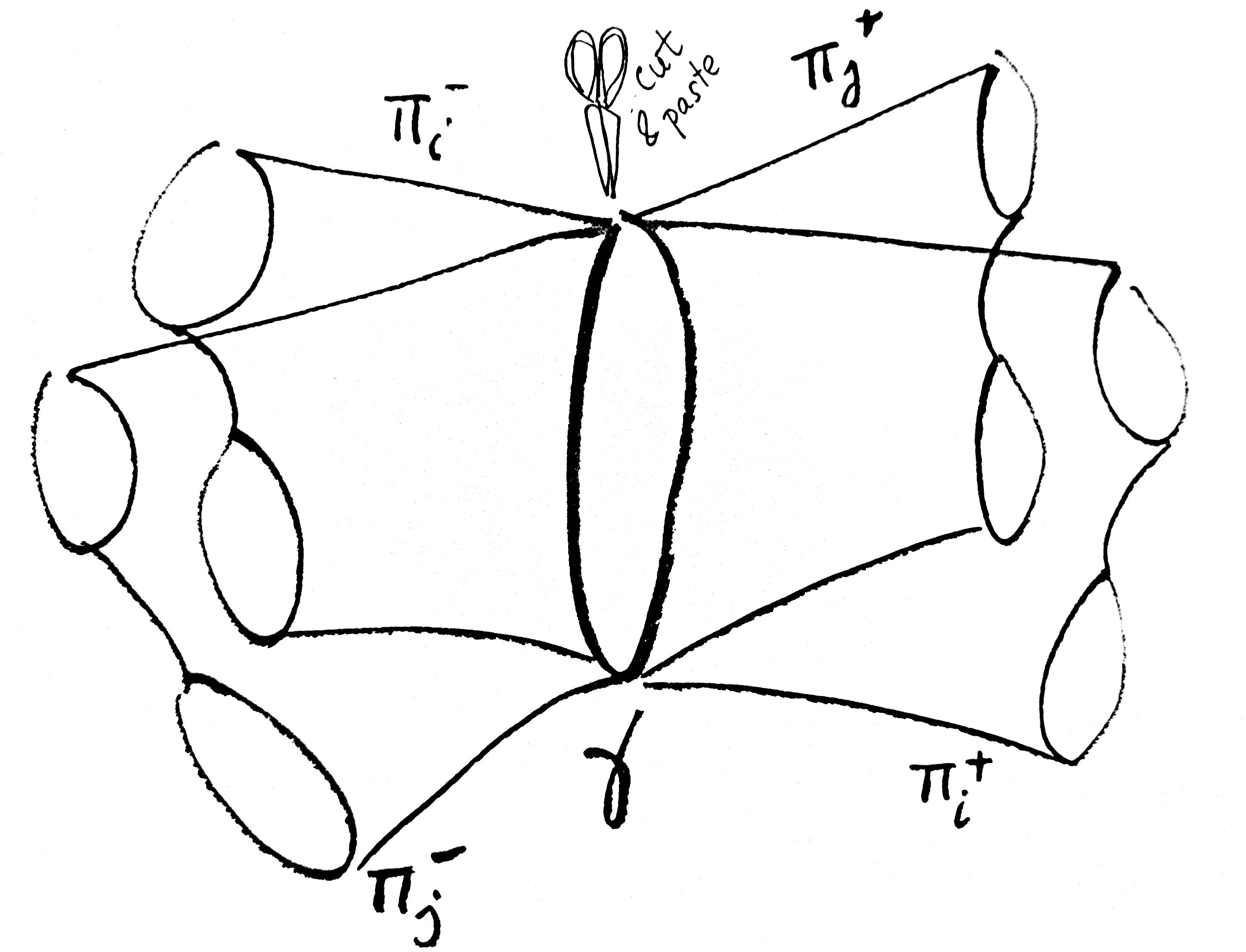}
\caption{Regluing $\hat{S}_i$ to $\hat{S}_j$ with a good gluing and reducing the number of components of $\hat{S}$ while making sure the result is still nearly Fuchsian.}
\label{reglue}
\end{figure}

\begin{proof}[Proof of Theorem \ref{irred}]
Suppose the closed oriented essential $(1+O(\eps))$-quasifuchsian surface $S$ we build out of $\mu_{\eps,R}$ has $r$ components:
\[
S = S_1\sqcup \cdots \sqcup S_r.
\]

We take a finite covering $\hat{S} = \hat{S}_1\sqcup \cdots \sqcup\hat{S}_r$ of $S$ of degree $r$ so that the good pants lift homeomorphically and and with the property that if a cuff $\gamma \in \curves$ appears in $S_j$, then it appears at least $r$ times in $\hat{S}_j$, for $1\leq j \leq r$. In view of Lemma 4.5, we can take a double cover of $\hat{S}$ to which good pants lift homeomorphically to good pants with nonseparating cuffs. This lets us assume all cuffs in $\hat{S}$ are nonseparating.

The surface $\hat{S}$ is built out of a multiple $N\mu_{\eps,R}$ of $\mu_{\eps,R}$, where $N = 2r$. Below we will explain how we can make $r$ cut and paste operations on cuffs of $\hat{S}$ in order to get a \emph{connected} closed $(1+O(\eps))$-quasifuchsian surface also built out of $N\mu_{\eps,R}$.

For $\gamma\in \curves$, let $\bPi_k(\gamma)$ be the ends of pants in $\tpants(\gamma)$ that lie in $\hat{S}_k$. We can divide $\bPi_k(\gamma) = \bPi_k^-(\gamma) \sqcup \bPi_k^+(\gamma)$ that induce a negative and positive orientation on $\gamma$. 

Consider the graph $X$ whose vertices $v_1,\ldots,v_r$ are in correspondence with the components $\hat{S}_1,\ldots,\hat{S}_r$ of $\hat{S}$. Two vertices $v_i$ and $v_j$ in $X$ are connected by an edge when $\hat{S}_i$ and $\hat{S}_j$ share a common cuff $\gamma\in\curves$ so there are pants $\pi_i^-\in \bPi_i^-(\gamma)$ and $\pi_j^-\in \bPi_j^-(\gamma)$ satisfying
\[
| \ft_{\gamma} \pi_i^- - \ft_{\gamma} \pi_j^- | < \frac{\eps}{R}.
\]

The irreducibility of $\mu_{\eps,R}$ comes in to play to show

\begin{claim*}
$X$ is connected.
\end{claim*}

\begin{proof}
Suppose by contradiction that $X_1 \se X$ is a connected component that is not empty and not all of $X$. Let $X_2 = X - X_1$.

We may decompose $N\mu_{\eps,R}$ as $N\mu_{\eps,R} = \hat{\mu}_1 + \hat{\mu}_2$, where each $\hat{\mu}_i$ has support in the pants that make up the components of $\hat{S}$ corresponding to the vertices of $X_i$. Moreover, the way in which the $\hat{S}$ was constructed tells us that $\hat{\mu}_i = N\mu_i$, where each $\mu_i$ is also a nonnegative integer-valued measure on $\pants$.

In particular, we have a nontrivial decomposition $\mu_{\eps,R} = \mu_1+\mu_2$. By the irreducibility of $\mu_{\eps,R}$ there is a cuff $\gamma \in \curves$ appearing in the good pants decompositions of some connected components of $\hat{S}$ corresponding to vertices in both $X_1$ and $X_2$.

Let $\bPi^-_{X_i}(\gamma)$ denote the ends of pants in $\pants^-(\gamma)$ coming from pants that lie in the components of $\hat{S}$ corresponding to the vertices of $X_i$, for $i = 1$ or $2$. Let $F_i := \ft_{\gamma} ( \bPi^-_{X_i}(\gamma))$.

We claim $N_{\eps/2R} (F_1) \cup N_{\eps/2R} (F_2) = \N^1 (\sqrt{\gamma})$. If we let $F = \ft_{\gamma} (\pants^-(\gamma))$, this is equivalent to saying $N_{\eps/2R} (F) = \N^1(\sqrt{\gamma})$. The latter follows from the equidistribution of feet (Theorem \ref{kw}), which tells us
\[
0 = \frac{\Ft(\N^1(\sqrt{\gamma}) - F)}{C_{\eps,R,\gamma}} \geq (1-\delta) \,\lambda\lef( N_{-\delta} (\N^1(\sqrt{\gamma}) - F) \ri),
\]
and so we see $N_{\delta}(F)$ has full measure. Since $\delta = e^{-qR} < \eps/2R$, we get the desired conclusion.

As $\N^1(\sqrt{\gamma})$ is connected, we conclude $N_{\eps/2R}(F_1) \cap N_{\eps/2R}(F_2) \neq\emptyset$. Thus we may find pants $\pi_1^-\in \bPi_{X_1}^-(\gamma)$ and $\pi_2^- \in \bPi_{X_2}^-(\gamma)$ so that
\[
\dist\, (\ft_{\gamma} \pi_1^- ,\ft_{\gamma} \pi_2^-) < \frac{\eps}{R}.
\]

This implies there is an edge between some vertex of $X_1$ and some vertex of $X_2$, contradicting the assumption that $X_1$ is a proper connected component of $X$.
\end{proof}

Let $T\se X$ be a maximal tree. For an edge $e$ of $T$ between two vertices $v_i$ and $v_j$, we select cuffs $\gamma(e, \hat{S}_i)$ lying in $\hat{S}_i$ and $\gamma(e,\hat{S}_j)$ in $\hat{S}_j$ with the following properties. As cuffs in $\curves$ they are the same $\gamma_e\in\curves$ and we may find $\pi_i^- \in \bPi^-_i(\gamma_{e})$ and  $\pi_j^- \in \bPi^-_j(\gamma_{e})$ so that $|\ft_{\gamma_{e}} \pi_i^- - \ft_{\gamma_{e}} \pi_j^-| < \eps/R$.

As both $\hat{S}_i$ and $\hat{S}_j$ are $(\eps,R)$-well built from pants $\pants$, both $\pi_i^-$ and $\pi_j^-$ are $(\eps,R)$-well glued to some $\pi_i^+$ and $\pi_j^+$ in $\pants^+(\gamma_{e})$, respectively. Namely,
\[
|\ft_{\gamma_{e}} \pi_i^- - \tau(\ft_{\gamma_{e}} \pi_i^+) | < \frac{\eps}{R}
\aand
|\ft_{\gamma_{e}} \pi_j^- - \tau(\ft_{\gamma_{e}} \pi_j^+) | < \frac{\eps}{R}.
\]
Therefore, $\pi_i^-$ may be $(2\eps,R)$-well glued to $\pi_j^+$ and $\pi_j^-$ may be $(2\eps,R)$-well glued to $\pi_i^+$.

From our hypothesis that each cuff originally in each component $S_k$ appears at least $r$ times in each $\hat{S}_k$, we can assume that all the $\gamma(e,S_k)$ are distinct as curves in the surfaces they lie in. This means we can perform all these regluings at once, without worrying about one regluing interfering in another. Thus we obtain a \emph{connected} closed $(1+O(\eps))$-quasifuchsian surface built out of $N\mu_{\eps,R}$.
\end{proof}

\section{Barycenters of the good pants are equidistributed}\label{equid}

\begin{figure}
\includegraphics[scale=0.1]{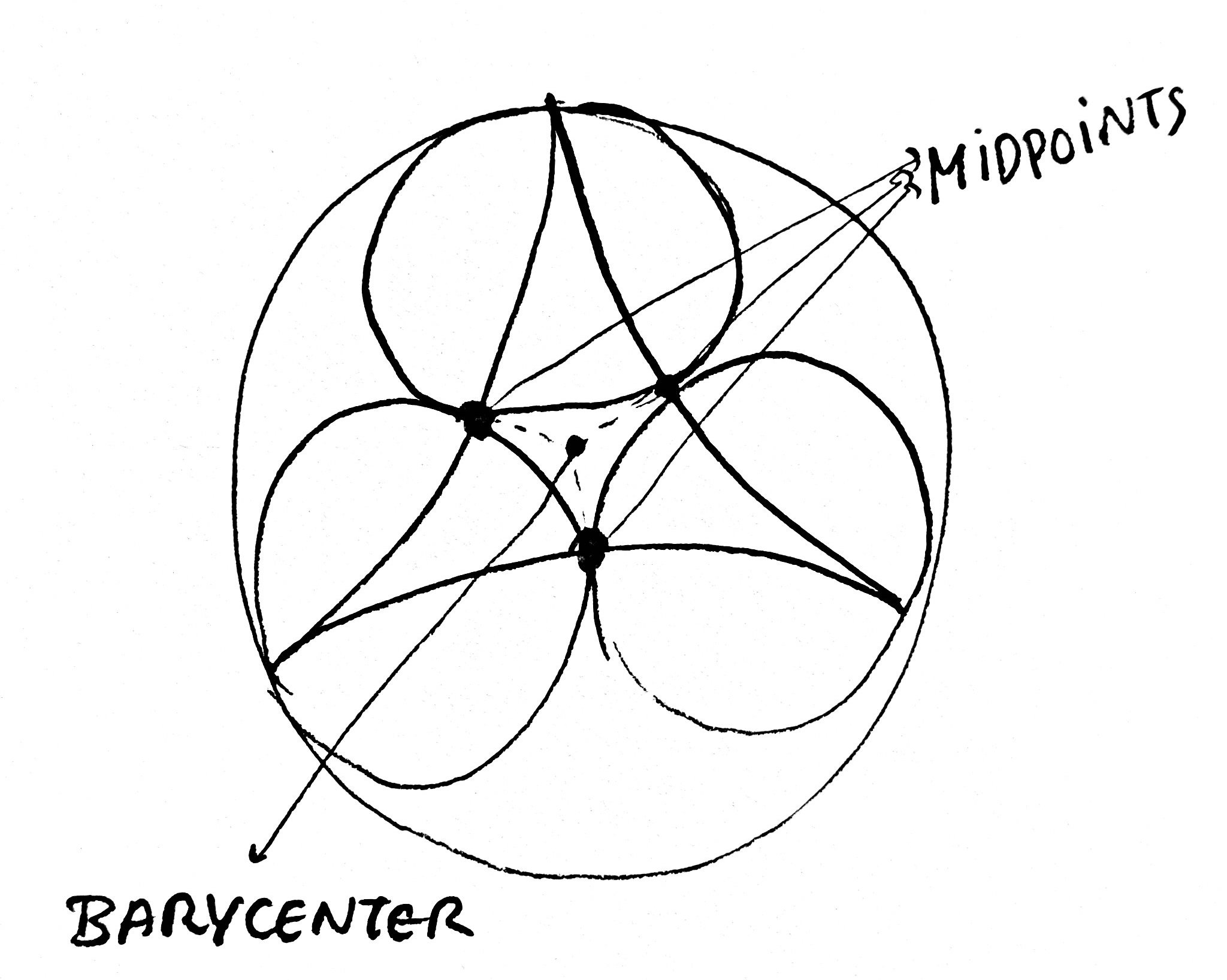}
\includegraphics[scale=0.1]{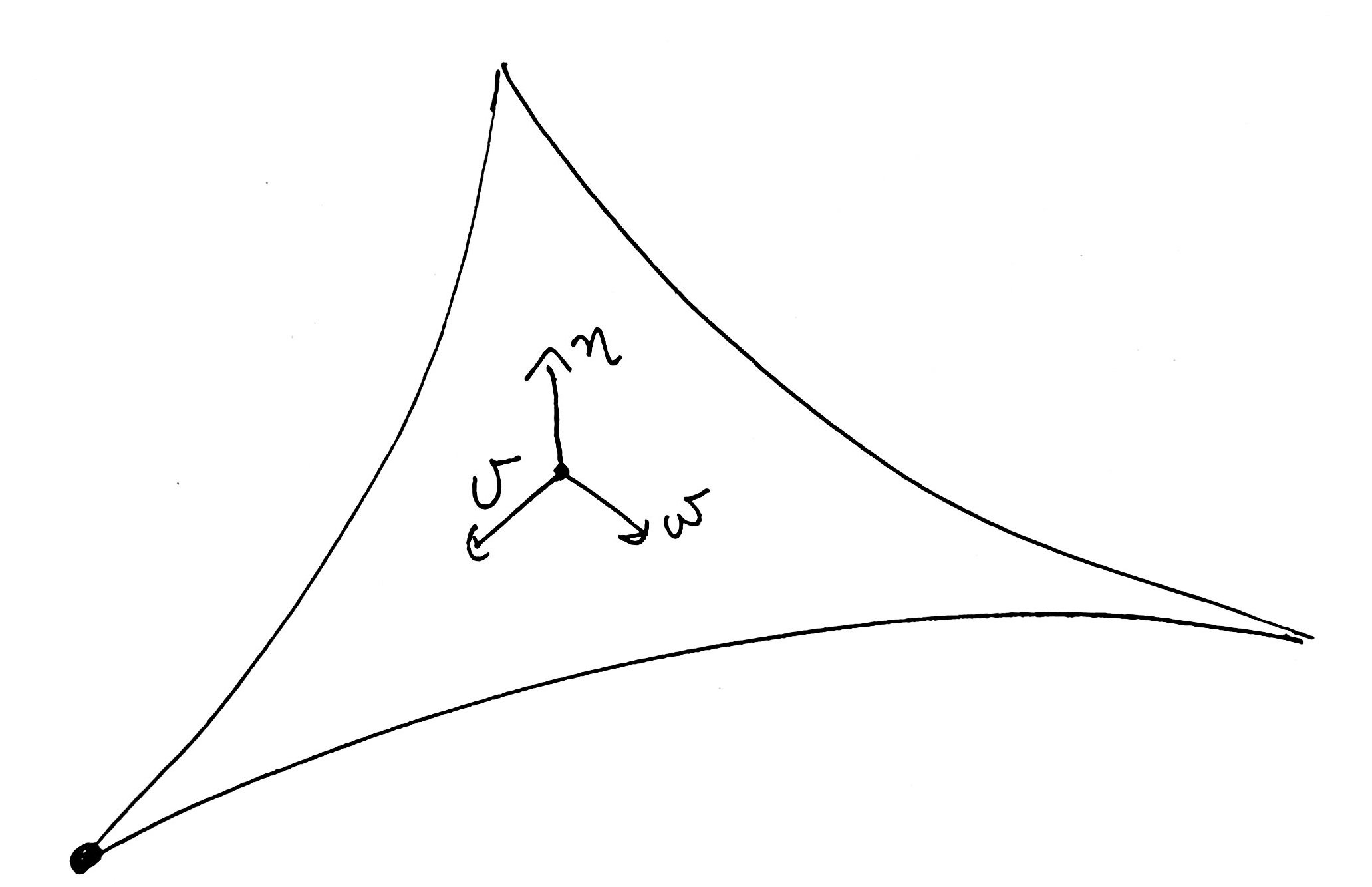}
\caption{Left: midpoints and barycenter of an ideal triangle. Right: one of the three framed barycenters of an ideal triangle.}
\end{figure}

Let $T\se \bH^3$ be an oriented ideal triangle. There are three horocycles based on the vertices of $T$ that are pairwise tangent, with their tangency points lying in $\partial T$. The points where the horocycles meet $\partial T$ are called the \emph{midpoints} of the edges of $T$.

The geodesic rays from the midpoints of $T$ towards the opposite vertices meet at the \emph{barycenter} $b(T)$ of $T$. The \emph{framed barycenters} of $T$ are the frames $(v,w,n)$ based at $b(T)$, where $v$ points away from a side of $T$, $n$ is normal to $T$ and $v\times w = n$.

The barycenter of an ideal triangle $T\se M$ is the projection onto $M$ of the barycenter of a lift of $T$ to $\bH^3$. The framed barycenters of $T\se M$ are the projections to $\Fr M$ of the framed barycenters of a lift of $T$ to $\bH^3$.

A good pants $\pi\in\pants$ has a pleated structure consisting of two ideal triangles, as in Figure \ref{twist}. Its \emph{barycenters} are the framed barycenters of these ideal triangles.

We let $\beta_{\eps,R}$ be the weighted uniform probability measure supported on the barycenters of the pants in $\pants$. In this section, we will show

\begin{thm}[Equidistribution of barycenters]\label{bary}
For $\epsilon \to 0$ and $R(\epsilon)\to \infty$ fast enough,
\[
\beta_{\eps,R(\eps)} \wkstar \nu_{\Fr M},
\]
where $\nu_{\Fr M}$ is the probability volume measure on $\Fr M$.
\end{thm}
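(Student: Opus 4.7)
The plan is to reduce the equidistribution of barycenters to two equidistribution statements already available in the paper: the equidistribution of feet along each good cuff (Theorem \ref{kw}) and the equidistribution of the good cuffs themselves in $\T^1 M$ (the variant of Lalley's theorem \cite{L} cited in the outline).

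First I would analyze how the six framed barycenters of a good pants $\pi$ depend on the geometry of its cuffs. Each framed barycenter corresponds to a side of one of the two ideal triangles of $\pi$, hence to one of the three cuffs of $\pi$. In the universal cover, lift such a cuff $\gamma$ to a geodesic $\tilde\gamma\subset\bH^3$ and lift $\pi$ so that the triangle adjacent to $\gamma$ on a chosen side has an ideal vertex at an endpoint of $\tilde\gamma$. Its other two ideal vertices are endpoints of lifts of the other cuffs of $\pi$. The key geometric lemma I would prove is that, up to error going to $0$ as $\eps\to 0$ and $R\to\infty$, these two other vertices -- hence the supporting $2$-plane of the ideal triangle, its barycenter, and the frame it carries -- are determined by the foot $\ft_\gamma\pi \in \N^1(\sqrt\gamma)$ alone, independent of the precise complex lengths of the other two cuffs (which wiggle within the $(\eps,R)$-good range). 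This produces a continuous map $B_\gamma:\N^1(\sqrt\gamma)\to \Fr M$ such that the framed barycenter of $\pi$ associated to $\gamma$ equals $B_\gamma(\ft_\gamma\pi)$ up to a uniform error $o(1)$.

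Second, I would upgrade Theorem \ref{kw} from indicator sets to continuous test functions by a routine approximation: for $h\in C(\N^1(\sqrt\gamma))$,
\[
\frac{1}{C_{\eps,R,\gamma}}\sum_{\pi\in\pants(\gamma)} h(\ft_\gamma \pi) = \int_{\N^1(\sqrt\gamma)} h\,d\lambda_\gamma + o_{\eps,R}(1),
\]
uniformly in $\gamma\in\curves$. Applying this with $h = g\circ B_\gamma$ for a fixed $g\in C(\Fr M)$ rewrites the partial sum of $g$ over framed barycenters attached to a single cuff $\gamma$ as $\int_{\Fr M} g\,d\mu_\gamma + o(1)$, where $\mu_\gamma := (B_\gamma)_*\lambda_\gamma$ is a probability measure on $\Fr M$ supported on a $2$-dimensional submanifold attached to $\gamma$. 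Using the torus structure of $\N^1(\sqrt\gamma)$, this measure factors as arclength along $\gamma$ times Haar on the $S^1$ of normal directions, pushed forward by the uniform geometric construction that places the barycenter and assigns its frame. Grouping $\int g\,d\beta_{\eps,R}$ by cuff, and noting that $C_{\eps,R,\gamma}$ depends only mildly on $\bl(\gamma)\approx 2R$, I would then express it (up to $o(1)$) as an essentially uniform weighted average of the $\mu_\gamma$ over $\gamma\in\curves$. Lalley's theorem implies that the unit tangent vectors of the good cuffs equidistribute in $\T^1 M$ against Liouville measure as $\eps\to 0$ and $R(\eps)\to\infty$ fast enough, so the arclength-along-$\gamma$ factor aggregates to Liouville measure on $\T^1 M$; combined with the Haar-on-$S^1$ factor, this yields the Haar measure on the $S^1$-bundle $\Fr M \to \T^1 M$, namely $\nu_{\Fr M}$.

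The main obstacle, and where most of the work lies, is the first step: precisely establishing that the framed barycenter depends only on the foot, to leading order, with a quantitative error. This requires explicit hyperbolic trigonometry in $\bH^3$ describing how the endpoints of the other two lifted cuffs (and hence the barycenter and the ideal-triangle plane's frame) move as one perturbs the other two cuff lengths within the $(\eps,R)$-good band, and a uniform bound showing that this perturbation affects the framed barycenter by at most $O(\eps) + O(e^{-cR})$. Once this geometric lemma is in place, the remaining two steps amount to combining the continuous version of Theorem \ref{kw} with Lalley's equidistribution in $\T^1 M$.
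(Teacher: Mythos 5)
Your proposal is correct and follows essentially the same route as the paper: reduce to equidistribution of feet along cuffs (via a continuous-test-function version of Theorem~\ref{kw}, which is exactly the paper's Theorem~\ref{ftalongcurve}), establish that the framed barycenter is determined by the foot up to a uniform $o(1)$ error, and then aggregate over cuffs using Lalley's theorem. The one refinement worth absorbing from the paper is that it exhibits your foot-to-barycenter map $B_\gamma$ explicitly as the right translation $v_R = R_{a_{R/2}\,k\,a_{\log(\sqrt{3}/2)}}$ on $\Fr M$, which both makes the geometric estimate (Lemma~\ref{ab}) a direct lift-and-compare computation in $\bH^3$ and lets the paper conclude from equidistribution of feet in $\Fr M$ by Haar-invariance under right translations, rather than by unpacking the pushed-forward measures $(B_\gamma)_*\lambda_\gamma$ cuff by cuff as you propose.
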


In other words, the barycenters of the good pants equidistribute in $\Fr M$ as $\eps\to 0$ and $R(\eps)\to\infty$. 
This will be used in Section 6 to show that the connected surface $S_{\eps,R}$ built out of $N= N(\eps,R,M)$ copies of each pants in $\pants$ equidistributes as $\eps\to 0$ and $R\to\infty$. This will follow from the fact that the unit tangent bundle of each pair of pants (outside of the pleats) may be obtained from the barycenters via the right action of a set $\Delta \se \PSL_2 \bR$.

\begin{figure}
\includegraphics[scale=0.09]{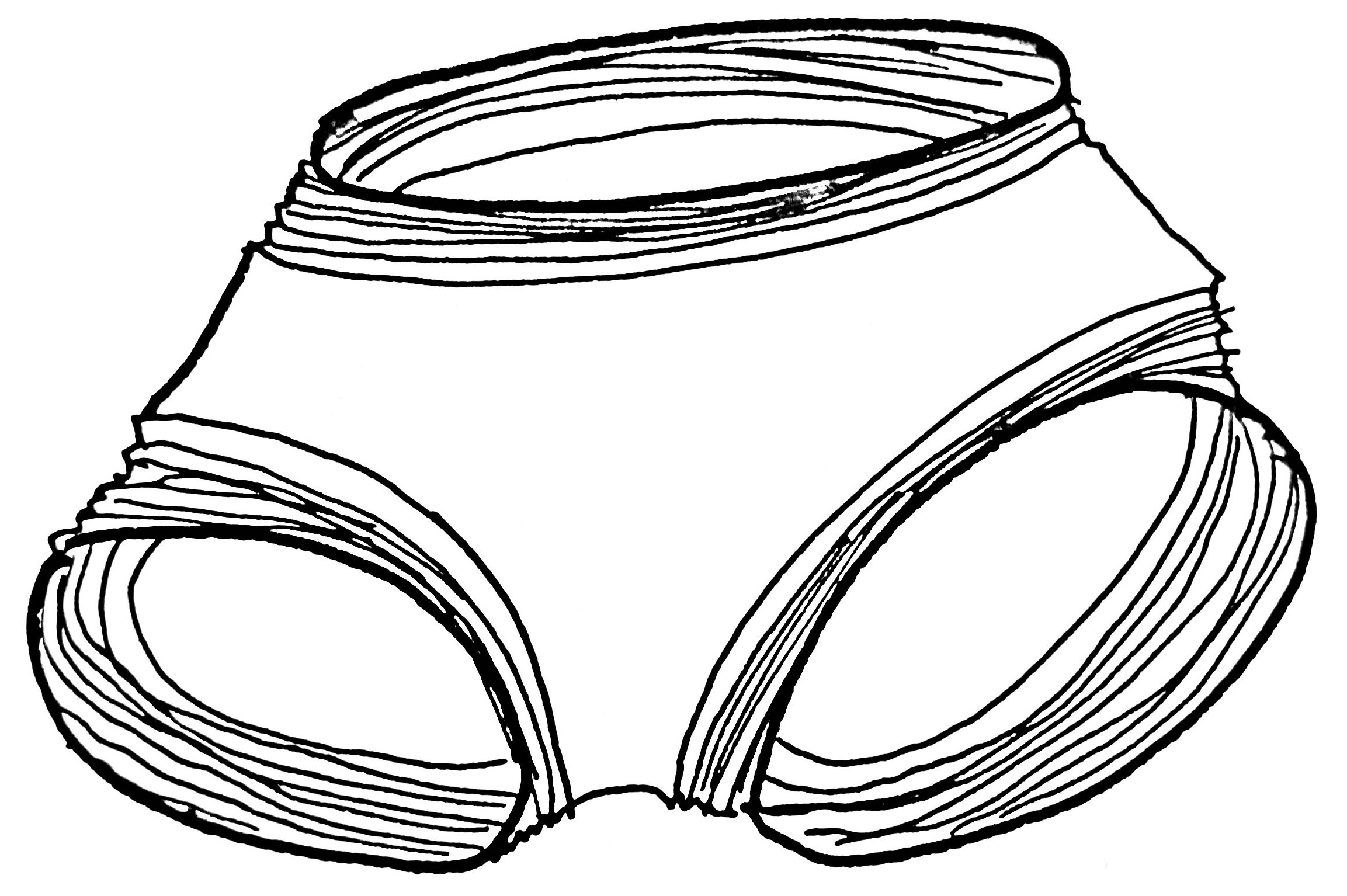}
\caption{Pleated structure of a pair of pants consisting of two ideal triangles.}
\label{twist}
\end{figure}

\subsection{Outline of the proof}

We will show the equidistribution of barycenters, Theorem 5.1, in three steps.

First, we will prove that the feet of all pants in $\pants$, seen as points in $\Fr M$, equidistributes as $\eps\to 0$. Precisely, a foot $f$ of $\pi = [(f,C_i)] \in \tpants$ is associated to the frame $(v,f,v\times f)$, where $v$ is the unit tangent vector to the $\gamma\in\curves$ homotopic to $f(C_i)$. (With this identification, we can realize $\N^1(\gamma)$ as a subset of $\Fr M$.) We let $\phi_{\eps,R}$ be the weighted uniform probability measure on $\Fr M$ supported on the feet of pants in $\pants$. We will show

\begin{lem}[Equidistribution of feet in $\Fr M$]\label{equidftfm}
For $\epsilon \to 0$ and $R(\epsilon)\to \infty$ fast enough,
\[
\phi_{\eps,R(\eps)} \wkstar \nu_{\Fr M}.
\]
\end{lem}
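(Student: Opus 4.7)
The plan is to decompose $\phi_{\eps,R}$ fiberwise over the good curves and combine two inputs: the Kahn--Wright equidistribution of feet (Theorem \ref{kw}), which controls each fiber $\N^1(\sqrt{\gamma})$, and the Lalley-type equidistribution of good cuffs in $\T^1 M$ from \cite{L}, which controls the base. The $\SO(2)$-bundle structure $\Fr M \to \T^1 M$ (rotation in the plane normal to a unit tangent vector) then reconstitutes these ingredients into equidistribution on $\Fr M$.

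Concretely, given $g \in C(\Fr M)$, I would first write
\[
\int_{\Fr M} g \, d\phi_{\eps,R} = \frac{1}{Z_{\eps,R}} \sum_{\gamma\in\curves} \sum_{\pi\in\tpants(\gamma)} g(\ft \pi),
\]
where $Z_{\eps,R}$ is the total number of feet. The next step is to upgrade Theorem \ref{kw} from indicator functions to continuous test functions, by sandwiching $g|_{\N^1(\sqrt{\gamma})}$ between indicators of $\delta$-balls (with $\delta = e^{-qR}$) and invoking the uniform modulus of continuity of $g$; this yields an expansion
\[
\sum_{\pi\in\tpants(\gamma)} g(\ft \pi) = 2 C_{\eps,R,\gamma} \int_{\N^1(\sqrt{\gamma})} g \, d\lambda_\gamma + \omega_R(g)\, C_{\eps,R,\gamma},
\]
where $\omega_R(g) \to 0$ as $R\to\infty$ uniformly in $\gamma\in\curves$ and the factor $2$ records the splitting $\tpants(\gamma) = \pants^-(\gamma) \sqcup \pants^+(\gamma)$ through the involution $r$. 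Since $C_{\eps,R,\gamma}$ depends only on $\eps, R$ and $\bl(\gamma) \in [2R - 2\eps, 2R + 2\eps]$, these weights are uniform across $\curves$ up to a factor of $1 + O(\eps)$.

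For the base equidistribution, I would use the version of Lalley's theorem cited in the introduction: the cuffs of $(\eps,R)$-good pants equidistribute in $\T^1 M$ against Liouville $m_L$ asymptotically almost surely as $\eps\to 0$ and $R\to\infty$. Because the empirical distribution of good cuffs on $\T^1 M$ is exactly $\bigl(\sum_\gamma C_{\eps,R,\gamma}\bigr)^{-1}\sum_\gamma C_{\eps,R,\gamma}\,\overline{\delta_\gamma}$ (with $\overline{\delta_\gamma}$ the probability arclength on $\gamma \subset \T^1 M$), Lalley gives weak-* convergence of this measure to $m_L$. Now the Lebesgue measure $\lambda_\gamma$ on $\N^1(\sqrt{\gamma})$ is, via the $\SO(2)$-bundle $\Fr M \to \T^1 M$, precisely the fiberwise Haar average of $\overline{\delta_\gamma}$ (modulo the $\bh\bl(\gamma)$-translation quotient), and $\nu_{\Fr M}$ is the fiberwise Haar lift of $m_L$. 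Lifting the Lalley convergence $\SO(2)$-equivariantly therefore yields
\[
\frac{1}{\sum_\gamma C_{\eps,R,\gamma}} \sum_{\gamma\in\curves} C_{\eps,R,\gamma}\, \lambda_\gamma \wkstar \nu_{\Fr M}.
\]

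Combining the two displays, $\int g\, d\phi_{\eps,R}$ equals the weighted average of $\int g\, d\lambda_\gamma$ over $\gamma$ up to an additive error $\omega_R(g)\to 0$, so $\phi_{\eps,R(\eps)} \wkstar \nu_{\Fr M}$ provided $R(\eps)\to\infty$ fast enough to dominate both the Kahn--Wright discretization scale $\delta = e^{-qR}$ and the Lalley convergence rate. The main obstacle I expect is the continuous-function upgrade of Theorem \ref{kw} with an error that is uniform in $\gamma\in\curves$: this uniformity is what lets the fiberwise approximation error survive averaging against Lalley. Once that quantitative uniform version is in place, gluing the fiberwise Lebesgue pieces into $\nu_{\Fr M}$ via the $\SO(2)$-bundle structure $\Fr M \to \T^1 M$ is routine.
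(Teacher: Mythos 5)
Your proposal follows essentially the same route as the paper: upgrade the Kahn--Wright equidistribution-of-feet theorem from indicator functions to continuous (or bounded measurable) test functions on $\N^1(\sqrt{\gamma})$, then average the per-cuff estimates over $\curves$ and feed in Lalley's theorem to equidistribute the cuffs in $\T^1 M$. The paper's Theorem \ref{ftalongcurve} does the continuous-function upgrade via the layer-cake formula, where you suggest sandwiching by $\delta$-balls; these are the same idea.

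One imprecision worth flagging: you assert that ``the empirical distribution of good cuffs on $\T^1 M$ is \emph{exactly} $\bigl(\sum_\gamma C_{\eps,R,\gamma}\bigr)^{-1}\sum_\gamma C_{\eps,R,\gamma}\,\overline{\delta_\gamma}$'' and that Lalley gives weak-* convergence of \emph{that} $C_{\eps,R,\gamma}$-weighted measure to Liouville. Lalley's theorem, as used in the paper (Lemma \ref{ft1}), gives the asymptotically almost sure statement for the \emph{uniform} average over $\curves$; it does not directly deliver the $C_{\eps,R,\gamma}$-weighted average. The two differ: $C_{\eps,R,\gamma}$ is roughly the number of pants incident to $\gamma$, not the probability weight of $\gamma$ among cuffs. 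What saves you is exactly the observation you make in passing --- that since $\bl(\gamma) \in [2R-2\eps,2R+2\eps]$, the constants $C_{\eps,R,\gamma}$ vary across $\curves$ only by a factor $1+O(\eps)$, so the weighted and unweighted averages are asymptotically equal. The paper packages this carefully in Lemma \ref{ft2}, which compares $C_{\eps,R,\gamma}$ to $\#\tpants/\#\curves$ by first deducing $2\#\pants(\gamma)\sim C_{\eps,R,\gamma}$ from the $g\equiv 1$ case of Theorem \ref{ftalongcurve}, and then appealing to the explicit formula for $C_{\eps,R,\gamma}$. Your proof should carry this comparison explicitly rather than folding it into a claimed identity for the empirical distribution, but the underlying argument is sound and matches the paper's.
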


The proof of Lemma 5.2 will use the fact that the feet are well-distributed in the unit normal bundle of a given good curve (due to Kahn-Wright \cite{KW2}, in a modified version), as well as the fact that the good curves themselves are asymptotically almost surely well-distributed in $\T^1 M$ (due to Lalley \cite{L}).

Let $a_t = \diag(e^{t/2},e^{-t/2})$ and $k\in \SO_2$ be the ninety-degree rotation bringing the first vector in a frame to the second, fixing the third.
The second step of the proof is to observe that the right action\footnote{We choose an origin $o\in \Fr M$ and identify $\Fr M \cong \PSL_2 \bC$ by sending $go$ to $g$. We say that the right action $R_h$ of an element $h\in G$ on $go\in \Fr M$ is given by $R_h (go) = gho$. This is an antihomomorphism $R:G\to \Aut G$.} $v_R:= R_{a_{R/2} k a_{\log(\sqrt{3}/2)}}$ of the element
\[
a_{R/2} \,k\, a_{\log(\sqrt{3}/2)} \in \PSL_2 \bC,
\]
brings the feet of a pants $\pi$ to frames that are very close to the framed barycenters of the triangles of the pleated structure of $\pi$. 

We call the images of the feet of $\pants$ under $v_R$ the \emph{approximate barycenters} of the pants in $\pants$. In Lemma \ref{ab}, we show that the distances in $\Fr M$ between the approximate barycenters and the actual barycenters of pants in $\pants$ go to zero uniformly as $\eps\to 0$.

Let $\beta^a_{\eps,R}$ be the (weighted) uniform probability measure on the approximate barycenters of the pants in $\pants$. We will show that these approximate barycenters equidistribute in $\Fr M$, namely

\begin{prop}[Equidistribution of approximate barycenters]\label{equidab}
For $\eps\to 0$ and $R(\eps)\to\infty$ fast enough,
\[
\beta^{a}_{\eps,R(\eps)} \wkstar \nu_{\Fr M}.
\]
\end{prop}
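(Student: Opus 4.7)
My plan is to verify the conclusion by revisiting the proof of Lemma \ref{equidftfm} with the test function $g\in C(\Fr M)$ replaced by the $R$-dependent function $g\circ v_R$. Because $v_R$ is right multiplication in $\PSL_2\bC$ and $\nu_{\Fr M}$ is right-invariant, $(v_R)_*\nu_{\Fr M}=\nu_{\Fr M}$, so Proposition \ref{equidab} is equivalent to
\[
\int g\circ v_R\, d\phi_{\eps,R(\eps)}\longrightarrow \int g\circ v_R\, d\nu_{\Fr M}=\int g\, d\nu_{\Fr M}.
\]
By density, it is enough to consider $g\in C^1(\Fr M)$. A naive appeal to Lemma \ref{equidftfm} fails since the family $\{g\circ v_R\}_R$ is not equicontinuous on $\Fr M$: the factor $a_{R/2}$ has expanding operator norm $e^{R/2}$ in the transverse directions.

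The central geometric observation is that the expanding and contracting directions of the frame flow $R_{a_t}$ are \emph{transverse} to each fiber torus $\N^1(\sqrt\gamma)\subseteq \Fr M$; that torus is tangent to the neutral directions, namely translation along $\gamma$ and rotation of the normal frame. Hence $R_{a_{R/2}}$ preserves $\N^1(\sqrt\gamma)$ setwise and acts as a translation of its intrinsic flat 2-torus metric (an isometry), and composing with the bounded factors $R_k$ and $R_{a_{\log(\sqrt 3/2)}}$ shows that $v_R|_{\N^1(\sqrt\gamma)}:\N^1(\sqrt\gamma)\to\Fr M$ is Lipschitz with a constant $C=C(M)$ independent of $\gamma$ and $R$. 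In particular, for any $g\in C^1(\Fr M)$, the restriction $g\circ v_R|_{\N^1(\sqrt\gamma)}$ is Lipschitz with constant $\leq C\|g\|_{C^1}$ uniformly in $R$ and $\gamma$.

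With this uniform fiberwise control, I apply the continuous-function generalization of Theorem \ref{kw} fiber by fiber with test function $g\circ v_R|_{\N^1(\sqrt\gamma)}$:
\[
\frac{1}{|\tpants(\gamma)|}\sum_{\til\pi\in\tpants(\gamma)}(g\circ v_R)(\ft\til\pi)=\int_{\N^1(\sqrt\gamma)}g\circ v_R\, d\lambda_\gamma+O\lef(\|g\|_{C^1}\, e^{-qR}\ri),
\]
uniformly in $\gamma\in\curves$. Summing over $\gamma$ with weights $|\tpants(\gamma)|/|\tpants|$ gives
\[
\int g\, d\beta^a_{\eps,R}=\int g\, d\bigl((v_R)_*\til\nu_R\bigr)+O\bigl(\|g\|_{C^1} e^{-qR}\bigr),
\]
where $\til\nu_R=|\tpants|^{-1}\sum_\gamma|\tpants(\gamma)|\lambda_\gamma$ is the averaged fiber-Lebesgue measure that, via Lalley's equidistribution of good curves in $\T^1 M$, converges weak-$*$ to $\nu_{\Fr M}$ in the proof of Lemma \ref{equidftfm}.

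The remaining step, which I expect to be the main obstacle, is to deduce $(v_R)_*\til\nu_R\wkstar (v_R)_*\nu_{\Fr M}=\nu_{\Fr M}$ from $\til\nu_R\wkstar\nu_{\Fr M}$, despite $v_R$ being non-equicontinuous. I plan to rerun the Lalley-style averaging directly on the shifted tori $v_R(\N^1(\sqrt\gamma))$ in place of $\N^1(\sqrt\gamma)$: for each fixed $R$, $(v_R)_*\til\nu_R$ is a convex combination of flat probability measures on these shifted tori which, by the $v_R$-invariance of Haar, must in aggregate equal $\nu_{\Fr M}$ in the Lalley limit. Executing this requires a quantitative Lalley estimate (with exponential rate in $R$) applied to the fiber-integral function $\gamma\mapsto\int_{\N^1(\sqrt\gamma)}g\circ v_R\, d\lambda_\gamma$, whose intrinsic Lipschitz bound from Paragraph 2 converts the transverse $e^{R/2}$ blow-up of $v_R$ into a controllable error; choosing $R(\eps)\to\infty$ fast enough to dominate both the $e^{-qR}$ Kahn-Wright error and the residual Lalley error then completes the proof.
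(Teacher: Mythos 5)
Your central geometric observation is the same as the paper's, but you stop one step short of using its full strength, and this leads you to manufacture an obstacle that isn't there. You correctly note that $R_{a_{R/2}}$ preserves $\N^1(\sqrt\gamma)$ setwise and acts there as a translation of the flat torus; but you only extract from this the \emph{Lipschitz} statement for $g\circ v_R|_{\N^1(\sqrt\gamma)}$. The stronger and more useful consequence is measure-theoretic: $(R_{a_{R/2}})_*\lambda_\gamma=\lambda_\gamma$, since a translation of a flat torus preserves Haar measure. Feeding this into your own reduction, write $v_R=R_{a_{R/2}}\circ R_{ka_{\log(\sqrt3/2)}}$ (as right actions, so $R_{gh}=R_h\circ R_g$) and observe
\[
(v_R)_*\til\nu_R
\;=\;\bigl(R_{ka_{\log(\sqrt3/2)}}\bigr)_*\,\bigl(R_{a_{R/2}}\bigr)_*\,\til\nu_R
\;=\;\bigl(R_{ka_{\log(\sqrt3/2)}}\bigr)_*\,\til\nu_R,
\]
so the pushforward of $\til\nu_R$ by $v_R$ is literally the pushforward by a \emph{fixed}, $R$-independent element of $\PSL_2\bC$. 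Since you already know $\til\nu_R\wkstar\nu_{\Fr M}$ from Lalley (the qualitative statement, as in Lemma \ref{ft1}), and pushforward by a fixed homeomorphism of the compact space $\Fr M$ preserves weak-$*$ convergence, it follows at once that $(v_R)_*\til\nu_R\wkstar \bigl(R_{ka_{\log(\sqrt3/2)}}\bigr)_*\nu_{\Fr M}=\nu_{\Fr M}$, the last equality by right-invariance of Haar. There is nothing left to prove.

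This means your final paragraph, in which you propose to ``rerun the Lalley-style averaging on the shifted tori'' and invoke a \emph{quantitative} Lalley estimate with exponential rate in $R$, is both unnecessary and a genuine gap as written: such a quantitative rate is not in the cited reference \cite{L} and is not established in the paper, so it cannot simply be assumed. (Note also that the ``shifted tori'' $v_R(\N^1(\sqrt\gamma))=R_{ka_{\log(\sqrt3/2)}}(\N^1(\sqrt\gamma))$ in fact do not depend on $R$ at all, again because $R_{a_{R/2}}$ fixes each $\N^1(\sqrt\gamma)$; the transverse $e^{R/2}$ blow-up of $v_R$ you worry about never sees these tori.) The paper's proof does exactly this simpler thing, packaged as two short lemmas: first, that $(R_{a_{R/2}})_*\phi_{\eps,R}\wkstar\nu_{\Fr M}$, proved by applying Theorem \ref{ftalongcurve} to $g\circ R_{a_{R/2}}$ and using the $R_{a_t}$-invariance of $\lambda^\gamma$ to leave the upper and lower bounds $\lambda^\gamma(m_\delta g)$, $\lambda^\gamma(M_\delta g)$ untouched; and second, that pushforward by a fixed $h\in\PSL_2\bC$ carries weak-$*$ limits to weak-$*$ limits and fixes $\nu_{\Fr M}$. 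Replace your last paragraph with this invariance observation and the proof closes.
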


To conclude, we use Lemmas \ref{ab} and \ref{equidab} to show the main theorem of the section -- the \emph{actual} barycenters of the pants equidistribute.

\begin{figure}
\includegraphics[scale=0.11]{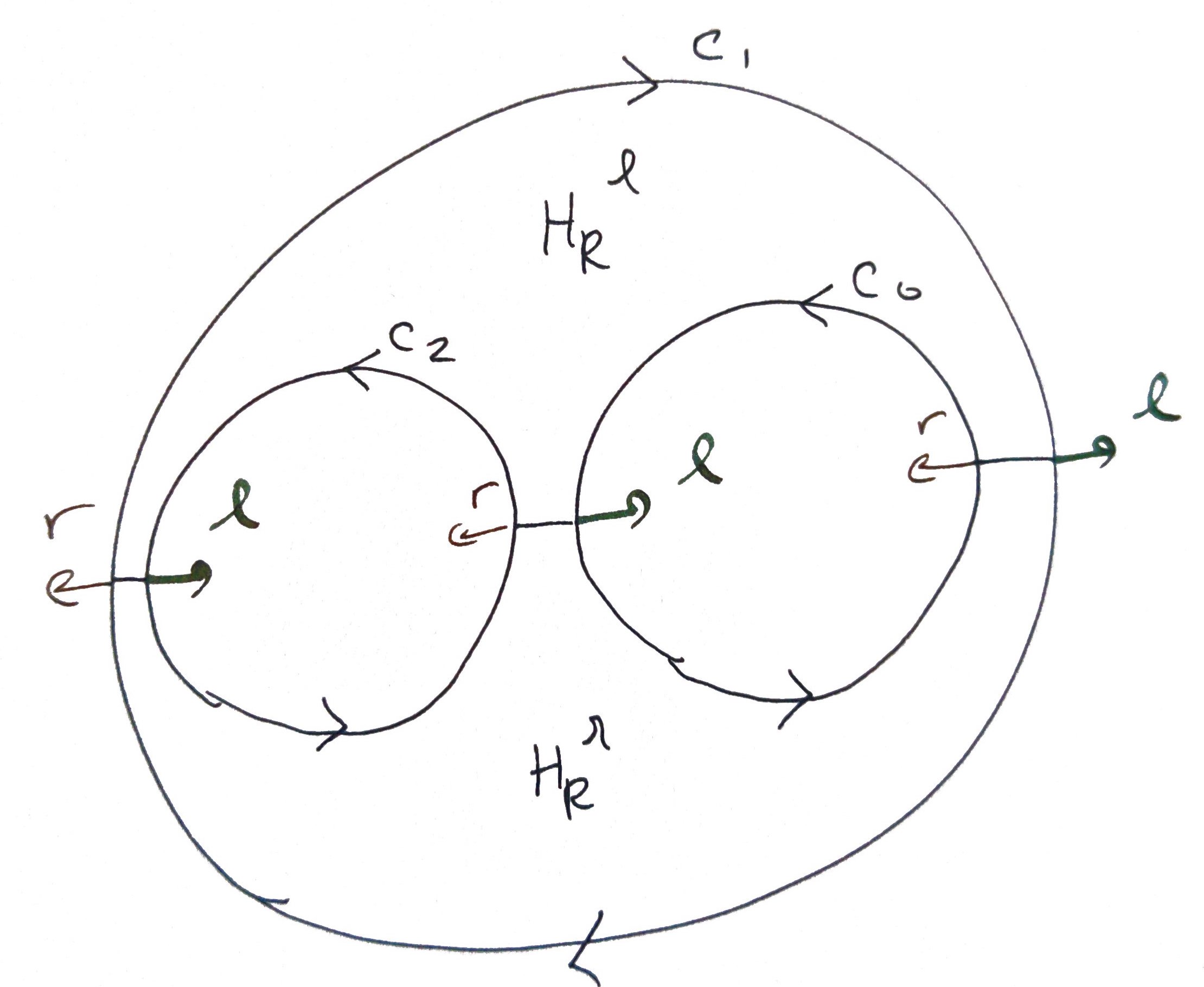}
\caption{
The pants $P_R$ divided into left and right hexagons, with its left and right feet.}
\label{leftandright}
\end{figure}

\subsection{Left and right} In this subsection we will do some bookkeeping that will be useful to carry out the rest of the proof.

Let $P_R$ be the oriented planar hyperbolic pair of pants whose cuffs have size $2R$, as defined in Section 3. The cuffs of $P_R$ are named $C_0$, $C_1$ and $C_2$, as in Figure \ref{leftandright}. As defined before, each cuff $C_i$ has two \emph{feet} in $\N^1 (C_i)$, which are unit vectors in the direction of the short orthogeodesics incident to $C_i$. The \emph{left} foot of $C_i$ points away from $C_{i-1}$ and the \emph{right} foot points away from $C_{i+1}$.

We can cut $P_R$ along its short orthogeodesics to obtain two right-angled hexagons $H_R^{\ell}$ and $H_R^r$. The \emph{left} right-angled hexagon $H_R^{\ell}$ of $P_R$ is the one so that a traveller going around $\partial H^{\ell}_R$ in the direction given by the orientation of $P_R$ sees the cuffs in the cyclic order $(C_0\,C_2\,C_1)$. The \emph{right} right-angled hexagon is the other one (associated to the cyclic order $(C_0\,C_1\,C_2)$).

As before, let $v_R$ be the right action of $a_{R/2} k a_{\log(\sqrt{3}/2)}\in \PSL_2\bR$. Observe that the image of a left foot of $P_R$ under $v_R$ falls inside $H_R^{\ell}$. Similarly, the image of a right foot under $v_R$ falls in $H_R^r$.

\begin{figure}
\includegraphics[scale=0.08]{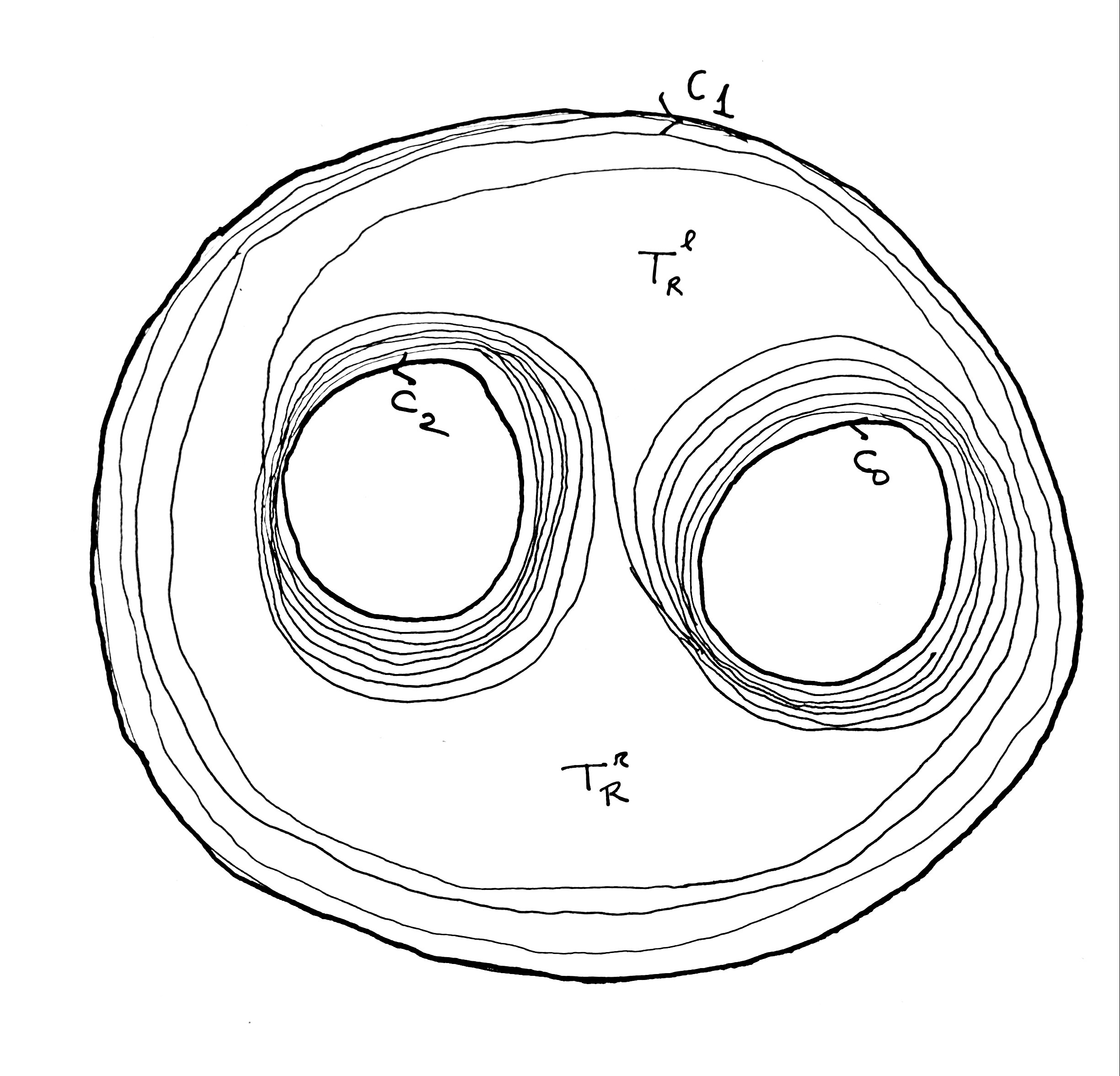}
\caption{
Spinning the hexagons of $P_R$ into two ideal triangles.}
\label{spun}
\end{figure}

We can turn the right-angled hexagons $H_R^{\ell}$ and $H_R^r$ into ideal triangles $T_R^{\ell}$ and $T_R^r$ by spinning their vertices around the cuffs, following their orientation. See Figure \ref{spun}.

Let $\pi\in \pants$ and  $f\in\pi$ be a pleated representative (so $f(P_R)$ is made out of two ideal triangles). We call $f(T^{\ell}_R)$ the \emph{left triangle} $T^{\ell}(\pi)$ of $\pi$ and $f(T^r_R)$ the \emph{right triangle} $T^r(\pi)$ of $\pi$. Note that these are well-defined as they do not depend on the choice of pleated representative in $\pi$.

Now let $\pi\in\tpants$ and $(f,C_i)\in \pi$ be a pleated representative. The \emph{left barycenter} of $\pi$, denoted $\bary^{\ell}(\pi)$, is the framed barycenter of $T^{\ell}(\pi)$ associated to the side $f(C_i)$. Similarly, the \emph{right barycenter} of $\pi$, denoted $\bary^r(\pi)$, is the framed barycenter of $T^r(\pi)$ associated to the side $f(C_i)$.

\subsection{Equidistribution of feet in $\Fr M$}

The goal of this subsection is to prove Lemma \ref{equidftfm}, in other words, that
\[
\phi_{\eps,R(\eps)} \wkstar \nu_{\Fr M}
\]
as $\eps \to 0$. To do so, we will use the fact that the feet of pants are well-distributed along a good curve. This is Theorem \ref{kw}, due to Kahn-Wright \cite{KW}, but we will use the modified version of Theorem \ref{ftalongcurve} below. The difference is that Theorem \ref{kw} is stated for counting feet in a subset $B$ of $\N^1(\sqrt{\gamma})$, whereas the counting we will do is weighted by a nonnegative function $g\in L^{\infty}(\N^1(\gamma))\se L^{\infty} (\Fr M)$.

For $\gamma\in \curves$, we let $\lambda^{\gamma}$ denote the probability Lebesgue measure in $\N^1(\gamma)\se \Fr M$. For a bounded function $g$ on a metric space, we let
\[
m_{\delta} (g) (p) = \inf_{B_{\delta}(p)} g \aand M_{\delta} (g)(p) = \sup_{B_{\delta}(p)} g,
\]
where $B_{\delta}(p)$ is the metric ball of radius $\delta$ around $p$.

\begin{thm}[Equidistribution of feet along a curve]\label{ftalongcurve}
There exists $q>0$ depending on $M$ such that for any $\epsilon > 0$, there is $R\geq R_0(\eps)$ so that the following holds. Let $\gamma\in \curves$. If $g \in L^{\infty} \lef( \Fr M \ri)$ is a nonnegative function, then
\[
(1-\delta) \int_{\N^1 (\gamma)} m_{\delta} (g) \,d\lambda^{\gamma} \leq 
\frac{1}{C_{\eps,R,\gamma}} \sum_{\pi\in\pants(\gamma)} \lef( g(\ft^{\ell} \pi) + g(\ft^r \pi) \ri) \leq
(1 + \delta) \int_{\N^1 (\gamma)} M_{\delta} (g) \,d\lambda^{\gamma},\tag{$\star$}
\]
where $\delta = e^{-qR}$,
\[
C_{\eps,R,\gamma} = \frac{2\pi c_{\eps} \eps^4 \ell(\gamma) e^{4R-\ell(\gamma)}}{\vol M}
\]
and $c_{\eps}\epszero 1$.
\end{thm}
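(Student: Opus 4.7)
The strategy is to deduce Theorem \ref{ftalongcurve} from the set-valued Kahn--Wright equidistribution (Theorem \ref{kw}) in two essentially independent steps: first, reduce the weighted sum over feet in $\N^1(\gamma) \se \Fr M$ to a weighted sum over feet in $\N^1(\sqrt{\gamma})$ using the two-to-one covering $p : \N^1(\gamma) \to \N^1(\sqrt{\gamma})$; second, upgrade the set-based Theorem \ref{kw} to a version weighted by a general nonnegative $L^\infty$ function via the standard layer-cake approximation.

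For the reduction to $\N^1(\sqrt{\gamma})$, recall that for each $\pi \in \pants(\gamma)$ the two feet $\ft^{\ell}\pi, \ft^r \pi \in \N^1(\gamma)$ project to the single point $\ft\pi \in \N^1(\sqrt{\gamma})$ and differ by the deck translation $\tau(v) = v + \bh\bl(\gamma)$. Setting $\tilde h := g|_{\N^1(\gamma)} + \tau^* g|_{\N^1(\gamma)}$ on $\N^1(\gamma)$ yields a $\tau$-invariant function, which therefore descends to $h : \N^1(\sqrt{\gamma}) \to \bbR$ satisfying
\[
\sum_{\pi \in \pants(\gamma)} \lef( g(\ft^{\ell}\pi) + g(\ft^r \pi) \ri) \;=\; \sum_{\pi \in \pants(\gamma)} h(\ft\pi).
\]
For the layer-cake step, I would apply Theorem \ref{kw} to each superlevel set $B_t = \{h > t\} \se \N^1(\sqrt{\gamma})$ and integrate in $t$ against the identity $h = \int_0^\infty 1_{B_t}\, dt$. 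Writing $\lambda_\gamma$ for the probability Lebesgue measure on $\N^1(\sqrt{\gamma})$ and $C = C(\eps,R,\gamma)$ for the constant in Theorem \ref{kw}, this gives
\[
(1-\delta)\, C \int_0^\infty \lambda_{\gamma}(N_{-\delta} B_t)\, dt \;\leq\; \sum_\pi h(\ft\pi) \;\leq\; (1+\delta)\, C \int_0^\infty \lambda_{\gamma}(N_{\delta} B_t)\, dt,
\]
and the identities $N_\delta \{h > t\} = \{M_\delta(h) > t\}$ and $N_{-\delta}\{h > t\} = \{m_\delta(h) > t\}$ convert the layer-cake integrals into $\int M_\delta(h)\, d\lambda_\gamma$ and $\int m_\delta(h)\, d\lambda_\gamma$, producing the weighted Kahn--Wright statement for $h$ on $\N^1(\sqrt{\gamma})$.

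It remains to translate between $h$ on $\N^1(\sqrt{\gamma})$ and $g$ on $\Fr M$. Since $p_* \lambda^\gamma = \lambda_\gamma$ and $\tau$ preserves $\lambda^\gamma$, the pullback $\int h \circ p \, d\lambda^\gamma = \int h\, d\lambda_\gamma$ together with the elementary inequalities
\[
m_\delta(h)(\bar v) \geq m_\delta(g)(v_1) + m_\delta(g)(v_2), \qquad M_\delta(h)(\bar v) \leq M_\delta(g)(v_1) + M_\delta(g)(v_2)
\]
for the two preimages $v_1, v_2$ of $\bar v$ (by super-/sub-additivity of inf and sup, using that an $\Fr M$-ball contains the corresponding $\N^1(\gamma)$-slice) yield
\[
\int m_\delta(h)\, d\lambda_\gamma \;\geq\; 2 \int m_\delta(g)\, d\lambda^\gamma \aand \int M_\delta(h)\, d\lambda_\gamma \;\leq\; 2 \int M_\delta(g)\, d\lambda^\gamma.
\]
The factor of $2$ is absorbed into $C_{\eps,R,\gamma} := 2\, C(\eps,R,\gamma)$, whose explicit form matches the stated one because Kahn--Wright's argument identifies $C(\eps, R, \gamma)$ with the asymptotic count of good pants containing $\gamma$ (up to $(1+o(1))$). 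The main obstacle here is not analytic but bookkeeping: tracking the several metrics on $\Fr M$, $\N^1(\gamma)$, and $\N^1(\sqrt{\gamma})$, the associated $N_{\pm \delta}$ thickenings, and the compatibility between the smoothing scale $\delta = e^{-qR}$ and the explicit constant. The substantive input--mixing of the frame flow on $\Fr M$--enters only through the black-box Theorem \ref{kw}.
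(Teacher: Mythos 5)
Your proposal is correct and follows essentially the same route as the paper's own proof: define the $\bh\bl(\gamma)$-periodic function $h = g + \tau^*g$ on $\N^1(\gamma)$, descend it to $\N^1(\sqrt{\gamma})$, apply the set-based Theorem \ref{kw} to its superlevel sets, integrate via the layer-cake formula using the identities $N_{\pm\delta}\{h > t\} = \{m_{\mp\delta}(h) > t\}$ (with the appropriate sign conventions), and then pass back to $\N^1(\gamma)$ and to $\Fr M$ using the covering map, the $\tau$-invariance of $\lambda^\gamma$, and the inclusion of intrinsic $\N^1(\gamma)$-balls into ambient $\Fr M$-balls, absorbing the resulting factor of $2$ into the constant. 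The only difference is cosmetic notation ($\tilde h, h$ versus the paper's $h, \check h$).
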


\begin{proof}[Proof from Theorem \ref{kw}]
Let $g\in L^{\infty}(\Fr M)$. Since the measure $\lambda^{\gamma}$ is supported on $\N^1(\gamma)$, we may assume $g\in L^{\infty}(\N^1(\gamma))$. Let $h(n):= g(n) + g(n+\bh\bl(\gamma))$. Since $h$ is invariant under $n\mapsto n+\bh\bl(\gamma)$, $h$ descends to a function $\check{h}\in L^{\infty}(\N^1(\sqrt{\gamma}))$ so that $h\circ \proj = \check{h}$, where $\proj: \N^1(\gamma)\to\N^1(\sqrt{\gamma})$ is the quotient projection.

Using the shorthand notation
\[
\{f > y\}:= \{n\in \N^1(\sqrt{\gamma}) : f(n)>y\},
\]
note that
\[
N_{-\delta} \{f>y\} = \{m_{\delta}(f)>y\} \aand
N_{\delta} \{f> y\} = \{M_{\delta}(f)> y\}.
\]

Thus, Theorem \ref{kw} gives us
\begin{equation}\label{before}
(1-\delta)\,\lambda(\{m_{\delta}(\check{h})>y \})
\leq \frac{\#\{\pi\in\pants:\check{h}(\ft\pi)>y\}}{C_{\eps,R,\gamma}} \leq (1+\delta)\,\lambda(\{M_{\delta}(\check{h})> y\}),
\end{equation}
where $\lambda$ is the probability Lebesgue measure on $\N^1(\sqrt{\gamma})$.

A basic property of the Lebesgue integral says that for a function $f\in L^{\infty}(X,\mu)$, where $X$ is a space with a measure $\mu$, we have $\int_X f\,d\mu = \int_0^{\|f\|_{\infty}} \mu(\{f>y\}) \,dy$. Thus, if we integrate the inequality (\ref{before}) above with respect to $y$ from $0$ to $\|\check{h}\|_{L^{\infty}(\N^1(\sqrt{\gamma}))}$ and apply this property for $\lambda$ and the counting measure of feet in $\N^1(\sqrt{\gamma})$, we obtain
\[
(1-\delta)\int_{\N^1(\sqrt{\gamma})} m_{\delta}(\check{h})\,d\lambda \leq \frac{1}{C_{\eps,R,\gamma}} \sum_{\pi\in\pants(\gamma)} \check{h}(\ft \pi) \leq (1+\delta)\int_{\N^1(\sqrt{\gamma})} M_{\delta}(\check{h})\,d\lambda.
\]

Note that $\check{h}(\ft\pi) = g(\ft^{\ell}\pi)+g(\ft^r\pi)$, so the middle term of the inequality is the same as in $(\star)$. On the other hand, $m_{\delta}(\check{h})\circ \proj = m_{\delta}(h)$. Thus, $\int_{\N^1(\sqrt{\gamma})} m_{\delta} (\check{h})\, d\lambda = \int_{\N^1(\gamma)} m_{\delta} (h)\,d\lambda^{\gamma}$. Finally, $\int_{\N^1(\gamma)} m_{\delta} (h)\,d\lambda^{\gamma} \geq 2\int_{\N^1(\gamma)}  m_{\delta} (g)\, d\lambda^{\gamma}$ and similarly $\int_{\N^1(\sqrt{\gamma})} M_{\delta} (\check{h})\, d\lambda \leq 2\int_{\N^1(\gamma)} M_{\delta}(g)\,d\lambda^{\gamma}$. This yields the desired inequality $(\star)$, up to the constant $C_{\eps,R,\gamma}$ absorbing a factor of $2$.
\end{proof}

We can simplify the main statement of the theorem with the following notations. For a measure $\mu$ on a space $X$, and $g\in L^{\infty}(X)$, we let $\mu(g):=\int_X gd\mu$. We define a measure $\phi^{\gamma}_{\eps,R}$ supported on $\N^1(\gamma)$ by
\[
\phi^{\gamma}_{\eps,R} (g) =\frac{1}{C_{\eps,R,\gamma}} \sum_{\pi \in \pants(\gamma)} \lef( g(\ft^{\ell}\pi) + g(\ft^r \pi) \ri),
\]
where $g\in C(\Fr M)$.

Fix a nonnegative function $g\in C(\Fr M)$. The inequality $(\star)$ can be rewritten as
\[
(1-\delta)\, \lambda^{\gamma} \lef( m_{\delta} g \ri) \leq
\phi^{\gamma}_{\eps,R} (g) \leq
(1 + \delta) \,\lambda^{\gamma} \lef (M_{\delta} g \ri)
\]
and we can average it over all $\gamma\in\curves$, yielding
\[
(1-\delta) \frac{1}{\#\curves}\sum_{\gamma\in\curves} \lambda^{\gamma} \lef( m_{\delta} g \ri) \leq 
\frac{1}{\#\curves}\sum_{\gamma\in\curves}\phi^{\gamma}_{\eps,R} (g) \leq
(1 + \delta) \frac{1}{\#\curves}\sum_{\gamma\in\curves}\lambda^{\gamma} \lef (M_{\delta} g \ri).
\]
If we can show that the upper and lower bounds of this inequality are very close to $\nu_{\Fr M}(g)$ and that the middle term is very close to $\phi_{\eps,R} (g)$ as $\eps\to 0$, then it will follow that $\phi_{\eps,R} (g) \epszero \nu_{\Fr M} (g)$. This, in turn, implies Lemma \ref{equidftfm}, using the fact that since $M$ is compact, $C(\Fr M)\se L^{\infty}(\Fr M)$, as well as the fact that if $\phi_{\eps,R} (g) \epszero \nu_{\Fr M} (g)$ for nonnegative functions $g\in C(\Fr M)$, it follows that $\phi_{\eps,R}(g)\epszero \nu_{\Fr M} (g)$ for all $g\in C(\Fr M)$.

Our task is therefore to show the following two lemmas:

\begin{lem}\label{ft1}
For $g\in C(\Fr M)$,
\[
\lef| \frac{1}{\#\curves}\sum_{\gamma\in\curves} \lambda^{\gamma} \lef( m_{\delta} g \ri) - \nu_{\Fr M} (g) \ri| \aand \lef| \frac{1}{\#\curves}\sum_{\gamma\in\curves} \lambda^{\gamma} \lef( M_{\delta} g \ri) - \nu_{\Fr M} (g) \ri|
\] 
converge to zero as $\eps\to 0$. (Recall $\delta = e^{-qR(\eps)}$ goes to zero as $\eps\to 0$.)
\end{lem}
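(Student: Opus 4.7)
My plan is to first observe that $\Fr M$ is compact, so $g$ is uniformly continuous; hence $\|M_{\delta}g - g\|_{\infty}$ and $\|m_{\delta}g - g\|_{\infty}$ both tend to $0$ as $\delta\to 0$. Since $\delta = e^{-qR}$ and we choose $R(\eps)\to\infty$ as $\eps\to 0$, both expressions in the lemma will be controlled once I establish that
\[
\frac{1}{\#\curves}\sum_{\gamma\in\curves} \lambda^{\gamma}(g) \arr \nu_{\Fr M}(g) \quad \text{as } \eps\to 0.
\]

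The next step is a disintegration reducing this to an equidistribution statement in $\T^1 M$. Recall $\N^1(\gamma)\se \Fr M$ is the $2$-torus of frames $(v, n, v\times n)$ with $v = \dot\gamma$, and $\lambda^{\gamma}$ is its Haar probability measure. The right $\SO_2$-action on $\Fr M$ by the rotations $k_{\theta}$ fixing the first vector has $\T^1 M$ as its orbit space. Defining $\tilde g\in C(\T^1 M)$ by averaging over this action,
\[
\tilde g(v) = \frac{1}{2\pi}\int_0^{2\pi} g(v\cdot k_{\theta})\,d\theta,
\]
the uniformity of $\lambda^{\gamma}$ in the along-$\gamma$ and normal-angle directions yields
\[
\lambda^{\gamma}(g) = \frac{1}{\ell(\gamma)}\int_0^{\ell(\gamma)} \tilde g(\dot\gamma(s))\,ds = \mu_{\gamma}(\tilde g),
\]
where $\mu_{\gamma}$ is the uniform probability measure on $\gamma$, viewed as a closed orbit of the geodesic flow in $\T^1 M$. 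Moreover, the pushforward of $\nu_{\Fr M}$ under the $\SO_2$-quotient $\Fr M\to \T^1 M$ is the Liouville measure $\nu_{\T^1 M}$, and since $\tilde g$ is the $\SO_2$-average of $g$, we have $\nu_{\T^1 M}(\tilde g) = \nu_{\Fr M}(g)$.

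It therefore suffices to show the Lalley-type equidistribution
\[
\frac{1}{\#\curves}\sum_{\gamma\in\curves}\mu_{\gamma} \wkstar \nu_{\T^1 M}
\quad\text{as } \eps\to 0,\; R(\eps)\to\infty.
\]
This is precisely the ``cuffs of the pants equidistribute in $\T^1 M$ asymptotically almost surely'' input from \cite{L} flagged in the Outline. Given that input, evaluating on the continuous bounded test function $\tilde g\in C(\T^1 M)$ completes the reduction.

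The step I expect to be the most delicate is coupling $R(\eps)$ to $\eps$ so that all errors shrink simultaneously: the modulus-of-continuity error $\|M_{\delta}g - m_{\delta}g\|_{\infty}$ (controlled once $e^{-qR(\eps)}$ is small relative to the continuity modulus of $g$), and the Lalley equidistribution error for the restricted family $\curves$, whose rate depends on how narrow the length window $[2R-2\eps,\,2R+2\eps]$ is relative to $R$. Choosing $R(\eps)$ to tend to infinity slowly enough as $\eps\to 0$ should make both quantitative errors negligible. Once the choice of $R(\eps)$ is secured, the three ingredients---uniform continuity of $g$, the disintegration identity $\lambda^{\gamma}(g)=\mu_{\gamma}(\tilde g)$, and the equidistribution of cuffs in $\T^1 M$---combine to give both limits claimed in the lemma.
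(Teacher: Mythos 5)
Your proposal matches the paper's proof in its essential structure: reduce $\lambda^{\gamma}(g)$ to an integral of a fiber-averaged function $\tilde g$ (the paper calls it $\hat g$) along the closed geodesic $\gamma$ in $\T^1 M$, observe that the Haar measure of $\Fr M$ pushes to Liouville measure, invoke Lalley's equidistribution of good curves, and handle the $m_\delta$/$M_\delta$ discrepancy by uniform continuity of $g$ together with $\delta=e^{-qR}\to 0$. One small imprecision worth flagging: you identify the target statement $\frac{1}{\#\curves}\sum_{\gamma}\mu_{\gamma}\wkstar\nu_{\T^1 M}$ as ``precisely'' the asymptotically-almost-surely input from Lalley, but Lalley's Theorem II (as used in the paper) is the single-orbit probability statement $\Prob_{\eps}\bigl(|\mu_{\gamma}(h)-\nu_{\T^1 M}(h)|>\eta\bigr)\to 0$ for fixed $h$; passing from that to the weak-* convergence of the averaged measure requires the short Chebyshev-type splitting of $\curves$ into $\curves^{\geq\eta}$ and $\curves^{<\eta}$ (which the paper carries out explicitly and bounds by $\eta+2\|g\|_{\infty}\Prob_{\eps}(\curves^{\geq\eta})$). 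That step is routine, so your argument is correct, but you should not describe the two statements as identical.
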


\begin{lem}\label{ft2}
For $g\in C(\Fr M)$,
\[
\lef| \frac{1}{\#\curves}\sum_{\gamma\in\curves}\phi^{\gamma}_{\eps,R} (g) - \phi_{\eps,R} (g) \ri| \epszero 0.
\]
\end{lem}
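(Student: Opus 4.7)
The plan is to unfold both averaged sums into counting sums over the feet of all pants in $\pants$, and then calibrate the normalization by testing against $g\equiv 1$.

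First, I would unwind the left-hand side. Since $\pants(\gamma)=\pants(\gamma^{-1})$, each (unoriented) cuff of each pants in $\pants$ is counted exactly twice when $\gamma$ ranges over the oriented set $\curves$. Using that every pants has three cuffs, and each cuff contributes two feet (left and right), the counting telescopes into a sum over all feet (with multiplicity) weighted by $2$:
\[
\sum_{\gamma\in\curves}\sum_{\pi\in\pants(\gamma)}\bigl(g(\ft^\ell\pi)+g(\ft^r\pi)\bigr) = 2\sum_{\pi\in\pants}\sum_{f\text{ foot of }\pi}g(f) = 12\,\#\pants\cdot\phi_{\eps,R}(g),
\]
by definition of the uniform probability measure $\phi_{\eps,R}$ on the $6\,\#\pants$ feet.

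Second, I would replace each $C_{\eps,R,\gamma}$ by a common constant. From its formula, $C_{\eps,R,\gamma}$ depends on $\gamma\in\curves$ only through $\ell(\gamma) = 2R+O(\eps)$, so $C_{\eps,R,\gamma}=C^{*}\bigl(1+O(\eps)\bigr)$ uniformly in $\gamma\in\curves$, where $C^{*}:=C_{\eps,R,2R}$. Combining with the previous step,
\[
\frac{1}{\#\curves}\sum_{\gamma\in\curves}\phi^\gamma_{\eps,R}(g) = \frac{12\,\#\pants}{\#\curves\cdot C^{*}}\,\phi_{\eps,R}(g)+E(g),\qquad |E(g)|\leq O(\eps)\,\|g\|_\infty.
\]

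Third, I would pin down the coefficient $12\,\#\pants/(\#\curves\cdot C^{*})$ by applying Theorem \ref{ftalongcurve} with $g\equiv 1$: this gives $\phi^\gamma_{\eps,R}(1)=2\,\#\pants(\gamma)/C_{\eps,R,\gamma}\in[1-\delta,1+\delta]$, hence $\#\pants(\gamma)=\tfrac12 C_{\eps,R,\gamma}(1+O(\delta))$. Summing over $\gamma\in\curves$ and invoking the same double-count $\sum_{\gamma\in\curves}\#\pants(\gamma)=6\,\#\pants$ yields $12\,\#\pants/(\#\curves\cdot C^{*})=1+O(\eps+\delta)$. Therefore
\[
\Bigl|\frac{1}{\#\curves}\sum_{\gamma\in\curves}\phi^\gamma_{\eps,R}(g)-\phi_{\eps,R}(g)\Bigr|\leq\bigl(O(\eps)+O(\delta)\bigr)\|g\|_\infty\epszero 0
\]
as $R(\eps)\to\infty$ and $\delta=e^{-qR}\to 0$. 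The argument is essentially bookkeeping; the only point that needs care is the multiplicity arising from summing over oriented curves in $\curves$ rather than unoriented ones, but no ingredient beyond the $g\equiv 1$ case of Theorem \ref{ftalongcurve} is required.
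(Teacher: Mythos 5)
Your proposal takes essentially the same route as the paper's proof. Both arguments reduce to the two key inputs: (i) the $g\equiv 1$ case of Theorem \ref{ftalongcurve}, which gives $2\#\pants(\gamma)/C_{\eps,R,\gamma}\in[1-\delta,1+\delta]$ uniformly in $\gamma$, and (ii) the explicit formula for $C_{\eps,R,\gamma}$, which shows it depends on $\gamma$ only through $\ell(\gamma)=2R+O(\eps)$ and is therefore uniformly $(1+o(1))$ times a single constant; the paper normalizes by $\#\tpants/\#\curves$ while you introduce $C^{*}$, but the calibration step and the resulting estimate $2\#\pants(\gamma)\sim C_{\eps,R,\gamma}\sim \#\tpants/\#\curves$ are the same.
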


\begin{proof}[Proof of Lemma \ref{ft1}]
For $g\in C(\Fr M)$, we define a function $\hat{g} \in C(\T^1 M)$ via
\[
\hat{g}(p,v) = \frac{1}{2\pi} \int_{S^1(v)} g(p,v,\theta)\,d\theta,
\]
where $S^1(v)$ is the circle in $\T^1_p M$ orthogonal to $v$. For $\gamma\in\curves$, we let $d\gamma$ be the probability length measure of $\gamma$ on $\T^1 M$, in other words, for $h\in C(\T^1 M)$.
\[
\int_{\T^1 M} h\,d\gamma = \frac{1}{\ell(\gamma)} \int_0^{\ell(\gamma)} h(\gamma(t),\gamma'(t)) \,dt.
\]
Note that for $g\in C(\Fr M)$,
\[
\int_{\T^1 M} \hat{g} \,d\gamma = \int_{\Fr M} g \, d\lambda^{\gamma}.
\]

Let $\Prob_{\eps}$ be the uniform probability measure on $\curves$. In Theorem II of \cite{L}, Lalley showed that, if $h\in C(\T^1 M)$ and $\eta>0$, then
\[
\Prob_{\eps} \lef( \lef| \int_{\T^1 M} h\, d \gamma - \nu_{\T^1 M} (h) \ri| > \eta \ri) \epszero 0.
\]
In other words, if $g\in C(\Fr M)$, then
\[
\Prob_{\eps} \lef( \lef| \lambda^{\gamma}(g) - \nu_{\Fr M} (g) \ri| > \eta \ri) \epszero 0.
\]

Let $\curves^{\geq \eta}$ be the $\gamma\in\curves$ so that $|\lambda^{\gamma}(g) - \nu_{\Fr M}(g)| \geq \eta$ and let $\curves^{<\eta} := \curves - \curves^{\geq\eta}$. Then,

\begin{align*}
\lef|\frac{1}{\#\curves} \sum_{\gamma\in\curves} \lambda^{\gamma}(g) - \nu_{\Fr M} (g) \ri| &\leq
\frac{1}{\#\curves} \lef( \sum_{\gamma\in\curves^{<\eta}} \lef|\lambda^{\gamma}(g)- \nu_{\Fr M}(g)\ri| +\sum_{\gamma\in\curves^{\geq\eta}} \lef|\lambda^{\gamma}(g)- \nu_{\Fr M}(g)\ri| \ri) \\
&\leq \eta + 2\|g\|_{L^{\infty} (\Fr M)} \Prob_{\eps} (\curves^{>\eta}).
\end{align*}

As $\eta>0$ was arbitrary, this shows that
\[
\lim_{\eps\to 0} \frac{1}{\#\curves} \sum_{\gamma\in\curves} \lambda^{\gamma}(g) = \nu_{\Fr M } (g).
\]

Finally, since $g$ is continuous and $\delta\to 0$ as $\eps\to 0$, we also conclude that
\[
\frac{1}{\#\curves} \sum_{\gamma\in\curves} \lambda^{\gamma}\lef( m_{\delta} g\ri) \aand \frac{1}{\#\curves} \sum_{\gamma\in\curves} \lambda^{\gamma}\lef( M_{\delta} g\ri)
\]
converge to $\nu_{\Fr M}(g)$ as $\eps\to 0$, which concludes the proof of Lemma \ref{ft1}.
\end{proof}

\begin{proof}[Proof of \ref{ft2}]

To begin, for $\gamma\in\curves$ and for $g\in C(\Fr M)$, we define
\[
\Ft_{\gamma}(g) := \sum_{\pi\in\pants(\gamma)} \lef( g(\ft^{\ell} \pi) + g(\ft^r \pi) \ri).
\]

Note that $\#\tpants = \sum_{\gamma\in\curves}\#\pants^- (\gamma)$, or equivalently, $2\#\tpants = \sum_{\gamma\in\curves} \#\pants(\gamma)$. This is because each oriented good curve corresponds to a distinct element of $\curves$, while each $\#\pants(\gamma)$ counts the ends of pants having either $\gamma$ or its orientation reversal as a cuff.

In particular,
\begin{align*}
\phi_{\eps,R}(g) &= \frac{1}{2\#\tpants} \sum_{\pi\in\tpants} \lef( g(\ft^{\ell} \pi) + g(\ft^r \pi) \ri) \\
&=  \frac{1}{2\#\tpants} \sum_{\gamma\in\curves} \sum_{\pi\in \pants^-(\gamma)} \lef( g(\ft^{\ell} \pi) + g(\ft^r \pi) \ri)  \\
&= \frac{1}{4\#\tpants} \sum_{\gamma\in\curves} \Ft_{\gamma} (g).
\end{align*}

Moreover,
\begin{align*}
\frac{1}{\#\curves}\sum_{\gamma\in\curves} \phi_{\eps,R}^{\gamma}(g) - \phi_{\eps,R} (g) &=
\frac{1}{\#\curves} \sum_{\gamma\in\curves} \frac{\Ft_{\gamma}(g)}{C_{\eps,R,\gamma}} - \frac{1}{4\#\tpants} \sum_{\gamma\in\curves} \Ft_{\gamma}(g) \\
&= \sum_{\gamma\in\curves} \Ft_{\gamma}(g) \lef(\frac{1}{\#\curves C_{\eps,R,\gamma}} - \frac{1}{4\#\tpants}  \ri).
\end{align*}

Thus, we obtain
\begin{align*}
\lef| \sum_{\gamma\in\curves}\frac{1}{\#\curves} \phi_{\eps,R}^{\gamma}(g) - \phi_{\eps,R} (g) \ri| &\leq
\frac{\#\curves}{\#\tpants} \sup_{\gamma\in\curves}  |\Ft_{\gamma}(g)| \lef|\frac{1}{\#\curves C_{\eps,R,\gamma}} - \frac{1}{4\#\tpants}  \ri|.
\end{align*}
Using the fact that
\[
|\Ft_{\gamma}(g)| \leq 2 \#\pants(\gamma)\, \|g\|_{L^{\infty}(\Fr M)},
\]
we have
\begin{align}\label{bound}
\lef| \sum_{\gamma\in\curves}\frac{1}{\#\curves} \phi_{\eps,R}^{\gamma}(g) - \phi_{\eps,R} (g) \ri| &\leq
\|g\|_{L^{\infty}(\Fr M)} \sup_{\gamma\in\curves} \frac{2\#\pants(\gamma)}{C_{\eps,R,\gamma}} \lef| 1 - \frac{C_{\eps,R,\gamma}}{4\#\tpants/\#\curves} \ri| \\
&\leq \|g\|_{L^{\infty}(\Fr M)} \, (1+\delta) \sup_{\gamma\in\curves}\lef| 1 - \frac{C_{\eps,R,\gamma}}{4\#\tpants/\#\curves} \ri|,
\end{align}
where $\delta = e^{-qR}$ and in the last step we have used the equidistribution of feet (Theorem \ref{ftalongcurve}).

Now it remains for us to show that
\[
(\star):= \sup_{\gamma\in\curves} \lef| 1 - \frac{C_{\eps,R,\gamma}}{4\#\tpants/\#\curves} \ri|
\]
tends to 0 as $\eps\to 0$,
which we will also achieve with the equidistribution of feet and some algebraic manipulations.

Recall that $C_{\eps,R,\gamma} = (\vol M)^{-1} 2\pi c_{\eps} \eps^4 \ell(\gamma) e^{4R - \ell{\gamma}}$, where $c_{\eps}\to 1$ as $\eps\to 0$. We define a new constant,
\[
C_{\eps,R} := \frac{2\pi c_{\eps} 2R e^{2R}}{\vol M},
\]
that is very close to $C_{\eps,R,\gamma}$ but does not depend on $\gamma$.

Note that
\begin{align*}
(\star) = \sup_{\gamma\in\curves} \lef| \frac{\frac{4\#\tpants/\#\curves}{C_{\eps,R}} - \frac{C_{\eps,R,\gamma}}{C_{\eps,R}}}{\frac{4\#\tpants/\#\curves}{C_{\eps,R}}} \ri| \leq \sup_{\gamma\in \curves} \frac{\lef|1 - \frac{4\#\tpants/\#\curves}{C_{\eps,R}} \ri| + \lef| 1 - \frac{C_{\eps,R,\gamma}}{C_{\eps,R}}\ri| }{\frac{4\#\tpants/\#\curves}{C_{\eps,R}}}.
\end{align*}

It suffices to show that, as $\epsilon\to 0$, the right hand side goes to 0.

As $\gamma$ is an $(\eps,R)$-good curve, we have
\begin{align}\label{nogamma}
\lef| 1 - \frac{C_{\eps,R,\gamma}}{C_{\eps,R}} \ri| = \lef| \frac{\ell(\gamma)\,e^{2R-\ell(\gamma)} - 2R}{2R} \ri| &\leq \frac{|\ell(\gamma)||e^{2R-\ell(\gamma)} - 1| + |\ell(\gamma)-2R|}{2R} \nonumber \\
&\leq \lef( 1 + \frac{\eps}{2R}\ri) (e^{2\eps}- e^{-2\eps}) + \frac{\eps}{R},
\end{align}
where the last term in the inequality, let us call is $\omega_1(\eps)$, goes to 0 as $\eps\to 0$.

The final task is now to show
\[
\lim_{\eps\to 0} \lef|1 - \frac{4\#\tpants/\#\curves}{C_{\eps,R}} \ri| = 0.
\]

From the estimate \ref{nogamma} and equidistribution of feet, we have
\[
\lef| 1 - \frac{2\# \pants(\gamma)}{C_{\eps,R}} \ri| \leq \frac{\lef| 1 -  \frac{C_{\eps,R}}{C_{\eps,R,\gamma}} \ri| + \lef| 1 - \frac{2\#\pants(\gamma)}{C_{\eps,R,\gamma}} \ri|}{\frac{C_{\eps,R}}{C_{\eps,R,\gamma}}} \leq 2 ( \omega_1 (\eps) + \delta) =: \omega_2(\eps).
\]
Above, $\omega_2(\eps)\to 0$ as $\eps\to 0$. In other words,
\[
1 - \omega_2(\eps) \leq \frac{2\# \pants(\gamma)}{C_{\eps,R}} \leq 1 + \omega_2(\eps).
\]
Averaging this inequality over $\gamma$ and recalling that $\sum_{\gamma\in\curves} \#\pants(\gamma) = 2\# \tpants$, we obtain
\[
1 - \omega_2(\eps) \leq \frac{4\#\tpants/\#\curves}{C_{\eps,R}} \leq 1 + \omega_2(\eps).
\]
Thus we have all we needed to show that $(\star)\epszero 0.$
\end{proof}


To wrap up this section, we have proved Lemmas \ref{ft1} and \ref{ft2}, which is what we needed to show that the feet of all pants equidistribute in $\Fr M$, namely
\[
\phi_{\eps,R(\eps)} \wkstar \nu_{\Fr M}.
\]

\subsection{Approximate barycenters of pants}

As before, we let $v_R$ be the right action on $\Fr M\simeq \PSL_2\bC$ of the element
\[
a_{R/2}\, k\, a_{\log(\sqrt{3}/2)} \in \PSL_2 \bC,
\]
where $a_t = \diag(e^{t/2},e^{-t/2})$ and $k\in \SO_2$ is the ninety-degree rotation bringing the first frame to the second.

We defined \emph{left} and \emph{right approximate barycenter} of an end of pants $\pi\in\tpants$ respectively by
\[
\ab^{\ell} (\pi) = v_R (\ft^{\ell} \pi) \aand
\ab^r (\pi) = v_R (\ft^r \pi).
\]

This subsection is dedicated to proving that the aproximate barycenters are indeed close to barycenters. Precisely,

\begin{lem}\label{ab}
Let $\pi\in \tpants$ and $s = \ell$ or $r$. Then,
\[
\dist_{\Fr M} \lef( \ab^{s}\pi,\bary^s \pi \ri) \leq \omega(\eps),
\]
where $\omega(\eps)\to 0$ as $\eps\to 0$.
\end{lem}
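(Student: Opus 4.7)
The plan is to compare $\ab^s \pi$ with $\bary^s \pi$ by a direct computation in a convenient $\bH^3$-lift, reducing first to a Fuchsian model and then perturbing. Fix a pleated representative $(f, C_i) \in \pi$ and lift to $\bH^3$; after applying an isometry of $\bH^3$, we may assume that the cuff $\gamma$ homotopic to $f(C_i)$ is the $z$-axis in upper half-space, oriented upward. Then $\ft^s\pi$ is a specific frame at a specific point on the $z$-axis and $\bary^s\pi$ is the framed barycenter of the spun ideal triangle $T^s(\pi)\subset \bH^3$; both are explicit functions of the cuff complex lengths and twists of $\pi$, while $v_R$ is a fixed element of $\PSL_2\bC$ independent of $\pi$.

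\textbf{The Fuchsian model.} The heart of the proof is the case when $\pi$ is realized by the planar Fuchsian pair of pants $P_R$, with all cuffs of length exactly $2R$ and trivial twists, so that the pleated structure lies in a totally geodesic $\bH^2\subset\bH^3$. The right action of $v_R$ then decomposes into three explicit moves: $a_{R/2}$ slides the foot frame along the cuff by $R/2$, bringing it to the midpoint of the hexagonal cuff-arc; $k$ rotates by $90^\circ$ so the leading direction now points perpendicular into the hexagon and the spun triangle; and $a_{\log(\sqrt{3}/2)}$ moves by this signed distance along the new leading geodesic. Using the classical hyperbolic geometry of the ideal triangle---in particular that the distance from a horocyclic midpoint of a side to the barycenter along the geodesic toward the opposite vertex is $\log\sqrt{3}$---one verifies that the three constants in $v_R$ are tuned so that $\dist_{\Fr\bH^3}(\ab^s\pi,\bary^s\pi) = O(e^{-R/2})$ in the model. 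The $O(e^{-R/2})$ discrepancy comes entirely from the gap between the finite-$R$ hexagon and its $R\to\infty$ ideal-triangle limit, controlled by the short-orthogeodesic length $s(R) = O(e^{-R/2})$ that comes from the hexagon identity $\cosh s = \cosh R/(\cosh R - 1)$.

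\textbf{Perturbation and conclusion.} For a general good pants $\pi\in\tpants$, the cuff complex lengths are within $2\eps$ of $2R$ and the pleating twists are $O(\eps)$. Standard continuous dependence of pleated surfaces on their complex-length and twist parameters then gives, in the chosen $\bH^3$-normalization, $O(\eps)$-closeness in $\Fr\bH^3$ between the foot, the ideal vertices of $T^s(\pi)$ and therefore $\bary^s\pi$, and their counterparts for the Fuchsian model $P_R$; since $v_R$ is fixed, the same closeness transfers to $\ab^s\pi$. Combining, $\dist_{\Fr\bH^3}(\ab^s\pi,\bary^s\pi) = O(\eps) + O(e^{-R/2}) = \omega(\eps)$ after taking $R = R(\eps)\to\infty$ sufficiently fast with $\eps\to 0$. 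The main technical difficulty will be the bookkeeping in the Fuchsian model step: carefully tracking the orientation of the foot, the sign convention for the rotation $k$, and the horocyclic geometry of the spun triangle, so as to confirm that the three constants in $v_R$ produce exact cancellation in the $R\to\infty$ limit rather than a fixed asymptotic offset. The perturbation step is comparatively routine since the pleated geometry depends holomorphically on the complex-length parameters.
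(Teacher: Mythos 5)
The overall setup matches the paper's: fix a pleated representative, lift to $\bH^3$ with the marked cuff sent to the geodesic $(0,\infty)$, and study where $v_R$ sends the lifted foot. From there your route is genuinely different. The paper does not pass through a Fuchsian model pants $P_R$. Instead it observes that $\ab^{\ell}\pi$ is, \emph{by construction}, the framed barycenter of the explicit ideal triangle $(\infty,0,\kappa^+)$, where $\kappa$ is the perpendicular ray traced out by $R_{a_t}R_k R_{a_{R/2}}f$. It then compares the ideal vertex set $(\infty,0,\kappa^+)$ of this comparison triangle to the actual vertex set $(\infty,\til\gamma_1^-,\til\gamma_2^-)$ of $T^{\ell}(\pi)$ directly. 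The two error mechanisms are: (a) $\kappa^+$ versus $\sigma^+$ (the endpoint of the analogous ray built from $a_{\bh\bl(\gamma)/2}$ instead of $a_{R/2}$), which is $O(\eps)$ because the good-curve length condition gives $|\bh\bl(\gamma) - R| = O(\eps)$; and (b) $\sigma^+$ and $0$ versus the genuine vertices $\til\gamma_2^-$ and $\til\gamma_1^-$, which is controlled by the small angle $\omega(\eps)$ between the geodesic planes through the three cuffs, itself a consequence of $h_i\to\operatorname{id}$ uniformly on $\hat\bC$. Your proposal separates the same two sources into ``hexagon geometry'' (your $O(e^{-R/2})$) and ``perturbation'' ($O(\eps)$), but you pay for it: you need a perturbative statement comparing the pleated structure of a general good pants to that of $P_R$, and ``standard continuous dependence'' / ``holomorphic dependence on complex-length parameters'' is not the concrete estimate the argument needs. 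The paper never has to quantify that dependence because it never leaves $\pi$.

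There is also a substantive issue you should confront rather than defer. You state (correctly) that the distance from a horocyclic midpoint of a side of an ideal triangle to its barycenter, measured toward the opposite vertex, is $\log\sqrt3$; this is the inradius of the ideal triangle. The element in the paper's $v_R$ is $a_{\log(\sqrt3/2)}$, and $\log(\sqrt3/2) = \log\sqrt3 - \log 2 \neq \log\sqrt3$ (indeed $\log(\sqrt3/2) < 0$). After $R_{a_{R/2}}R_k$ the frame is based precisely at the horocyclic midpoint of the side $(\infty,0)$ of the triangle $(\infty,0,\kappa^+)$ (the foot of the perpendicular from $\kappa^+$), so if the inradius is $\log\sqrt3$, then $v_R$ with $a_{\log(\sqrt3/2)}$ misses the barycenter by a fixed amount $\log 2$, which does not tend to $0$ with $\eps$. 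You flag ``confirming that the three constants in $v_R$ produce exact cancellation'' as the main technical difficulty and leave it undone; but carrying it out would immediately surface this $\log 2$ mismatch. Either the paper's constant contains a typo (in which case the lemma and your proof both go through with $a_{\log\sqrt3}$), or there is a normalization of $a_t$ or $k$ different from what is written, in which case your ``$\log\sqrt3$'' step needs to be redone in the paper's conventions. As written, your proposal asserts both the (correct) inradius value and that the paper's constants cancel, and those two assertions are in tension; a proof would have to resolve this.

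One further small point: the bound on ``pleating twists'' being $O(\eps)$ for a good pants is true but should be stated as a consequence of something concrete (e.g., the planes $P_0,P_1,P_2$ through the cuff lifts all pass through $\infty$ and make pairwise angles $\omega(\eps)$), which is exactly the geometric input the paper isolates and uses.
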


\begin{proof}

\begin{figure}
\includegraphics[scale=0.1]{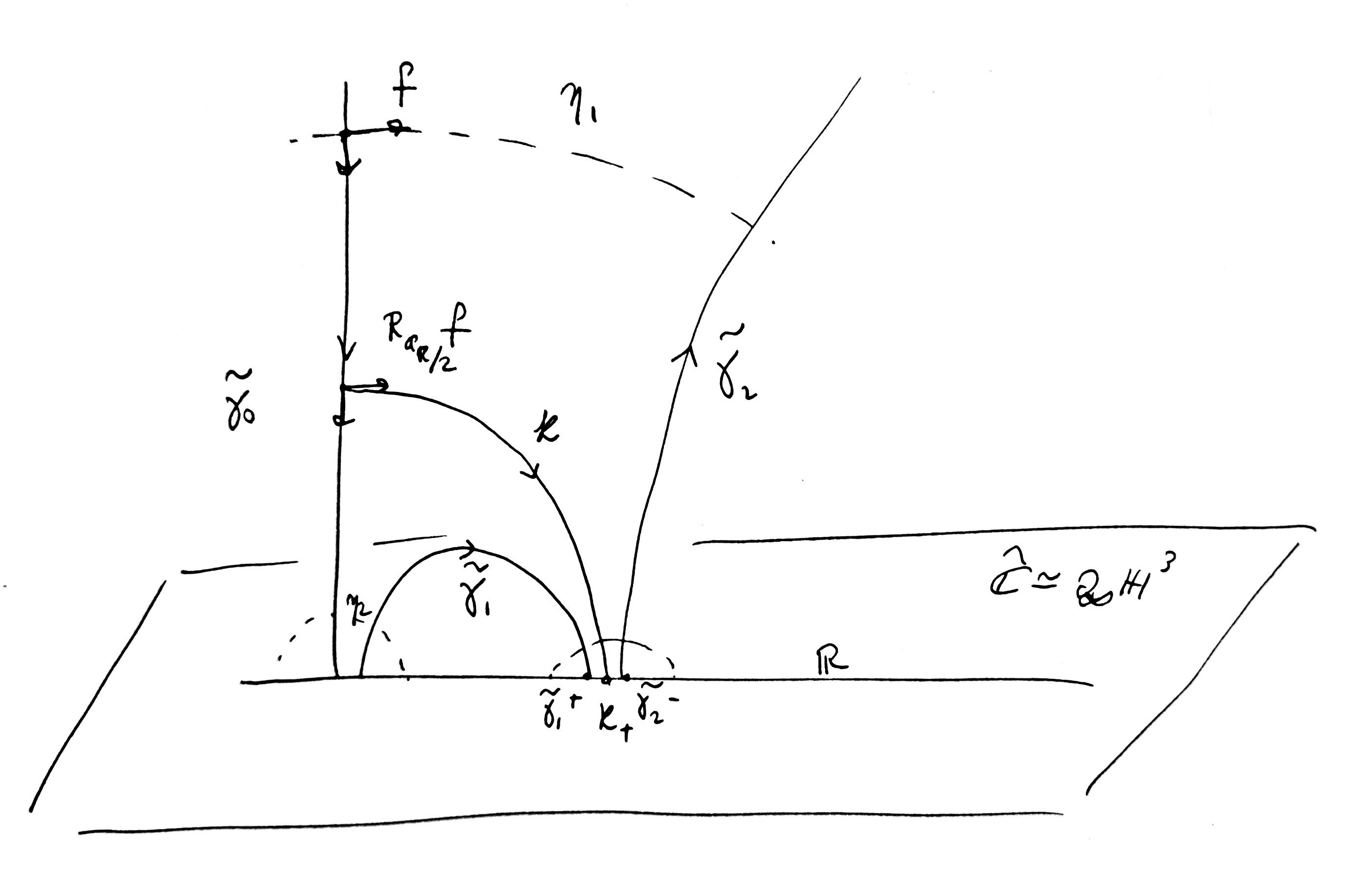}
\caption{Lifting the pants $\pi$ and its left foot
to $\bH^3$.}
\label{kappafig}
\end{figure}

Throughout the proof, we will let $\omega(\eps)$ denote any quantity that goes to zero as $\eps\to 0$.

Let $\pi = [(f,C_i)] \in \tpants$. Pick a representative $(f,C_i)$ with geodesic cuffs and let $f(C_j) = \gamma_{j-i}$, for $i,j \in \bZ/3$. Without loss of generality and to be explicit, we assume $f$ to be orientation-preserving.

Lift $\gamma_0$ to the geodesic $\til{\gamma}_0$ from $\infty$ to $0$ in $\hat{\bC}\simeq \partial_{\infty}\bH^3$. Lift the left foot $\ft^{\ell}\pi$ to the frame based $f$ at $e^{R/2} i$, whose first vector points at the direction of $\gamma$ and whose second vector points at the positive direction of the real line $\bR\se \hat{\bC}$.

Let $\kappa$ be the ray given by
\[
\kappa(t) := R_{a_t} R_{k} R_{a_{R/2}} f
\]
for $t\geq 0$.
The left approximate barycenter $\ab^{\ell}\pi$ lifts to the framed barycenter $\kappa(\log(\sqrt{3}/2))$ of the triangle with vertices $(\infty, 0,\kappa_+)$ associated to the side $(\infty,0) = \til{\gamma}_0$.

Choose lifts $\til{\gamma}_1$ and $\til{\gamma}_2$ of the other cuffs of $\pi$ so they are connected to $\til{\gamma}_0$ by lifts of the short orthogeodesics, as in Figure \ref{kappafig}. Note that $\til{\gamma}_1$ and $\til{\gamma}_2$ lie, respectively, in geodesic planes $P_1$and $P_2$ with $\infty$ in their boundary that make an angle of $\omega(\eps)$ with each other and the plane $P_0$ that contains $\til{\gamma}_0$ and $(\kappa(t))_{t\geq 0}$.

The left triangle of $\pi$ lifts to the triangle with vertices $(\infty,\til{\gamma}_1^-,\til{\gamma}_2^-)$ in this picture.

We let
\[
\sigma(t) := R_{a_t} R_k R_{a_{\bh\bl(\gamma)/2}} f,
\]
for $t\geq 0$, 
be a lift of the orthogeodesic ray from $\gamma_0$ to itself. Then, $\sigma^+$ lies in the annulus $E$ of $\hat{\bC}$ so that $\til{\gamma}_1^+,\til{\gamma}_2^- \in \partial E$. Since the geodesic planes $P_1$ and $P_2$ containing $\til{\gamma}_1$ and $\til{\gamma}_2$ make a small angle with each other, it follows that $\sigma^+$ is within distance $\omega(\eps)$ in $\hat{\bC}$ of $\til{\gamma}_1^+$ and $\til{\gamma}_2^-$.

On the other hand, since $R_{a_{R/2}} f$ and $R_{a_{\bh\bl(\gamma)/2}} f$ are at a distance $O(\eps)$ of each other in $\Fr \bH^3$, it follows that $\sigma^+$ and $\kappa^+$ are at a distance $O(\eps)$ in $\hat{\bC}$.

We conclude that the vertices $(\infty,\til{\gamma}_1^-,\til{\gamma}_2^-)$  of the left triangle of $\pi$ are within distance $\omega(\eps)$ of the vertices $(\infty,0,\kappa^+)$ of the triangle whose framed barycenter associated to $(\infty,0)$ is $\ab^{\ell} \pi$. This means that all the framed barycenters of these triangles are within $\omega(\eps)$ of each other in $\Fr \bH^3$. Thus,
\[
\dist_{\Fr M} (\ab^{\ell}\pi,\bary^{\ell}\pi) \leq \omega(\eps),
\]
as desired.

The proof follows in the same way for the right barycenters.
\end{proof}

\subsection{Equidistribution of approximate  barycenters in $\Fr M$}

In this subsection, we will show that the probability uniform measure $\beta^a_{\eps,R}$ supported on the approximate barycenters of $\pi\in\tpants$ equidistributes as $\eps\to 0$ and $R(\epsilon)\to\infty$. 

We first claim that the equidistribution of feet on $\Fr M$, Lemma \ref{equidftfm}, still holds if we move the feet by the right action of $a_{R/2}$, i.e.,

\begin{lem}
As $\eps\to 0$ and $R(\eps)\to\infty$,
\[
(R_{a_{R/2}})_* \phi_{\eps,R} \wkstar \nu_{\Fr M}.
\]
\end{lem}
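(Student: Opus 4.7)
The plan is to rerun the proof of Lemma \ref{equidftfm} with $g$ replaced by $g_R := g\circ R_{a_{R/2}}$, exploiting the invariance of everything in sight under $R_{a_{R/2}}$. Since $\PSL_2\bC$ is unimodular, the probability Haar measure $\nu_{\Fr M}$ on $\Gamma\backslash\PSL_2\bC \cong \Fr M$ is invariant under the right action of $\PSL_2\bC$; in particular $(R_{a_{R/2}})_*\nu_{\Fr M} = \nu_{\Fr M}$. So given $g \in C(\Fr M)$, it suffices to show
\[
\int_{\Fr M} g_R\, d\phi_{\eps,R} \to \int_{\Fr M} g\, d\nu_{\Fr M} \quad \text{as }\eps\to 0.
\]

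The key geometric fact is that $R_{a_{R/2}}$ preserves each $\N^1(\gamma) \subset \Fr M$: the first vector of a frame in $\N^1(\gamma)$ is tangent to $\gamma$, so geodesic flow by $R/2$ keeps the basepoint on $\gamma$ while parallel transporting the frame. In the torus parametrization $\N^1(\gamma) \cong \bC/(\bl(\gamma) + 2\pi i\bZ)$, this action is the translation $n \mapsto n + R/2$, which is a flat isometry and preserves $\lambda^\gamma$. Consequently $\int_{\N^1(\gamma)} g_R\, d\lambda^\gamma = \int_{\N^1(\gamma)} g\, d\lambda^\gamma$. Applying Theorem \ref{ftalongcurve} to the nonnegative part of $g_R$ (which has $\|g_R\|_\infty = \|g\|_\infty$) should yield
\[
\frac{1}{C_{\eps,R,\gamma}}\sum_{\pi\in\pants(\gamma)}\lef(g_R(\ft^\ell\pi) + g_R(\ft^r\pi)\ri) \approx \int_{\N^1(\gamma)} g\, d\lambda^\gamma,
\]
and then averaging over $\gamma\in\curves$ and invoking Lemmas \ref{ft1} and \ref{ft2} applied to the \emph{fixed} function $g$ (which is now possible because the right-hand side above no longer depends on $R$) identifies the limit as $\int_{\Fr M} g\, d\nu_{\Fr M}$.

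The main obstacle is that the modulus of continuity of $g_R$ with respect to the ambient metric on $\Fr M$ degrades in $R$: right multiplication by $a_{R/2}$ has Lipschitz constant of order $e^{R/2}$ (the operator norm of its adjoint action), so a priori $|m_\delta g_R - g_R|$ and $|M_\delta g_R - g_R|$ are only controlled by $\omega_g(e^{R/2}\delta) = \omega_g(e^{(1/2-q)R})$, which vanishes only if $q > 1/2$. I would sidestep this by noting that the $\delta$-neighborhoods $N_\delta$ in Theorem \ref{kw} are taken in the intrinsic torus metric on $\N^1(\sqrt\gamma)$, under which $R_{a_{R/2}}$ is a genuine isometry. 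Interpreting $m_\delta g_R$ and $M_\delta g_R$ with respect to this intrinsic metric, they inherit the uniform continuity of $g_R|_{\N^1(\sqrt\gamma)}$ \emph{uniformly} in $R$, and the error $\int (M_\delta g_R - m_\delta g_R)\, d\lambda^\gamma$ tends to zero as $\delta = e^{-qR} \to 0$. With this resolution the proof of Lemma \ref{equidftfm} transfers verbatim to the pushforward.
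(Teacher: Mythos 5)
Your proposal is correct and follows essentially the same route as the paper: apply Theorem \ref{ftalongcurve} to $g_R = g\circ R_{a_{R/2}}$, exploit that $R_{a_{R/2}}$ acts on each $\N^1(\gamma)$ as the translation $n\mapsto n + R/2$ (hence a $\lambda^\gamma$-preserving isometry in the torus metric), and then feed the resulting, $R$-independent bounds $\lambda^\gamma(m_\delta g)$ and $\lambda^\gamma(M_\delta g)$ into Lemmas \ref{ft1} and \ref{ft2}. The paper's one-paragraph proof leaves the modulus-of-continuity subtlety you raise entirely implicit; your explicit resolution — that $m_\delta$, $M_\delta$ in Theorem \ref{ftalongcurve} are meant in the intrinsic metric on $\N^1(\gamma)$ inherited from $\N^1(\sqrt{\gamma})$, under which $R_{a_{R/2}}$ is a genuine isometry, so $m_\delta(g_R) = (m_\delta g)\circ R_{a_{R/2}}$ — is exactly what makes the paper's inequality $(1-\delta)\lambda^\gamma(m_\delta g) \leq (R_{a_{R/2}})_*\phi^\gamma_{\eps,R}(g) \leq (1+\delta)\lambda^\gamma(M_\delta g)$ come out, and is a useful clarification of the argument rather than a deviation from it.
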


\begin{proof}
We want to verify that the arguments of the original proof of Lemma \ref{equidftfm} still hold when we change $g$ to $g_R:= g\circ R_{a_{R/2}}$. What makes this not obvious is that the function $g_R$ is not fixed as we take our $\eps\to 0$ limit.

Let $g\in C(\Fr M)$ be nonnegative.
Along each $\gamma\in\curves$, the Lebesgue probability measure $\lambda^{\gamma}$ is invariant under the right action of $a_t \in \PSL_2 \bC$. Therefore, the equidistribution of feet along a curve, Lemma \ref{ftalongcurve} gives us
\[
(1-\delta)\lambda^{\gamma}\lef(m_{\delta}g \ri)
\leq \phi^{\gamma}_{\eps,R} (g_R)
\leq (1+\delta) \lambda^{\gamma} (M_{\delta} g),
\]
where as usual $\delta= e^{-qR}$. Following logic of the proof of Lemma 5.2, we average this inequality along curves on $\curves$, and we want to show that the upper and lower bounds are close to $\nu_{\Fr M}(g)$, which is exactly Lemma 5.6, and that the term in the middle is close to $\phi_{\eps,R} (g_R)$, which is Lemma 5.7 with $g_R$ instead of $g$. In fact, the proof of Lemma 5.7 remains largely the same with $g_R$; the one thing to note is that $g_R$ has the same $L^{\infty}$ norm as $g$ in $\Fr M$.
\end{proof}

We also observe
\begin{lem}
Suppose $\nu_i$ are probability measures in $\Fr M$ so that $\nu_i \wkstar \nu_{\Fr M}$ as $i\to\infty$. Then, given $h\in \PSL_2 \bC$, we have
\[
(R_h)_* \nu_i \wkstar \nu_{\Fr M}
\]
\end{lem}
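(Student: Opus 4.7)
The plan is to reduce the statement to the right-invariance of the Haar measure $\nu_{\Fr M}$ combined with the definition of weak-$*$ convergence. Recall that $\Fr M = \Gamma \backslash \PSL_2 \bC$ and $\nu_{\Fr M}$ is the (normalized) Haar measure. Since $\PSL_2 \bC$ is semisimple and hence unimodular, Haar measure on the group is bi-invariant, and this bi-invariance descends to right-invariance of $\nu_{\Fr M}$ on the quotient. That is, for any $g \in C(\Fr M)$ and any $h \in \PSL_2 \bC$,
\[
\int_{\Fr M} g \circ R_h \, d\nu_{\Fr M} = \int_{\Fr M} g \, d\nu_{\Fr M}.
\]

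Now fix $g \in C(\Fr M)$. The right translation $R_h : \Fr M \to \Fr M$ is a homeomorphism (in fact a diffeomorphism), so $g \circ R_h$ is again a continuous function on $\Fr M$. By definition of the pushforward and the hypothesis $\nu_i \wkstar \nu_{\Fr M}$ applied to the test function $g \circ R_h$,
\[
\int_{\Fr M} g \, d(R_h)_* \nu_i = \int_{\Fr M} g \circ R_h \, d\nu_i \xrightarrow{i\to\infty} \int_{\Fr M} g \circ R_h \, d\nu_{\Fr M}.
\]

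Combining these two displays gives $\int g \, d(R_h)_* \nu_i \to \int g \, d\nu_{\Fr M}$ for every $g \in C(\Fr M)$, which is precisely $(R_h)_* \nu_i \wkstar \nu_{\Fr M}$. There is essentially no obstacle to overcome: the proof is immediate once one observes that Haar measure is right-invariant (so the limit of the translated test functions is unchanged) and that right translation preserves the continuous functions (so the weak-$*$ convergence hypothesis applies).
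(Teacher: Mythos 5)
Your proof is correct and follows essentially the same route as the paper: push the test function through the translation, apply the weak-$*$ hypothesis, and invoke right-invariance of the Haar measure. The only (cosmetic) difference is that you write $\int g\,d(R_h)_*\nu_i = \int g\circ R_h\,d\nu_i$ while the paper writes $g\circ R_h^{-1}$, a convention choice for pushforwards that has no bearing on the argument.
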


\begin{proof}
Suppose $g\in C(\Fr M)$. Then, 
\[
(R_h)_* \nu_i (g) = \nu_i (g\circ R_h^{-1}).
\]
Thus $(R_h)_*\nu_i (g) \to \nu_{\Fr M}(g\circ R_h^{-1})$ as $i\to\infty$. As $\nu_{\Fr M}$ is invariant under the right action of $\PSL_2\bC$, we conclude.
\end{proof}

By construction,
\[
\beta^{a}_{\eps,R} = (R_{k a_{\log (\sqrt{3}/2)}})_* (R_{a_{R/2}})_* \phi_{\eps,R}.
\]
Combining the two lemmas above, we conclude that $\beta^{a}_{\eps,R} \wkstar \nu_{\Fr M}$ as $\eps\to 0$ and $R(\eps)\to \infty$.

\subsection{Conclusion: equidistribution of barycenters in $\Fr M$}

Finally, as a corollary of the equidistribution of the approximate barycenters, we obtain the equidistribution of the actual barycenters, which is the main theorem of the section. For $g\in C(\Fr M)$, we have

\[
\beta^a_{\eps,R}(g) - \beta_{\eps,R}(g) = 
\frac{1}{2\#\tpants} \sum_{\pi \in \tpants}\sum_{s\in \{\ell,r\}} \lef( g(\ab^s \pi) - g(\bary^s \pi)   \ri)
\]

As $g$ is uniformly continuous and
\[
\dist_{\Fr M} (\ab^s \pi,\bary^s \pi) \leq \omega(\eps),
\]
where $\omega(\eps)\to 0$ as $\eps \to 0$, we conclude that
\[
|\beta^a_{\eps,R}(g) - \beta_{\eps,R}(g)| \epszero 0.
\]
Thus, since $\beta^a_{\eps,R}(g) \to \nu_{\Fr M}(g)$, we conclude that $\beta_{\eps,R}(g) \to \nu_{\Fr M}(g)$ as $\eps \to 0$ and $R(\eps)\to\infty$.

\section{Equidistributing surfaces}

Let $S_{\epsilon,R}$ be the connected, closed, $\pi_1$-injective and $(1+O(\epsilon))$-quasifuchsian surface made out of $N(\epsilon,R)$ copies of each pants in $\pants$, as explained in Section 2. Let $\nu_{S_{\epsilon,R}}$ be their probability area measures on $\Gr M$. Note that these measures are also the probability area measure of the possibly disconnected surface built out of one copy of each $\pi\in\pants$. Using the fact that the barycenters of good pants are well distributed, we will show

\begin{thm} As $\epsilon\to 0$ and $R(\epsilon)\to\infty$ fast enough,
\[
\nu_{S_{\epsilon,R(\epsilon)}} \wkstar \nu_{\Gr M},
\]
where $\nu_{\Gr M}$ is the probability volume measure of $\Gr M$.
\end{thm}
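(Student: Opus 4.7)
\emph{Proof proposal.} My plan is to write $\nu_{S_{\epsilon,R}}$ as a right-action transform of $\beta_{\epsilon,R}$ by a fixed subset $\Delta\subset\PSL_2\bR$, and then to pass to the limit using Theorem \ref{bary} and the right-$\PSL_2\bC$-invariance of the Haar measure $\nu_{\Fr M}$. By Theorem \ref{samelimit_intro}, the weak-$*$ limit of $\nu_{S_{\epsilon,R}}$ does not depend on the choice of homotopy representative, so I may realize each $S_{\epsilon,R}$ as the pleated surface from Section 2.3 whose bending lamination contains three spiralling geodesics per good pants, splitting each pants into two ideal triangles $T^{\ell}(\pi),T^{r}(\pi)$. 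In this realization, $\nu_{S_{\epsilon,R}}$ is the normalized sum of the totally geodesic area measures of these ideal triangles on $\Gr M$.

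Identify $\Fr M\cong\Gamma\backslash\PSL_2\bC$ and let $p:\Fr M\to\Gr M$ be the projection sending a frame to the oriented $2$-plane spanned by its first two vectors; then $p_*\nu_{\Fr M}=\nu_{\Gr M}$, and $p$ is invariant under right multiplication by $\SO_2\le\PSL_2\bR$. Since the right action of $\PSL_2\bR$ on $\Fr\bH^3$ stabilizes the totally geodesic $\bH^2$ determined by a frame's normal vector, for any ideal triangle $T\subset M$ and any framed barycenter $b\in\Fr M$ of $T$ the orbit $\{b\cdot h:h\in\Delta\}\subset\Fr M$ equals the frame bundle $\Fr T$ for a \emph{fixed} bounded set $\Delta\subset\PSL_2\bR$, namely the one parametrizing the standard ideal triangle from its standard framed barycenter. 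Normalizing Haar measure $dh$ on $\PSL_2\bR$ so that $\SO_2$-fibers have mass $2\pi$ (and so $dh$ descends to the hyperbolic area on $\bH^2=\PSL_2\bR/\SO_2$) yields
\[
\int_T g(q,T_qT)\,d\area(q)=\frac{1}{2\pi}\int_\Delta g\bigl(p(b\cdot h)\bigr)\,dh,\qquad g\in C(\Gr M),
\]
with $|\Delta|=2\pi^2$.

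Each ideal triangle has exactly three framed barycenters, cyclically permuted by the order-three rotation fixing its barycenter; this rotation lies in $\PSL_2\bR$ and preserves $\Delta$ under left multiplication, so the integral above is the same for each of the three. Summing over triangles, averaging over their three framed barycenters, and using that each pants contributes six framed barycenters to $\beta_{\epsilon,R}$ while having total area $2\pi$, the normalizing constants balance to
\[
\nu_{S_{\epsilon,R}}(g)=\frac{1}{|\Delta|}\int_\Delta \beta_{\epsilon,R}\bigl(g\circ p\circ R_h\bigr)\,dh=\beta_{\epsilon,R}(G),
\]
where $G(x):=|\Delta|^{-1}\int_\Delta g(p(x\cdot h))\,dh$ is continuous on $\Fr M$ by uniform continuity of $g\circ p$ together with compactness of $\Delta$. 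By Theorem \ref{bary}, $\beta_{\epsilon,R(\epsilon)}(G)\to\nu_{\Fr M}(G)$, and by Fubini together with the right-$\PSL_2\bC$-invariance of $\nu_{\Fr M}$,
\[
\nu_{\Fr M}(G)=\frac{1}{|\Delta|}\int_\Delta \nu_{\Fr M}(g\circ p\circ R_h)\,dh=\nu_{\Fr M}(g\circ p)=\nu_{\Gr M}(g),
\]
so $\nu_{S_{\epsilon,R(\epsilon)}}(g)\to\nu_{\Gr M}(g)$ for every $g\in C(\Gr M)$, as desired.

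The main technical obstacle is the integration formula in the second paragraph: identifying a single $\Delta$ that works from any framed barycenter of any ideal triangle in $M$, and nailing down the normalizing constants in the presence of the $3$-fold triangle symmetry and the six-framed-barycenters-per-pants bookkeeping. Once that formula is in place, the rest is a Fubini argument fed by Theorem \ref{bary} and the right-invariance of Haar measure; and the choice of pleated-into-triangles realization is justified a priori by Theorem \ref{samelimit_intro}.
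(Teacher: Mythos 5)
Your proof is correct and is essentially the paper's argument: the paper first shows $\nu_{\T^1 S_{\eps,R}}\wkstar\nu_{\Fr M}$ in $\Fr M$ (Claim 6.2) using the same set $\Delta\subset\PSL_2\bR$, the same Fubini interchange, and Theorem \ref{bary}, and then descends to $\Gr M$ by $\SO_2$-averaging, whereas you lift the test function to $\Fr M$ at the outset via $p$ and fold the descent and the $\Delta$-average into a single integration formula, which is the same computation reorganized. The bookkeeping you spell out (six framed barycenters per pants, area $2\pi$ per pants, $|\Delta|=2\pi^2$, and the left-invariance of $\Delta$ under the order-three rotation making the formula independent of which framed barycenter is chosen) is precisely what the paper's Claim 6.2 formula needs, and your appeal to Theorem \ref{samelimit_intro} is harmless but unnecessary here since the paper already defines $S_{\eps,R}$ and $\beta_{\eps,R}$ via the pleated-into-ideal-triangles realization.
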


Outside of the pleating lamination, we may define the unit tangent bundle $\T^1 S_{\eps,R}$ of $S_{\eps,R}$. This can be seen as a three-dimensional submanifold of $\Fr M$, where $(p,v)\in \T^1 S_{\eps,R}$ is included in $\Fr M$ as the frame $(p,v,w,v\times w)$, where $w$ is the image of $v$ under the ninety-degree rotation $k\in \SO_2$ described in the last section.

Let $\nu_{\T^1 S_{\eps,R}}$ be the probability volume measure of $\T^1 S_{\eps,R}$ on $\Fr M$. We show

\begin{claim}
As $\eps\to 0$ and $R(\eps)\to\infty$,
\[
\nu_{\T^1 S_{\eps,R(\eps)}} \wkstar \nu_{\Fr M}.
\]
\end{claim}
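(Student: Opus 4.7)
The key structural fact, hinted at in the paragraph just before the claim, is that the unit tangent bundle of any ideal triangle $T\se \bH^3$ can be realized inside $\Fr M$ as the right orbit $b\cdot \Delta$ of any of its framed barycenters $b$ by a fixed subset $\Delta\se \PSL_2\bR$. The plan is to use this to disintegrate $\nu_{\T^1 S_{\eps,R}}$ against $\beta_{\eps,R}$ and then invoke Theorem \ref{bary} together with the right-invariance of $\nu_{\Fr M}$.

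\textbf{Disintegration.} First I would fix a model ideal triangle $T_0\se \bH^2$ with barycenter at the origin $o\in \bH^2$, identify $\Fr\bH^2\cong \PSL_2\bR$ so that one of the three framed barycenters of $T_0$ is the identity, and set $\Delta := \T^1 T_0 \se \PSL_2\bR$. Rotation by $2\pi/3$ about $o$ preserves $T_0$ and cycles its three framed barycenters, so $\Delta$ is invariant under left multiplication by this rotation; consequently, for any ideal triangle $T\se \bH^3$ and any framed barycenter $b\in \Fr M$ of $T$, one has $\T^1 T = b\cdot \Delta$ (as subsets of $\Fr M$). Let $\mu$ be the Haar-proportional probability measure on $\Delta$; then the pushforward $\mu_b := (g\mapsto b\cdot g)_*\mu$ is the probability volume measure of $\T^1 T$ inside $\Fr M$. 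Since each $\pi_0 \in \pants$ contributes two pleated ideal triangles $T^\ell(\pi_0)$ and $T^r(\pi_0)$, and since as $\pi$ ranges over the three ends $e^{-1}(\pi_0)\se \tpants$ the framed barycenters $\bary^s(\pi)$ enumerate the three framed barycenters of $T^s(\pi_0)$, careful bookkeeping of the normalizations (the total $\T^1$-volume of $S_{\eps,R}$ is $4\pi^2\#\pants$, while $\beta_{\eps,R}$ is the uniform probability measure on a set of $6\#\pants$ points) should produce the disintegration
\[
\nu_{\T^1 S_{\eps,R}} \;=\; \int_{\Fr M} \mu_b \, d\beta_{\eps,R}(b).
\]

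\textbf{Passage to the limit.} For $f\in C(\Fr M)$ I would set $F(b):=\int_\Delta f(b\cdot g)\,d\mu(g)$, so that $\nu_{\T^1 S_{\eps,R}}(f) = \beta_{\eps,R}(F)$. The function $F$ is bounded by $\|f\|_\infty$ and continuous by dominated convergence. Theorem \ref{bary} then yields $\beta_{\eps,R(\eps)}(F)\to \nu_{\Fr M}(F)$ as $\eps\to 0$, and by Fubini combined with right-$\PSL_2\bC$-invariance of the Haar probability measure $\nu_{\Fr M}$ on $\Fr M\cong \Gamma\backslash \PSL_2\bC$,
\[
\nu_{\Fr M}(F) = \int_\Delta \left( \int_{\Fr M} f(b\cdot g)\,d\nu_{\Fr M}(b) \right) d\mu(g) = \nu_{\Fr M}(f).
\]
This gives $\nu_{\T^1 S_{\eps,R(\eps)}}\wkstar \nu_{\Fr M}$, proving the claim. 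The step I expect to be most delicate is the disintegration itself: the combinatorial multiplicities (three ends per pants, two triangles per pants, three framed barycenters per triangle) must combine correctly with the normalizations of $\beta_{\eps,R}$ and of the $\T^1$-volume so that the averaging really produces the identity above. Once that identity is in hand, the passage to the limit is essentially formal.
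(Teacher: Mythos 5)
Your argument is correct and is essentially the same as the paper's: disintegrate $\nu_{\T^1 S_{\eps,R}}$ over the framed barycenters via the set $\Delta\subseteq\PSL_2\bR$, then feed the result into Theorem \ref{bary} and the right-invariance of $\nu_{\Fr M}$. The only difference is cosmetic — you apply the equidistribution theorem once to the averaged function $F$ and invoke Fubini at the end, whereas the paper applies Fubini first and then equidistribution plus dominated convergence to the family $t\mapsto\beta_{\eps,R}(g\circ R_t^{-1})$; your care in checking that $b\cdot\Delta$ is independent of the choice of framed barycenter $b$ (via left-invariance of $\Delta$ under the $2\pi/3$ rotation) and in verifying the combinatorial normalization is a small improvement in rigor over the paper's terser treatment.
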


\begin{proof}
This proof is similar to pages 23-26 of \cite{La}. 

Let $\Delta \se \PSL_2 \bR$ be the set so that $R_{\Delta} (b)$ is the unit tangent bundle of the ideal triangle in $M$ with $b\in \Fr M$ as a framed barycenter (for any $b\in \Fr M$). Let $\nu_{\PSL_2 \bR}$ be the probability Haar measure on $\PSL_2 \bR$.

Thus, given $g\in C(\Fr M)$,

\[
\nu_{\T^1 S_{\eps, R}} (g) = \int_{\Fr M} \frac{1}{\nu_{\PSL_2 \bR}(\Delta)}\int_{\Delta} g(R^{-1}_t b) \,d\nu_{\PSL_2 \bR}(t)\, d\beta_{\eps,R}.
\]

By Fubini's theorem,
\[
\nu_{\T^1 S_{\eps, R}} (g) = \frac{1}{\nu_{\PSL_2 \bR}(\Delta)}\int_{\Delta} \beta_{\eps,R} (g\circ R^{-1}_t) \,d\nu_{\PSL_2\bR} (t).
\]

From the equidistribution of the barycenters and the $\PSL_2 \bC$-invariance of $\nu_{\Fr M}$, the integrand $\beta_{\eps,R}(g\circ R_t^{-1})$ converges to $\nu_{\Fr M} (g)$. Using the dominated convergence theorem, we conclude that
\[
\nu_{\T^1 S_{\eps,R}} (g) \epszero \frac{1}{\nu_{\PSL_2 \bR}(\Delta)}\int_{\Delta}  \nu_{\Fr M} (g) \,d\nu_{\PSL_2\bR} = \nu_{\Fr M} (g).
\]
\end{proof}

For $g\in C(\Fr M)$, we let $\til{g}\in C(\Gr M)$ be the function defined by
\[
\til{g}(p,P) = \frac{1}{2\pi} \int_{0}^{2\pi} g(p,r_{\theta} f)\,d\theta,
\]
where $r_{\theta}\in \PSL_2 \bR$ is the rotation of degree $\theta$ and $f$ is the frame whose first two vectors span the oriented plane $P$.

Fubini's theorem tells us that for $g\in C(\Fr M)$,
\[
\nu_{\T^1 S_{\eps,R}} (g) = \nu_{S_{\eps,R}} (\til{g}).
\]
Moreover, any $h\in C(\Gr M)$ is of the form $h=\til{g}$, where $h(p,P) = g(p,f)$ for any frame $f$ whose first two vectors span $P$.

Thus Claim 6.2 implies Theorem 6.1, and we can conclude that the connected surfaces made out of $N(\eps,R,M)$ copies of each pants in $\pants$ equidistribute in $\Gr M$ as $\eps\to 0$ and $R(\eps)\to\infty$.

\section{Non-equidistributing surfaces}

Let $\sG$ be a set containing a representative of each commensurability class of closed immersed totally geodesic surfaces in $M$. Let $(\sG_k)_{k\ge 1} \seq \sG$ be an increasing sequence of finite subsets, so that $\bigcup_{k\geq 1} \sG_k = \sG$. (In the case when $\sG$ is finite, it suffices to take $\sG_k = \sG$ for all $k\geq 1$.)
Kahn and Marković \cite{KME} proved that given $k\geq 1$, $\eps>0$ small enough and $R = R(\eps,k)$ large enough, each $T\in \sG_k$ has a finite cover $\hat{T}$ which admits a pants decomposition of pants in $\pants$ that are all glued via $(\eps,R)$-good gluings. (This fact was used to prove the Ehrenpreis conjecture.) By possibly passing to a further double cover, we can assume the cuffs of each $\hat{T}$ are all nonseparating, as explained in the proof of Theorem \ref{irred}.

For each $T\in \sG_k$, let $T^d$ be a cover of $\hat{T}$ of degree $d = d(T,\eps,R)$. We may choose this cover so that $T^d$  also admits a pants decomposition, denoted $\Pi_T$, by pants in $\pants$ that are glued via $(\eps,R)$-good gluings.

Let $\hat{S}_{\eps,R}$ be the connected, closed, $\pi_1$-injective and $(1+O(\epsilon))$-quasifuchsian surface produced in the previous section. We may assume that $\hat{S}_{\eps,R}$ is built out of $N(\eps,R,k)\geq \#\sG_k$ copies of each $\pi\in\pants$.

For each $T\in\sG_k$, choose a curve $\gamma\se T^d$ that arises as a boundary of a pants in $\Pi_T$. Let $\pi_T^- \in \pants^- (\gamma_T)$  and $\pi_T^+ \in \pants^+(\gamma_T)$ be pants in $\Pi_T$ that are $(\eps,R)$-well glued along $\gamma_T$. Namely,
\[
\lef|\ft \pi^-_T - \tau\lef(\ft \pi^+_T\ri) \ri| < \frac{\eps}{R},
\]
where as before $\tau(x) = x + 1 + i\pi$.

As argued in the proof of Theorem \ref{irred}, since $\hat{S}_{\eps,R}$ is built out of $N$ copies of \emph{each} $\pi \in \pants$, there is a pants $p^-_T\in \pants^-(\gamma_T)$ in $\hat{S}_{\eps,R}$ so that
\[
|\ft \pi^-_T - \ft p^-_T | < \frac{\eps}{R}.
\]
(Note we could take $p^-_T$ to be another copy of $\pi^-_T$ if $N>1$.)
On the other hand, $p^-_T$ is $(\eps,R)$-well glued to a pants $p^+_T$ also from $\hat{S}_{\eps,R}$, i.e.,
\[
\lef|\ft p^-_T - \tau \lef( \ft p^+_T \ri) \ri| < \frac{\eps}{R}.
\]

Putting these inequalities together, we have that
\[
\lef|\ft p^-_T - \tau\lef(\ft \pi^+_T\ri) \ri| < 2\frac{\eps}{R} \aand 
\lef|\ft \pi^-_T - \tau\lef(\ft p^+_T\ri) \ri| < 2\frac{\eps}{R}.
\]
In other words, we may cut along $\gamma_T$ and reglue $p^-_T$ to $\pi^+_T$ and $\pi^-_T$ to $p^+_T$ in a $(2\eps,R)$-good way. We call this reglued surface $S_{\eps,R,\bd}$, where $\bd = (d(T,\eps,R))_{T\in\sG_k}$ is a vector keeping track of the degrees of each cover $T^d\to \hat{T}$.

The regluings are done along nonseparating curves, so each $S_{\eps,R,\bd}$ is closed, oriented, \emph{connected} and $(1+O(\eps))$-quasifuchsian. (The connectedness uses the fact that the regluings were done along nonseparating cuffs.) As usual, we let $\nu(S_{\eps,R,\bd})$ denote the probability area measure of $S_{\eps,R,\bd}$ on the Grassmann bundle $\Gr M$. Recall that $\nu_{\Gr M}$ denotes the Haar measure on $\Gr M$ and $\nu_T$ denotes the area measure of $T$ on $\Gr M$.

\begin{figure}
\includegraphics[scale=0.16]{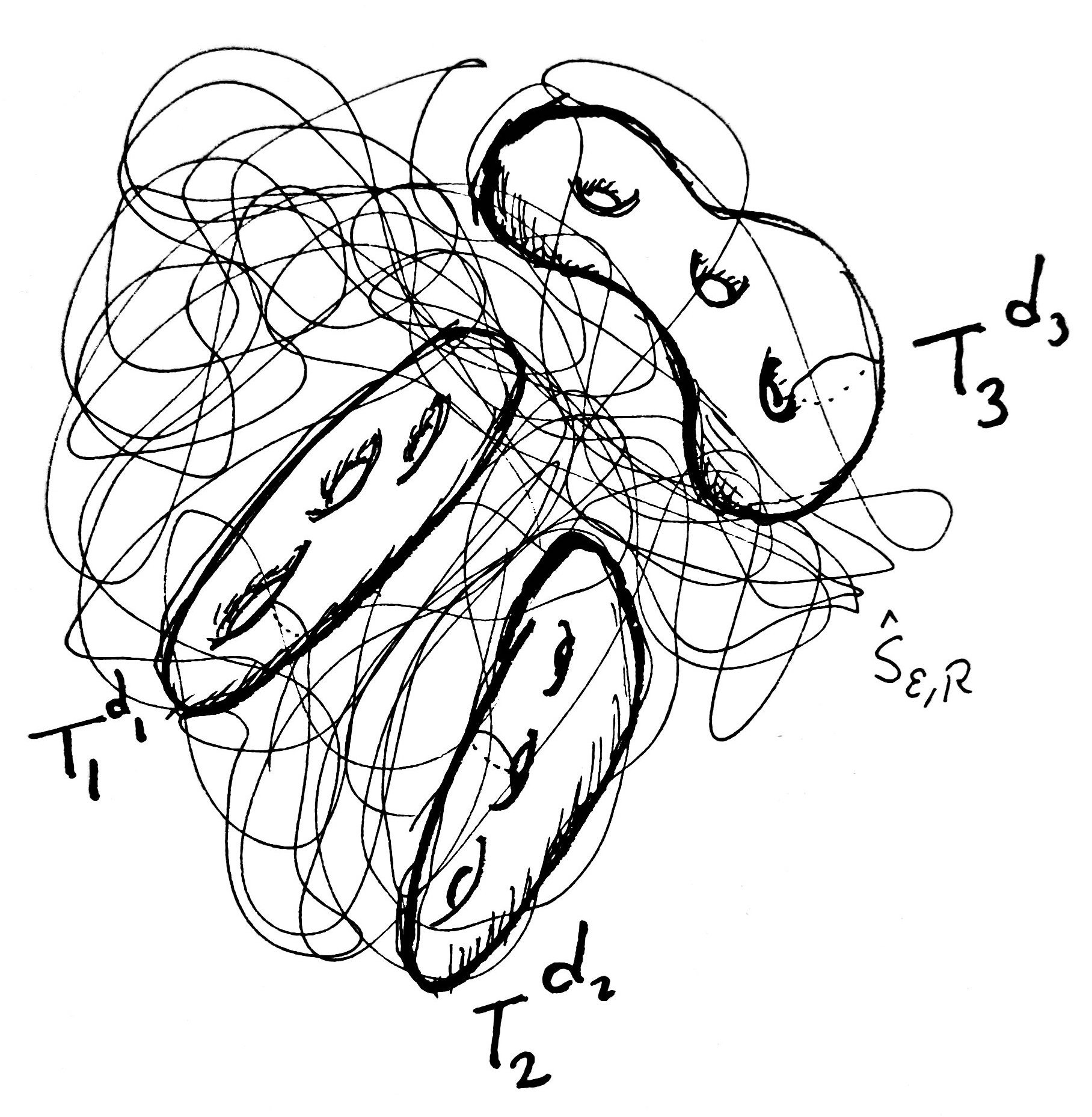}
\caption{The family of surfaces $S_{\eps,R,\bd}$, which can accumulate on the totally geodesic surfaces $T$ by appropriately choosing the degrees $d$ of their covers.}
\end{figure}

\begin{prop}
The weak-* limit points, as $\eps\to 0$ and $k\to\infty$, of the measures $\nu(S_{\eps,R(\eps,k),\bd(\eps,k)})$ on $\Gr M$ consist of all measures $\nu$ of the form
\[\tag{$\star$}
\nu = \alpha_M \nu_{\Gr M} + \sum_{T\in\sG} \alpha_T \nu_T.
\]
\end{prop}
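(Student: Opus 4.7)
The plan is to express $\nu(S_{\eps,R,\bd})$ as a convex combination of $\nu(\hat{S}_{\eps,R})$ and the measures $\nu_T$ for $T\in\sG_k$, weighted by relative area, and then to prescribe the degrees $\bd$ to realize any target convex combination $(\star)$.

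The key observation is that the cut-and-reglue surgery producing $S_{\eps,R,\bd}$ does not change the area measure on $\Gr M$: the individual pants, as immersed objects in $M$, are exactly the same pants that make up the disjoint union $\hat{S}_{\eps,R}\sqcup\bigsqcup_{T\in\sG_k} T^d$, and only the combinatorial gluing pattern is altered. Since the unnormalized pushforward area measure is additive across pants, and $T^d$ is a finite cover of $T$ (via $T^d\to \hat{T}\to T$) so its normalized area measure on $\Gr M$ is $\nu_T$, writing $A := \area(\hat{S}_{\eps,R})$, $a_T := \area(\hat{T})$ and $A_{\mathrm{tot}} := A + \sum_{T\in\sG_k} d(T,\eps,R)\,a_T$, one has
\[
\nu(S_{\eps,R,\bd}) \;=\; \frac{A}{A_{\mathrm{tot}}}\, \nu(\hat{S}_{\eps,R}) \;+\; \sum_{T\in\sG_k} \frac{d(T,\eps,R)\,a_T}{A_{\mathrm{tot}}}\, \nu_T.
\]

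By the equidistribution result of Section 6, $\nu(\hat{S}_{\eps,R}) \wkstar \nu_{\Gr M}$ as $\eps\to 0$ with $R(\eps)\to\infty$. Therefore, to realize a target measure of the form $(\star)$ with weights $(\alpha_M,(\alpha_T)_{T\in\sG})$, I would choose $k(\eps)\to\infty$ and degrees $d(T,\eps,R)$ so that $A/A_{\mathrm{tot}}\to \alpha_M$ and $d(T,\eps,R)a_T/A_{\mathrm{tot}}\to \alpha_T$ for every $T\in\sG$. When $\alpha_M>0$, setting $d(T,\eps,R):=\lceil \alpha_T A/(\alpha_M a_T)\rceil$ for $T\in\sG_{k(\eps)}$ with $\alpha_T>0$ achieves this. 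When $\alpha_M=0$, introducing an auxiliary weight $\omega(\eps)\to\infty$ and setting $d(T,\eps,R):=\lceil \omega(\eps)\alpha_T A / a_T\rceil$ forces the total area to grow faster than $A$, so that $A/A_{\mathrm{tot}}\to 0$ while the $T$-weights still converge to $\alpha_T$.

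The main subtlety is the diagonal extraction when $\sG$ is countably infinite: $k(\eps)$ must grow slowly enough that the truncation error $\sum_{T\in\sG\setminus\sG_{k(\eps)}}\alpha_T$ tends to zero, yet fast enough that each $T$ with $\alpha_T>0$ is included by the time $\eps$ is small enough for the prescribed degree to be at least $1$. This coordination is standard and presents no real difficulty once a rate $A = A_{\eps,R(\eps)}$ is fixed. Conversely, because each $S_{\eps,R,\bd}$ is closed, connected and $(1+O(\eps))$-quasifuchsian, any weak-* limit of $\nu(S_{\eps,R,\bd})$ is automatically of the form $(\star)$ by the results of Section 2, so no additional limit points arise.
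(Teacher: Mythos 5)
Your proof follows essentially the same route as the paper: write $\nu(S_{\eps,R,\bd})$ as a convex combination of $\nu(\hat{S}_{\eps,R})$ and the $\nu_T$ weighted by (normalized) area, appeal to $\nu(\hat{S}_{\eps,R})\wkstar\nu_{\Gr M}$ from Section~6, and tune the cover degrees $d(T,\eps,R)$ to realize any prescribed coefficient vector $(\alpha_M,(\alpha_T)_T)$. The paper phrases the weights in terms of genus, $2\pi(g_T-1)d(T)$ versus $2\pi(g_{\eps,R}-1)$, which for pants-built surfaces is proportional to your area weights, so the two decompositions agree; the paper then simply asserts that the degrees can be made to grow appropriately, while you spell out explicit choices for the cases $\alpha_M>0$ and $\alpha_M=0$ and flag the diagonal extraction needed when $\sG$ is infinite. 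You also make explicit two points the paper leaves implicit: that the cut-and-reglue surgery leaves the area measure on $\Gr M$ unchanged because it only alters the combinatorial gluing pattern and not the underlying immersed pleated pants, and that the reverse inclusion, namely that no limit points outside the family $(\star)$ occur, follows from the Ratner-classification argument of Section~2. These are worthwhile additions in exposition, but the mathematical content and structure of the argument are the same.
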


\begin{proof}
Let $g_T$ denote the genus of a totally geodesic surface $T$ and let $g_{\eps,R}$ denote the genus of $\hat{S}_{\eps,R}$.

We can write the area measure $\nu(S_{\eps,R,\bd})$ as
\[
\nu(S_{\eps,R,\bd}) = \frac{1}{\sum_{T\in \sG_k} 2\pi(g_T-1)d(T) + 2\pi(g_{\eps,R} - 1)} \lef( \sum_{T\in \sG_k} 2\pi(g_T-1) d(T) \nu_T + 2\pi(g_{\eps,R} - 1)\nu(\hat{S}_{\eps,R}) \ri).
\]

Recall that $\nu(\hat{S}_{\eps,R(\eps)})\wkstar \nu_{\Gr M}$ as $\eps\to 0$. Thus, by making $d(T,\eps,R(\eps,k))$ grow appropriately fast for each $T$, we can make $\nu(S_{\eps,R,\bd})$ converge to any given measure of the form $(\star)$ as $\eps\to 0$ and $k\to\infty$.
\end{proof}

This, together with Theorem 1.2, which was proved in Section 4, completes the proof of Theorem 1.1.

\end{document}